\numberwithin{equation}{section}
\theoremstyle{thmstyleone}%
\newtheorem{theorem}{Theorem}[section]
\theoremstyle{thmstyletwo}%
\theoremstyle{thmstylethree}%
\newtheorem{definition}{Definition}[section]%
\newtheorem{assumption}{Assumption}[section]%
\newtheorem{lemma}{Lemma}[section]
\newtheorem{example}{Example}[section]%
\newtheorem{remark}{Remark}[section]%
\begin{document}

\title[Mixed-dimensional poromechanical models]{Mixed-dimensional poromechanical models of fractured porous media}


\author[1]{\fnm{W. M.} \sur{Boon}}\email{wietse@kth.se}

\author[2]{\fnm{J. M.} \sur{Nordbotten}}\email{jan.nordbotten@uib.no}

\affil[1]{\orgdiv{Department of Mathematics}, \orgname{KTH Royal Institute of Technology}, \orgaddress{\street{Lindstedtsvägen 25}, \postcode{11428}, \state{Stockholm}, \country{Sweden}}}

\affil[2]{\orgdiv{Department of Mathematics}, \orgname{University of Bergen}, \orgaddress{\state{Bergen}, \country{Norway}}}


\abstract{We combine classical continuum mechanics with the recently developed calculus for mixed-dimensional problems to obtain governing equations for flow in, and deformation of, fractured materials. We present models both in the context of finite and infinitesimal strain, and discuss non-linear (and non-differentiable) constitutive laws such as friction models and contact mechanics in the fracture. Using the theory of well-posedness for evolutionary equations with maximal monotone operators, we show well-posedness of the model in the case of infinitesimal strain and under certain assumptions on the model parameters.}

\maketitle

\section{Introduction}
\label{sec:introduction}

The general topic of fractured porous media is of great importance in applications from biomedicine, to industrial materials, to subsurface geophysics. Its successful mathematical treatment requires a combination of classical elasticity \cite{hughes1983mathematical} with  contact mechanics  \cite{kikuchi1988contact}, and poromechanics  \cite{coussy2005poromechanics} all of which must be extended to allow for complex geometric descriptions. Much progress has been made recently on the understanding of fluid flow in fractured porous media utilizing the conceptual framework of mixed-dimensional geometries  \cite{boon2021functional, martin2005modeling}, which allows for lower-dimensional representations of fractures and their intersections. 

 Despite the importance and recent attention, the mathematical modeling and analysis of flow and deformation in fractured porous media is still far behind the needs of numerical analysts and practitioners. As a response to this, the current paper has two main aims: First, to provide the first consistent and frame-invariant mathematical model for fractured porous media on mixed-dimensional geometries. Second, to provide a well-posedness theory covering a broad class of problems of relevance to applications. 

\subsection{ Introduction to modeling and analysis of fractured porous media}

A realistic model of flow in fractured porous media necessarily requires a mathematical description of both the fluid flow as well as the mechanical response. This combination includes important nonlinearities stemming from the finite strain theory itself, combined with the contact-mechanical problem in the fracture, and finally the non-linear dependence of fluid flow on the fracture aperture.  These nonlinearities appear in the context of a problem that essentially has a saddle-point structure due to the coupling of flow and deformation. To date, and to the best of the knowledge of the authors, this problem has not been successfully analyzed in full. In this contribution, we will exploit recent developments in the form of mixed-dimensional calculus, together with abstract results from the theory of nonlinear monotone evolutionary equations, to provide a frame-invariant and self-consistent theory that, together with well-posedness results for poromechanics of fractured media, extends well beyond existing analysis. 

The present work needs to be seen in context of three independent developments over the last two decades. Firstly, in terms of modeling, it has been recognized since the work of Martin et al.  \cite{martin2005modeling}, that fluid flow in fractured materials can be successfully modeled using a co-dimension one representation of the fracture. We will refer to such models, where the underlying geometry is composed of domains with different topological dimension, as \textit{mixed-dimensional}.  By now, mixed-dimensional models for fluid flow in fractured media are well established, both from the perspective of well-posedness  \cite{boon2018robust}, as well as their approximation properties relative to the underlying equidimensional problem  \cite{bukavc2017dimensional, angot2009asymptotic}. Secondly, the present authors have developed a general framework for considering mixed-dimensional models of this type, where using the language of exterior calculus and differential forms, basic concepts of mixed-dimensional functions and operators are established  \cite{boon2021functional}. This leads to a mixed-dimensional functional analysis, which has been shown to inherit many of the tools associated with standard functional analysis. Thirdly, Picard and collaborators have developed the existence theory for evolutionary equations in the setting of maximally monotone operators  \cite{picard2011partial, picard2015well}. In particular, they have shown how poroealisticity can be analyzed in this framework  \cite{mcghee2010note} and that the setting is well suited to handle nonlinearities such as arise for contact problems  \cite{trostorff2012alternative}. The combination of these three developments is the foundation that allows us to consider the poromechanical contact problem which lies at the heart of poromechanics for fractured media. However, a key missing ingredient in the above is the representation of poromechanics as a mixed-dimensional model. 

 Earlier works have considered this problem using more standard approaches. Girault et al. have considered coupled poromechanics for fracture with a mixed-dimensional formulation for flow in the sense of a lower-dimensional flow representation within the fracture  \cite{girault2019mixed}, however in their analysis they have disregarded the nonlinearities associated with changes in fracture aperture, both as it pertains to the flow problem, but also the contact mechanics. Furthermore, geometric complexity is ignored as only a single fracture is considered. A different perspective was taken by Yotov et al., who considered the problem in an equidimensional sense using Stokes’ equation for the flow in the fracture, but again considering only infinitesimal aperture changes such that contact was disregarded  \cite{ambartsumyan2019nonlinear}. Bonaldi et al show well-posedness for the case where non-linearities arise due to  multiphase flow, but consider only linearized mechanics \cite{BONALDI202140}. This expands on similar results for the single-phase case in \cite{girault2015lubrication}. Finally, we mention also the work of Cusini et al, which address numerical method for this coupled problem \cite{cusini2021simulation}. While they consider geometric complexity, they limit their discussion to quasi-static, small-strain kinematics. This limitation, in particular, implies that only small slip-lengths are allowed in the resulting problem. None of the works discussed above considered finite strain modeling. Numerical and other modeling contributions have been summarized in two recent reviews \cite{berre2019flow,ambartsumyan2019nonlinear}, where important contributions relevant for this paper include the work of Jha and Juanes \cite{jha2014coupled}, Garipov et al \cite{garipov2016discrete}, Norbeck et al \cite{norbeck2016embedded}, Berge et al \cite{berge2020finite} and Stefansson et al \cite{stefansson2021fully}. 

 With the above background in mind, we here summarize the main contributions of this paper: 
\begin{enumerate}
    \item A frame-invariant formulation of finite strain suitable for fractured media within the context of mixed-dimensional calculus, allowing for a large class of complex fracture networks, and its correspondence to classical finite strain theory. 

    \item Governing equations for finite strain poromechanics of fractured media expressed in terms of mixed-dimensional variables and operators for infinitesimal strain, while allowing for contact mechanics, frictional sliding, and lubrication theory for flow in fracture. 

    \item Well-posedness theory for a linearized strain model, in the presence of contact mechanics and friction, under certain constraints on the constitutive laws.

\end{enumerate}
\subsection{Outline}

The remainder of this paper is structured as follows. Section \ref{sec:preliminaries} introduces the fundamental definitions used in the formulation and analysis of mixed-dimensional models. We discuss the \textit{mixed-dimensional continuum assumption} that is central in handling the different length scales inherent to these models. The admissible geometry is then introduced and we keep track of the connectivity between subdomains using directed acyclic graphs (DAGs). These DAGs allow us to create function spaces containing scalar and vector-valued functions that are relevant to modeling poromechanics in fractured media. All functions are defined on smooth reference domains and we use concepts from exterior calculus to appropriately map these to physical space. 

Section \ref{sec:mixed-dimentional-strain} derives invariant strain measures for the mixed-dimensional setting. The definitions follow a ``top-down" approach in which we a strain measure is formed as the linearization of a rotationally invariant finite strain. Additional attention is given to the volumetric strain as it forms the key term that couples the flow and mechanics equations in Biot poroelasticity models.

The mixed-dimensional poromechanics model is presented in Section \ref{sec:mixed-dimentional-poromechanics}. The model consists of the physical conservation principles of mass and momentum supplemented by appropriate constitutive laws. Two models are derived, based on the finite and linearized strain measures of Section \ref{sec:mixed-dimentional-strain}, respectively. This section concludes with a discussion relating our model to classic models of (poro)elasticity. 

Section \ref{sec:well-posedness} focuses on the well-posedness analysis of our model. We introduce a set of simplifying assumptions on the consititutive laws that ensures that the system can be analyzed as an evolutionary equation. Using the assumed monotonicity of our relations, we obtain well-posedness of our model in temporally weighted spaces. To close the section, example models are presented that contain conventional choices for the constitutive laws and satisfy our assumptions.

We conclude the paper in Section \ref{sec:summary}, highlighting the necessary aspects of our model that ensure physical relevance and well-posedness. The paper is supplemented by Appendix \ref{sec:appendix}, which gives the background on evolutionary equations necessary for the well-posedness analysis. 

\section{Preliminaries: Mixed-dimensional modeling and analysis}\label{sec:preliminaries}

In this section, we make precise the problem setting, its geometry and the operators adapted to the mixed-dimensional problems. The first subsection is more general and introductory in nature providing a continuum mechanical perspective on mixed-dimensional modeling, while the remaining sections lay the mathematical foundation for the exposition that follows. 

\subsection{Problem setting and motivation}
\label{sec:probelm_setting}

Classical continuum mechanics (which we will refer to as fixed-dimensional whenever needed to avoid confusion) is the modeling tool which allows for the derivation and statement of the classical field equations  \cite{hughes1983mathematical, truesdell2004non}. In particular, it leads to the development of conservation laws and constitutive laws satisfying suitable notions of frame invariance. A key building block for continuum mechanics is the assumption that the notion of a continuum is a reasonable modeling choice. We choose to formulate this as follows (the precise statement of the continuum assumption is not essential, see e.g. the thorough discussion in  \cite{truesdell2004non}):
\begin{definition}[Fixed-dimensional continuum assumption]\label{def:2.1}
For a domain \( Y\subset\mathbb{R}^{n}\), there exists a scale of consideration \(\mathcal{l}_{0}\), such that for any quantity of interest \(\mathcal{m}\), and a \( n\)-dimensional ball \( B_{x,\mathcal{l}_{0}}^{n}\) centered on \(x\) and with radius \(\mathcal{l}_{0}\), the integral below is well-defined, and the approximation is sufficiently accurate for the applications of interest: 
\begin{equation}\label{eq:2.1}
\overline{\mathcal{m}}(x)\approx\frac{1}{\left\vert B_{x,\mathcal{l}_{0}}^{n}\right\vert }\int_{B_{x,\mathcal{l}_{0}}^{n}}^{}\mathcal{m} \mathrm{d}V.
\end{equation}
\end{definition}
In other words, our formulation of the fixed-dimensional continuum assumption states that a point evaluation of a quantity (say, porosity of a porous material), can be approximated by a (say, volume) integral of characteristic size \(\mathcal{l}_{0}\), and that this approximation is accurate enough that the precise size (and indeed shape) of the integral is immaterial. As a classical example, one notes that for porosity, it is typically taken as a modeling assumption that a scale of consideration exists, the so-called ``Representative Elementary Volume" (on the order of 10 to 100 times the mean grain size), wherein the porosity is well defined  \cite{bear1979groundwater,nordbotten2011geological}. At lower scales, the integral in \eqref{eq:2.1} will be strongly affected by the precise number of grains in the integration volume. In the continuation, we will only be interested in continuum scales, and omit the overbar on the continuum quantity. 

The classical continuum assumption is suitable for a wide range of applications, and underlies the vast majority of real-world industrial computations in applied engineering. 

\begin{figure}[!htbp]
\centering
\includegraphics[width=\textwidth]{./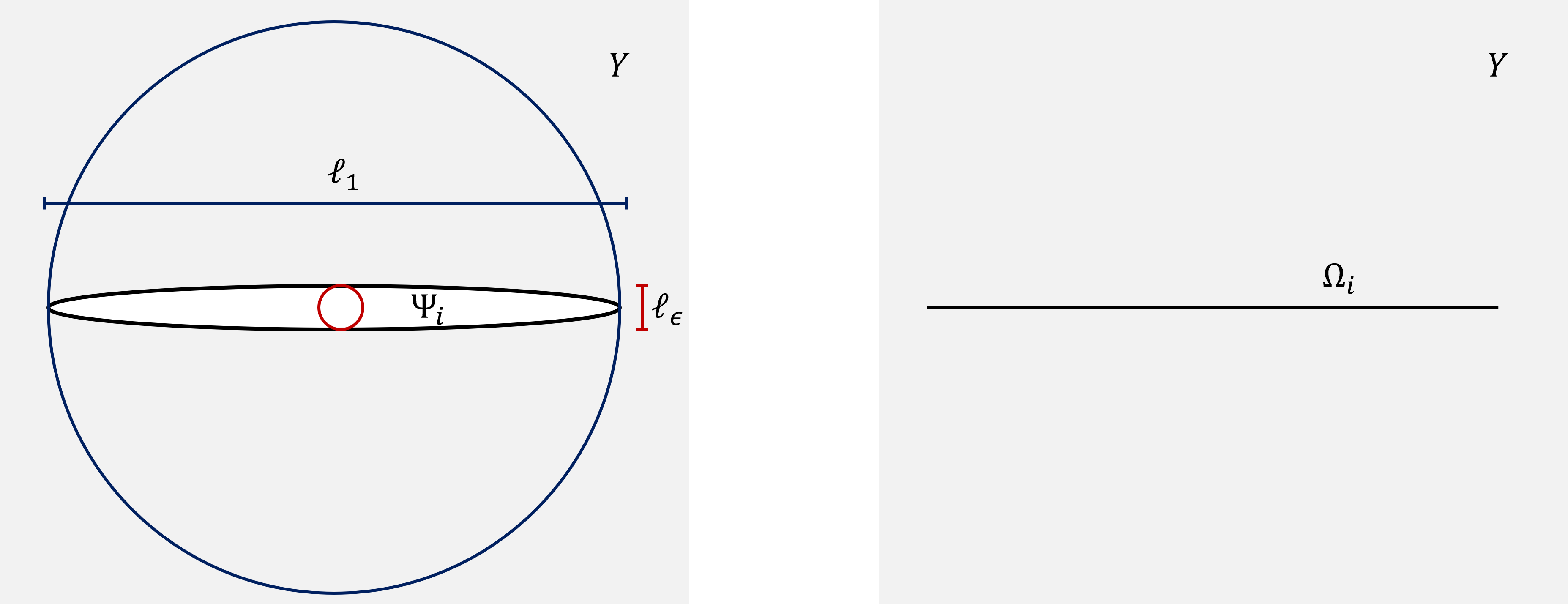}
\caption{Left: Illustration of a domain $Y$, with a high aspect-ratio inclusion $\Psi_i$, with maximal inscribed and minimum covering balls shown in red and blue, respectively. Right: Illustration of the same domain $Y$, where the high aspect-ratio inclusion is now modeled by a lower-dimensional manifold $\Omega_i$.}
\label{fig:illustration-of-domain-y}
\end{figure}

Our interest herein is in problems for which the geometry in consideration contains high-aspect-ratio inclusions \( \Psi_{i}\), indexed by $i$, that in some sense interferes with the continuum assumption. To be concrete (although the argument is more general), we consider thin fractures and their intersections. We characterize these high-aspect ratio inclusions by two length-scales, \(\mathcal{l}_{\epsilon }\) and \(\mathcal{l}_{1}\), corresponding to the diameter of the maximal inscribed and minimum covering ball, respectively (as illustrated for a single manifold in the left part of Figure \ref{fig:illustration-of-domain-y}). We are furthermore interested in the case where the small length scale violates the continuum assumption, i.e. where the following ordering holds: 
\begin{equation}
\label{eq:2.2}
\mathcal{l}_{\epsilon } \ll \mathcal{l}_{0} \ll \mathcal{l}_{1}.
\end{equation}
For such problems the standard fixed-dimensional continuum assumption cannot be applied, since the high-aspect feature (and variables within it) cannot be appropriately defined. Depending on the needs of the application at hand, we may nevertheless be inclined to consider \(\mathcal{l}_{0}\) as the appropriate modeling scale, in which case we have no other choice than to represent the thin inclusions as manifolds of lower dimension. We note that the intersection of such manifolds will have even lower dimension yet.  

The above provides the motivation for considering a \textit{mixed-dimensional continuum assumption}, wherein we are still interested in a domain \( Y\subset\mathbb{R}^{n}\), but where we allow this domain to contain a set of manifolds of topological dimension \( d<n\) (as illustrated for a single manifold in the right part of Figure \ref{fig:illustration-of-domain-y}). We formulate this extension of Definition \ref{def:2.1} as follows:  
\begin{definition}[Mixed-dimensional continuum assumption]\label{def:2.2}
Any inclusion \( \Psi_{i}\subset Y\) which satisfies \eqref{eq:2.2} can be well-represented by a \( d_{i}\)-dimensional manifold \( \Omega_{i}\). Moreover on the scale of consideration \(\mathcal{l}_{0}\),  then for any quantity of interest \(\mathcal{m}\), and a \( d_{i}\)-dimensional ball \( B_{x,\mathcal{l}_{0}}^{d_{i}}\subset \Omega_{i}\) centered on \(\mathbf{x}\in \Omega_{i}\), the integrals below are well-defined, and the approximation is sufficiently accurate for the applications of interest: 
\begin{equation}\label{eq:2.3}
\overline{\mathcal{m}}(x)\approx\frac{1}{\left\vert B_{x,\mathcal{l}_{0}}^{d_i}\right\vert }\int_{B_{x,\mathcal{l}_{0}}^{d_i}}^{}\int_{\Psi_{i}^{\perp }}^{}\mathcal{m }\mathrm{d}V_{\perp } \mathrm{d}V_{\parallel}.
\end{equation}
\end{definition}
Here we use the notation \( \Psi_{i}^{\perp }\) to indicate the cross-section of \( \Psi_{i}\) orthogonal to \( \Omega_{i}\), and we denote the measure of integration perpendicular and parallel to \( \Omega_{i}\) by \( \mathrm{d}V_{\perp }\) and \( \mathrm{d}V_{\parallel}\), respectively.

We remark that our definitions of continuum assumptions suffer from the usual weaknesses \cite{truesdell2004non}, in that they are poorly adapted to quantities near boundaries and for variables which have macroscopic discontinuities. These issues can be resolved by appealing to more technical definitions. However, as we will not deal with the issue of upscaling in the following, but merely use the result that continuum variables can be assumed to be sufficiently accurate for applications of interest, we will not elaborate these details further.

In the following, we assume that we are always within the setting of Definition \ref{def:2.2}, and proceed to make the notion of mixed-dimensional continuum variables precise, and apply the framework of continuum mechanics to derive the governing equations for the poroelastic response in fractured porous media. 

\subsection{Geometry}\label{sec:geometry}

To initialize our description of a mixed-dimensional problem, we first consider an admissible mixed-dimensional partitioning. The partitioning of Figure \ref{fig:illustration-of-domain-y} is, in a sense, too simple since it does not keep track of the boundaries between domains of different dimension. To achieve this, we herein introduce structured partitions of the domain along with the corresponding graph representations. These graphs first provide a canonical way to describe the connectivity between subdomains. Additionally, these definitions give the structure that allows us to define spaces of mixed-dimensional functions and the associated semi-discrete operators in Sections \ref{sec:mixed_dimensional} and \ref{sec:differential_operators}. For a detailed exposition of these concepts in the scalar setting, we refer to  \cite{boon2021functional}.

We will only consider problems embedded in a \( n\)-dimensional Cartesian ambient domain, and we are primarily concerned with the case \( n = 3\). Thus, let \( Y\subset\mathbb{R}^{n}\) be given, and let it be decomposed into non-overlapping, oriented  manifolds \( \Omega_{i}\) of topological dimension \( d_{i}\) such that \( Y = \cup_{i\in I}^{}\Omega_{i}\) with \( I\) the index set. In order to distinguish the domain and the partition, we will refer to the partition as \( \Omega\) without a subscript. 

We will not allow for arbitrary partitions, and therefore introduce a concept of admissible partitions. This requires some preliminaries. We first give each manifold \( \Omega_{i}\) some additional hierarchical structure  \cite{boon2021functional}. Each \( \Omega_{i}\) is \( C^{1}\)-diffeomorphic to a smooth \textit{reference domain} denoted by \( X_{i}\) and we denote the corresponding mapping by \( \phi_{0,i}:X_{i}\rightarrow \Omega_{i}\). We then endow each manifold with a directed acyclic graph, defined as follows.
\begin{definition}\label{def:2.3}
 A rooted Directed Acyclic Graph (DAG) \(\mathfrak{S}_{i}\) with \( i \in I\), is conforming to \(\Omega_{i}\)  if for all nodes \( j \in \mathfrak{S}_{i}\) :
\begin{itemize} \small
    \item There exists a root \( s_{j}\in I\) such that \( \phi_{0,j}(X_{j}) = \Omega_{s_{j}}\). Moreover, we assume \( s_{i} = i\) for each root \( i\) for convenience.

    \item For each descendant \( l \in I_j\), where \( I_{j}\) is the set containing the descendants of a node \( j\in\mathfrak{F}\), a differentiable map \( \phi_{j,l}:X_{l}\rightarrow\overline{X}_{j}\) exists with bounded derivative. We denote its range by \( \partial_{l}X_{j}\). Compound maps telescope in the sense that
\begin{equation*}
\phi_{k,l} = \phi_{k,j}\circ \phi_{j,l}
\end{equation*}
for each ancestor \( k\in\mathfrak{S}_{i}\) of \( j\).

    \item The descendants uniquely cover the parent node in the sense that
\begin{equation*}
\bigcup_{j\in \mathfrak{S}_{i}}^{}\phi_{i,j}(X_{j}) =\overline{X}_{i} \setminus \phi_{0,i}^{-1}(\partial Y\cap \partial \Omega_{i}).
\end{equation*}

In other words, each point \( x_{i}\) in reference domain \( X_{i}\) and on its boundary is uniquely associated to a node \( j\in\mathfrak{S}_{i}\) and a point \( x_{j}\in X_{j}\) such that \( x_{i} = \phi_{i,j}(x_{j})\). For \( x_{i}\) on the boundary \( \partial X_{i}\), we have \( j\) a descendant of \( i\) whereas for \( x_{i}\) in the interior of \( X_{i}\), we have \( j = i\). All points that are mapped to the \small physical boundary \( \partial Y\) by \( \phi_{0, i}\) are exempt from this rule. 
\end{itemize}
\end{definition}
From this we see that each \( \Omega_{i}\) is indeed a manifold, and furthermore we have access to a partition of its boundary through the DAG \(\mathfrak{S}_{i}\). This partitioning is illustrated in Fig. \ref{fig:illustration-of-reference-domain}.
\begin{figure}[!htbp]
\includegraphics[width=\textwidth]{./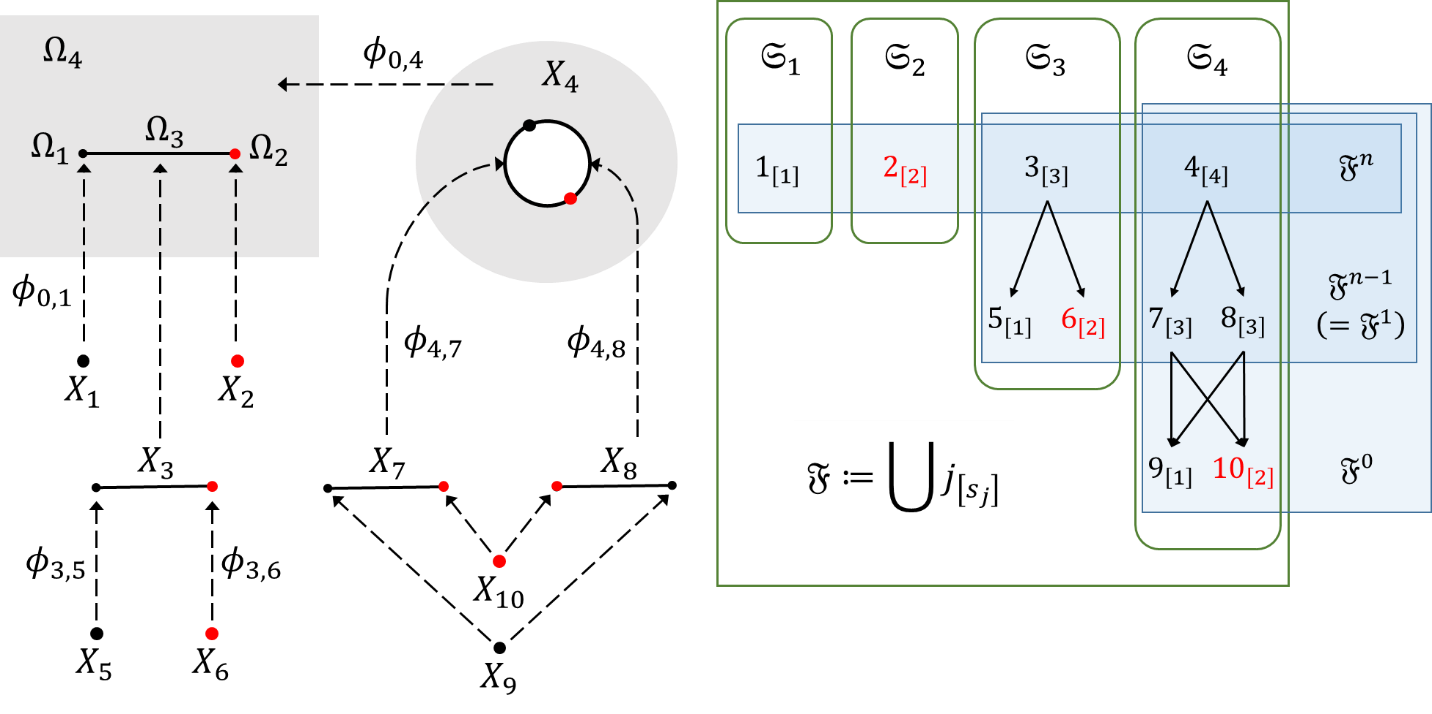}
\caption{Left: Illustration of reference domains and mappings to a physical domain with a single slit. The domains that map to \( \Omega_{2}\) and their respective images under the mappings \( \phi_{i,j}\) are highlighted in red. To comply with Definition \ref{def:2.4}, each \( X_{j}\) with equal \( s_{j}\) is considered equal. Right: The structure of the forest \(\mathfrak{F}\) and its component DAGs. For each node \( j\), the value of \( s_{j}\) denotes index of the domain with which \( \phi_{0,j}(X_{j})\) coincides in the physical domain. The \( k\)-forests \(\mathfrak{F}^{k}\), depicted in blue, are introduced in Section \ref{sec:differential_operators}.}
\label{fig:illustration-of-reference-domain}
\end{figure}

Based on the structure given in Definition \ref{def:2.3}, we can now provide a global structure to partitions \( \Omega\) of \( Y\) as follows  \cite{boon2021functional}.
\begin{definition}\label{def:2.4}
A forest \(\mathfrak{F } \coloneqq \bigcup_{i\in I}^{}\mathfrak{S}_{i}\) is \textit{conforming} to \( \Omega\) if the DAGs \(\mathfrak{S}_{i}\) are conforming to \( \Omega_{i}\) for all \( i\in I\) in the sense of Definition \ref{def:2.3}, and if for any \( j_{1},j_{2}\) such that \( s_{j_{1}} = s_{j_{1}}\), it holds that \( X_{j_{1}} = X_{j_{2}}\) and \( \phi_{0,j_{1}} = \phi_{0,j_{2}}\).
\end{definition}
Our main concern is partitions with conforming forests, and for clarity, we encode this in the following definition: 
\begin{definition}\label{def:2.5}
A partition \( \Omega\) of \( Y\) is \textit{admissible} if a conforming forest \(\mathfrak{F}\) exists. For any admissible partition, we denote the product space of reference domains as \(\mathfrak{X}_{i}\coloneqq\prod_{j\in\mathfrak{S}_{i}}^{}X_{j}\) and \(\mathfrak{X } \coloneqq \prod_{i\in I}^{}\mathfrak{X}_{i}\). 
\end{definition}
Definition \ref{def:2.5} allows for a rather large generality of domains, including curved and self-intersecting domains, and multiple examples are provided in the cited reference  \cite{boon2021functional}. In the present context, a relevant illustration for the case of a single fracture is provided below. 
\begin{example}
In the case of a single fracture, as was discussed in Figure \ref{fig:illustration-of-domain-y}, it creates a geometry as illustrated in Figure \ref{fig:illustration-of-reference-domain}. Note that in this example, only the domain \( \Omega_{4}\) has a non-contractible reference domain \( X_{4}\) associated with it, which comes in part from the fact that it has two boundaries (from ``the top" and from ``the bottom") neighboring the fracture \( \Omega_{3}\).
\end{example}
\begin{remark}
This structure naturally allows for lower-dimensional domains to terminate at, or even exist entirely on, the global boundary \( \partial Y\). In turn, boundary conditions can naturally be inherited on the fractures through the mappings \( \phi_{0,j}\). 
\end{remark}
As a convention for indexing, we will use as above \( i\in I\) for the roots of the DAGs, \( j\in\mathfrak{S}_{i}\) for the components of DAG \( i\), and finally we will write \( j\in\mathfrak{F}\) for the components of the full forest. We moreover define the following index set
\begin{equation}
I^{d} \coloneqq \left\{ i\in I \mid   d_{i} = d\right\},
\end{equation}
Additionally, let \( I_{j}^{d} \coloneqq \left\{ l\in I_{j} \mid   d_{j} = d\right\} .\) We will moreover use \( I^{d<n}\) to denote the set \(\left\{ i\in I\mid  d_{i}<n\right\}\).

For any domain \( X_{i}\) with \( \Omega_{i} = \phi_{0,i}(X_{i})\), we denote by \(\mathbf{F}_{i} \coloneqq D\phi_{0,i}\) the Frechet derivative of the \( C^{1}\) mapping \( \phi_{0,i}\), defined as the linear operator such that for any vector \( v\in\mathbb{R}^{d_{i}}\) and any point \( x\in X_{i}\) then
\begin{equation}\label{eq:2.5}
\mathbf{F}_{i}(x)v = \lim_{\epsilon \rightarrow 0}\frac{\phi_{0,i}(x+\epsilon v)-\phi_{0,i}(x)}{\epsilon }.
\end{equation}
Note that in terms of vector-matrix notation, which we conform to herein, we represent \(\mathbf{F}\) by a matrix whose rows correspond to gradients of the components of \( \phi_{0, i}\).

We will need appropriate extensions of the mappings \( \phi_{0, i}\), so that we can transform vectors in \(\mathbb{R}^{n}\). For root nodes \(i\in I\), i.e. the manifolds of dimension \( d_i \), we define these in the following way:
\begin{definition}\label{def:2.6}
Let \( i\in I\). Let \(\hat{X}_{i}\) be an open domain such that \( \dim(\hat{X}_{i}) = n\) and  \(\overline{X}_{i}\times\left\{ 0\right\}^{n-d_i} \subset\hat{X}_{i}\). Then the \textit{extended mapping} \(\hat{\phi }_{0,i} :\hat{X}_{i}\rightarrow\hat{\Omega }_{i}\subset Y\) is defined such that 
\begin{itemize} \small
    \item \(\hat{\phi }_{0,i} = \phi_{0,i}\) in \( \overline{X}_{i}\times\left\{ 0\right\}^{n-d_i}\).

    \item Orthogonality with respect to \( X_{i}\) and \( \Omega_{i}\) is preserved, i.e. the standard basis vector(s) \(\mathbf{e}_{d}\) for \( d>d_{i}\) is/are mapped to \( D\hat{\phi }_{0,i}\mathbf{e}_{n}\perp T\Omega_{i}\) with \( T\Omega_{i}\) the tangent bundle of \( \Omega_{i}\). 

    \item \(\hat{\phi }_{0,i}\) has a fixed scaling \(\mathcal{l}_{i}\) with respect to the orthogonal complement of \( X_{i}\), i.e. it holds that \( \mathrm{vol}(D\hat{\phi }_{0,i}) =\mathcal{l}_{i}^{n-d_i}\mathrm{vol}(D\phi_{0,i})\) in \( \overline{X}_{i}\times\left\{ 0\right\}^{n-d_i}\).

\end{itemize}
\end{definition}
We recall that the \( d_{i}\) dimensional volume spanned by the derivative of a map \( D\phi_{0,i}\) is given by 
\begin{equation} \label{eq: definition vol}
\mathrm{vol}(D\phi_{0,i}) \coloneqq \sqrt{\det((D\phi_{0,i})^{T}D\phi_{0,i})}.
\end{equation}
The definition of the extended mappings implies that for \(i\in I^n\), then simply \(\hat{\phi }_{0,i} = \phi_{0,i}\).

In Definition \ref{def:2.6}, the extension is given a length-scale designated by a parameter \(\mathcal{l}_{i}\sim\mathcal{l}_{\epsilon }\). This is an important point with respect to modeling, because this implies that we choose a conceptually arbitrary length-scale transversely to the fractures. Indeed, this is a necessary consequence of the mixed-dimensional continuum assumption: Since the transverse opening of a fracture is negligible (relative to the metric of the problem), if we are to nevertheless measure this opening, it must be measured in a different metric. This situation is analogous to multi-scale expansions encountered in homogenization, where the model depends in a non-negligible way on a (fine-scale) coordinate, which has negligible extent relative to the main (coarse-scale) coordinate of the problem (see e.g.  \cite{hornung1996homogenization}). 

For the leaves of the DAGs, corresponding to e.g. boundaries of the solid matrix and the tips of the fractures, we have more freedom to choose an extended mapping, resulting in an arbitrary extended coordinate system around the boundaries of domain. We specify the definition of the extended mappings on the boundaries of domains as follows:
\begin{definition}\label{def:2.7}
Let \( i\in I\) and let \( j\in I_{i}\) be a descendant. Let \(\hat{X}_{j}\subset\mathbb{R}^{n}\) with \(\overline{X}_{j}\times\left\{ 0\right\}^{n-d_{j}}\subset\hat{X}_{j}\). Then an \textit{extended mapping }\(\hat{\phi }_{i,j} :\hat{X}_{j}\rightarrow\hat{X }_i \) satisfies
\begin{itemize} \small
    \item \(\hat{\phi }_{i,j} = \phi_{i,j}\) in \(\overline{X}_{j}\times\left\{ 0\right\}^{n-d_{j}}\).

    \item Orthogonality with respect to \( X_{j}\) and \( X_i\) is preserved, i.e. the standard basis vector(s) \(\mathbf{e}_{d}\) for \( d>d_{i}\) is/are mapped to \( D\hat{\phi }_{i,j}\mathbf{e}_{d}\perp TX_i\).

    \item \(\hat{\phi }_{i,j}\) has a fixed scaling with respect to orthogonal complement of \( X_{j}\), i.e. it holds that \( \mathrm{vol}(D\hat{\phi }_{i,j}) =  \mathrm{vol}(D\phi_{i,j})(x)\) in \( \overline{X}_{j}\times\left\{ 0\right\}^{n-d_{j}}\).
\end{itemize}
The extended mapping to the physical domain is given by composition, \(\hat{\phi }_{0,j} = \hat{\phi }_{0,i} \circ \hat{\phi }_{i,j}\).
\end{definition}
The above definition does not uniquely specify an extended mapping whenever \(d_i-d_j \geq 2\), however the precise choice of extensions has no impact on the following derivations, and we therefore omit a further specification.

For mechanics, we will be interested in deformation. Thus, we will allow for the domain \( Y\), the partition \( \Omega_{i}\), and the mappings \( \phi_{0,i}\) to be time-dependent. However, we will not allow for structure of the partition to change, i.e. the DAGs \(\mathfrak{S}_{i}\), the topological dimensions \( d_{i}\) and the identification of root nodes \( s_{i}\) are not variable and neither are the mappings \( \phi_{i,j}\) for \( j\in I_{i}\). Nor do we allow for any macroscopic opening of the fractures, that is to say, we a priori assume that the dynamics stay within the range of validity of the mixed-dimensional continuum assumption. Nevertheless, we allow for sliding of the fractures, as well as fracture opening on the scale of \(\mathcal{l}_{\epsilon }\). In order to capture this, we extend the definition of coordinate mappings:
\begin{definition}\label{def:2.8}
For \( j, k\in\mathfrak{F}\) with \( s_{j} = s_{k}\), we define the coordinate mapping
\begin{equation*}
\phi_{j, k} \coloneqq \phi_{0,j}^{-1}\circ \pi_{j,k} \circ \phi_{0,k},
\end{equation*}
with \( \pi_{j, k}(x_{k})\) identifying the point on \( \Omega_{j}\) closest to \( x_{k}\), i.e. 
\begin{equation*}
\pi_{j, k}(x_{k}) \coloneqq \text{argmin}_{x_{j}\in \Omega_{j}}\left\vert x_{k}-x_{j}\right\vert.
\end{equation*}
\end{definition}

Note that if \( j, k\) are members of the same conforming DAG, then \( \pi_{j,k}\) is the identity operator. In turn, Definition \ref{def:2.8} generalizes the definition of \( \phi_{j,k}\) to nodes of different DAGs. Moreover, the mixed-dimensional continuum assumption assures that the expression $\left\vert x_{k}-x_{j}\right\vert = \mathcal{O}( \ell_\epsilon)$, and as such we infer that $\pi_{j, k}(x_{k})$ should be uniquely defined in our context. We will interpret opening of fractures that are sufficiently large to lead to non-uniqueness of $\pi_{j, k}(x_{k})$ as outside the range of validity of the modeling scales.

\begin{remark}
\label{remark:2.2}
We recall that since the mappings \( \phi_{0,i}\) are time-dependent, the coordinate map \( \pi_{j, k}\) will also be. Thus mappings \( \phi_{j,k}\), when \( j\) and \( k\) belong to different DAGs \(\mathfrak{S}_{i_{1}}\) and \(\mathfrak{S}_{i_{2}}\), will also in general be time-dependent. 
\end{remark}
Concluding this section, we provide an overview of the most important definitions related to mixed-dimensional geometries.
\begin{table}[!htbp]
    \centering
    \caption{Summary of geometrical concepts}
    \label{tab:summary_of_geometrical_concepts}
    \begin{tabular}{|c|l|}
    \hline
         \(I\) & Index set of roots, each root corresponds to a subdomain \(\Omega_i \subseteq Y\).  \\
        \(I^{d}\) & Subset of roots \(i \in I\) with dimension \(d_i = d\).\\
        \(\mathfrak{S}_i\) & For each \(i \in I\), a Directed Acyclic Graphs (DAG) keeps track of its boundaries.\\
        \(\mathfrak{F}\) & The forest is the collection of all DAGs.\\
        \(X_j\) & A smooth reference domain corresponding to node j.\\
        \(\phi_{i,j}\) & Coordinate map from \(X_{j}\) to \(X_{i}\). \(i = 0\) indicates the mapping to physical space.\\
        \hline
    \end{tabular}
\end{table}

\subsection{Mixed-dimensional function spaces}\label{sec:mixed_dimensional}
As a basis for mixed-dimensional modeling of the poromechanical system, we start by defining the appropriate variables on the geometry from Section \ref{sec:geometry}. A critical aspect of our model is that the variables will be defined on reference domains whereas the equations describing the physical model relate to the physical domain. It is therefore important to correctly transform functions between these domains. 

We approach these transformations systematically by using concepts from the field of exterior calculus, in particular the equivalent representation of functions as differential forms. The \textit{pullback operator} then provides the appropriate transformation mappings and, additionally, we obtain a canonical definition for trace operators. However, in order to make this presentation accessible to a broader audience, we only briefly exploit the calculus of differential forms, and then translate the definitions in terms of the representation by ``standard" functions. We refer the interested reader to  \cite{boon2021functional} for more details on the mixed-dimensional exterior calculus framework. Readers not familiar with exterior calculus are encouraged to skip ahead to Examples \ref{eg:2.3} and \ref{eg:2.3b} and use these as a guide to the exposition.

We start with the following key building block, which is illustrated in the right part of Figure \ref{fig:illustration-of-reference-domain}:
\begin{definition}\label{def:2.9}
The \( k\)\textit{-forest} \(\mathfrak{F}^{k}\mathfrak{\subseteq F}\) is defined for \( 0\leq k\leq n\) as the subgraph induced by the nodes \\
\begin{equation*}
\bigcup_{
\substack{
i\in I \\ 
 d_{i}\geq n-k \\ }
}\left\{  j\in\mathfrak{S}_{i} \mid  d_{i}-d_{j}\leq n-k\right\} .
\end{equation*}
\end{definition}
As is apparent here, we keep track of the codimension between \( X_{j}\) and \( X_{i}\) and the difference between \( n\) and \( k.\) Later in this subsection, these determine on which boundary segments of given codimension we define function traces. For ease of reference, we summarize the integer values that play important roles in this section in the following table.
\begin{table}[!htbp]
    \centering
    \caption{Summary of integers describing mixed-dimensional functions}
    \label{tab:summary_of_integers_describing_multi_dimentional_functions}
    \begin{tabular}{|c|p{8cm}|}
        \hline
         \(n\)& Dimension of the physical domain \(Y\). \\
         \(d_j\)& Dimension of subdomain \(X_j\).\\
         \(k\) & Order of the mixed-dimensional differential form.\\
         \(k_j\) & Local order of the differential form on \(X_{j}\).\newline This depends on \(k\) and the codimension between \(X_j\) and its root.\\
         \(p\) & Integer to distinguish between scalar and vector-valued forms. Here in, we are interested in \(p = 1\) for the flow equations and \(p = n\) for elasticity.\\
    \hline
    \end{tabular}
\end{table}

We continue with our brief exposition of mixed-dimensional differential forms (a full account is given in  \cite{boon2021functional}). The following five definitions suffice for the purposes of this work (confer e.g.  \cite{spivak1965} for a concise introduction to fixed-dimensional differential forms).
\begin{definition}\label{sec:2.10}
For any \( 0\leq k\leq n\), let the mixed-dimensional reference domain \(\mathfrak{X}^{k}\) be denoted \(\mathfrak{X}^{k} \coloneqq \coprod_{j\in\mathfrak{F}^{k}}^{}X_{j}\) with \( \coprod\) denoting the disjoint union. Each \(\mathfrak{X}^{k}\) is thus a collection of subdomains that corresponds to a \( k\)-forest \(\mathfrak{F}^{k}\), exemplified in Fig. \ref{fig:illustration-of-reference-domain}.
\end{definition}
We continue by defining the linear forms that are continuous on each \( X_{j}\subseteq\mathfrak{X}^{k}\).
\begin{definition}\label{sec:2.11}
For any \( 0\leq k\leq n\), let the mixed-dimensional \textit{locally continuous} \( k\)-forms with values in \(\mathbb{R}^{p}\), for \(p\in \{1,n\}\), be denoted \(\tilde{C}\mathfrak{L}^{k}(\mathfrak{X}^{k},\mathbb{R}^{p})\), and defined as a product space of alternating differential \( k_{j} \)-linear forms  \\
\begin{equation*}
\tilde{C}\mathfrak{L}^{k}(\mathfrak{X}^{k},\mathbb{R}^{p}) \coloneqq \prod_{j\in\mathfrak{F}^{k}}^{}C^{1}\Lambda^{k_{j}}(X_{j},\mathbb{R}^{p}),
\end{equation*}
where the local order is given by \( k_{j} = d_{i}-(n-k)\) for \( j\in\mathfrak{S}_{i}\), and where \( C^{1}\Lambda^{k_{j}}(X_{j},\mathbb{R}^{p})\) are bounded \( C^{1}\)-continuous forms on \( X_{j}\). 
\end{definition}
We refer to \(\mathfrak{a}\in \tilde{C}\mathfrak{L}^{k}(\mathfrak{X}^{k},\mathbb{R}^{p})\) as a \textit{mixed-dimensional differential form }since it contains elements defined on manifolds of different dimensionalities. An element of such a function space will be denoted using the Gothic font. To extract a local form on, say, \( X_{j}\) from \(\mathfrak{a}\), we will use the notation \( \iota_{j}\mathfrak{a}\in C^{1}\Lambda^{k_{j}}(X_{j},\mathbb{R}^{p})\). This allows us to generalize the normal algebraic operators by insisting that they commute with \(\iota\), thus if also \(\mathfrak{b}\in \tilde{C}\mathfrak{L}^{k}(\mathfrak{X}^{k},\mathbb{R}^{p})\), then \(\iota_j(\mathfrak{a}+\mathfrak{b})= \iota_j(\mathfrak{a})+\iota_j(\mathfrak{b})\), and similarly for subtraction, multiplication and division.

\begin{example}
\label{eg:2.2}
We recall that a differential \( k\)-linear form \( a\in C^{1}\Lambda^{k}(X_{j}, \mathbb{R})\), takes as argument \( k\) vectors from the tangent space \( TX_{j} =\mathbb{R}^{d_{j}}\). Thus \( a(x) :(\mathbb{R}^{d_{j}})^{k_{i}}\mathbb{\rightarrow R}\). Moreover, the skew-symmetric (alternating) properties of \( C^{1}\Lambda^{k}(X_{j}, \mathbb{R})\) ensure that permutations of vectors alternate signs, e.g. for \( v_{1},v_{2}\in TX_{j}\), and \( k = 2\) then \( a(x)(v_{1},v_{2}) = -a(x)(v_{2},v_{1})\).  
\end{example}
Coordinate transformations are naturally handled in the context of exterior calculus through the pullback operator, defined next.
\begin{definition}\label{def:2.10c}
For any differentiable mapping \( \phi  : X\rightarrow \phi(X)\), the pullback \( \phi^{\ast }\) of the differential form \( a\in C^{1}\Lambda^{k}(\phi(X),\mathbb{R}^{p})\) is the unique operator such that \( \phi^{\ast }a\) satisfies for \( v_{1},\ldots ,v_{k}\in TX\) \\ \begin{equation*}
(\phi^{\ast }a)(v_{1},\ldots ,v_{k}) = a((D\phi) v_{1},\ldots ,(D\phi) v_{k})
\end{equation*}
\end{definition}
Additionally, we have the trace operator that maps continuous differential forms to forms defined on the boundary.
\begin{definition}\label{def:2.10d}
For the boundary \( \partial X\), the trace of the differential form \( a\in C^{1}\Lambda^{k}(X, \mathbb{R}^{p})\) is denoted \( Tr_{\partial X} a\in C^{1}\Lambda^{k}(\partial X,\mathbb{R}^{p})\) and is defined as the restriction of \( a\) to the manifold \( \partial X\). 
\end{definition}
We combine the locally continuous differential forms in order to establish a notion of globally continuous forms by exploiting the pullback and trace operators.
\begin{definition}\label{def:2.11}
For any \( 0\leq k\leq n\), let the mixed-dimensional \textit{continuous} \( k\)-forms be denoted \( C\mathfrak{L}^{k} (\mathfrak{X}^{k},\mathbb{R}^{p})\), and defined as 
\begin{align*}
C\mathfrak{L}^{k}(\mathfrak{X}^{k},\mathbb{R}^{p}) \coloneqq &\left\{ \mathfrak{a}\in \tilde{C}\mathfrak{L}^{k}(\mathfrak{X}^{k},\mathbb{R}^{p}) \mid    \phi_{i,j}^{\ast }\mathrm{Tr}_{\partial_{j}X_{i}}\iota_{i}\mathfrak{a} = \varepsilon_{i,j}\iota_{j}\mathfrak{a},\right. \\
&\left. \forall i,j\in\mathfrak{F}^{k} \text{ with } i\in I \text{ and } j\in I_{i}\right\}.
\end{align*}

Here, \( \varepsilon\) is the orientation indicator that takes the value \( \varepsilon_{i,j} = 1\) if \( \partial_{j}X_{i}\) and \( \phi_{i,j}(X_{j})\) have the same orientation, and \( \varepsilon_{i,j} = -1\) otherwise.
\end{definition}
\begin{remark}
\label{remark:2.3}
The orientation \( \varepsilon_{i,j}\) can directly be calculated by verifying whether the composition \(\hat{\phi }_{j}^{-1}\hat{\phi }_{i}\) of extended coordinate maps preserves orientation in \(\mathbb{R}^{n}\).
\end{remark}
An important detail is that the space \( C\mathfrak{L}^{k}(\mathfrak{X}^{k},\mathbb{R}^{p})\) is not globally continuous, as the continuity is only imposed within each DAG \(\mathfrak{S}_{i}\). Pre-empting later developments, we note that e.g. deformations in \( C\mathfrak{L}^{0}(\mathfrak{X}^{0},\mathbb{R}^{n})\) are therefore allowed to be discontinuous across fractures in physical space. 

The above definitions of mixed-dimensional differential forms, as well as their pullback and trace operators, allow us now to consider representations as mixed-dimensional functions as used in the remainder of this paper. We first start by identifying the standard representation of differential forms in terms of classical functions from multivariate calculus. This discussion will exclusively consider the case of \( n = 3\).  Similar representations are used for lower dimensions (see e.g.  \cite{arnold2018finite}).
\begin{definition}\label{def:2.9h}
At any given point \( x\in X_{j}\), the space of differential forms \( C^{1}\Lambda^{k}(X_{j},\mathbb{R}^{p})\) has \( p \binom{d_{j}}{k}\) degrees of freedom. The \textit{standard representation} \(\grave{a}=\mathbbm{r}\acute{a}\) of a differential form \(\acute{a}\in C^{1}\Lambda^{k}(X_{j},\mathbb{R}^{p})\) is given for \( p = 1\) with respect to the standard basis for \(\mathbb{R}^{n}\) as follows (when we need to distinguish between the form and its representation, we denote the form by an \textit{accent aigu}, and the representation by an \textit{accent grave}): 
\begin{enumerate}  \small
    \item \label{def: 2.9.1}
    For \( k = 0\), the differential forms coincide with functions \(\grave{a}\in C^{1}(X_{j})\), thus \(\grave{a} =\acute{a}\). 

    \item \label{def: 2.9.2}
    For \( k = 1\), the differential one-forms \( \Lambda^{k}(X_{j})\) are represented by vector functions \(\grave{a}\in C^{1}(X_{j},TX_{j})\). This representation is the Riesz representation, which satisfies for vector fields \( v_{1}\in TX_{j}\) that \(\acute{a}(v_{1}) = v_{1} \cdot \grave{a}\). 

    \item \label{def: 2.9.3}
    For \( k = n-1\), the differential forms \( \Lambda^{k}(X_{j})\) are represented by ``flux" functions \(\grave{a}(x)\in C^{1}(X_{j},TX_{j})\). This representation satisfies for vector fields \( v_{1},v_{2}\in TX_{j}\) that \(\acute{a}(v_{1},v_{2}) = \mathrm{vol}(\grave{a},v_{1},v_{2})\), where \( \mathrm{vol}\) is the volume of the parallelopiped spanned by its arguments. 
    \item \label{def: 2.9.4}
      For \(k=n\), the differential forms \(a \in \Lambda^{k} (X_j)\) coincide with “density” functions \(\grave{a}(x) \in C^1 (X_j )\). This representation satisfies for vector fields \(v_1,v_2,v_3 \in TX_j\) \text{that} \(\acute{a}(v_1, v_2, v_3) = \grave{a} \mathrm{vol} (v_1, v_2, v_3)\)
\end{enumerate}

Since both pullback and trace act differently depending on the order of the form, we use the vernacular ``flux" to distinguish representations of \( n-1\) forms from representations of \( 1\)-forms, and similarly ``density" to distinguish representations of \( n\)-forms from representations of \( 0\)-forms. 
\end{definition}
\begin{remark}
\label{remark:2.4}
From Definition \ref{def:2.9h}, it is apparent that for \( n\leq 2\) (and importantly for our context, domains \( X_{j}\) for \( d_{j}\leq 2\)), the choice of representation is not unique, since e.g. the possibility \( 1 = k = d_{j}-1\) exists. This is a classical observation, and is resolved in the current context by the following conventions: 1) For \( p = n\), representations as (vector) functions, i.e. \ref{def: 2.9.1}. and \ref{def: 2.9.2}. in Definition \ref{def:2.9h}, are preferred over \ref{def: 2.9.3}. and \ref{def: 2.9.4}. When needed, we emphasize this representation by the subscript \(\mathbbm{r}_{-}\). 2) For \( p = 1\) representations as fluxes and densities, i.e. \ref{def: 2.9.3}. and \ref{def: 2.9.4}. in Definition \ref{def:2.9h}, are preferred over \ref{def: 2.9.1}. and \ref{def: 2.9.2}. When needed, we emphasize this representation by the subscript \(\mathbbm{r}_{+}\).
\end{remark}

Once a choice of representations has been established, we now have a one-to-one correspondence between mixed-dimensional differential forms and their function counterparts, and the inverse representation \(\mathbbm{r}^{-1}\) is thus well-defined. The definitions of mixed-dimensional function spaces are now implied by the previous developments. 
\begin{definition}\label{def:2.16}
For any mixed-dimensional form \(\acute{\mathfrak{a}}\in C\mathfrak{L}^{k}(\mathfrak{X}^{k},\mathbb{R}^{p})\) we denote its standard mixed-dimensional representation as \(\grave{\mathfrak{a}} =\mathbbm{r}_{\pm }\acute{\mathfrak{a}}\in C(\mathfrak{X}^{k},\mathbb{R}^{p})\), iff \(\mathbbm{r}_{\pm }\iota_{i}\acute{\mathfrak{a}} = \iota_{i}\grave{\mathfrak{a}}\) for all \( i\in I\). For \( p = n\) the spaces of \textit{continuous mixed-dimensional functions} on \(\mathfrak{X}^{k}\) that are relevant for this paper are given for \( k = 0, 1\) by the choice:
\begin{equation*}
C(\mathfrak{X}^{k},\mathbb{R}^{n}) \coloneqq \mathbbm{r}_{-}C\mathfrak{L}^{k}(\mathfrak{X}^{k},\mathbb{R}^{n}).
\end{equation*}
These are referred to as mixed-dimensional vector functions (\( k = 0\)) and matrix functions (\( k = 1\)).

For \( p = 1\) the spaces of continuous\textit{ }mixed-dimensional functions on \(\mathfrak{X}^{k}\) that are relevant for this paper are given for \( k = n-1,n\) by the choice:
\begin{equation*}
C(\mathfrak{X}^{k}, \mathbb{R}) \coloneqq \mathbbm{r}_{+}C\mathfrak{L}^{k}(\mathfrak{X}^{k}, \mathbb{R}).
\end{equation*}
We refer to the latter spaces as mixed dimensional fluxes (\( k = n-1\)) and densities (\( k = n\)). 
\end{definition}
\begin{definition}
\label{def:2.17}
The space of \textit{continuous mixed-dimensional functions with vanishing trace} is given by
\begin{equation*}
\mathring{C}(\mathfrak{X}^{k},\mathbb{R}^{p}) \coloneqq \{\grave{\mathfrak{a}}\in C(\mathfrak{X}^{k},\mathbb{R}^{p})\mid  \mathrm{Tr}_{\partial_{Y}X_{j}}\iota_{j}\acute{\mathfrak{a}} = 0, \forall j\in\mathfrak{F}^{k}\},
\end{equation*}
with \( \partial_{Y} X_{j} \coloneqq \phi_{0,j}^{-1}(\partial Y\cap \partial \Omega_{j}).\)

\end{definition}
On \( C(\mathfrak{X}^{k},\mathbb{R}^{p})\), we introduce a component-wise inner product as follows (we use angled brackets to denote inner products, and reserve parenthesis for tuples):
\begin{align}
\left\langle\mathfrak{a, b}\right\rangle_{\mathfrak{X}^{k}} &\coloneqq \sum_{j\in\mathfrak{F}^{k}}^{}\left\langle \iota_{j}\mathfrak{a,}\iota_{j}\mathfrak{b}\right\rangle_{X_{j}}, &
\forall \mathfrak{a, b} &\in C(\mathfrak{X}^{k},\mathbb{R}^{p}).
\end{align}
This naturally induces an \( L^{2}\)-norm 
\begin{align}
\Vert\mathfrak{a}\Vert_{\mathfrak{X}^{k}} &\coloneqq \sqrt{\left\langle\mathfrak{a, b}\right\rangle_{\mathfrak{X}^{k}}}, &
\forall\mathfrak{a} &\in C(\mathfrak{X}^{k},\mathbb{R}^{p}).
\end{align}
The space of \( L^{2}\) integrable functions can now be defined as the closure of the continuous functions with respect to this norm  \cite{boon2021functional}:
\begin{definition}
\label{def:2.18}
For \( 0\leq k\leq n\) and \( p \in \{1, n\}\), let the space of mixed-dimensional\textit{ square integrable functions} on \(\mathfrak{X}^{k}\) be defined as
\begin{equation*}
L^{2}(\mathfrak{X}^{k},\mathbb{R}^{p}) \coloneqq \overline{C(\mathfrak{X}^{k},\mathbb{R}^{p})}.
\end{equation*}

\end{definition}

We emphasize that this definition is a direct result of the representation of differential in terms of conventional function spaces. As is clear from the definition (and motivated by Definition \ref{def:2.9h}), even when the space is generated with \( p = 1\), some function components may be vector-valued, as we will see in the examples below.
\begin{example}
\label{eg:2.3}
The following two cases exemplify the spaces for \( p = 1\) that are relevant to our model.
\begin{itemize}  \small
    \item Let \( k = n\), then \(\mathfrak{F}^{n}\) is given by the roots \( I\). The number of degrees of freedom \( p \binom{d_{j}}{k} = \binom{n}{n} = 1\) in this case, so we have \( L^{2}(\mathfrak{X}^{n}, \mathbb{R}) = \prod_{i\in I}^{}L^{2}(X_{i}, \mathbb{R})\). This is the space in which we will define scalar density functions such as fluid pressures. In Figure \ref{fig:illustration-of-reference-domain}, these roots have indices \( i\) with \( 1\leq i\leq 4\).

    \item Let \( k = n-1\) and \(\mathfrak{a}\in L^{2}(\mathfrak{X}^{n-1}, \mathbb{R})\). Then for each root \( i\in I\) with \( d_{i}\geq 1\), \( \iota_{i}\mathfrak{a}\) is given by a vector function in \( L^{2}(X_{i},\mathbb{R}^{d_{i}})\). Moreover, for each \( j\in I_{i}^{d_{i}-1}\), we have that \( \iota_{j}\mathfrak{a}\in L^{2}(X_{j}, \mathbb{R})\), i.e. a scalar distribution on each boundary that models an interface between manifolds of codimension one. This will form our space in which we define the fluid flux, both internal to each subdomain and across interfaces. In Figure \ref{fig:illustration-of-reference-domain}, these functions are then vectors on the root nodes \( i = \{ 3,4\}\) and scalars on the nodes \( j\) with \( 5\leq j\leq 8\). Note that for \( \Omega_{3}\), there are three nodes \( j\in\left\{ 3,7,8\right\}\) with \( s_{j} = 3\). This allows us to separate the tangential flux inside the fracture (\( j = 3\)) and the normal flux entering from the two sides (\( j \in\left\{ 7,8\right\}\)).
\end{itemize}
\end{example}
\begin{example}
\label{eg:2.3b}
Similarly, we give examples of the two relevant spaces for \( p = n\). 
\begin{itemize}  \small
    \item Let \( k = 0\). We have that \(\mathfrak{F}^{0}\) consists of the roots \( i\in I^{n}\) and all their descendants. Thus, for \(\mathfrak{a}\in L^{2}(\mathfrak{X}^{0},\mathbb{R}^{n})\) and \( i\in I^{n}\), we have that \( \iota_{j}\mathfrak{a}\in L^{2}(X_{j},\mathbb{R}^{n})\) for all \( j\in\mathfrak{S}_{i}\). We will use this space to model the displacement of the solid. In Figure \ref{fig:illustration-of-reference-domain}, these functions are defined on the root with index \( 4\) and the descendant nodes \( j\) with \( 7\leq j\leq 10\).

    \item Let \( k = 1\) and \( n = 3\). The 1-forest \(\mathfrak{F}^{1}\) then contains all roots \( i\in I^{2}\cup I^{3}\) and their descendants \( j\) with \( d_{j}\geq d_{i}-2\). For the roots \( i\in I^{3}\), we have that \( \iota_{i}\mathfrak{a}\in L^{2}(X_{i},\mathbb{R}^{3\times d_{i}})\) and for \( j\in I_{i}^{1}\cup I_{i}^{2}\), it follows that \( \iota_{j}\mathfrak{a}\in L^{2}(X_{j},\mathbb{R}^{3\times d_{j}})\). On the other hand, for \( i\in I^{2}\), we obtain \( \iota_{i}\mathfrak{a}\in L^{2}(X_{i},\mathbb{R}^{3})\) and \( \iota_{j}\mathfrak{a}\in L^{2}(X_{j},\mathbb{R}^{3})\) for \( j\in I_{i}^{0}\cup I_{i}^{1}\). This space will be used to model displacement gradients, allowing us to define stresses and strains in the bulk matrix and its boundaries, as well as across  fractures. In Figure \ref{fig:illustration-of-reference-domain}, these functions are defined on the same domains as the flux functions discussed in Example \ref{eg:2.3}. However, for \( n = 3\), the flux and stresses will have different domains of definition. 

\end{itemize}
\end{example}
The final spaces required are those that are defined on all reference domains, which becomes particularly useful when we consider volumetric strains in Section \ref{sec:mixed_dimensional_linearized} and \ref{sec:mixedd_dimensional_volume}. These are defined for \(p\in \{1,n\}\) as e.g.:
\begin{equation}\label{eq:2.9}
L^{2}(\mathfrak{X}, \mathbb{R}^p) \coloneqq \prod_{j\in\mathfrak{F}}^{}L^{2}(X_{j}, \mathbb{R}^p).
\end{equation}
Analogous definitions extend to \(L^{\infty}(\mathfrak{X}, \mathbb{R}^p)\) and \(C^m(\mathfrak{X}, \mathbb{R}^p)\). 

In summary, the mixed-dimensional function spaces are defined using their equivalent representations as differential forms of order \( k\). This gives us access to pullback and trace operators, as is illustrated in Figure \ref{fig:pullback-of-function}. In turn, function spaces in the physical domain are defined in the next subsection such that their pullback onto reference domains \(\mathfrak{X}^{k}\) have certain regularity properties.
\begin{figure}[!htbp]\centering
\includegraphics[width=0.7\textwidth]{./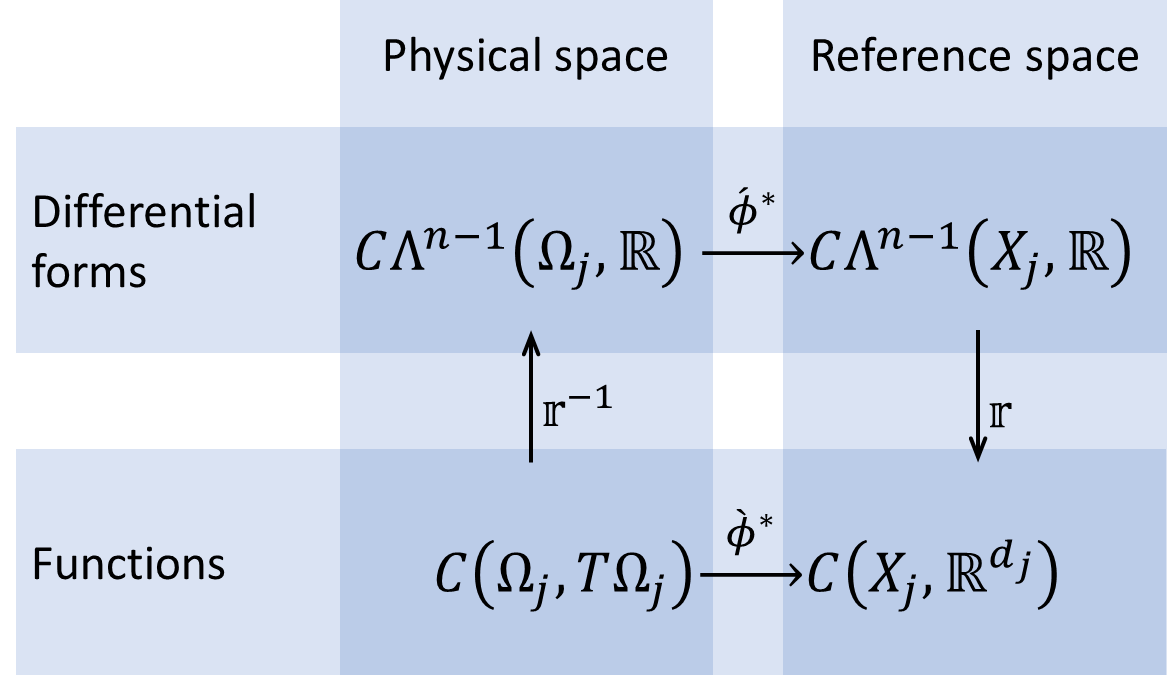}
\caption{The pullback \(\grave{\phi }^{\ast } \coloneqq\mathbbm{r}\acute{\phi}^\ast \mathbbm{r}^{-1} \) of a function is defined using its representation as a differential form. Illustrated here is the case of \( p = 1\) and \( k = n-1\), i.e. the flux functions, for which the operator \(\grave{\phi }^{\ast }\) is known as the Piola transform. The trace operator on functions is defined analogously as \(\grave{\mathrm{Tr}}\coloneqq\mathbbm{r} \acute{\mathrm{Tr}} \mathbbm{r}^{-1}\). Due to the commutativity of this diagram, we omit the accents when denoting these operators.}
\label{fig:pullback-of-function}
\end{figure}

\subsection{Differential operators}\label{sec:differential_operators}

The standard differential operators on manifolds can be extended to the setting of mixed-dimensional geometries. However, in order to achieve the proper coupling between the domains, as required from physical relevance (i.e. the use of the differential operators in conservation laws), manifolds of adjacent dimensionality must be coupled via so-called \textit{jump operators}. 

This section presents the mixed-dimensional gradient and divergence operators, assuming continuous functions of sufficient regularity. For a rigorous exposition of all mixed-dimensional differential operators (including the curl), we refer again to  \cite{boon2021functional}, which follows a classical construction of Čech and de Rham cohomology.

Let us start by defining the jump operator by \(\mathbbm{d}:C(\mathfrak{X}^{k},\mathbb{R}^{p})\rightarrow C(\mathfrak{X}^{k+1},\mathbb{R}^{p})\) that maps between subdomains of codimension one. We define this mapping by introducing a key set of indices. 
\begin{definition}\label{def:2.19}
For any root \( i\in I\), let the index set \( J_{i}\) be given by
\begin{equation*}
 J_{i} \coloneqq \left\{ j\in\mathfrak{F} \mid s_{j} = i \text{ and } j\in I_{\hat{\jmath}} \text{ for some } \hat{\jmath}\in I^{d_{i}+1}\right\} .
\end{equation*}

In other words, this is the set of nodes that coincide in the physical domain with \( i\) and have a root of dimension \( d_{j} = d_{i}+1\). Then the \textit{jump} \(\mathbbm{d}_{\Phi }\mathfrak{a}\in C(\mathfrak{X}^{k+1},\mathbb{R}^{p})\) of a continuous mixed-dimensional function \(\mathfrak{a}\in C(\mathfrak{X}^{k},\mathbb{R}^{p})\) is defined on a subdomain \( X_{i}\) with \( i\in I\) by the signed sum
\begin{equation*}
\iota_{i}(\mathbbm{d}_{\Phi }\mathfrak{a}) =(-1)^{n-k}\left(\sum_{l\in J_{i}}^{}\varepsilon_{i,l}\phi_{l,i}^{\ast }\iota_{l}\mathfrak{a}\right),  \forall i\in\mathfrak{F}^{k+1}\cap I
\end{equation*}

Where the pullback is used to map the function to the appropriate subdomain \( X_{i}\). It is defined through the representation of functions as differential forms, cf. Figure \ref{fig:pullback-of-function}.

The jump \(\mathbbm{d}_{\Phi }\) is extended to descendants \( j\in I_{i}\) by imposing that \(\mathbbm{d}_{\Phi }\mathfrak{a}\) is in \( C(\mathfrak{X}^{k+1},\mathbb{R}^{p})\). The remaining values \( \iota_{j}\mathbbm{d}_{\Phi }\mathfrak{a}\) with \( j\in\mathfrak{F}^{k+1}\setminus I\) are thus determined by trace values, cf. Definition \ref{def:2.11}.
\end{definition}

For an illustration of the domain and range of the jump operator, we refer to Figure \ref{fig:illustration-of-reference-domain}, recalling that \(\mathfrak{X}^{k}\) is the set of subdomains corresponding to the \( k\)-forest \(\mathfrak{F}^{k}\).

We note, as emphasized in Remark \ref{remark:2.2}, that when \( \Phi  = \Phi(t)\) is time-dependent, then so are the operators \( \pi_{l,i}\) and the mappings \( \phi_{l,i}\), and hence also the definition of the operator \(\mathbbm{d}_{\Phi(t)}\). As a notational shorthand, we denote this time-dependent jump in reference space as \(\mathbbm{d}_{t} =\mathbbm{d}_{\Phi(t)}\) when emphasis is needed, and otherwise simply write \(\mathbbm{d} = \mathbbm{d}_{\Phi }\) also for the jump operator on reference space to declutter the presentation.  This dependence of \(\mathbbm{d}_{\Phi }\) on the mapping \( \Phi\) has the (intended) consequence that the jump term \(\mathbbm{d}_{t}\) remains local in physical space for two points in contact, even when the domains they belong to are sliding relative to each other. 
\begin{remark}
\label{remark:2.5}
By the mixed-dimensional continuum assumption, \(\left\vert \pi_{l,i}(x)-x\right\vert\mathcal{ = O}(\mathcal{l}_{\epsilon })\). This definition allows for geometries that are slightly more general than a conforming forest. Indeed, for geometries with a conforming forest \( \pi_{l,i}(x) = x\). Furthermore, the closest point projection is Lipschitz continuous in the limit of infinitesimal smooth deformations. As a consequence, for a configuration \( \Phi\) and a smooth perturbation \( \Psi\) tangential to all boundaries, the derivatives considered from the ``left" and ``right" limits coincide, such that for all \( i\in I^{2}\):
\begin{equation*}
\lim_{\epsilon \rightarrow 0} \epsilon^{-1}\iota_{i}(\mathbbm{d}_{\Phi +\epsilon \Psi }\Phi) = \lim_{\epsilon \rightarrow 0} \epsilon^{-1}\iota_{i}(\mathbbm{d}_{(\Phi +\epsilon \Psi)-\epsilon \Psi }(\Phi -\epsilon \Psi)) = \lim_{\epsilon \rightarrow 0} \epsilon^{-1}\iota_{i}(\mathbbm{d}_{\Phi }(\Phi -\epsilon \Psi)).
\end{equation*}
This limit will be useful when considering the linearized theories later. 
\end{remark}
Next, we will construct the mixed-dimensional differential operators. Formally, these can be defined based on the differential forms and the exterior derivative  \cite{boon2021functional}, and then defining the differential operators on representations by requiring that commutation holds. However, as this introduces more formalisms than what is needed in the current exposition, we will here present the differential operators directly on the mixed-dimensional functions, with an understanding that mixed-dimensional exterior derivatives can similarly be defined on the mixed-dimensional forms. 

 We consider first the gradient, for which we set \( k = 0\), and define the local gradient operator \( \nabla  :C(\mathfrak{X}^{0},\mathbb{R}^{n})\rightarrow L^{2}(\mathfrak{X}^{1},\mathbb{R}^{n})\) as the standard gradient on each reference space: i.e. for \(\mathfrak{a}\in C(\mathfrak{X}^{0},\mathbb{R}^{n})\) let \( \nabla\mathfrak{a}\in L^{2}(\mathfrak{X}^{1},\mathbb{R}^{n})\) be such that
\begin{equation*}
\iota_{j}(\nabla\mathfrak{a}) =\begin{cases}
\nabla(\iota_{j}\mathfrak{a}),   & \forall j\in\mathfrak{F}^{1}\cap\mathfrak{F}^{0}, \\ 
0,   & \forall j\in\mathfrak{F}^{1}\setminus\mathfrak{F}^{0}. \\ 
\end{cases}
\end{equation*}
We emphasize that this operator takes the gradient not only of the components defined on the roots \( j\in I^{n}\), but also on its boundaries \( j\in I_{i}\).
\begin{definition}\label{def:2.20}
The \textit{mixed-dimensional gradient} on vector functions \( \mathfrak{D}_{\Phi }:C(\mathfrak{X}^{0},\mathbb{R}^{n})\rightarrow L^{2}(\mathfrak{X}^{1},\mathbb{R}^{n}), \) is defined as 
\begin{align}
\mathfrak{D}_{\Phi }\mathfrak{a } &\coloneqq \nabla\mathfrak{a} + \mathbbm{d}_{\Phi }\mathfrak{a}, &
\forall \mathfrak{a} &\in C(\mathfrak{X}^{0},\mathbb{R}).
\end{align}

The mappings \( \Phi\) are usually implied from the context, we will then omit the subscript and write \(\mathfrak{D}\).
\end{definition}
Similarly, for the mixed-dimensional divergence, we first define \((\nabla  \cdot ) :C(\mathfrak{X}^{n-1}, \mathbb{R})\rightarrow L^{2}(\mathfrak{X}^{n}, \mathbb{R})\) such that
\begin{equation*}
\iota_{j}(\nabla  \cdot \mathfrak{b}) =\begin{cases}
\nabla  \cdot (\iota_{j}\mathfrak{b}),   & \forall j\in\mathfrak{F}^{n}\cap\mathfrak{F}^{n-1}, \\ 
0,   & \forall j\in\mathfrak{F}^{n}\setminus\mathfrak{F}^{n-1}. \\ 
\end{cases}
\end{equation*}
\begin{definition}\label{def:2.21}
The \textit{mixed-dimensional divergence} \((\mathfrak{D}_{\Phi } \cdot ) :C(\mathfrak{X}^{0}, \mathbb{R})\rightarrow L^{2}(\mathfrak{X}^{1}, \mathbb{R})\) is defined as
\begin{align}
\mathfrak{D}_{\Phi }\cdot \mathfrak{ a } &\coloneqq \nabla  \cdot \mathfrak{a}+\mathbbm{d}_{\Phi } \mathfrak{a}, &
\forall \mathfrak{a} &\in C(\mathfrak{X}^{0}, \mathbb{R}).
\end{align}
The mappings \( \Phi\) are usually implied from the context, we will then omit the subscript and write \((\mathfrak{D} \cdot )\).
\end{definition}
It is important to note that these operators do not possess the same adjointness properties as the conventional gradient and divergence operators since they are defined on different \( k\)-forests. However, the adjoints of mixed-dimensional operators do play a vital role in our model and, to properly define these, we consider the differential operators as instances of densely defined unbounded linear operators (see e.g.  \cite{pedersen1989unbounded}) on \( L^{2}(\mathfrak{X}^{0},\mathbb{R}^{p})\). Taking the adjoint (see Def. \ref{def:A6}) of the mixed-dimensional gradient and divergence then leads us to the co-gradient \((\mathbb{D}_{\Phi } \cdot )\) and co-divergence \((\mathbb{D}_{\Phi })\), respectively.
\begin{definition}\label{def:2.22}
Let the \textit{mixed-dimensional co-gradient} be denoted \((\mathbb{D}_{\Phi } \cdot ): \mathrm{dom}(\mathbb{D}_{\Phi } \cdot )\subseteq L^{2}(\mathfrak{X}^{1},\mathbb{R}^{n})\rightarrow L^{2}(\mathfrak{X}^{0},\mathbb{R}^{n})\) and the \textit{mixed-dimensional co-divergence} be denoted \(\mathbb{D}_{\Phi } :\mathrm{dom}(\mathbb{D}_{\Phi })\subseteq L^{2}(\mathfrak{X}^{n}, \mathbb{R})\rightarrow L^{2}(\mathfrak{X}^{n-1}, \mathbb{R})\), defined such that for \(\mathfrak{b} \in L^2 (\mathfrak{X}^{1},\mathbb{R}^{n}) \) and \(\mathfrak{c} \in L^2 (\mathfrak{X}^{n},\mathbb{R}) \),
\begin{subequations} \label{eqs: co-differentials}
\begin{align}
\left\langle\mathbb{D}_{\Phi }\cdot \mathfrak{ b,a}\right\rangle_{\mathfrak{X}^{0}} &  = -\left\langle\mathfrak{D}_{\Phi }\mathfrak{a,b}\right\rangle_{\mathfrak{X}^{1}}, 
&\forall\mathfrak{a} &\in \mathring{C} (\mathfrak{X}^{0},\mathbb{R}^{n}), \\ \
\left\langle\mathbb{D}_{\Phi }\mathfrak{c,a}\right\rangle_{\mathfrak{X}^{n-1}} &  = -\left\langle\mathfrak{D}_{\Phi }\cdot \mathfrak{ a,c}\right\rangle_{\mathfrak{X}^{n}}, 
& \forall \mathfrak{a} &\in \mathring{C}(\mathfrak{X}^{n-1}, \mathbb{R}).
\end{align}
\end{subequations}

As with the gradient and divergence, we will in later sections mostly omit the subscript \( \Phi\).
\end{definition}
The differential operators \(\mathbb{D}_{\Phi } \cdot \) and \(\mathbb{D}_{\Phi }\) coincide with the conventional divergence and gradient on the roots, complemented by so-called \textit{half-jump} operators on the boundaries \( \partial_{j}X_{i}\) that relate \( \iota_{i}\mathfrak{a}\) and \( \iota_{s_{j}}\mathfrak{a}\). We refer the interested reader to \cite{boon2021functional} for explicit representations.  
On the other hand, Def. \ref{def:2.18} defines the mixed-dimensional gradient and divergence on the more regular spaces \( C(\mathfrak{X}^{k},\mathbb{R}^{p})\). We do not wish to require such regularity in the weak formulation of our model and we therefore expand the definition. 
\begin{definition}\label{def:2.23}
Let the mixed-dimensional gradient and divergence with boundary conditions be given by
\begin{subequations}
\begin{align}
\mathfrak{\mathring{D}}_{\Phi }&  :\mathrm{dom}(\mathfrak{\mathring{D}}_{\Phi })\subseteq L^{2}(\mathfrak{X}^{0},\mathbb{R}^{n})\rightarrow L^{2}(\mathfrak{X}^{1},\mathbb{R}^{n}),  
& \mathfrak{\mathring{D}}_{\Phi }   &\coloneqq (-\mathbb{D}_{\Phi } \cdot )' \\
(\mathfrak{\mathring{D}}_{\Phi } \cdot )&  :\mathrm{dom}(\mathfrak{\mathring{D}}_{\Phi } \cdot )\subseteq L^{2}(\mathfrak{X}^{n-1}, \mathbb{R})\rightarrow L^{2}(\mathfrak{X}^{n-1}, \mathbb{R}),  
& (\mathfrak{\mathring{D}}_{\Phi } \cdot )   &\coloneqq (-\mathbb{D}_{\Phi })' 
\end{align}
\end{subequations}
\end{definition}
\begin{remark}
The circular accent on these operators indicates that the functions in the respective domains have vanishing trace on \( \partial_{Y}\mathfrak{X}^{k}\). This is a direct consequence of using \textit{test functions} \(\mathfrak{a}\in \mathring{C}(\mathfrak{X}^{k},\mathbb{R}^{p})\) in \eqref{eqs: co-differentials}. Moreover, the role of boundary conditions could have been reversed (and indeed generalized), but we will retain the choice implied above for simplicity of exposition. 
\end{remark}
To conclude this section, we emphasize that all differential operators (and the co-differentials) introduced herein are densely defined, unbounded linear operators mapping as \( L^{2}(\mathfrak{X}^{k},\mathbb{R}^{p})\rightarrow L^{2}(\mathfrak{X}^{k+1},\mathbb{R}^{p})\), cf. Appendix \ref{sec:appendix}. In particular, the density of \( C(\mathfrak{X}^{k}, \mathbb{R})\) in \( L^{2}(\mathfrak{X}^{k}, \mathbb{R})\) was shown in Theorem 3.1 of \cite{boon2021functional}. A consequence of this statement is that the spaces \(\mathrm{dom}(\mathbb{D}_{\Phi } \cdot )\), \(\mathrm{dom}(\mathbb{D}_{\Phi })\), \(\mathrm{dom}(\mathring{\mathfrak{D}}_{\Phi })\) and \(\mathrm{dom}(\mathring{\mathfrak{D}}_{\Phi } \cdot )\) are all Hilbert spaces with respect to their respective graph norms. 

\section{Mixed-dimensional strain measures}
\label{sec:mixed-dimentional-strain}

Scalar elliptic mixed-dimensional equations are well understood \cite{boon2021functional}, and the case of \( p = 1\) and \( k\in\left\{ n-1, n\right\}\) leads to the standard equations used for mixed-dimensional models of flow in fractured porous media \cite{boon2021functional, nordbotten2017modeling, boon2018robust}. The main outstanding challenge in constitutive modeling is thus the correct treatment of the mechanical deformation in the mixed-dimensional setting. This is the topic of this section. 

Our approach in this development is to follow the ``top-down" modeling associated with classical continuum mechanics, in the tradition of e.g.  \cite{hughes1983mathematical, coussy2005poromechanics, temam2005mathematical, truesdell2004non}, adapted to the mixed-dimensional geometry and spaces presented in Section \ref{sec:preliminaries}. Thus, we obtain a mixed-dimensional finite strain theory directly for the geometric representation \(\mathfrak{F}\). The converse approach, which we will not pursue in this work, would be to take the standard theory of mechanics as applied to the domain \( Y\) with its high-aspect inclusions \( \Psi_{i}\), and derive a finite strain theory for \(\mathfrak{F}\) through an upscaling based on the limit process of \(\mathcal{l}_{\epsilon }\rightarrow 0\). We will discuss the relationship between the results obtained in this work and classical theory as posed on \( Y\) in Section \ref{sec:governing_eq_linearized}. 

\subsection{Recollection of fixed-dimensional finite strain theory}
\label{sec:recollection_fixed_dimensional}
To provide context for the mixed-dimensional strain measure introduced later, we briefly recall the standard setting of finite strain theory. We recall from \eqref{eq:2.5} in Section \ref{sec:geometry} that for domains \( X\) and \( \Omega  = \phi(X)\), we denote by \(\mathbf{F} = D\phi\) the derivative of the \( C^{1}\) mapping \( \phi\). Then
\begin{definition}\label{def:3.1}
The right Cauchy-Green deformation tensor \(\mathbf{C} :T\Omega \rightarrow T\Omega\) is defined for a configuration \( \phi\) as 
\begin{equation}
    \mathbf{C}(\phi) \coloneqq \mathbf{F}^{T}\mathbf{F}.
\end{equation}
\end{definition}
Since we are only concerned with problems embedded in \(\mathbb{R}^{n}\) with Cartesian coordinates, we will in the following use the same notation for all associated tensors. However, we will not have need for the full generality of tensor calculus as all variables are defined on subsets of \(\mathbb{R}^{d_{j}}\). We will therefore not distinguish notationally between ``raising and lowering indexes". 

In our geometric setting, the reference domain \( X\) is without physical meaning, and we will be concerned with a time-dependent physical configurations, represented by \( \phi(t)\) and where the initial state is denoted \(\underline{\phi } \coloneqq \phi(t = 0)\). These are naturally compared on the reference domain \( X\), since the deformation tensor is rotationally invariant here. Thus we have 
\begin{definition}\label{def:3.2}
Green-Lagrange strain tensor with respect to the configurations \( \phi\) and \(\underline{\phi }\) is defined by the 2-tensor 
\begin{equation}
\mathbf{E}(t) \coloneqq \frac{1}{2}(\mathbf{C}(\phi(t))\mathbf{-C}(\underline{\phi }))
\end{equation}
\end{definition}

The normalization factor $\frac{1}{2}$ is somehow arbitrary, but is typically included to ensure that the linearized strain becomes dual to the divergence operator on symmetric tensor functions. If furthermore the deformation is infinitesimal from the baseline configuration \(\underline{\phi }\) , i.e. that \( \phi(t) =\underline{\phi }+u(t)\), and \(\mathbf{F} =\underline{\mathbf{F}}+Du\), with \(\left\vert Du\right\vert \ll 1\), in the case of \( d = n\) leads to
\begin{align*}
\mathbf{E}(t) &=\frac{1}{2}((\underline{\mathbf{F}} + Du)^{T}(\underline{\mathbf{F}} + Du)-\underline{\mathbf{F}}^{T}\underline{\mathbf{F}})\\
&=\frac{1}{2}(\underline{\mathbf{F}}^{T}Du+(Du)^{T}\underline{\mathbf{F}})+(Du)^{T}Du
\end{align*}
The linearized strain tensor is obtained by retaining the first-order terms in \(\left\vert Du\right\vert\), as summarized below.
\begin{definition}
The linearized strain tensor with respect to the deformation \( u(t) = \phi(t)-\underline{\phi }\) is defined by the 2-tensor 
\begin{equation}
\mathbf{e}(t) \coloneqq\frac{1}{2}(\underline{\mathbf{F}}^{T}Du(t)+(Du(t))^{T}\underline{\mathbf{F}}).
\end{equation}
When expressed as a linear operator on \( u(t)\), we refer to this operator as the symmetric gradient, and write
\begin{equation}
D_{s}u(t)\coloneqq \mathbf{ e}(t).
\end{equation}
\end{definition}
We remark that this definition simplifies whenever the reference configuration \( X\) is equal to the baseline physical configuration \( \Omega\), since in this case \(\underline{\phi }(x) = x\), and \(\underline{\mathbf{F}} = \underline{\mathbf{F}}^{T} =\mathbf{I}\). However, due to the nature of the mixed-dimensional geometries of interest herein, this will in general not be the case in our context. For example, fractures are not restricted to be located on the \( xy\)-plane, but the corresponding reference domains are.

\subsection{Mixed-dimensional finite strain}
\label{sec:mixed_dimensional_finite}

We follow the same approach to derive a mixed-dimensional finite strain theory. To proceed, we first make precise the needed extensions of fixed-dimensional calculus to the mixed-dimensional setting. In particular, we have already defined the mixed-dimensional differential operators in Section \ref{sec:differential_operators}. While these are in principle sufficient to obtain a mixed-dimensional strain, a richer strain notion can be obtained by also considering the derivative of the mixed-dimensional extended coordinate mappings, defined in Definitions \ref{def:2.6} and \ref{def:2.7}. In the same manner as above, we therefore introduce 
\begin{definition}\label{def:3.4}
The derivative of the mixed-dimensional extended deformation \(\hat{\Phi }\) is denoted \(\boldsymbol{\mathfrak{F}} \coloneqq D\hat{\Phi }\), and satisfies \( \iota_{j}\boldsymbol{\mathfrak{F}} = D\hat{\phi }_{j}\) for all \( j\in\mathfrak{F}\).
\end{definition}

Note that the forest \(\mathfrak{F}\) and the derivative \(\boldsymbol{\mathfrak{F}}\) should not be confused. 

It is an important point that the mixed-dimensional setting now deviates from the fixed-dimensional case, in that \( \mathfrak{D}\Phi\neq D\Phi \neq \mathbb{D}\Phi\). These notions of a derivative of the configuration \( \Phi\) (all in a sense ``gradients"), have important distinctions. The derivative of the deformation \(\boldsymbol{\mathfrak{F}}\), defined on \(\mathfrak{X}\), contains information of the deformation of each \( \Omega_{i}\), but has no information about the relative placements of domains. On the other hand, the mixed-dimensional gradient \(\mathfrak{D}\Phi\), defined on \(\mathfrak{X}^{1}\), contains information on relative placements (due to the jump operator \(\mathbbm{d}\)), but only contains information regarding the deformation of the top-dimensional domains, i.e. those domains \( \Omega_{i}\) where \( d_{i} = n\). It is therefore clear that \(\mathfrak{D}\Phi\) contains the desired physical information (since the lower-dimensional domains are fractures – voids – and their precise deformation is immaterial). Conversely, the deformation \(\boldsymbol{\mathfrak{F}}\) is required for coordinate transformations, and can be thought of as a ``fabric" onto which to project vectors. 

The above discussion suggests: 
\begin{definition}\label{def:3.5}
The mixed-dimensional right Cauchy-Green deformation tensor is defined for a configuration \( \Phi\) as
\begin{equation}
\boldsymbol{\mathfrak{C }} \coloneqq (\Pi^{1}\boldsymbol{\mathfrak{F}}^{T})\mathfrak{D}(\Pi^{0}\Phi)
\end{equation}
where \( \Pi^{k}\) is the restriction from \(\mathfrak{X}\) to \(\mathfrak{X}^{k}\).
\end{definition}
\begin{remark}
The deformation \(\boldsymbol{\mathfrak{F}}\) based on the extended mappings \(\hat{\phi }\), allows for transforming vectors (and thus forces) in \(\mathbb{R}^{n}\) appropriately. Alternative suggestions for a ``symmetric" deformation tensor, such as e.g. expressions of the type \(\boldsymbol{\mathfrak{F}}^{T}\boldsymbol{\mathfrak{F}}\) or \((\mathfrak{D}(\Pi^{0}\Phi))^{T}\mathfrak{D}(\Pi^{0}\Phi)\), can be seen to be unsuitable, as the former contains no information of relative placements of domains, while the latter only retains the magnitude of displacements across a fracture, without orientation information. 
\end{remark}

Proceeding as in Section \ref{sec:recollection_fixed_dimensional}, we will use the mixed-dimensional right Cauchy-Green deformation tensor as the basis for defining a strain measure on the reference domain. We therefore consider a time-dependent mapping \( \Phi  = \Phi(t)\) from which we obtain a time-dependent deformation tensor \(\boldsymbol{\mathfrak{C}}(t)\). By again identifying time \( t = 0\) as the reference time with \(\underline{\Phi } \coloneqq \Phi(t = 0)\), then
\begin{definition}\label{def:3.7}
The mixed-dimensional Green-Lagrange strain tensor with respect to the configurations \( \Phi\) and \(\underline{\Phi }\) is defined by 
\begin{equation}
\mathfrak{E}(t) \coloneqq \varrho(\boldsymbol{\mathfrak{C}}(t)-\underline{\boldsymbol{\mathfrak{C}}}),
\end{equation}
where the mixed-dimensional gradients \(\mathfrak{D}_{t}\) are evaluated based on the configuration at time \( t\), such that in particular, \(\underline{\boldsymbol{\mathfrak{C}}} = \underline{\boldsymbol{\mathfrak{F}}}^T\mathfrak{D}_{\Phi(t)}\underline{\Phi }\). Moreover, the normalization factor \( \varrho\) is assigned the value \( \iota_{j}\varrho  =\frac{1}{2}\) for \( j\in\mathfrak{S}_{i}\) and \( i\in I^{n}\) and the value \( \iota_{j}\varrho  = 1\) otherwise. The justification for this choice will become apparent in Lemma \ref{lemma:5.1}.
\end{definition}

\begin{example}\label{eg:3.8}
We consider the interpretation of the mixed-dimensional Green-Lagrange strain tensor on domains of various dimensionality: 
\begin{enumerate} \small
    \item For top-dimensional domains, \( i\in I^{n}\), then as in the fixed-dimensional case,
    \begin{equation}
    \label{eq:3.8}
    \iota_{i}\mathfrak{E}(t) =\mathbf{E}_{i}(t).
    \end{equation}
    \item On the boundaries of the top-dimensional domains \( i\in\mathfrak{S}_{j}\), where \( j\in I^{n}\), the deformation tensor is given by \( \iota_{i}\boldsymbol{\mathfrak{C}} = (\hat{\mathbf{F}}_{i})^{T}\mathbf{F}_{i}\) with \(\hat{\mathbf{F}}_{i} \coloneqq D\hat{\phi }_{i}\). It is thus represented by a \(\mathbb{R}^{n}\times\mathbb{R}^{d_{i}}\) matrix. Then the strain takes the form
    \begin{equation}
    \iota_{i}\mathfrak{E}(t) =\frac{1}{2}(\hat{\mathbf{F}}_{i}^{T}(t)\mathbf{F}_{i}(t)-\hat{\underline{\mathbf{F}}}_{i}^{T}\underline{\mathbf{F}}_{i}) =\frac{1}{2}\binom{\mathbf{F}_{i}^{T}(t)\mathbf{F}_{i}(t)-\underline{\mathbf{F}}_{i}^{T}\underline{\mathbf{F}}_{i}}{\mathbf{0}}.
   \end{equation}

Note that due to Definition \ref{def:2.7}, the ``extended" components of \(\hat{\mathbf{F}}_{i}\) are orthogonal to \(\mathbf{F}_{i}\) (whose columns are vectors in the tangent space \( T\Omega_{i}\)), thus the last \( n-d_{i}\) rows of \( \iota_{i}\mathfrak{E}(t)\) are identically zero, justifying the claim that the precise choice of extensions in Definition \ref{def:2.7} is immaterial for the developments.

    \item For domains \( i\in I^{n-1}\) (the fractures), the mixed-dimensional gradient of the deformation is simply \( \iota_{i}\mathfrak{D}\Phi  = \iota_{i}\mathbbm{d}_{t}\Phi\), i.e. the jump in \( \phi_{j}\) between the two \( n\)-dimensional neighbors to \( \Omega_{i}\). Thus
    \begin{equation*}
        \iota_{i}\mathfrak{E}(t) =\hat{\mathbf{F}}_{i}^{T}(t)(\iota_{i}\mathbbm{d}_{t}\Phi(t))-\hat{\underline{\mathbf{F}}}_{i}^{T}(\iota_{i}\mathbbm{d}_{t}\underline{\Phi }).
    \end{equation*}
        As above, it is natural to decompose it into its parallel and normal components, denoted by subscripts \( \Vert \) and \( \perp\), respectively, which takes the form
\begin{equation}
\iota_{i}\mathfrak{E}(t) = \begin{bmatrix}
(\iota_{i}\mathfrak{E}(t))_{\parallel} \\ 
(\iota_{i}\mathfrak{E}(t))_{\perp} \\ 
\end{bmatrix} =\begin{bmatrix}
\mathbf{F}_{i}^{T}(t)(\iota_{i}\mathbbm{d}_{t}\Phi(t)) \\ 
\hat{\mathbf{F}}_{i,n}^{T}(t)(\iota_{i}\mathbbm{d}_{t}\Phi(t)) \\ 
\end{bmatrix}-\begin{bmatrix}
\underline{\mathbf{F}}_{i}^{T}(\iota_{i}\mathbbm{d}_{t}\underline{\Phi }) \\ 
\hat{\underline{\mathbf{F}}}_{i,n}^{T}(\iota_{i}\mathbbm{d}_{t}\underline{\Phi }) \\ 
\end{bmatrix}.
\end{equation}
Here we denote the \( n\)\textsuperscript{th} row of \(\hat{\mathbf{F}}_{i}^{T}(t)\) by \(\hat{\mathbf{F}}_{i,n}^{T}(t)\), which we note equals \(\hat{\mathbf{F}}_{i,n}^{T}(t) =\mathcal{l}^{-1}_i\mathbf{n}_{i}^{T}(t)\), where \(\mathbf{n}_{i}\) is the normal vector orthogonal to \( \Omega_{i}\) preserving the orientation of \(\hat{\phi }_{i}\). Therefore, the expression for the strain can be simplified to
\begin{equation} \label{eq: 3.11}
\iota_{i}\mathfrak{E}(t) =\begin{bmatrix}
\mathbf{F}_{i}^{T}(t)(\iota_{i}\mathbbm{d}_{t}\Phi(t))_{\parallel} \\ 
\mathcal{l}_{i}^{-1}(\iota_{i}\mathbbm{d}_{t}\Phi(t))_{\perp } \\ 
\end{bmatrix}-\begin{bmatrix}
\underline{\mathbf{F}}_{i}^{T}(\iota_{i}\mathbbm{d}_{t}\underline{\Phi })_{\parallel} \\ 
\mathcal{l}_{i}^{-1}(\iota_{i}\mathbbm{d}_{t}\underline{\Phi })_{\perp } \\ 
\end{bmatrix}.
\end{equation}
Here we have decomposed the displacement jump into its orthogonal and parallel components,
\begin{align*}
(\iota_{i}\mathbbm{d}_{t}\underline{\Phi })_{\perp } &\coloneqq \underline{\mathbf{n}}_{i}^{T}(\iota_{i}\mathbbm{d}_{t}\underline{\Phi })
& &\text{and} &
(\iota_{i}\mathbbm{d}\Phi_{0})_{\parallel} &\coloneqq \iota_{i}\mathbbm{d}\Phi_{0}-\underline{\mathbf{n}}_{i}(\iota_{i}\mathbbm{d}\Phi_{0})_{\perp }.
\end{align*}

Moreover, by the definition of the jump operator, the jump in the direction parallel to the fracture is identically zero, \((\iota_{i}\mathbbm{d}_{t}\Phi(t))_{\parallel} = 0\), and this term can be omitted from \eqref{eq: 3.11}. We furthermore note that by the mixed-dimensional continuum assumption the jump in the direction perpendicular to the fracture is of order \(\mathcal{l}_{\epsilon }\), thus \(\mathcal{l}_{i}^{-1}(\iota_{i}\mathbbm{d}_{t}\Phi(t))_{\perp } = \mathcal{O}(1)\)\textbf{.}  In contrast, sliding is measured as \((\iota_{i}\mathbbm{d}_{t}\underline{\Phi })_{\parallel}\), which measures the slip of the two fracture surfaces from the initial state until the current configuration. We thus arrive at the final expression for the strain in fractures,
\begin{equation}
 \iota_{i}\mathfrak{E}(t) =\begin{bmatrix}
-\underline{\mathbf{F}}_{i}^{T}(\iota_{i}\mathbbm{d}_{t}\underline{\Phi })_{\parallel} \\ 
\mathcal{l}_{i}^{-1}((\iota_{i}\mathbbm{d}_{t}\Phi(t))_{\perp }-(\iota_{i}\mathbbm{d}_{t}\underline{\Phi })_{\perp }) 
\end{bmatrix}.
\end{equation}
    \item  Since \(\mathfrak{D}\Phi\) is void on domains \( \Omega_{j}\) with \( j\in I^{d<n-1}\), so is \( \iota_{i}\mathfrak{E}(t)\) for all \( i\in\mathfrak{S}_{j}\).
\end{enumerate}
\end{example}
Again, we emphasize that the measure of opening of a fracture has arbitrary scale, depending on the choice of \(\mathcal{l}_{i}\). This implies that \(\mathfrak{E}(t)\) is a multi-scale strain measure, which we will return to in Section \ref{sec:mixed-dimentional-poromechanics} (Example \ref{eg:4.4}). We close this section by verifying that the mixed-dimensional finite strain \(\mathfrak{E}(t)\) is rotationally and translationally invariant. 
\begin{lemma}
\label{lemma:3.1}
Let \( \Phi(t)\) be a rigid body motion relative to \(\underline{\Phi }\). Then \(\mathfrak{E}(t) = 0\). 
\begin{proof}
A rigid body motion can be described by a rotation matrix \( R(t)\) and a vector \( V(t)\), both independent of space and the rotation satisfying \( R^{-1}(t) = R^{T}(t)\). Then \( \Phi(t) = R(t)\underline{\Phi }+V(t)\), i.e. for all \( i\in\mathfrak{F}\) the local mapping is given by 
\begin{equation*}
\iota_{i}\Phi(t) = R(t)\phi_{0, i}(0)+V(t).
\end{equation*}
Then since differentiation is a linear operator with constants in its null-space, we have both \(\mathbf{F}_{i}(t) = R(t)\underline{\mathbf{F}}_{i}\) and \(\hat{\mathbf{F}}_{i}(t) = R(t)\underline{\hat{\mathbf{F}}}_{i}\), while by the same argument the jump operator satisfies \( \iota_{i}\mathbbm{d}_{t}\Phi(t) = \iota_{i}\mathbbm{d}_{t}(R(t)\underline{\Phi }) = \iota_{i}R(t)\mathbbm{d}_{t}\underline{\Phi }\).

Now a direct substitution gives
\begin{equation*}
\hat{\mathbf{F}}_{i}^{T}(t)\mathbf{F}_{i}(t) =(R(t)\underline{\hat{\mathbf{F}}}_{i})^{T}R(t)\underline{\mathbf{F}}_{i} 
 =\underline{\hat{\mathbf{F}}}_{i}R^{T}(t)R(t)\underline{\mathbf{F}}_{i} =\underline{\hat{\mathbf{F}}}_{i}\underline{\mathbf{F}}_{i}
\end{equation*}
and 
\begin{equation*}
\begin{split}
\hat{\mathbf{F}}_{i}^{T}(t)(\iota_{i}\mathbbm{d}_{t}\Phi(t)) &=(R(t)\underline{\hat{\mathbf{F}}}_{i})^{T}(R(t)\iota_{i}\mathbbm{d}_{t}\underline{\Phi })\\
& =\underline{\hat{\mathbf{F}}}_{i}R^{T}(t)R(t)(\iota_{i}\mathbbm{d}_{t}\underline{\Phi }) = \underline{\hat{\mathbf{F}}}_{i}(\iota_{i}\mathbbm{d}_{t}\underline{\Phi })
\end{split}
\end{equation*}
Comparison with the local expressions for \( \iota_{i}\mathfrak{E}(t)\) provided in Example \ref{eg:3.8} verifies the lemma.
\end{proof}
\end{lemma}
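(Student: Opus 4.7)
The plan is to show $\boldsymbol{\mathfrak{C}}(t) = \underline{\boldsymbol{\mathfrak{C}}}$ pointwise on every node of the forest, so that $\mathfrak{E}(t) = \varrho(\boldsymbol{\mathfrak{C}}(t) - \underline{\boldsymbol{\mathfrak{C}}}) = 0$ trivially. Working from Definition \ref{def:3.5}, I need to track how each of the three constituents of $\boldsymbol{\mathfrak{C}}(t) = (\Pi^1 \boldsymbol{\mathfrak{F}}^T)\mathfrak{D}_{\Phi(t)}(\Pi^0 \Phi(t))$ transforms under the rigid motion $\Phi(t) = R(t)\underline{\Phi} + V(t)$, where $R(t)$ is spatially constant with $R(t)^T R(t) = \mathbf{I}$ and $V(t)$ is a spatially constant vector.

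First I would handle the pointwise derivatives. On each reference domain $X_j$, both $R(t)$ and $V(t)$ are constant in space, so applying the Frechet derivative from \eqref{eq:2.5} gives $\mathbf{F}_j(t) = R(t)\underline{\mathbf{F}}_j$ for the physical mappings. By Definitions \ref{def:2.6} and \ref{def:2.7}, the extended mappings $\hat{\phi}_j$ inherit the same rigid motion on their orthogonal extensions (since a rigid motion in $\mathbb{R}^n$ preserves orthogonality and the fixed scalings $\mathcal{l}_j$), so $\hat{\mathbf{F}}_j(t) = R(t)\underline{\hat{\mathbf{F}}}_j$ as well. Thus $\boldsymbol{\mathfrak{F}}(t) = R(t)\underline{\boldsymbol{\mathfrak{F}}}$ componentwise.

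Second I would treat the mixed-dimensional gradient $\mathfrak{D}_{\Phi(t)} = \nabla + \mathbbm{d}_{\Phi(t)}$ applied to $\Phi(t)$. The local gradient component gives $\nabla \Phi(t) = R(t)\nabla \underline{\Phi}$ since the additive $V(t)$ is killed by differentiation. For the jump component, I would appeal to Definition \ref{def:2.19}: $\iota_i(\mathbbm{d}_{\Phi(t)} \Phi(t))$ is a signed sum $\sum_{l\in J_i} \varepsilon_{i,l} \phi_{l,i}^*(t) \iota_l \Phi(t)$, where the pullback is linear. The constant translation $V(t)$ contributes $V(t)\sum_l \varepsilon_{i,l}$, and since the orientation indicators come in opposite pairs across a codimension-one interface, this sum vanishes; the factor $R(t)$ then pulls out of the linear combination to give $\iota_i \mathbbm{d}_{\Phi(t)} \Phi(t) = R(t)\, \iota_i \mathbbm{d}_{\Phi(t)} \underline{\Phi}$. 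Combining the two contributions, $\mathfrak{D}_{\Phi(t)} \Phi(t) = R(t)\, \mathfrak{D}_{\Phi(t)} \underline{\Phi}$.

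With these identities in hand, the conclusion follows by direct substitution and the orthogonality relation:
\begin{equation*}
\boldsymbol{\mathfrak{C}}(t) = (\Pi^1 \boldsymbol{\mathfrak{F}}(t)^T)\, \mathfrak{D}_{\Phi(t)} \Phi(t) = (\Pi^1 (R(t)\underline{\boldsymbol{\mathfrak{F}}})^T)\, R(t)\, \mathfrak{D}_{\Phi(t)} \underline{\Phi} = (\Pi^1 \underline{\boldsymbol{\mathfrak{F}}}^T)\, \mathfrak{D}_{\Phi(t)} \underline{\Phi} = \underline{\boldsymbol{\mathfrak{C}}},
\end{equation*}
matching the definition of $\underline{\boldsymbol{\mathfrak{C}}}$ in Definition \ref{def:3.7}, which crucially takes the jump with respect to the \emph{current} configuration $\Phi(t)$.

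The main obstacle I foresee is the rigorous justification that the time-dependent jump operator $\mathbbm{d}_{\Phi(t)}$ commutes with multiplication by the constant matrix $R(t)$ and annihilates the constant $V(t)$, despite $\mathbbm{d}_{\Phi(t)}$ itself depending on the closest-point projection $\pi_{l,i}$ at time $t$. This is resolved by noting that $\pi_{l,i}$ is an input to the definition of the coordinate map $\phi_{l,i}(t)$ and thus of the pullback, but the pullback remains a linear operator on values of the form, so the linearity argument above goes through regardless of which $\pi_{l,i}$ is used. Everything else is routine matrix algebra using $R(t)^T R(t) = \mathbf{I}$.
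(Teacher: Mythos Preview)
Your proof is correct and follows essentially the same approach as the paper: both establish $\hat{\mathbf{F}}_i(t)=R(t)\underline{\hat{\mathbf{F}}}_i$ and $\iota_i\mathbbm{d}_t\Phi(t)=R(t)\,\iota_i\mathbbm{d}_t\underline{\Phi}$, then cancel $R^T R=\mathbf{I}$. The only difference is organizational: the paper verifies $\iota_i\boldsymbol{\mathfrak{C}}(t)=\iota_i\underline{\boldsymbol{\mathfrak{C}}}$ case by case (top-dimensional, boundaries, fractures) by pointing to Example~\ref{eg:3.8}, whereas you work once with the global tensor $\boldsymbol{\mathfrak{C}}(t)=(\Pi^1\boldsymbol{\mathfrak{F}}^T)\mathfrak{D}_{\Phi(t)}\Phi(t)$ and factor $R(t)$ through $\mathfrak{D}_{\Phi(t)}=\nabla+\mathbbm{d}_{\Phi(t)}$ in one stroke. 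Your explicit justification that $\sum_{l\in J_i}\varepsilon_{i,l}=0$ kills the translation $V(t)$ in the jump is a nice detail the paper elides with ``by the same argument''.
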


\subsection{Mixed-dimensional linearized strain}
\label{sec:mixed_dimensional_linearized}

When considering a constitutive theory, our primary variable will be the displacement of the top-dimensional domains \(\mathfrak{u}\in L^{2}(\mathfrak{X}^{0},\mathbb{R}^{n})\). To be able to calculate the linearized strain in the remaining, lower-dimensional subdomains, we require an extension operator onto the domain of \( \Phi\), which we define as: 
\begin{definition}
\label{def:3.10}
A bounded linear operator \( \Xi  : L^{2}(\mathfrak{X}^{0},\mathbb{R}^{n})\rightarrow L^{2}(\mathfrak{X},\mathbb{R}^{n})\) is an \(\mathfrak{X}^{0}\)-\textit{extension} operator if it is a right-inverse of the restriction \( \Pi^{0}\), i.e. 
\begin{align}
\Pi^{0}\Xi\mathfrak{u} &\coloneqq \mathfrak{u}, & 
\forall \mathfrak{u} &\in L^{2}(\mathfrak{X}^{0},\mathbb{R}^{n}).
\end{align}
\end{definition}
\begin{remark}
We note that the most natural choice for \( \Xi\) is an averaging operator, such that for a fracture \( i\in I^{2}\), its displacement \( \iota_{i}(\Xi\mathfrak{u})\) is defined as the average displacement of the rock on the two sides. Such operators allow us to consider the representation as the mean of neighboring (rock) positions in \( I^{n}\), such that for any \( i\in I^{d<n}\) 
\begin{equation*}
\iota_{i}(\Xi\mathfrak{u}) =\frac{1}{\left\vert\hat{I}_{i}^{n}\right\vert }\sum_{j\in\hat{I}_{i}^{n}}^{}\phi_{j,i}^{\ast }(\iota_{j}\mathfrak{u}).
\end{equation*}
We study the role of extension operators in more detail in Section \ref{sec:mixedd_dimensional_volume}.
\end{remark}
We obtain a linearized strain by considering deformations \( \Phi(t)\) such that \( \Phi(t) = \Xi\mathfrak{u}(t)+\underline{\Phi }\), and where the mixed-dimensional gradients in \(\mathfrak{Du}\) are small in the sense that for all \( i\in\mathfrak{F}\), and for all \( x\in X_{i}\), it holds that 
\begin{equation}\label{eq:3.16}
\Vert(\iota_{i}\mathfrak{D}\mathfrak{u})(x)\Vert \ll \Vert\underline{\mathbf{F}}_{i}(x)\Vert.
\end{equation}
Using this we define the linearized strain as \(\mathfrak{e}(t)\), obtained by omitting ``small" terms. More precisely, we define the linearized strain as the Fréchet derivative of the finite strain in the following sense: 
\begin{definition}\label{def:3.11}
For some initial mapping \(\underline{\Phi }\), and some deformation \(\mathfrak{u}\in C(\mathfrak{X}^{0},\mathbb{R}^{n})\), let the \textit{mixed-dimensional linearized strain} be defined as
\begin{equation}
\mathfrak{e}(\mathfrak{u}) \coloneqq D\mathfrak{E}(\underline{\Phi })(\Xi\mathfrak{u}),
\end{equation}
where \( D\mathfrak{E}(\underline{\Phi })(\Psi)\) is the Fréchet derivative of \(\mathfrak{E}\) at \(\underline{\Phi }\) acting on the perturbation \( \Psi\). When expressed as a linear operator on \(\mathfrak{u}(t)\), we refer to this operator as the \emph{symmetric gradient}. Thus \(\mathfrak{D}_{s} : C^{1}(\mathfrak{X}^{0},\mathbb{R}^{n})\rightarrow L^{2}(\mathfrak{X}^{1},\mathbb{R}^{n})\) is defined as
\begin{equation}
\mathfrak{D}_{s}\mathfrak{u} \coloneqq \mathfrak{e}(\mathfrak{u}).
\end{equation}
\end{definition}
\begin{example}\label{eg:3.12}
Continuing from Example \ref{eg:3.8}, we consider the interpretation of the mixed-dimensional linearized strain tensor on domains of various dimensionality, keeping in mind that \(\underline{\Phi }\) is the unstrained state, i.e.  \(\mathfrak{E}(\underline{\Phi }) = 0\).
\begin{enumerate} \small
    \item For top-dimensional domains, \( i\in I^{n}\), then as in the fixed-dimensional case, the linearized strain tensor takes the form
    \begin{equation}
    \iota_{i}\mathfrak{e}(\mathfrak{u}) =\frac{1}{2}(\underline{\mathbf{F}}_{i}^{T}Du_{i}(t)+(Du_{i}(t))^{T}\underline{\mathbf{F}}_{i}).
    \end{equation}
    \item On the boundaries of the top-dimensional domains \( i\in\mathfrak{S}_{j}\), where \( j\in I^{n}\), the linearized strain tensor is represented by a \(\mathbb{R}^{n}\times\mathbb{R}^{d_{i}}\) matrix. It has the explicit form
    \begin{equation}
        \iota_{i}\mathfrak{e}(t) =\frac{1}{2}(\hat{\underline{\mathbf{F}}}_{i}^{T}Du_{i}(t)+(D\hat{u}_{i}(t))^{T}\underline{\mathbf{F}}_{i}) =\frac{1}{2}\begin{pmatrix}
        \underline{\mathbf{F}}_{i}^{T}Du_{i}(t)+(Du_{i}(t))^{T}\underline{\mathbf{F}}_{i} \\ 
        0 
        \end{pmatrix}.
        \end{equation}
As in the case of the finite strain, the last \( n-d_{i}\) rows of \( \iota_{i}\mathfrak{e}(t)\) are identically zero.

    \item For domains \( i\in I^{n-1}\) (the fractures), we calculate the Fréchet derivative as 
\begin{equation}
\begin{split}
\iota_{i}(D\mathfrak{E}(\underline{\Phi }))(\Psi) & = \lim_{\epsilon \rightarrow 0}\epsilon^{-1}(\iota_{i}\mathfrak{E}(\underline{\Phi }+\epsilon \Psi)-\iota_{i}\mathfrak{E}(\underline{\Phi }))\\ 
& = \lim_{\epsilon \rightarrow 0}\frac{1}{\epsilon }\begin{bmatrix}
-\underline{\mathbf{F}}_{i}^{T}(\iota_{i}\mathbbm{d}_{\underline{\Phi }+\epsilon \Psi }\underline{\Phi })_{\parallel} \\ 
\mathcal{l}_{i}^{-1}((\iota_{i}\mathbbm{d}_{\underline{\Phi }+\epsilon \Psi }(\underline{\Phi }+\epsilon \Psi))_{\perp }-(\iota_{i}\mathbbm{d}_{\underline{\Phi }+\epsilon \Psi }\underline{\Phi })_{\perp }) 
\end{bmatrix}\\ 
&  = \lim_{\epsilon \rightarrow 0}\frac{1}{\epsilon }\begin{bmatrix}
-\underline{\mathbf{F}}_{i}^{T}(\iota_{i}\mathbbm{d}_{\underline{\Phi }+\epsilon \Psi }\underline{\Phi })_{\parallel} \\ 
\epsilon\mathcal{l}_{i}^{-1}(\iota_{i}\mathbbm{d}_{\underline{\Phi }+\epsilon \Psi }\Psi)_{\perp }
\end{bmatrix} =\begin{bmatrix}
\underline{\mathbf{F}}_{i}^{T}(\iota_{i}\mathbbm{d}_{\underline{\Phi }}\Psi)_{\parallel} \\ 
\mathcal{l}_{i}^{-1}(\iota_{i}\mathbbm{d}_{\underline{\Phi }}\Psi)_{\perp }
\end{bmatrix}.
\end{split}
\end{equation}
In the final line, we have used the continuity of the jump operator, as elaborated in Remark \ref{remark:2.5} . Substituting in the extended deformation \( \Xi\mathfrak{u}\), we now obtain
\begin{equation}\label{eq:3.22}
 \iota_{i}\mathfrak{e}(\mathfrak{u}) = \iota_{i}(D\mathfrak{E}(\underline{\Phi }))(\Xi\mathfrak{u}) =\begin{bmatrix}
\underline{\mathbf{F}}_{i}^{T}(\iota_{i}\mathbbm{d}_{\underline{\Phi }}\Xi\mathfrak{u})_{\parallel} \\ 
\mathcal{l}_{i}^{-1}(\iota_{i}\mathbbm{d}_{\underline{\Phi }}\Xi\mathfrak{u})_{\perp } \\ 
\end{bmatrix} = \begin{bmatrix}
\underline{\mathbf{F}}_{i}^{T}(\iota_{i}\mathbbm{d}_{\underline{\Phi }}\mathfrak{u})_{\parallel} \\ 
\mathcal{l}_{i}^{-1}(\iota_{i}\mathbbm{d}_{\underline{\Phi }}\mathfrak{u})_{\perp } \\ 
\end{bmatrix}.
\end{equation}
Here the extension operators vanish since they are identity operators on the top-level domains (by Definition \ref{def:3.10}). 

    \item As in the finite deformation case, the linearized strain is void for \( i\in I^{d<n-1}\).
\end{enumerate}
\end{example}
\begin{remark}
Example \ref{eg:3.12} illustrates that the extension \( \Xi\) plays no role in the final expressions for the linearized strain. However, this situation changes when considering the linearization of volumetric strain in the next section. Secondly, we emphasize the trivial (but sometimes forgotten) fact that while the finite strain is rotationally invariant, its linearization is not. The importance in deriving a linearized strain from a rotationally invariant quantity is thus to ensure consistency in the limit of small deformations. 
\end{remark}
\begin{remark}\label{remark:3.4}
It is an important detail that while in the finite strain case the differential operators are time-dependent via their dependence on the jump operator \(\mathbbm{d}_{\Phi(t)}\), it is clear from Definition \ref{def:3.11} and Example \ref{eg:3.12} (see e.g. \eqref{eq:3.22}), that the differential operator in the linearized strain is evaluated at \(\underline{\Phi }\), and thus not time-dependent. 
\end{remark}
\begin{figure}[!htbp]
\includegraphics[width=\textwidth]{./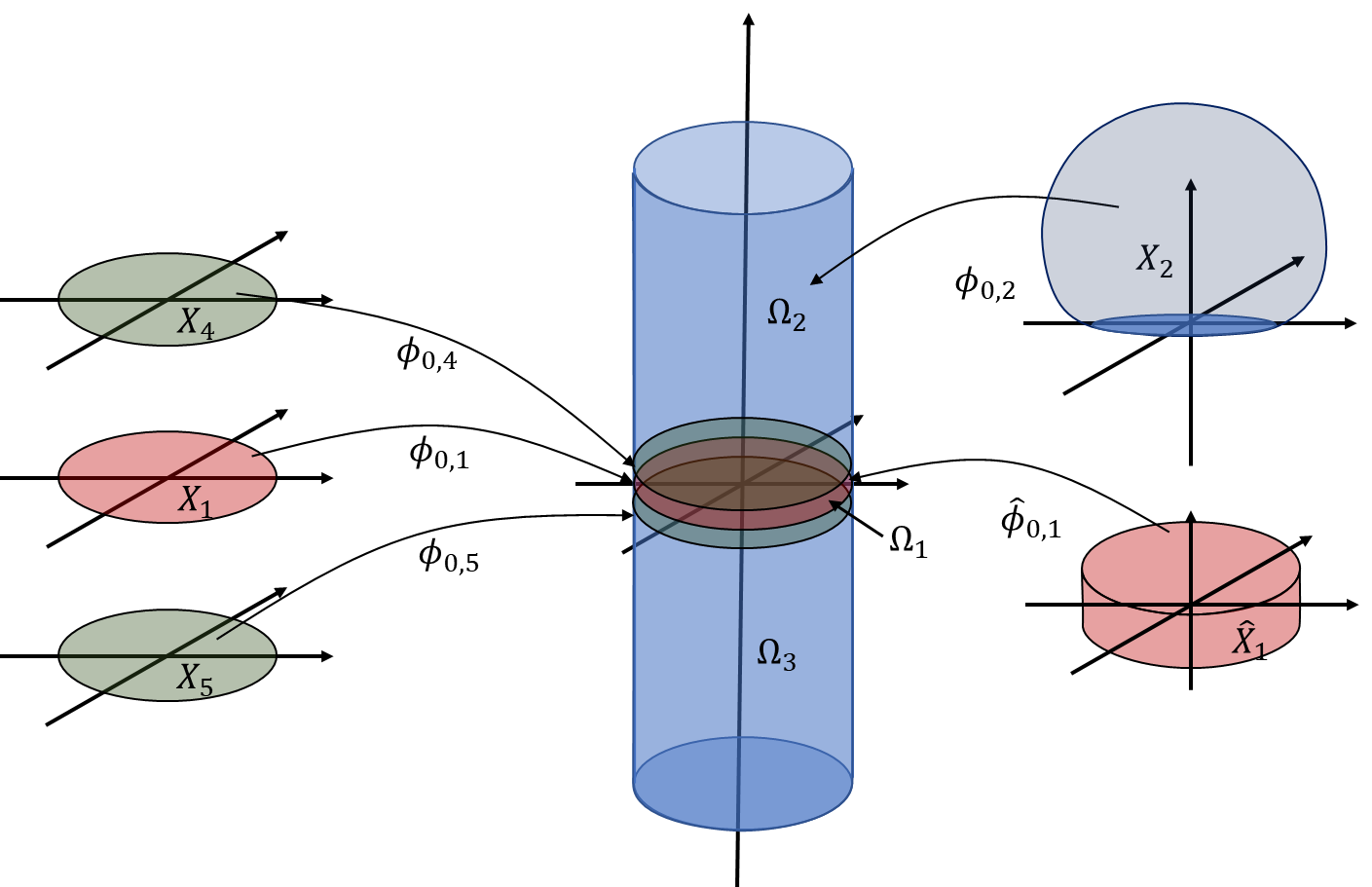}
\caption{Geometry considered in Example \ref{eg:3.14}, where all mappings can be chosen such that \(\phi_{0,j}(0)=0\) and that \((F_j)_{\Vert} = I\) near the origin.}
\label{fig:geometry}
\end{figure}

We give a second example to be more concrete. 
\begin{example}\label{eg:3.14}
Consider a circular fracture defined by the unit disc in the plane as illustrated in Figure \ref{fig:pullback-of-function}. We choose an initial mapping that is the identity mapping near the origin, i.e. \( X_{1} = B^{2}(x)\subset\mathbb{R}^{2}\), such that \(\underline{\phi }_{0,1}(x) =\left[x, 0\right]\in\mathbb{R}^{3}\) and \( \Omega_{1} =\underline{\phi }_{0,1}(X_{1})\). Let the domains ``above" and ``below" be enumerated \( 2, 3\) with mappings \( \phi_{0, 2}(x) = x\) and \( \phi_{0, 3} = x\) on their respective domains \( X_{2}\) and \( X_{3}\), and let the extended mapping of \( X_{1}\) be defined such that \(\underline{\hat{\phi }}_{0,1} =\left[x, \mathcal{l}^{-1}_1 y \right]\) for  \((x,y)\in X_{1}\times [-\epsilon ,\epsilon ]\) for some \( \epsilon >0\).  Then on the fracture, \( x\in X_{1}\), the fully linearized strain is simply 
\begin{equation}
\iota_{1}\mathfrak{e}(t;x) =\begin{pmatrix}
\iota_{2}\mathfrak{u}_{\Vert  }(t;x)-\iota_{3}\mathfrak{u}_{\Vert   }(t;x) \\ 
\mathcal{l}_{1}^{-1}(\iota_{2}\mathfrak{u}_{\perp }(t;x)-\iota_{3}\mathfrak{u}_{\perp }(t;x)) \\ 
\end{pmatrix}\in\mathbb{R}^{3},
\end{equation}
while on the lower boundary of the fracture, indexed by say \( j = 5\) such that \( s_{5} = 1\) and with \(\underline{\hat{\phi }}_{0,5} =\underline{\hat{\phi }}_{0,1}\), the strain is 
\begin{equation}
\iota_{5}\mathfrak{e}(t;x) =\frac{1}{2}((D\iota_{5}\mathfrak{u})^{T} + D\iota_{5}\mathfrak{u}) = \frac{1}{2}\begin{bmatrix}
(D_{\Vert  }\iota_{5}\mathfrak{u}_{\Vert  })^{T}+D_{\Vert  }\iota_{5}\mathfrak{u}_{\Vert  } \\ 
0  
\end{bmatrix}.
\end{equation}
Thus \( \iota_{5}\mathfrak{e}(t)\) is represented by a \( 3\times 2\) matrix similar to the horizontal components of the linearized strain. We note that when seen together, the fracture strain and (either of) the boundary strains can be combined and considered as a representation of the full strain. 

For the interior of the matrix, we recover the standard linearized strain, such as in e.g. \( X_{3}\)
\begin{equation}
\iota_{3}\mathfrak{e}(t;x) =\frac{1}{2}((D\iota_{3}\mathfrak{u})^{T} + D\iota_{3}\mathfrak{u}).
\end{equation}
\end{example}

\subsection{Mixed-dimensional volume measure and the matrix trace operator}
\label{sec:mixedd_dimensional_volume}

We will see in the continuation that flow is naturally formulated with pressures in \( L^{2}(\mathfrak{X}^{n}, \mathbb{R})\) and fluxes in \( L^{2}(\mathfrak{X}^{n-1}, \mathbb{R})\) whereas the mechanics is naturally formulated with displacements in \( L^{2}(\mathfrak{X}^{0},\mathbb{R}^{n})\) and strains in \( L^{2}(\mathfrak{X}^{1},\mathbb{R}^{n})\), cf. Examples \ref{eg:2.3} and \ref{eg:2.3b}. In order to develop appropriate coupling between flow and mechanics, we will also need operators mapping between these spaces, particularly to capture the effect of volume changes in the lower-dimensional domains (and conversely, the impact of fluid pressure on the total stress). As in the preceding sections, we present the volume measure in the setting of finite deformation first, and subsequently its linearization. 

\subsubsection{Finite deformation volumetric strain}
\label{sec:finite_deformation}

By continuity, the determinant of the derivative of the transformation \( J \coloneqq \det(D\hat{\Phi })\) contains the volume of the physical configuration relative to the volume of the reference domains. This is sufficient for the top-dimensional domains, \( i\in I^{n}\), however, for lower-dimensional domains \( i\in I^{d<n}\), it is of interest to include not only the static weight \(\mathcal{l}_{i}\), but also the change associated with the jump in displacement. We therefore define the mixed-dimensional volume \(\mathfrak{J}\) as follows. 
\begin{definition}\label{def:3.9}
Let \( \omega_{i}^{j}\in C^{1}(X_{i})\) be a set of nonnegative weights for \( i\in I^{d<n}\) and \( j\in\mathfrak{F}\) with \( s_{j} = i\). The \textit{mixed-dimensional volume density} \(\mathfrak{J}(\Phi)\) is defined such that:
\begin{enumerate} \small
    \item \label{def: 3.9.1} For \( i\in I^{n}\), then \( \iota_{i}\mathfrak{J}(\Phi) = \mathrm{vol}(\mathbf{F}_{i})\), defined in \eqref{eq: definition vol}.
    \item \label{def: 3.9.2} For \( i\in I^{d<n-1}\), then 
\begin{equation}
\iota_{i}\mathfrak{J}(\Phi) =\mathcal{l}_{i}^{n-d_{i}}\left(1+\sum_{j\in J_{i}^{n-1}}^{}\omega_{i}^{j}\mathcal{l}_{j}^{-1}\phi_{j, i}^{\ast }(\iota_{j}\mathbbm{d}_{\Phi }\Phi)_{\perp }\right)\mathrm{vol}(\mathbf{F}_{i}),\\ 
\end{equation}
where \( J_{i}^{n-1}\) is the set of indexes \( j\in \bigcup_{l\in I^{n-1}}^{}\mathfrak{S}_{l}\) such that \( s_{j} = i\).

    \item \label{def: 3.9.3} For \( i\in I\), and \( j\in I_{i}\), then \( \iota_{j}\mathfrak{J}(\Phi) = 0\).

\end{enumerate}
\end{definition}
The above definition is motivated as follows (confer also Figure \ref{fig:illustration-of-multi-scale-contact-mechanics}): As stated in the mixed-dimensional continuum assumption, Definition \ref{def:2.2}, it is natural to consider that the idealization of the fracture has some effective opening \(\mathcal{l}_{i}\), and that in general its volume per unit area changes linearly with perpendicular opening (or closing) \( \iota_{i}(\mathbbm{d}\Phi)_{\perp }\) according to a proportionality constant \( \omega_{i}^{i}\). Similarly, the cross-sectional area associated with an intersection will have some lower limit \(\mathcal{l}_{i}^{n-d_{i}}\), and change proportionally to the opening of nearby fractures meeting at that intersection, according to weights \( \omega_{i}^{j}\). We recognize that the definition stated in point \ref{def: 3.9.1} above can be written as special case of point \ref{def: 3.9.2} (by introducing the convention that \(\mathcal{l}_{i} = 1\) for \( i\in I^{n}\)), but retain separate definitions for pedagogical clarity. Finally, it is typically not relevant to consider volume changes of the surfaces (point \ref{def: 3.9.3}). 
\begin{figure}[!htbp]
\includegraphics[width=\textwidth]{./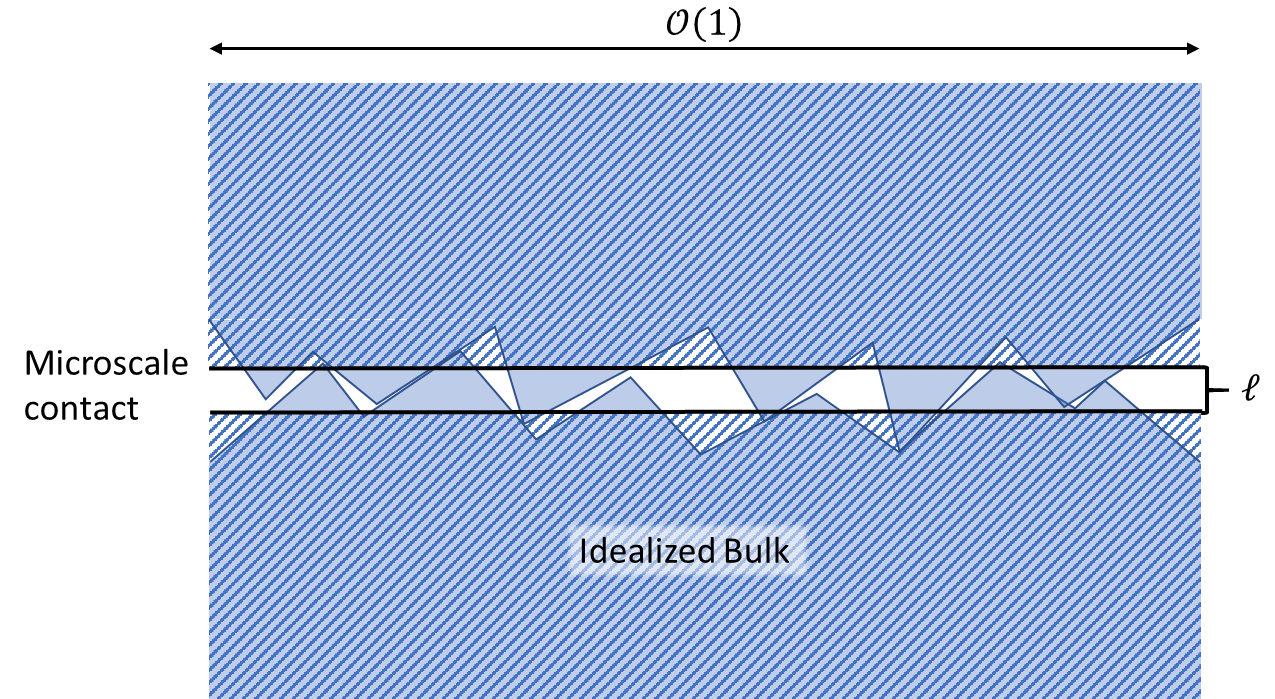}
\caption{Illustration of multi-scale contact mechanics, adapted from Oden and Martins \cite{oden1985models}}
\label{fig:illustration-of-multi-scale-contact-mechanics}
\end{figure}

\subsubsection{Linearized volume change}
\label{sec:linearized_vol}
We briefly recall under the small deformation assumption in fixed-dimensional continuum mechanics, the linearization of the volumetric change of a deformation becomes the matrix trace. To be precise, let us as in Section \ref{sec:mixed_dimensional_linearized} consider a small deformation \( \Phi(t)\) such that \( \Phi(t) = \Xi\mathfrak{u}(t)+\underline{\Phi }\), where \eqref{eq:3.16} holds. Then since the determinant commutes with the matrix product, for any \( i\in I^{n}\) we have the relationship (recall that in the case when \( d_{i} = n\), then \( \mathrm{vol} = \det\), and for \( d_{i}<n\), we have \( \mathrm{vol}(\mathbf{F}_{i})^{2} = \det(\mathbf{F}_{i}^{T}\mathbf{F}_{i})\)):
\begin{equation}
\begin{split}
\frac{J_{i}}{\underline{J}_{i}} &=\frac{\det(\mathbf{F}_{i})}{\det(\underline{\mathbf{F}}_{i})} = \det(\underline{\mathbf{F}}_{i}^{-1}\mathbf{F}_{i}) = \det(\underline{\mathbf{F}}_{i}^{-1}D(\underline{\phi }_{i}+u_{i})) = \det(I+\underline{\mathbf{F}}_{i}^{-1}Du_{i})\\
&= \det(I+\underline{\mathbf{C}}_{i}^{-1}\underline{\mathbf{F}}_{i}^{T}Du_{i})\\
& = 1+\text{ Trace }(\underline{\mathbf{C}}_{i}^{-1}\underline{\mathbf{F}}_{i}^{T}Du_{i})+\mathcal{O}(\left\vert Du\right\vert^{2})\\
&= 1+\text{ Trace }(\underline{\mathbf{C}}_{i}^{-1}\mathbf{e}(u_{i}))+\mathcal{O}(\left\vert Du\right\vert^{2}).
\end{split}
\end{equation}
where we see that the linear term is the trace of the linearized strain, scaled by the deformation tensor of the undeformed state. Thus
\begin{equation}
\frac{(DJ_{i})(u_{i})}{\det(\underline{\mathbf{F}}_{i})} = \text{ Trace }(\underline{\mathbf{C}}_{i}^{-1}\mathbf{e}(u_{i})).
\end{equation}

In the same spirit, we first consider the linearization of the mixed-dimensional volume change, and secondly identify its interpretation as a matrix trace.  
\begin{definition}
For some initial mapping \(\underline{\Phi }\), and some deformation \(\mathfrak{u}\in C(\mathfrak{X}^{0},\mathbb{R}^{n})\), the \textit{linearized mixed-dimensional volume change} \(\mathfrak{j}\in L^{2}(\mathfrak{X}, \mathbb{R})\) is defined as
\begin{equation}
\mathfrak{j}(\mathfrak{u}) \coloneqq D\mathfrak{J}(\underline{\Phi })(\Xi\mathfrak{u}),
\end{equation}
where \( D\mathfrak{J}(\underline{\Phi })(\Psi)\) is the derivative of \(\mathfrak{J}\) at \(\underline{\Phi }\) for the perturbation \( \Psi\).
\end{definition}
\begin{example}\label{eg:3.16}
Continuing from Example \ref{eg:3.12}, we consider the interpretation of the mixed-dimensional linearized volumetric strain on domains of various dimensionality. 
\begin{enumerate} \small
    \item For \( i\in I^{n}\), then we obtain as in the fixed-dimensional case
    
    \begin{equation}
     \frac{\iota_{i}\mathfrak{j}(\mathfrak{u})}{\det(\underline{\mathbf{F}}_{i})} = \text{ Trace }(\underline{\mathbf{C}}_{i}^{-1}\iota_{i}\mathfrak{e}(\mathfrak{u})).
    \end{equation}
    
    \item For \( i\in I^{n-1}\), then we first calculate the Fréchet derivative  based on its action on \( \Psi\) as (using that for a conforming forest, \((\iota_{i}\mathbbm{d}_{\underline{\Phi }}\underline{\Phi })_{\perp } = 0\) by definition, and introducing the shorthand notation \(\mathbbm{d}_{\Delta }(\underline{\Phi }+\epsilon \Psi) =\mathbbm{d}_{\underline{\Phi }+\epsilon \Psi }(\underline{\Phi }+\epsilon \Psi)-\mathbbm{d}_{\underline{\Phi }}(\underline{\Phi }+\epsilon \Psi)\)):
    
\begin{equation}
\begin{split}
\iota_{i} &D\mathfrak{J}\left(\underline{\Phi }\right)\left(\Psi\right)
= \lim_{\epsilon \rightarrow 0}\epsilon^{-1}\left(\iota_{i}\mathfrak{J}\left(\underline{\Phi }+\epsilon \Psi\right)-\iota_{i}\mathfrak{J}\left(\underline{\Phi }\right)\right)\\  
& = \lim_{\epsilon \rightarrow 0}\epsilon^{-1}\Bigg(\left(\mathcal{l}_{i}+\omega_{i}^{i}\left(\iota_{i}\mathbbm{d}_{\underline{\Phi }+\epsilon \Psi }\left(\underline{\Phi }+\epsilon \Psi\right)\right)_{\perp }\right)\mathrm{vol}\left(D\left(\underline{\phi }_{i}+\epsilon \psi_{i}\right)\right)
-\mathcal{l}_{i}\mathrm{vol}\left(\underline{\mathbf{F}}_{i}\right)\Bigg)\\
&= \lim_{\epsilon \rightarrow 0}\epsilon^{-1}\Bigg(\left(\mathcal{l}_{i}+\omega_{i}^{i}\left(\iota_{i}\mathbbm{d}_{\underline{\Phi }+\epsilon \Psi }\left(\underline{\Phi }+\epsilon \Psi\right)\right)_{\perp }\right)\\
&\left.\sqrt{\det\left(\left(D\left(\underline{\phi }_{i}+\epsilon \psi_{i}\right)\right)^{T}D\left(\underline{\phi }_{i}+\epsilon \psi_{i}\right)\right)}-\mathcal{l}_{i}\sqrt{\det\left(\underline{\mathbf{F}}_{i}^{T}\underline{\mathbf{F}}_{i}\right)}\right)\\ 
&= \lim_{\epsilon \rightarrow 0}\epsilon^{-1}\Bigg(\left(\mathcal{l}_{i}+\omega_{i}^{i}\left(\left(\iota_{i}\mathbbm{d}_{\underline{\Phi }}\left(\underline{\Phi }+\epsilon \Psi\right)\right)_{\perp }+\left(\iota_{i}\mathbbm{d}_{\Delta }\left(\underline{\Phi }+\epsilon \Psi\right)\right)_{\perp }\right)\right)\\
&\sqrt{\det\left(\underline{\mathbf{F}}_{i}^{T}\underline{\mathbf{F}}_{i}+\epsilon\left(\underline{\mathbf{F}}_{i}^{T}D\psi_{i}+\left(D\psi_{i}\right)^{T}\underline{\mathbf{F}}_{i}\right)+\epsilon^{2}\left(D\psi_{i}\right)^{T}D\psi_{i}\right)}\\
&-\mathcal{l}_{i}\sqrt{\det\left(\underline{\mathbf{F}}_{i}^{T}\underline{\mathbf{F}}_{i}\right)}\Bigg)\\
&= \lim_{\epsilon \rightarrow 0}\epsilon^{-1}\mathrm{vol}\left(\underline{\mathbf{F}}_{i}\right)\Bigg(\left(\mathcal{l}_{i}+\omega_{i}^{i}\left(\iota_{i}\left(\epsilon\mathbbm{d}_{\underline{\Phi }}\Psi +\mathbbm{d}_{\Delta }\left(\underline{\Phi }+\epsilon \Psi\right)\right)\right)_{\perp }\right)\\
&\left(1+\frac{\epsilon }{2}\text{ Trace }\left(\underline{\mathbf{C}}_{i}^{-1}\left(\underline{\mathbf{F}}_{i}^{T}D\psi_{i}+\left(D\psi_{i}\right)^{T}\underline{\mathbf{F}}_{i}\right)\right)+\mathcal{O}\left(\epsilon^{2}\right)\right)-\mathcal{l}_{i}\Bigg)\\
&= \mathrm{vol}\left(\underline{\mathbf{F}}_{i}\right)\left(\omega_{i}^{i}\left(\iota_{i}\mathbbm{d}_{\underline{\Phi }}\Psi\right)_{\perp }+\frac{\mathcal{l}_{i}}{2}\text{ Trace }\left(\underline{\mathbf{C}}_{i}^{-1}\left(\underline{\mathbf{F}}_{i}^{T}D\psi_{i}+\left(D\psi_{i}\right)^{T}\underline{\mathbf{F}}_{i}\right)\right)\right)\\
&\lim_{\epsilon \rightarrow 0} 1+\frac{\omega_{i}^{i}\mathrm{vol}\left(\underline{\mathbf{F}}_{i}\right)\left(\iota_{i}\mathbbm{d}_{\underline{\Phi }}\Psi\right)_{\perp }}{2}\text{ Trace }\left(\underline{\mathbf{C}}_{i}^{-1}\left(\underline{\mathbf{F}}_{i}^{T}D\psi_{i}+\left(D\psi_{i}\right)^{T}\underline{\mathbf{F}}_{i}\right)\right)\\
&\lim_{\epsilon \rightarrow 0} \epsilon +\omega_{i}^{i}\mathrm{vol}\left(\underline{\mathbf{F}}_{i}\right)\lim_{\epsilon \rightarrow 0}\left(\iota_{i}\left(\mathbbm{d}_{\Delta }\underline{\Phi}\right)\right)_{\perp }\left(1+\epsilon \text{ Trace }\left(\underline{\hat{\mathbf{F}}}_{i}^{-1}D\hat{\psi }_{i}\right)\right)+\mathcal{O}\left(\epsilon\right)\\
& = \mathrm{vol}\left(\underline{\mathbf{F}}_{i}\right)\left(\omega_{i}^{i}\left(\iota_{i}\mathbbm{d}_{\underline{\Phi }}\Psi\right)_{\perp }+\frac{\mathcal{l}_{i}}{2}\text{ Trace }\left(\underline{\mathbf{C}}_{i}^{-1}\left(\underline{\mathbf{F}}_{i}^{T}D\psi_{i}+\left(D\psi_{i}\right)^{T}\underline{\mathbf{F}}_{i}\right)\right)\right).
\end{split}
\end{equation}
Here we have used the continuity of the closest point projection in the definition of the jump operator to conclude that \( \lim_{\epsilon \rightarrow 0}(\iota_{i}(\mathbbm{d}_{\Delta }\Phi))_{\perp } = 0\). 
Now it follows that: 
\begin{equation}
\begin{split}
\frac{\iota_{i}\mathfrak{j}(\mathfrak{u})}{\mathcal{l}_{i}\mathrm{vol}(\underline{\mathbf{F}}_{i})} &=\frac{\iota_{i}D\mathfrak{J}(\Phi)(\Xi\mathfrak{u})}{\mathcal{l}_{i}\mathrm{vol}(\underline{\mathbf{F}}_{i})}\\
&=\frac{\omega_{i}^{i}}{\mathcal{l}_{i}}(\iota_{i}\mathbbm{d}\mathfrak{u})_{\perp }+\text{ Trace }(\underline{\mathbf{C}}_{i}^{-1}\frac{1}{2}(\underline{\mathbf{F}}_{i}^{T}D(\iota_{i}\Xi\mathfrak{u})+(D(\iota_{i}\Xi\mathfrak{u}))^{T}\underline{\mathbf{F}}_{i})).
\end{split}
\end{equation}
Here we have used that for \( i\in I^{n-1}\), it holds that \( \iota_{i}\mathbbm{d}\Xi\mathfrak{u} = \iota_{i}\mathbbm{d}\mathfrak{u}\). 
    \item For \( i\in I^{d<n-1}\), a similar calculation as above gives
    \begin{equation}
    \begin{split}
    \frac{\iota_{i}\mathfrak{j}(\mathfrak{u})}{\mathcal{l}_{i}^{n-d_{i}}\mathrm{vol}(\underline{\mathbf{F}}_{i})} &= \sum_{j\in\hat{I}_{i}^{n-1}}^{}\frac{\omega_{i}^{i}}{\mathcal{l}_{i}}(\iota_{j}\mathbbm{d}\mathfrak{u})_{\perp }\\
    &+\text{ Trace }\left(\underline{\mathbf{C}}_{i}^{-1}\frac{1}{2}(\underline{\mathbf{F}}_{i}^{T}D(\iota_{i}\Xi\mathfrak{u})+(D(\iota_{i}\Xi\mathfrak{u}))^{T}\underline{\mathbf{F}}_{i})\right).
    \end{split}
    \end{equation}
    Here we have used that the initial state is a conforming forest, thus since \( s_{i} = s_{j}\) it holds that \(\underline{\phi }_{j,i}^{\ast }\) is the identity.
    \item  For \( i\in I\), and \( j\in I_{i}\), then \( \iota_{j}\mathfrak{j}(\mathfrak{u}) = 0\).
\end{enumerate}
\end{example}
The above calculation shows that for fractures and intersections, \( i\in I^{d<n}\), the linearized volume change has two components, which have the natural interpretations of transverse opening and longitudinal stretching. 

Motivated by the fixed-dimensional case, we wish to express the linearized volume change in terms of the linearized strain. We begin by observing that with the representation of \( \Xi\) as defined above, and \( i\in I^{d<n}\), then 
\begin{equation}
\begin{split}
&\text{ Trace }(\underline{\mathbf{C}}_{i}^{-1}\frac{1}{2}(\underline{\mathbf{F}}_{i}^{T}D(\iota_{i}\Xi\mathfrak{u})+(D(\iota_{i}\Xi\mathfrak{u}))^{T}\underline{\mathbf{F}}_{i}))\\ &=\frac{1}{\left\vert\hat{I}_{i}^{n}\right\vert }\sum_{j\in\hat{I}_{i}^{n}}^{}\text{ Trace }(\underline{\mathbf{C}}_{i}^{-1}\frac{1}{2}(\underline{\mathbf{F}}_{i}^{T}D(\iota_{j}\hat{\mathfrak{u}})+(D(\iota_{j}\hat{\mathfrak{u}}))^{T}\underline{\mathbf{F}}_{i}))\\ &=\frac{1}{\left\vert\hat{I}_{i}^{n}\right\vert }\sum_{j\in\hat{I}_{i}^{n}}^{}\text{ Trace }(\underline{\mathbf{C}}_{i}^{-1}(\iota_{j}\mathfrak{e})_{\Vert  })
\end{split}
\end{equation}
This calculation again exploits that the initial state is a conforming forest, since for \( s_{i} = s_{j}\) it holds that \(\underline{\mathbf{F}}_{i} =\underline{\mathbf{F}}_{j}\). Moreover, we recall that for \( i\in I^{n-1}\), it holds that \(\frac{1}{\mathcal{l}_{i}}(\iota_{i}\mathbbm{d}\mathfrak{u})_{\perp } =(\iota_{i}\mathfrak{e})_{\perp }\). This allows us to introduce the following: 
\begin{definition}
The \textit{mixed-dimensional matrix trace} operator \(\mathfrak{T'}:L^{2}(\mathfrak{X}^{1},\mathbb{R}^{n})\rightarrow L^{2}(\mathfrak{X}^{n}, \mathbb{R})\) is defined as follows. For \(\mathfrak{u}\in L^{2}(\mathfrak{X}^{1},\mathbb{R}^{n})\), and \(\mathfrak{e} = \mathfrak{D}_{s}\mathfrak{u}\in L^{2}(\mathfrak{X}^{1},\mathbb{R}^{n})\) the trace operator satisfies 
\begin{equation}
\mathfrak{j}(\mathfrak{u})\coloneqq\mathfrak{J}(\underline{\Phi })\mathfrak{T'e}=\mathfrak{J}(\underline{\Phi })\mathfrak{T}'(\mathfrak{D}_{s}\mathfrak{u})
\end{equation}
\end{definition}
\begin{example}Continuing from Example \ref{eg:3.16}, we consider the interpretation of the mixed-dimensional matrix trace on domains of various dimensionality.
\begin{enumerate} \small
    \item For \( i\in I^{n}\), then as in the fixed-dimensional case
\begin{equation*}
\iota_{i}(\mathfrak{T}'\mathfrak{e}) = \text{ Trace}(\underline{\mathbf{C}}_{i}^{-1}\iota_{i}\mathfrak{e})
\end{equation*}

    \item For \( i\in I^{d<n}\), then
\begin{equation*}
\iota_{i}(\mathfrak{T}'\mathfrak{e}) = \sum_{j\in\hat{I}_{i}^{n-1}}^{}\omega_{i}^{j}(\iota_{j}\mathfrak{e})_{\perp }+\frac{1}{\left\vert\hat{I}_{i}^{n}\right\vert }\sum_{j\in\hat{I}_{i}^{n}}^{}\text{ Trace }(\underline{\mathbf{C}}_{i}^{-1}(\iota_{j}\mathfrak{e})_{\Vert  })
\end{equation*}

    \item For \( i\in I\), and \( j\in I_{i}\), then \( \iota_{j}(\mathfrak{T}'\mathfrak{e}) = 0\).

\end{enumerate}
\end{example}

\begin{figure}
    \centering
    \includegraphics[width=0.8\textwidth]{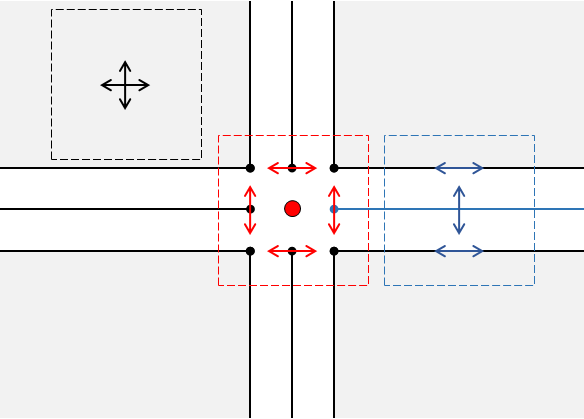}
    \caption{The mixed-dimensional trace operator \(\mathfrak{T'}\) maps strains defined in \( L^{2}(\mathfrak{X}^{1},\mathbb{R}^{n})\) to volumetric changes defined in \( L^{2}(\mathfrak{X}^{n}, \mathbb{R})\) for \( i\in I^{2}\) (black), \( i\in I^{1}\) (blue), \( i\in I^{0}\) (red).}
    \label{fig:mixed-dimentional-trace-operator}
\end{figure}

\begin{example}
Let \( n = 2\) and consider the geometry from Figure \ref{fig:mixed-dimentional-trace-operator}.
\begin{itemize} \small
    \item Let \( i\in I^{2}\). The linearized volumetric strain is given, as in the fixed-dimensional case by the trace of the linear strain, depicted by black arrows in Figure \ref{fig:mixed-dimentional-trace-operator}.

    \item Let \( i\in I^{1}\). Volumetric changes of \( \Omega_{i}\) are measured using two metrics: changes with respect to tangential dilation are captured using the strain on the adjacent skins whereas changes in the aperture are measured using the locally defined strain vector. This is illustrated by the blue arrows in Figure \ref{fig:mixed-dimentional-trace-operator}.

    \item Let \( i\in I^{0}\). In this case, volumetric changes of \( \Omega_{i}\) are measured using the perpendicular components of the strain vectors in the adjacent fractures. This is illustrated by the red arrows in Figure \ref{fig:mixed-dimentional-trace-operator}. 

    \item Let \( n = 3\) and consider an extrusion of the geometry from Figure \ref{fig:mixed-dimentional-trace-operator} in the third dimension. Then, for \( i\in I^{1}\), \( \Omega_{i}\) represents an intersection line between fractures. Here, the tangential stretching is captured using the trace of the three-dimensional strain in the four corner lines whereas the opening is measured using the change in aperture of the adjacent fractures.
\end{itemize}
\end{example}

We also define a mapping of pressures to stresses, generalizing the identity tensor. The following definition ensures that the duality from the fixed-dimensional case is preserved in reference space.
\begin{definition}
\label{def:3.20}
Let the \textit{mixed-dimensional identity} operator \(\mathfrak{T} : L^{2}(\mathfrak{X}^{n}, \mathbb{R})\rightarrow L^{2}(\mathfrak{X}^{1},\mathbb{R}^{n})\) be such that 
\begin{align}
\langle\mathfrak{Ta, b}\rangle_{\mathfrak{X}^{1}} &\coloneqq \langle\mathfrak{a, T'b}\rangle_{\mathfrak{X}^{n}},  
& \forall\mathfrak{a} &\in L^{2}(\mathfrak{X}^{n}, \mathbb{R}) \text{ and } \mathfrak{b}\in L^{2}(\mathfrak{X}^{1},\mathbb{R}^{n}).
\end{align}
\end{definition}
We summarize the linearized operators constructed in Section \ref{sec:mixed_dimensional_linearized} and \ref{sec:linearized_vol} with the diagram shown in Figure \ref{fig:figure7}.  This diagram also includes the co-symmetric-gradient, \( (\mathbb{D}_{s} \cdot) \), which will be defined in Section \ref{sec:governing_eq_linearized}. Its  duality with \(\mathfrak{D}_{s}\) is discussed in Section \ref{sec:the_space}.
\begin{figure}[h]
    \centering
    \includegraphics[width=0.6\textwidth]{./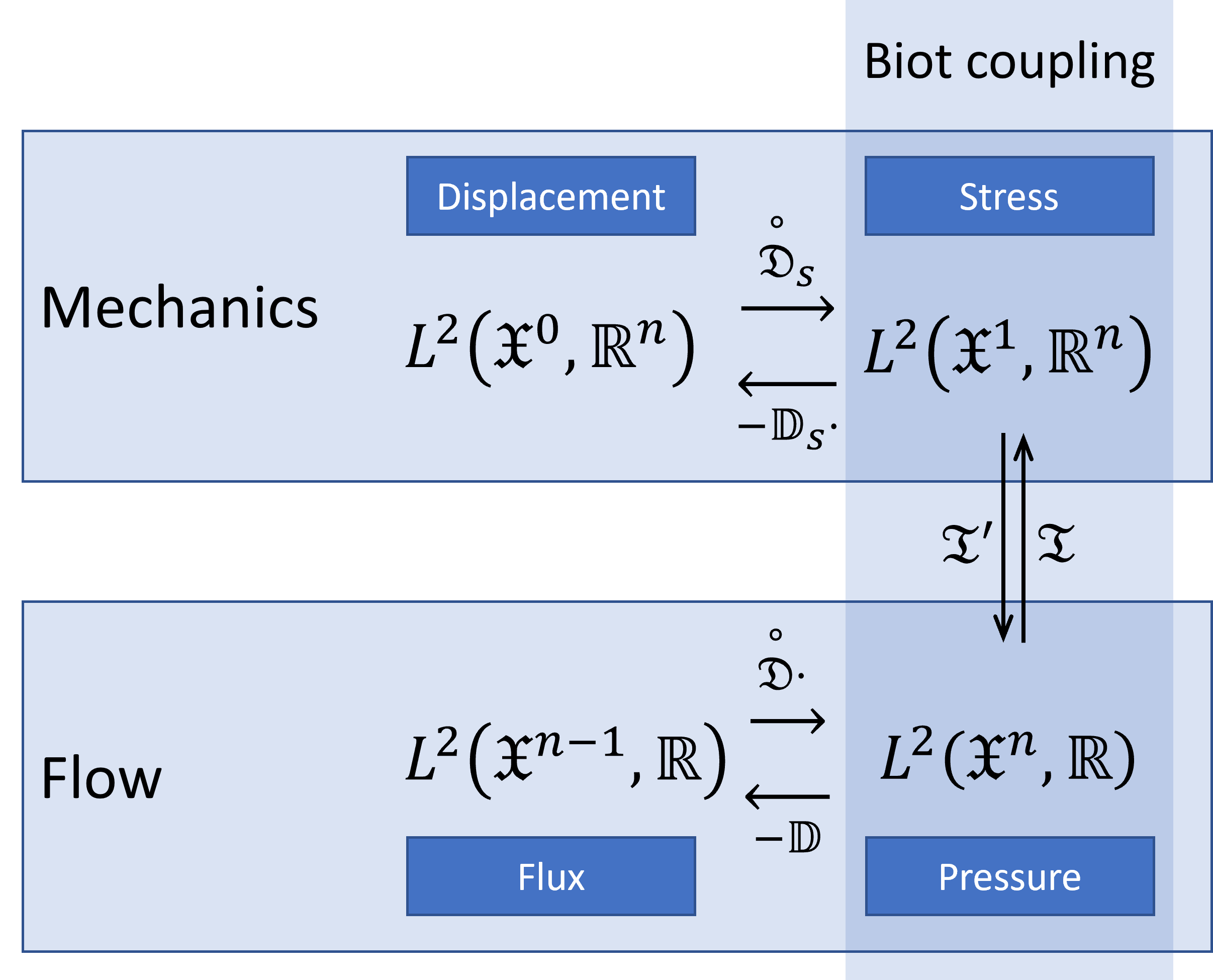}
    \caption{The canonical structure of coupled poromechanics.}
    \label{fig:figure7}
\end{figure}

\section{Mixed-dimensional poromechanics}
\label{sec:mixed-dimentional-poromechanics}

The preceding sections have laid the geometric foundations for considering deformation in mixed-dimensional geometries. In this section, we use these foundations to develop a theory of poromechanics. Our presentation will first establish the spatial structure of the system, as inferred from the geometry and differential operators defined above. We then consider the physical modeling of fluid flow, thence mechanics, and finally summarize the complete model. We close this section by discussing the relationship between mixed-dimensional poromechanics and classical equidimensional models. The exposition follows the standard modeling approach for poromechanics, which in the fixed-dimensional case is carefully reviewed by e.g. Coussy  \cite{coussy2005poromechanics}, and summarized recently in  \cite{reveron2021iterative}.

 We emphasize that all modeling in this section is considered on the reference domains \(\mathfrak{X}\). This implies that all variables and derivatives are with respect to these domains, such that e.g. when considering a mass density \( \rho_{i}\), it is understood that this is mass density per unit volume form on \( X_{i}\). With density as a concrete example, and domains \( i\) with \( d_{i} = 3\), this implies that density is a function of thermodynamics state (pressure and temperature), as well as the volumetric deformation (pullback of \( n\)-forms).  On domains \( i\) with \( d_{i}<3\), we immediately get the interpretation of mass per measure, i.e. that the physical units of mass density are not simply mass per volume, but rather mass per area, length, or point. 

\subsection{Primary variables and structure}
\label{sec:primary_variables}
We consider the quasi-static poroelastic system as the composition of a mass conservative description of fluid flow, together with a mechanical system at equilibrium, as formulated on the reference domains \(\mathfrak{X}^{k}\). As such, we consider displacement \(\mathfrak{u}\in C(\mathfrak{X}^{0},\mathbb{R}^{n})\), total stress \(\mathfrak{s}\in C(\mathfrak{X}^{1},\mathbb{R}^{n})\), fluid mass density \( \rho(\mathfrak{p})\) (represented for simplicity by pressure \(\mathfrak{p}\in C(\mathfrak{X}^{n}, \mathbb{R})\) in absence of thermal effects) and fluid mass flux \(\mathfrak{q}\in C(\mathfrak{X}^{n-1}, \mathbb{R})\) as primary variables. For the sake of exposition, we will frequently also refer to the strain \(\mathfrak{E}\in C(\mathfrak{X}^{1},\mathbb{R}^{n})\), which we consider as a secondary variable derived from the displacement. In this section we will assume enough regularity for all the definitions below to be valid: a weak formulation where regularity is considered in more detail is presented in Section \ref{sec:well-posedness}.

Before we detail the physical constitutive laws, we note that the structure of the function spaces and constitutive laws essentially dictate the underlying canonical structure of the spatial operators. This is already summarized in Figure \ref{fig:figure7} above, which, with reference to the primary variables defined above, can be summarized in terms of the following structures. 

Conservation structure for fluid mass is stated on the space \( C(\mathfrak{X}^{n}, \mathbb{R})\), and can thus (apart from a source term \(\mathfrak{r}_{\mathfrak{m}}\)) only involve the quantities pressure \(\mathfrak{p}\), volumetric strain \(\mathfrak{J}(\Xi\mathfrak{u}+\underline{\Phi })\) and divergence of flux \(\mathfrak{D} \cdot \mathfrak{q}\). 

Similarly, the balance of mechanical forces is stated on the space \( C(\mathfrak{X}^{0},\mathbb{R}^{n})\), and can thus (apart from external forces \(\mathfrak{r}_{\mathfrak{s}}\)) only involve the quantities displacement \(\mathfrak{u}\) and co-gradient of stress \(\mathbb{D} \cdot \mathfrak{s}\). There are no other operators that map to this space. 

We will see that the system has two constitutive laws. For the fluid, this is a binary relationship on \( C(\mathfrak{X}^{n-1}, \mathbb{R})\), involving the flux \(\mathfrak{q}\) and the gradient \(\mathbb{D} \mathfrak{p}\). 

For the mechanical forces, this is in principle a binary relationship on \( C(\mathfrak{X}^{1},\mathbb{R}^{n})\), involving the total stress \(\mathfrak{s}\) and the strain \(\mathfrak{E}\). However, as noted in Section \ref{sec:linearized_vol}, the linearized volumetric strain is actually a trace map from the strain space to the mass density space, and thus its dual, the identity map, enters the force balance. 

In the following, we will present the full temporal modeling for finite strain and non-linear constitutive laws. As a guide to this development, we provide the following structure, which we consider the canonical ``Laplacian" of mixed-dimensional poromechanics, obtained from linearly combining the admissible relationships (with unit weights): 
\begin{subequations}
\begin{align}
    &\text{Force balance:}
    & \partial_t^2\mathfrak{u} - \mathbb{D}_s \cdot \overbrace{\left( \mathfrak{D}_s \mathfrak{u} + \mathfrak{Tp} \right)}^{\mathfrak{s}} &= \mathfrak{r}_{\mathfrak{m}}, \\
    &\text{Mass balance:}
    & \partial_t\mathfrak{p} + \mathfrak{T}^* \mathfrak{D}_s \mathfrak{u} 
    + \mathfrak{D} \cdot \underbrace{\left( - \mathbb{D} \mathfrak{p} \right)}_{\mathfrak{q}} &= \mathfrak{r}_{\mathfrak{s}}.
\end{align}
\end{subequations}

\subsection{Fluid flow}
\label{sec:fluid_flow}
As surveyed in the previous section, the conservation law for fluid mass can be stated as:
\begin{equation}
\partial_{t}\mathfrak{m} + \mathfrak{D} \cdot \mathfrak{q} = \mathfrak{r}_{\mathfrak{m}},
\end{equation}
where the fluid mass content satisfies a constitutive relation dependent the mechanical strain and fluid pressure, 
\begin{equation}
\mathfrak{m = m}(\mathfrak{E, p})\in C(\mathfrak{X}^{n}, \mathbb{R}).
\end{equation}
The fluid mass flux is assumed to satisfy a linear proportionality with a gradient, specifically the co-divergence \(\mathbb{D}\), of fluid pressure, which corresponds to Darcy’s law for intact porous rock (\( i\in I^{n}\)) and laminar flow fractures (\( i\in I^{n-1}\))  \cite{martin2005modeling, boon2018robust}: 
\begin{equation}
\label{eq:Darcy}
\mathfrak{  q} = \kappa(\mathfrak{E,q})(\mathbb{-D}\mathfrak{p}+\mathfrak{r}_{\mathfrak{q}}).
\end{equation}
With \(\mathfrak{r}_{\mathfrak{q}}\) the contribution due to gravity. Note that we have allowed the material coefficient to depend on the strain, in order to accommodate that the fracture conductivity may be altered when there is slip along the fracture. Moreover, the dependency on the fluid flux accommodates non-linear relationships such as the Darcy-Forchheimer law.
\begin{remark}
The permeability of a rock domain \( \iota_{i}\kappa(\mathfrak{E,q})\) with \( d_{i} = 3\), is the permeability on the reference space \( X_{i}\). Given a permeability \( K_{i}\) on the physical domain, these are related by the usual transformation rules  \cite{hughes1983mathematical}, e.g. 
\begin{equation}\label{eq:4.4}
K_{i} = \det(\mathbf{F}_{i})^{-1}\mathbf{F}_{i}(\iota_{i}\kappa(\mathfrak{E,q}))\mathbf{F}_{i}^{T}.
\end{equation}
In particular, this implies that a scalar permeability \( K_{i}\) in physical space may nevertheless be represented by an anisotropic tensor \( \iota_{i}\kappa\) on \( X_{i}\) (and opposite). On the other hand, it is important to note that \eqref{eq:4.4} preserves symmetry properties, thus symmetry of \( \iota_{i}\kappa\) corresponds to symmetry of \( K_{i}\).  Similar comments apply to all material properties introduced in this and the following sections.
\end{remark}

\subsection{Mechanical response}
\label{sec:mechanical_response}

We consider balance of momentum as the basis for modeling the mechanical response. We state the equilibrium assumption recognizing that \(\mathbb{D} \cdot \) (the co-gradient) is a divergence operator on the top-dimensional domains and their boundaries. Thus for a stress variable \(\mathfrak{s}\) (with the interpretation of a Piola-Kirchhoff stress of the second kind), a change in momentum \( \rho_{r}\partial_{t}^{2}\mathfrak{u}\) in view of a mixture mass density for fluid and rock \( \rho_{r}\) leads us to the following balance of momentum: 
\begin{equation}
\rho_{r}\partial_{t}^{2}\mathfrak{u}-\mathbb{D} \cdot (\boldsymbol{\mathfrak{F}}\mathfrak{s}) =\mathfrak{r}_{\mathfrak{s}},
\end{equation}
with \(\mathfrak{r}_{\mathfrak{s}}\) describing body forces. This vector equation has components associated with the basis vectors of \(\mathbb{R}^{n}\). The deformation \(\boldsymbol{\mathfrak{F}}\) enters the momentum balance to transform forces on \( T\hat{X}_{i}\rightarrow\mathbb{R}^{n}\)  \cite{hughes1983mathematical} with \(\hat{X}_{i}\) the extended domain defined in Definitions \ref{def:2.6} and \ref{def:2.7}.

 The mechanical stress state depends on both the strain and the fluid pressure. As we are primarily interested in moderate deformations, we restrict our attention to linear (Saint Venant-Kirchhoff) stress-strain response in the porous rock and its boundaries, \( j\in\mathfrak{S}_{i}\), with \( i\in I^{n}\)  \cite{temam2005mathematical, gurtin1975continuum}:
\begin{equation}\label{eq:4.6}
\iota_{j}\mathfrak{s}(\mathfrak{E,p}) = C_{j} :\iota_{j}\mathfrak{E}-\iota_{j}(\alpha\mathfrak{Tp}).
\end{equation}
With \( \alpha\) a positive, symmetric linear operator generalizing the Biot-Willis constant  \cite{coussy2005poromechanics} and \( C_{j}\) the fourth-order stiffness tensor.

 Physical reality demands a greater generality than linear mechanical response of the fracture, since both friction and contact mechanics will in general be given by non-linear relationships. Moreover, for finite deformation, determination of contact itself may be a non-linear problem. Therefore we allow for a (possibly non-linear and non-local in space) constitutive law \(\mathfrak{A}_{i}\) between stress and strain to be defined on fractures \( i\in I^{n-1}\) and their boundaries  \cite{kikuchi1988contact}. We impose this by introducing \(\mathfrak{A}_{i}\) as a binary relation (see Appendix \ref{sec:appendix}) such that
\begin{equation} \label{eq: binary relation}
( \iota_{i}(\mathfrak{s}+\alpha\mathfrak{Tp}), \iota_{i}(\hat{\gamma }+\check{\gamma }\partial_{t})\mathfrak{E})\in\mathfrak{A}_{i}.
\end{equation}
Here \(\hat{\gamma }\) and \(\check{\gamma }\) are given parameters with \(\hat{\gamma }+\check{\gamma } = 1\) and \(\hat{\gamma }\check{\gamma } = 0\) on each \( X_{i}\) that allow us to model relationships between \textit{either} stress and strain \textit{or} stress and strain rate, respectively. In Section \ref{sec:connection_to_classical}, we will give examples of particular choices of friction and contact laws.

We combine the constitutive laws for the bulk and fracture subdomains in the notation of a mixed-dimensional binary relation: 
\begin{equation}
(\mathfrak{s}+\alpha\mathfrak{Tp, }(\hat{\gamma }+\check{\gamma }\partial_{t})\mathfrak{E}) \in \mathfrak{A}.
\end{equation}
\begin{remark}
The presence of the material law \eqref{eq:4.6} on boundaries, implies that the constitutive modeling is general enough to allow for materials with coated boundaries (say, covered by a thin membrane), or other disturbances of the material parameters associated with the boundaries of the domain. A perfectly homogeneous material with no disturbance in material parameter at its boundaries will then be a degenerate case of the model with \(\mathfrak{A}_{j} = 0\) (for \( j\in I_{i}\) with \( i\in I^{n}\)).
\end{remark}
\subsection{Governing equations for mixed-dimensional finite strain poromechanics}
\label{sec:governing_eq_mixed_dimensional_finite}
We summarize the above developments in the system of equations for mixed-dimensional poroelastic fractured media. 
\begin{table}[!htbp]
    \centering
    \begin{tabular}{|p{11.5cm}|}
    \hline
{
\textbf{Governing equations for mixed-dimensional finite strain poromechanics}
\begin{subequations} \label{Box 4.1}
\begin{align}
&\text{Balance of forces: }&
\rho_{r}\partial_{t}^{2}\mathfrak{u}-\mathbb{D}\cdot\left(\boldsymbol{\mathfrak{F}}\mathfrak{s}\right) =\mathfrak{r}_{\mathfrak{s}}
&\\ 
&\text{Balance of mass: }&  
\partial_{t}\mathfrak{m}+\mathfrak{D}\cdot \mathfrak{q} = \mathfrak{r}_{\mathfrak{m}}  &\\
&\text{Finite strain: }&  
\mathfrak{E} = \mathfrak{E}\left(\mathfrak{u}\right) &\\
&\text{Stress-strain binary relations: }& 
\left(\mathfrak{s}+\alpha\mathfrak{T}\mathfrak{p},\left(\hat{\gamma }+\check{\gamma }\partial_{t}\right)\mathfrak{E}\right)\in \mathfrak{A} &\\
&\text{Darcy’s law: }& 
\mathfrak{q}=\kappa\left(\mathfrak{E,q}\right)\left(\mathbb{-D}\mathfrak{p}+\mathfrak{r}_{\mathfrak{g}}\right) &\\
&\text{Fluid mass content: } & 
\mathfrak{m} = \mathfrak{m}\left(\mathfrak{E, p}\right) &\\
&\text{Finite deformation gradient:} &  
\boldsymbol{\mathfrak{F}} = D\left(\Xi\mathfrak{u}+\underline{\hat{\Phi }}\right) &
\end{align}
\vspace{-0.5cm}
\end{subequations}} 
\\
    \hline
    \end{tabular}
\end{table}

By taking the right-hand sides \(\mathfrak{r}_{\mathfrak{s}}\) and \(\mathfrak{r}_{\mathfrak{m}}\) together with the mixture density \( \rho_{r}\) as given, as well as appropriate initial and boundary conditions, the above set of equations are formally closed. Superficially, we identify that we have 7 equations for the 7 (mixed-dimensional) unknowns \(\left[\mathfrak{u}, \mathfrak{s}, \boldsymbol{\mathfrak{F}}, \mathfrak{m}, \mathfrak{q}, \mathfrak{E}, \mathfrak{p}\right]\). A careful counting of scalar and vector equations on domains of various dimensionality supports this claim. The field equations should be supplemented by appropriate boundary conditions, we will discuss one such choice in Section \ref{sec:well-posedness}.

A well-posedness theory for the general finite deformation mixed-dimensional model is not within reach, since it would require us to simultaneously address open questions in both contact mechanics and poromechanics. On the other hand, in the context of infinitesimal deformation, fairly general results are nevertheless possible to establish. In the next sections, we will therefore restrict our attention to linearized strain and make specific assumptions on the constitutive laws that allow us to rigorously establish an example of a well-posed model for mixed-dimensional poromechanics. 

\subsection{Governing equations for mixed-dimensional linearized strain poromechanics}
\label{sec:governing_eq_linearized}

We continue by considering the model \eqref{Box 4.1} in the context of the mixed-dimensional linearized strain from Def. \ref{def:3.11}. Thus, we assume that the strain is given by
\begin{equation*}
\mathfrak{E}(\mathfrak{u})\approx\mathfrak{e}(\mathfrak{u}) =\mathfrak{D}_{s}\mathfrak{u}
\end{equation*}

Our exposition simplifies (and as we will see, symmetrizes) by including the transformation of the reference configuration in the definition of the divergence operators (in analogy to the fixed-dimensional case, see e.g.  \cite{hughes1983mathematical}).
\begin{definition}\label{def:4.1}
For an initial configuration \(\underline{\hat{\Phi}}\) with derivative \(\underline{\boldsymbol{\mathfrak{F}}}\), let the \textit{mixed-dimensional co-symmetric-gradient }\((\mathbb{D}_{s} \cdot ):C^{1}(\mathfrak{X}^{1},\mathbb{R}^{n})\rightarrow C(\mathfrak{X}^{0},\mathbb{R}^{n})\) be defined such that
\begin{equation*}
\mathbb{D}_{s}\cdot \mathfrak{s} \coloneqq \mathbb{D} \cdot (\underline{\boldsymbol{\mathfrak{F}}}\mathfrak{s})
\end{equation*}
\end{definition}
\begin{remark}
Recall that in the fixed-dimensional case, the symmetric gradient is adjoint to the divergence applied to symmetric tensor fields. This forms a key tool in the well-posedness for fixed-dimensional elasticity models and we note that this property remains valid in the mixed-dimensional setting, as we show in Section \ref{sec:model_equations}.

Secondly, we linearize the fluid mass content relationship \(\mathfrak{m}(\mathfrak{e,p})\). Consistent with the relationships discussed in \cite{coussy2005poromechanics}, we obtain
\begin{equation*}
\mathfrak{ m}(\mathfrak{e,p})\approx\mathfrak{m}_{0}+\mathfrak{T'}(\alpha\mathfrak{e})+\beta\mathfrak{p}
\end{equation*}

Here, \(\mathfrak{m}_{0}\) is the initial mass content, \(\mathfrak{T'}\) is the mixed-dimensional identity operator from Def. \ref{def:3.20}, \( \alpha\) is the generalized Biot-Willis constant from \eqref{eq:4.6}, and \( \beta\) is the specific storativity.

We summarize these simplifications in the following system of equations
\begin{table}[!htbp]
    \centering
    \begin{tabular}{|p{11.5cm}|}
        \hline
{
\textbf{Governing equations for mixed-dimensional linearized strain poromechanics}
\begin{subequations} \label{Box 4.2}
\begin{align}
&\text{Balance of forces: }& 
 \rho_{r}\partial_{t}^{2}\mathfrak{u}-\mathbb{D}_{s}\cdot\mathfrak{s} = \mathfrak{r}_{\mathfrak{s}} 
& \label{eq: 4.11a}\\
&\text{Balance of mass: }& 
 \partial_{t}\mathfrak{m}+\mathfrak{D}\cdot \mathfrak{q} =\mathfrak{r}_{\mathfrak{m}} 
& \label{eq: 4.11b}\\
&\text{Linearized strain: }& 
\mathfrak{e} = \mathfrak{D}_{s}\mathfrak{u} 
& \label{eq: 4.11c}\\
&\text{Stress-strain binary relations: }& 
\left(\mathfrak{s}+\alpha\mathfrak{Tp, }\left(\hat{\gamma }+\check{\gamma }\partial_{t}\right)\mathfrak{e}\right)\in \mathfrak{A} 
& \label{eq: 4.11d}\\
&\text{Darcy’s law: }& 
\mathfrak{q} = \kappa\left(\mathfrak{q,e}\right)\left(\mathbb{-D}\mathfrak{p}+\mathfrak{r}_{\mathfrak{q}}\right) 
& \label{eq: 4.11e}\\
&\text{Fluid mass content: }& 
\mathfrak{m}\left(\mathfrak{e,p}\right) =\mathfrak{m}_{0}+\mathfrak{T'}\left(\alpha\mathfrak{e}\right)+\beta\mathfrak{p} 
& \label{eq: 4.11f}
\end{align}
\vspace{-0.5cm}
\end{subequations}}
\\
    \hline
    \end{tabular}       
\end{table}
\end{remark}

\subsection{Connection to classical continuum mechanical formulations}
\label{sec:connection_to_classical}
In this section, we identify how fixed-dimensional formulations of contact mechanics and fluid flow in fractured porous rocks appear as special cases of the governing equations derived in the preceding sections and as summarized in Section \ref{sec:governing_eq_mixed_dimensional_finite}.  This identification will serve as support for our claim that our development is a consistent generalization of accepted mathematical descriptions, extended to our current setting of mixed-dimensional geometries. 

\subsubsection{Contact mechanics}
\label{sec:contact_mechanics}

We illustrate the connection to contact mechanics by omitting all fluid considerations, and restricting our attention to a single fracture in the context of linearized strain, such as described in Example \ref{eg:3.14}, and illustrated by Figure \ref{fig:geometry}. The governing equations, assuming linearized strain, then simplify to (indexing of \( i\) corresponds to indexing in Example \ref{eg:3.14}).

Hooke’s law for solids:
\begin{subequations}
\begin{align}\label{eq:4.12a}
 \sigma_{i} &= C_{i}:\varepsilon(u_{i})  &
 \text{for } i &= 2,3.
\end{align}
Conservation of momentum for solids:
\begin{align}\label{eq:4.12b}
\rho_{r,i}\partial_{t}^{2}u_{i}-\nabla  \cdot (\underline{\mathbf{\hat{F}}}_{i}\sigma_{i}) &= r_{s,i} 
& \text{for } i &= 2,3.
\end{align}
Hooke’s law for surfaces:
\begin{align}\label{eq:4.12c}
\sigma_{j} &= C_{j}:\varepsilon(u_{j}) &
\text{for } j &= 4,5.
\end{align}
Conservation of momentum on surfaces:
\begin{align}
\rho_{r,j}\partial_{t}^{2}u_{j}-\nabla_{\Vert  } \cdot (\underline{\hat{\mathbf{F}}}_{j}\sigma_{j})+(\sigma_{j-2} \cdot n_j-\sigma_{1}) &= r_{s,j} & 
\text{for }  j &= 4,5.
\end{align}
Constitutive law for fracture: 
\begin{align}
F(\varepsilon_{i},\dot{\varepsilon }_{i},\sigma) &= 0 &
\text{for } i &= 1.
\end{align}
\end{subequations}

These equations embody several classical formulations. Surface stress of elastic materials is an important phenomena at some scales  \cite{gurtin1975continuum}, as we will discuss in Section \ref{sec:coated_deformable}. In this subsection, we will disregard these terms, and consider a degenerate model with \( C_j = 0\) and hence \( \sigma_j = 0\) on domains \( j = 4,5\).  In the quasi-static case, \( \rho_{r,i}\partial_{t}^{2}u_{i} = 0\), the above equations now exactly correspond to the classical equations of elasticity and material contact, as summarized in the book of Kikuchi and Oden (see e.g. the exposition in chapters 2 and 13 of \cite{kikuchi1988contact}). We highlight the importance of two particular choices for the constitutive law for fracture. 
\begin{example}[Signorini problem]
Decomposing as in Section \ref{sec:mixed-dimentional-poromechanics} the fracture strain into its tangential part (sliding) \( \iota_{1}\mathfrak{e}_{\Vert  } =\frac{1}{2}\underline{\mathbf{F}}_{i}^{T}(\iota_{i}\mathbbm{d}_{\underline{\Phi }}\mathfrak{u})_{\parallel}\) and perpendicular part (opening) \( \iota_{1}\mathfrak{e}_{\perp } =\mathcal{l}_{i}^{-1}(\iota_{i}\mathbbm{d}_{\underline{\Phi }}\mathfrak{u})_{\perp }\), we first state the constitutive model for frictionless contact (see e.g. equations (2.31) of \cite{kikuchi1988contact}), where the absence of friction is given by:
\begin{subequations}\label{eqs: signorini}
\begin{equation} \label{signorini friction}
\iota_{1}\mathfrak{s}_{\Vert  } = 0,
\end{equation}
and contact mechanics is given by the Karush-Kuhn-Tucker (KKT) triplet
\begin{equation} \label{signorini KKT}
\begin{split}
(\iota_{1}\mathfrak{s}_{\perp }) (\iota_{1}\mathfrak{e}_{\perp }) & = 0\\
\iota_{1}\mathfrak{s}_{\perp }&\leq 0\\
\iota_{1}\mathfrak{e}_{\perp }&\geq 0.
\end{split}
\end{equation}
\end{subequations}
The two inequalities state that the materials cannot interpenetrate, and the contact cannot be tensile, while the equality states that one of the two conditions must hold as an equality. We recognize that equations \eqref{eqs: signorini} are of the form given by the binary relation \eqref{eq: binary relation}, with \( \iota_{1}\check{\gamma } = 0\).
\end{example}
\begin{example}[Rough surfaces]\label{eg:4.4}
The presence of \(\mathcal{l}_{i}^{-1}\) in the perpendicular part of the strain measure, together with the macroscopic condition that the deformation preserves the mixed-dimensional nature of the problem, essentially provides a multi-scale representation of strain. Indeed with \(\mathcal{l}_{i}\ll 1\), we realize that fracture opening is measured relative to a finer scale than the rest of the deformation. This makes sense relative to the mixed-dimensional continuum assumption, Definition \ref{def:2.2}, since macroscopically, the fracture always has negligible transversal width, while the strain nevertheless measures perturbations of the fracture at the scale of \(\mathcal{l}_{i}\). 

At the length-scale transversal to a fracture opening, it is well-known that the compression and crushing of micro-roughness can significantly impact the stress-strain response. This is illustrated in Figure \ref{fig:illustration-of-multi-scale-contact-mechanics}, which has been adapted from the classical presentation by Oden and Martins \cite{oden1985models}. With access to such representations, it is suggested that an appropriate stress-strain model for a fracture (see Chapters 11 and 13 of  \cite{kikuchi1988contact}) depends on both rate of compression and effective opening (in these expressions the plus sign indicates that only positive values are considered, e.g. \((a)_{+} = \max(a,0)\), and \( C_{1}^{l}\) correspond to material constants):
\begin{equation}\label{eq:4.19}
\sigma_{1,\perp } = -C_{1}^{1}(-\varepsilon_{1,\perp })_{+}^{C_{1}^{2}}-C_{1}^{3}(-\dot{\varepsilon }_{1,\perp })_{+}^{C_{1}^{4}}.
\end{equation}
Complementing the perpendicular stress-strain law for fracture is the Coulomb law of friction, expressed as the KKT triplet :
\begin{subequations}
\begin{align}
\lambda^{2}(c_{1}^{5}\sigma_{1,\perp }-\left\vert \sigma_{1,\Vert  }\right\vert) &= 0 \\
c_{1}^{5}\sigma_{1,\perp }-\left\vert \sigma_{1,\Vert  }\right\vert &\geq 0 \\
\lambda^{2}&\geq 0,
\end{align}
\end{subequations}
where \( \lambda^{2}\) is a Lagrange multiplier associated with sliding: 
\begin{equation}
\dot{\varepsilon }_{1,\Vert  } = \lambda^{2}\sigma_{1,\Vert  }.
\end{equation}
\end{example}
While our framework as stated does not allow for the full generality of \eqref{eq:4.19}, with both \( C_{1}^{1}\) and \( C_{1}^{3}\) non-zero, these contact laws are nevertheless admissible as a binary inclusion when either \( C_{1}^{1}\) or \( C_{1}^{3}\) are zero. 

\subsubsection{Fluid flow in rigid fractured porous media}
\label{sec:fluid_flow_rigid}
In the absence of mechanical deformation, the mixed-dimensional equations \eqref{Box 4.1} have previously been shown to be algebraically equivalent to reduced-dimensional formulations of flow in porous media  \cite{boon2021functional}. These equations have become quite popular over the last decade, as illustrated by a recent literature review  \cite{berre2019flow}. An important consideration is the validity of reduced-dimensional models for fracture flow. This has been extensively studied, and is by now well established in the single-phase regime considered herein  \cite{martin2005modeling, angot2009asymptotic, flemisch2018benchmarks}. 

\subsubsection{Fluid flow in deformable porous media}
\label{sec:fluid_flow_deformable}
Let us now validate the model equations in the case without fractures, wherein the mixed-dimensional geometry trivially reduces to the normal fixed-dimensional geometry, which is to say that \( I = I^{n} = \{ 1\}\), and \( \Omega_{1} = Y\).

A review of the definitions in Section \ref{sec:mixed_dimensional} and \ref{sec:differential_operators} for this case of a single domain now verify that all the mixed-dimensional functions revert to their standard definitions from calculus. Thus e.g. a deformation \(\mathfrak{u}\in C(\mathfrak{X}^{0},\mathbb{R}^{n})\) is identically equal to \( u\in C(Y,\mathbb{R}^{n})\), similarly, the differential operators also reduce to their fixed-dimensional counterparts \(\mathfrak{D} \sim \mathbb{D} \sim \nabla\) while \(\mathfrak{D} \cdot \sim \mathbb{D} \cdot  \sim \nabla \cdot \). 

The equations \eqref{Box 4.1} are thus equivalent to the same equations written in ``Latin letters", which correspond exactly to the standard model for poromechanics subjected to large deformations, as summarized in e.g. Table 3.4 of  \cite{coussy2005poromechanics}.

\subsubsection{Coated deformable solids}
\label{sec:coated_deformable}

In this final example, we will consider the special case of elastic solids with surface coatings. The continuum theory for elastic material surfaces goes back to Gurtin and Murdoch  \cite{gurtin1975continuum}, and our general mixed-dimensional model includes some aspects of their theory, notably the momentum balance and elastic constitutive law. 

 To illustrate this, we consider a single internal domain \( \Omega_{2}\), together with a lower-dimensional domain contained on a part of its exterior boundary \( \Omega_{4}\subset \partial Y\). The numbering is chosen so that the example can be considered as the upper domain of Figure \ref{fig:geometry}, and it is then the interpretation that the bottom part of this domain is coated. In terms of the reference configurations, the governing equations in the finite strain case are now given by equations \eqref{eq:4.12a} and \eqref{eq:4.12b} for the bulk material \( i = 2\). For the surface \( j = 4\), the surface stress is given by \eqref{eq:4.12c}, while the momentum balance simplifies to 
\begin{align}\label{eq:4.24}
\rho_{r,2}\partial_{t}^{2}u_{2}-\nabla_{\Vert} \cdot (\mathbf{F}_{4}\sigma_{4})+\sigma_{2} \cdot n &= r_{s,j} & 
\text{ for } j &= 4.
\end{align}
We recognize our surface momentum balance \eqref{eq:4.24} and our surface stress \eqref{eq:4.12c} as equation (6.1) and the first equation of Section 7 of reference \cite{gurtin1975continuum}, respectively. An important detail is the application to curved surfaces, in which case the surface divergence term becomes (omitting the subscript 4 for clarity): 
\begin{equation}
\nabla_{\Vert  } \cdot (\hat{\mathbf{F}}\sigma) = \nabla_{\Vert  } \cdot \begin{pmatrix}
\mathbf{F} &\mathcal{l}^{-1}\mathbf{n} \\ 
\end{pmatrix}\sigma  = \nabla_{\Vert  }\begin{pmatrix}
\mathbf{F} &\mathcal{l}^{-1}\nabla_{\Vert  }\mathbf{n} \\ 
\end{pmatrix} \cdot \sigma +\hat{\mathbf{F}} \cdot \nabla_{\Vert  }\sigma.
\end{equation}
For curved surfaces, this relationship expresses the fact that the gradient of the normal component of a surface enters the balance of momentum. This geometric relationship can also be expressed in terms of mean curvature, as in equation (2.9) of \cite{gurtin1975continuum}.

\section{Well-posedness of the mixed-dimensional linearized strain model}
\label{sec:well-posedness}

In this section, we will analyze the linearized strain model \eqref{Box 4.2}. While strain is infinitesimal in the bulk, the model retains two important (and non-trivial) non-linearities in the constitutive laws: First, the frictional contact law at the fractures, and secondly non-linear relationship between pressure gradients and fluid flows. This implies that the well-posedness results presented in this section are a significant generalization of the analysis presented in previous work on flow and deformation in fractured media (see e.g. the recent paper  \cite{girault2019mixed} and references therein). 

However, our analysis below excludes one important non-linear dependence: That of fracture permeability on the fracture opening. We will discuss this point in Section \ref{sec:summary}. Throughout this section, we will rely on the theory for maximally monotone binary relations, as well as the existence theory of evolutionary equations, both summarized in Appendix \ref{sec:appendix}. 

\subsection{The space of symmetric tensor fields}
\label{sec:the_space}
By Definition \ref{def:3.11}, the linearized strain \(\mathfrak{e}\) has symmetries in the sense that the representation of \( \iota_{i}\mathfrak{e}\) with \( i\in I^{n}\) is a symmetric tensor field in \(\mathbb{R}^{n\times n}\). Moreover, for all boundaries \( j\in I_{i}\cap\mathfrak{F}^{1}\), we recall that the tangential components \( \iota_{j,\parallel }\mathfrak{e}\) form a symmetric tensor in \(\mathbb{R}^{(n-1)\times (n-1)}\) whereas the normal components \( \iota_{j,\perp }\mathfrak{e}\) are zero, cf. Example \ref{eg:3.12}. These properties are captured in the definition of the following function space
\begin{equation*}
\mathfrak{G}\coloneqq \left\{ \mathfrak{e}\in L^{2}(\mathfrak{X}^{1},\mathbb{R}^{n})\left\vert
\begin{matrix}
\text{asym}(\iota_{j,\parallel }\mathfrak{e}) = 0,   & \forall j\in\mathfrak{F}^{1}, \\ 
\iota_{j,\perp }\mathfrak{e} = 0, & \forall j\in\mathfrak{F}^{1}\cap I_{i}, i\in I^{n} 
\end{matrix} \right.\right\}.
\end{equation*}

By the usual Cauchy arguments, the stress-strain relationship \(\mathfrak{A}\) retains these properties, i.e. we have that the stress \(\mathfrak{s}\) belongs to \(\mathfrak{G}\) as well.

Note that this space generalizes the space of symmetric tensors that is often used in fixed-dimensional linear elasticity. A strong tool in that setting is the adjointness between the divergence on symmetric tensors and the symmetric gradient on vector fields. We now show that this adjointness property is retained in the mixed-dimensional setting.

\begin{lemma}
\label{lemma:5.1}
The mixed-dimensional symmetric gradient and the co-symmetric gradient operators satisfy the following integration by parts formula for all $\mathfrak{s} \in \mathrm{dom}(\mathbb{D}_s \cdot) \subset \mathfrak{G}$ and $\mathfrak{u} \in \mathrm{dom}(\mathring{\mathfrak{D}}_s ) \subset L^2(\mathfrak{X}^0, \mathbb{R}^n)$:
\begin{equation*}
\left\langle\mathbb{D}_{s}\cdot \mathfrak{s}, \mathfrak{u}\right\rangle_{\mathfrak{X}^{0}}+\left\langle\mathfrak{s}, \mathring{\mathfrak{D}}_s \mathfrak{u}\right\rangle_{\mathfrak{X}^{1}} = 0.
\end{equation*}
\begin{proof}
We first observe using Definition \ref{def:4.1} and the duality from Definition \ref{def:2.23} that
\begin{equation*}
\left\langle\mathbb{D}_{s}\cdot \mathfrak{s}, \mathfrak{u}\right\rangle_{\mathfrak{X}^{0}} \coloneqq \left\langle\mathbb{D} \cdot (\boldsymbol{\mathfrak{F}}\mathfrak{s}), \mathfrak{u}\right\rangle_{\mathfrak{X}^{0}} = -\left\langle\mathfrak{s}, \boldsymbol{\mathfrak{F}}^{T}\mathring{\mathfrak{D}}\mathfrak{u}\right\rangle_{\mathfrak{X}^{1}}.
\end{equation*}
Thus it remains to show that 
$\left\langle\mathfrak{s}, \boldsymbol{\mathfrak{F}}^{T}\mathring{\mathfrak{D}}\mathfrak{u}\right\rangle_{\mathfrak{X}^{1}} = \left\langle\mathfrak{s}, \mathring{\mathfrak{D}}_s \mathfrak{u}\right\rangle_{\mathfrak{X}^{1}}$
for $\mathfrak{s} \in \mathrm{dom}(\mathbb{D}_s \cdot)$ and $\mathfrak{u} \in \mathrm{dom}(\mathring{\mathfrak{D}}_s ) $.
We do this by dimension:
\begin{enumerate} \small
    \item For \( i\in I^{n}\), the symmetry of \( \iota_{i}\mathfrak{s}\) gives us
\begin{align*}
\left\langle \iota_{i}\mathfrak{s}, \iota_{i}\boldsymbol{\mathfrak{F}}^{T}\mathring{\mathfrak{D}}(\mathfrak{u})\right\rangle_{X_{i}} &=\left\langle \iota_{i}\mathfrak{s},\underline{\mathbf{F}}_{i}^{T}D\iota_{i}\mathfrak{u}\right\rangle_{X_{i}}\\
&=\left\langle \iota_{i}\mathfrak{s},\frac{1}{2}(\underline{\mathbf{F}}_{i}^{T}D\iota_{i}\mathfrak{u}+(D\iota_{i}\mathfrak{u})^{T}\underline{\mathbf{F}}_{i})\right\rangle_{X_{i}}\\
&=\left\langle \iota_{i}\mathfrak{s}, \iota_{i}\mathring{\mathfrak{D}}_{s}\mathfrak{u}\right\rangle_{X_{i}}
\end{align*}

    \item For \( j\in I_{i}\) with \( i\in I^{n}\), the same argument as above applies for the tangential components. On the other hand, the normal components of \( \iota_{j}\mathfrak{s}\) are zero, immediately giving us
\begin{equation*}
\left\langle \iota_{j}\mathfrak{s}, \iota_{j}\boldsymbol{\mathfrak{F}}^{T}\mathring{\mathfrak{D}}(\mathfrak{u})\right\rangle_{X_{j}} =\left\langle \iota_{j}\mathfrak{s}, \iota_{j}\mathring{\mathfrak{D}}_{s}\mathfrak{u}\right\rangle_{X_{j}}
\end{equation*}

    \item For \( j\in\mathfrak{S}_{i}\) with \( i\in I^{n-1}\), we obtain 
\begin{equation*}
\iota_{j}\boldsymbol{\mathfrak{F}}^{T}\mathring{\mathfrak{D}}(\mathfrak{u}) =\hat{\underline{\mathbf{F}}}_{j}^{T}\iota_{j}\mathbbm{d}\mathfrak{u} = \iota_{j}\mathring{\mathfrak{D}}_{s}\mathfrak{u}
\end{equation*}
\end{enumerate}
Since this covers all \( j\in\mathfrak{F}^{1}\), we have \(\boldsymbol{\mathfrak{F}}^{T}\mathring{\mathfrak{D}}(\mathfrak{u}) =\mathring{\mathfrak{D}}_{s}\) on \(\mathfrak{X}^{1}\) and thus \(\left\langle\mathbb{D}_{s} \cdot \mathfrak{s}, \mathfrak{u}\right\rangle_{\mathfrak{X}^{0}} = -\left\langle\mathfrak{s}, \mathring{\mathfrak{D}}_s \mathfrak{u}\right\rangle_{\mathfrak{X}^{1}}\).
\end{proof}
\end{lemma}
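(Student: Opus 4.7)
The plan is to reduce the claimed identity to a purely algebraic comparison of two ``symmetric-gradient-like'' objects, and then to dispatch this comparison by a case analysis over the nodes of $\mathfrak{F}^1$.

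First, I would unwind the definition of $\mathbb{D}_s \cdot$ via Definition~\ref{def:4.1} to write $\mathbb{D}_s \cdot \mathfrak{s} = \mathbb{D} \cdot (\underline{\boldsymbol{\mathfrak{F}}}\mathfrak{s})$, and then invoke the adjoint pair $\mathring{\mathfrak{D}} = (-\mathbb{D}\cdot)'$ from Definition~\ref{def:2.23}. Since the test function $\mathfrak{u}$ lies in $\mathrm{dom}(\mathring{\mathfrak{D}}_s)$ (and in particular has vanishing trace on $\partial_Y \mathfrak{X}^0$), the boundary terms drop and we obtain
\begin{equation*}
\langle \mathbb{D}_s \cdot \mathfrak{s}, \mathfrak{u}\rangle_{\mathfrak{X}^0}
= -\langle \underline{\boldsymbol{\mathfrak{F}}}\mathfrak{s}, \mathring{\mathfrak{D}}\mathfrak{u}\rangle_{\mathfrak{X}^1}
= -\langle \mathfrak{s}, \underline{\boldsymbol{\mathfrak{F}}}^T \mathring{\mathfrak{D}}\mathfrak{u}\rangle_{\mathfrak{X}^1}.
\end{equation*}
The proof then reduces to checking the pointwise identity $\langle \mathfrak{s}, \underline{\boldsymbol{\mathfrak{F}}}^T \mathring{\mathfrak{D}}\mathfrak{u}\rangle_{X_j} = \langle \mathfrak{s}, \mathring{\mathfrak{D}}_s \mathfrak{u}\rangle_{X_j}$ on each $X_j$ with $j \in \mathfrak{F}^1$, exploiting the symmetry constraints built into $\mathfrak{G}$.

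Next I would carry out the case analysis on $j\in\mathfrak{F}^1$. For $i\in I^n$ we have $\iota_i \underline{\boldsymbol{\mathfrak{F}}}^T \mathring{\mathfrak{D}}\mathfrak{u} = \underline{\mathbf{F}}_i^T D\iota_i\mathfrak{u}$, and since $\iota_i\mathfrak{s}$ is symmetric by membership in $\mathfrak{G}$, the inner product is unchanged upon replacing this matrix by its symmetric part, which by Example~\ref{eg:3.12}(1) equals $\iota_i \mathring{\mathfrak{D}}_s \mathfrak{u}$ (this is exactly the step that motivated the factor $\tfrac{1}{2}$ in the normalization $\varrho$ of Definition~\ref{def:3.7}). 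For boundaries $j \in I_i$ of $i\in I^n$, I would split into tangential and normal parts: on the tangential part the same symmetrization argument applies, while on the normal part the constraint $\iota_{j,\perp}\mathfrak{s}=0$ from the definition of $\mathfrak{G}$ makes the discrepancy vanish. Finally, for $j\in\mathfrak{S}_i$ with $i\in I^{n-1}$ (fractures and their descendants), the mixed-dimensional gradient reduces to the jump, $\iota_j \mathring{\mathfrak{D}}\mathfrak{u} = \iota_j \mathbbm{d}\mathfrak{u}$, and I would verify directly using the block form of $\underline{\hat{\mathbf{F}}}_j$ (whose first $d_j$ rows give $\underline{\mathbf{F}}_j^T$ acting in the tangent plane, and whose normal rows contribute $\mathcal{l}_j^{-1}\mathbf{n}_j^T$) that $\underline{\hat{\mathbf{F}}}_j^T \iota_j \mathbbm{d}\mathfrak{u}$ matches the explicit form of $\iota_j \mathring{\mathfrak{D}}_s\mathfrak{u}$ computed in Example~\ref{eg:3.12}(3).

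The main obstacle, and the subtlety worth emphasizing, is bookkeeping around the extended deformation $\underline{\boldsymbol{\mathfrak{F}}}$ and the dimension-dependent normalization $\varrho$. In the bulk the factor $\tfrac{1}{2}$ arises from symmetrization of an already-symmetric pairing; on the fractures no such factor appears, and instead the full jump contributes, with the normal component carrying the multi-scale factor $\mathcal{l}_j^{-1}$ supplied by the extended coordinate map $\hat{\phi}_{0,j}$ of Definition~\ref{def:2.6}. Lining up these conventions on both sides is exactly what makes the identity hold on the nose, which is in turn what justifies the choice of $\varrho$ recorded in Definition~\ref{def:3.7}. Once all three cases are checked and summed over $\mathfrak{F}^1$, one concludes $\underline{\boldsymbol{\mathfrak{F}}}^T\mathring{\mathfrak{D}}\mathfrak{u} = \mathring{\mathfrak{D}}_s\mathfrak{u}$ in the duality pairing against any $\mathfrak{s}\in \mathfrak{G}$, yielding the stated integration-by-parts formula.
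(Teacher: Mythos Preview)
Your proposal is correct and follows essentially the same approach as the paper: reduce via Definition~\ref{def:4.1} and the adjointness of Definition~\ref{def:2.23} to showing $\langle\mathfrak{s},\underline{\boldsymbol{\mathfrak{F}}}^T\mathring{\mathfrak{D}}\mathfrak{u}\rangle = \langle\mathfrak{s},\mathring{\mathfrak{D}}_s\mathfrak{u}\rangle$, then dispatch by exactly the same three-case analysis over $\mathfrak{F}^1$ (bulk symmetry, boundary tangential/normal split, fracture jump). Your additional commentary on the role of the normalization $\varrho$ and the scaling $\mathcal{l}_j^{-1}$ is helpful context but not a departure from the paper's argument.
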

We will require different treatment of the stress-strain relationships depending on whether the relationship concerns strains or strain rates. Recall that we have introduced the parameters \(\hat{\gamma }\), respectively \(\check{\gamma }\), in \eqref{eq: binary relation} to make this distinction. Using these parameters, we define the restricted identity operators as follows.
\begin{definition}
Let the \textit{restricted mixed-dimensional identity} operators \(\check{\mathfrak{T}},\hat{\mathfrak{T}}:L^{2}(\mathfrak{X}^{n}, \mathbb{R})\rightarrow\tilde{\mathfrak{S}}\) be defined as
\begin{align*}
\check{\mathfrak{T}} &\coloneqq \check{\gamma }\mathfrak{T} & 
\text{and}& &
\hat{\mathfrak{T}} &=\hat{\gamma }\mathfrak{T.}
\end{align*}
Similarly, let the \textit{restricted mixed-dimensional trace }operators be defined as \(\check{\mathfrak{T}}' \coloneqq \check{\gamma }\mathfrak{T}'\) and \(\hat{\mathfrak{T}}' \coloneqq \hat{\gamma }\mathfrak{T}'\).
\end{definition}

\subsection{Model equations for well-posedness analysis}
\label{sec:model_equations}

Let us consider the system of equations from \eqref{Box 4.2} with the goal of obtaining a system of four equations and four variables. In particular, we aim to rewrite the system in terms of fluid pressure \(\mathfrak{p}\), fluid flux \(\mathfrak{q}\), and two new variables; namely the bulk velocity \(\mathfrak{v}\), and an augmented stress \(\tilde{\mathfrak{s}}\):
\begin{align} \label{eq: defs v and s}
\mathfrak{v } &\coloneqq \partial_{t}\mathfrak{u}, &
\tilde{\mathfrak{s}} &\coloneqq\mathfrak{s}+\alpha\check{\mathfrak{T}}\mathfrak{p.}
\end{align}

Note that \(\tilde{\mathfrak{s}}\) corresponds to the mechanical stress in the fractures as this is the natural variable for which frictional contact laws are formulated. In the bulk, it equals the original, poroelastic stress \(\mathfrak{s}\) since \(\check{\mathfrak{T}}\) is zero there.

We proceed in four steps. First, substituting the definitions of \(\mathfrak{v}\) and \(\tilde{\mathfrak{s}}\) in \eqref{eq: 4.11a}, the \textbf{balance of forces }becomes 
\begin{equation}
\rho_{r}\partial_{t}\mathfrak{v}-\mathbb{D}_{s} \cdot (\tilde{\mathfrak{s}}-\alpha\check{\mathfrak{T}}\mathfrak{p}) =\mathfrak{r}_{\mathfrak{s}}.
\end{equation}

We make the following assumptions on \( \rho_{r}\):
\begin{assumption}\label{assumption:1}
\( \rho_{r}\) is a coercive, linear operator with coercivity constant \( c_{\rho }>0\).
\end{assumption}
Second, we consider the stress-strain relationships. For that, we first take the derivative of \eqref{eq: 4.11c} with respect to time and use the commutativity of \( \partial_{t}\) and \(\mathfrak{D}_{s}\) (cf. Remark \ref{remark:3.4}):
\begin{equation*}
\partial_{t}\mathfrak{e} = \partial_{t}\mathfrak{D}_{s}\mathfrak{u} = \mathfrak{D}_{s}\mathfrak{v}.
\end{equation*}

Substituting this in \eqref{eq: 4.11d} together with the definition of \(\tilde{\mathfrak{s}}\) from \eqref{eq: defs v and s}, we obtain 
\begin{equation}
(\tilde{\mathfrak{s}}+\alpha\hat{\mathfrak{T}}\mathfrak{p, }(\hat{\gamma }\partial_{t}^{-1}+\check{\gamma })\mathfrak{D}_{s}\mathfrak{v})\in \mathfrak{A}.
\end{equation}

Note that \( \partial_{t}^{-1}\) implies integration in time. In the surrounding bulk (\( i\in I^{n}\)), we assume that \(\hat{\gamma } = 1\) (and \(\check{\gamma } = 0\)) giving us a stress-strain relationship \(\hat{\mathfrak{A}}\). Conversely, we let \(\check{\gamma } = 1\) in the fractures leading to a stress-strain rate relationship describing (frictional) contact \(\check{\mathfrak{A}}\). More precisely, we define the restricted operators
\begin{align*}
\check{\mathfrak{A}} &\coloneqq \check{\gamma }\mathfrak{A}, &
\text{and} & &
\hat{\mathfrak{A}} &\coloneqq \hat{\gamma }\mathfrak{A},
\end{align*}
such that \(\mathfrak{A} = \check{\mathfrak{A}} + \hat{\mathfrak{A}}\). We now continue by making the following assumptions:
\begin{assumption}\label{assumption:2}
 \(\check{\mathfrak{A}}\) is bounded and \(\check{c}\)-maximal monotone relation for some \(\check{c}>0\), c.f. Definition \ref{def:A5}. Moreover, \( (0,0)\in\check{\mathfrak{A}}\). We emphasize that this means that for \((\mathfrak{s}_{1},\dot{\mathfrak{e}}_{1}),(\mathfrak{s}_{2},\dot{\mathfrak{e}}_{2})\in\check{\mathfrak{A}},\) we have
\begin{align*}
\left\langle \iota_{j}(\mathfrak{s}_{1}-\mathfrak{s}_{2}),\iota_{j}(\dot{\mathfrak{e}}_{1}-\dot{\mathfrak{e}}_{2})\right\rangle_{X_{j}} &\geq\check{c}\left\vert \iota_{j}(\mathfrak{s}_{1}-\mathfrak{s}_{2})\right\vert_{X_{j}}^{2},
& \forall i&\in I^{n-1}, j\in\mathfrak{F}^{1}\cap\mathfrak{S}_{i}.
\end{align*}
\end{assumption}
\begin{assumption}\label{assumption:3}
\(\hat{\mathfrak{A}}\) is a coercive linear operator with coercivity constant \(\hat{c}>0\). Thus, for \((\mathfrak{s}_{1},\mathfrak{e}_{1}),(\mathfrak{s}_{2},\mathfrak{e}_{2})\in\hat{\mathfrak{A}}\), it follows that
\begin{align*}
\left\langle \iota_{j}(\mathfrak{s}_{1}-\mathfrak{s}_{2}),\iota_{j}(\mathfrak{e}_{1}-\mathfrak{e}_{2})\right\rangle_{X_{j}}&\geq\hat{c}\left\vert \iota_{j}(\mathfrak{s}_{1}-\mathfrak{s}_{2})\right\vert_{X_{j}}^{2}, 
& \forall i&\in I^{n}, j\in\mathfrak{F}^{1}\cap\mathfrak{S}_{i}.
\end{align*}
\end{assumption}
With the assumed linearity of \(\hat{\mathfrak{A}}\), we have
\begin{equation*}
(\tilde{\mathfrak{s}}+\alpha\hat{\mathfrak{T}}\mathfrak{p,}\hat{\gamma }\partial_{t}^{-1}\mathfrak{D}_{s}\mathfrak{v})\in\hat{\mathfrak{A}}\Leftrightarrow(\tilde{\mathfrak{s}}+\alpha\hat{\mathfrak{T}}\mathfrak{p,}\hat{\gamma }\mathfrak{D}_{s}\mathfrak{v})\in \partial_{t}\hat{\mathfrak{A}}
\end{equation*}

Together with \((\tilde{\mathfrak{s}}+\alpha\hat{\mathfrak{T}}\mathfrak{p,}\check{\gamma }\mathfrak{D}_{s}\mathfrak{v})\in\check{\mathfrak{A}}\), the \textbf{stress-strain (rate) relationships} become
\begin{equation}
(\tilde{\mathfrak{s}}+\alpha\hat{\mathfrak{T}}\mathfrak{p, }\mathfrak{D}_{s}\mathfrak{v})\in\check{\mathfrak{A}}+\partial_{t}\hat{\mathfrak{A}}.
\end{equation}
\begin{remark}
More generally, we may assume \ref{assumption:2} for \(\mathfrak{A}_{i}\) if \( \iota_{i}\check{\gamma } = 1\) and \ref{assumption:3} if \( \iota_{i}\hat{\gamma } = 1\). However, for ease of presentation, we herein consider the case where these parameters are determined by dimension and thus have \ref{assumption:2} in the fractures and \ref{assumption:3} in the bulk and its surfaces.
\end{remark}
Our third equation concerns the mass balance. To capture volumetric change in terms of our four variables, we first use the decomposition induced by \(\check{\gamma }\) and \(\hat{\gamma }\) to rewrite
\begin{equation*}
\mathfrak{e} = \hat{\gamma }\mathfrak{e}+\check{\gamma }\mathfrak{e} = \hat{\mathfrak{A}}(\tilde{\mathfrak{s}}+\alpha\hat{\mathfrak{T}}\mathfrak{p})+\partial_{t}^{-1}\check{\gamma }\mathfrak{D}_{s}\mathfrak{v}
\end{equation*}

Taking the derivative of \eqref{eq: 4.11f} with respect to time and substituting this equality, we have
\begin{equation*}
\partial_{t}\mathfrak{m} = \partial_{t}\hat{\mathfrak{T}}'\alpha\hat{\mathfrak{A}}(\tilde{\mathfrak{s}}+\alpha\hat{\mathfrak{T}}\mathfrak{p})+\check{\mathfrak{T}}'\alpha\mathfrak{D}_{s}\mathfrak{v}+\partial_{t}\beta\mathfrak{p}
\end{equation*}

Inserting this in \eqref{eq: 4.11b}, the \textbf{mass balance }equation becomes 
\begin{equation}
\partial_{t}\hat{\mathfrak{T}}'\alpha\hat{\mathfrak{A}}(\tilde{\mathfrak{s}}+\alpha\hat{\mathfrak{T}}\mathfrak{p})+\check{\mathfrak{T}}'\alpha\mathfrak{D}_{s}\mathfrak{v}+\partial_{t}\beta\mathfrak{p}+\mathfrak{D \cdot q} = \mathfrak{r}_{\mathfrak{m}}.
\end{equation}

Here, we assume that

\begin{assumption}\label{assumption:4}
\( \beta\) is a coercive, linear operator with coercivity constant \( c_{\beta }>0\).
\end{assumption}
Finally, in \eqref{eq: 4.11e}, i.e. \textbf{Darcy’s law}, we neglect the dependency of the permeability on the strain \(\mathfrak{e}\):
\begin{equation*}
\mathfrak{q} = \kappa(\mathfrak{q})(\mathbb{-D}\mathfrak{p}+\mathfrak{r}_{\mathfrak{g}}).
\end{equation*}

This is equivalent to stating that \( \kappa^{-1}(\mathfrak{q})\mathfrak{q = }-\mathbb{D}\mathfrak{p}+\mathfrak{r}_{\mathfrak{g}}\). This relation only contains two variables (due to the negligence of strain dependencies) and we can consider this law, which is possibly non-linear, as a binary relation \( \kappa^{-1}\). Observing that the dependency on \(\mathfrak{q}\) is implied in the notation, we arrive at the binary relation
\begin{equation}
(\mathfrak{q, }-\mathbb{D}\mathfrak{p}+\mathfrak{r}_{\mathfrak{g}})\in \kappa^{-1}
\end{equation}

Again, we make an assumption on the relation \( \kappa^{-1}\), namely that
\begin{assumption}\label{assumption:5}
\( \kappa^{-1}\) is bounded and \( c_{\kappa }\)-maximal monotone for some \( c_{\kappa }>0\). Moreover, \((0,0)\in \kappa^{-1}\).
\end{assumption}
With these simplifications and assumptions in place, we have a system of four equations with four unknowns.
\begin{table}[!htbp]    
\centering
\begin{tabular}{|p{11.6cm}|}
\hline
{
\textbf{Governing equations for simplified mixed-dimensional poromechanics}
\begin{subequations} \label{eqs: simplified md poromechanics}
\begin{align}
&\text{Balance of forces: } & 
 \rho_{r}\partial_{t}\mathfrak{v}-\mathbb{D}_{s}\cdot\left(\tilde{\mathfrak{s}}-\alpha\check{\mathfrak{T}}\mathfrak{p}\right) =\mathfrak{r}_{\mathfrak{s}}  &\\ 
&\text{Balance of mass: } & 
\check{\mathfrak{T}}'\alpha\mathfrak{D}_{s}\mathfrak{v}+\partial_{t}\left(\hat{\mathfrak{T}}'\alpha\hat{\mathfrak{A}}\alpha\hat{\mathfrak{T}}+\beta\right)\mathfrak{p}+\partial_{t}\hat{\mathfrak{T}}'\alpha\hat{\mathfrak{A}}\tilde{\mathfrak{s}}+\mathfrak{D} \cdot \mathfrak{q} = \mathfrak{r}_{\mathfrak{m}} &\\ 
&\text{Stress-strain relations: } &  
\left(\tilde{\mathfrak{s}}+\alpha\hat{\mathfrak{T}}\mathfrak{p, }\mathfrak{D}_{s}\mathfrak{v}\right)\in\check{\mathfrak{A}}+\partial_{t}\hat{\mathfrak{A}}  &\\ 
&\text{Darcy’s law: } & 
\left(\mathfrak{q,}-\mathbb{D}\mathfrak{p}+\mathfrak{r}_{\mathfrak{g}}\right)\in \kappa^{-1} &
\end{align}
\vspace{-0.5cm}
\end{subequations}
}
\\ 
\hline
\end{tabular}
\end{table}

\subsection{Weak formulation}
\label{sec:weak_formualtion}
To accommodate analysis of system \eqref{eqs: simplified md poromechanics}, we next present the weak formulation of the poromechanics problem. The first step is to introduce the relevant function space on which to pose the problem. Following the observations from Section \ref{sec:mixed_dimensional}, we consider the following four function spaces for the variables:
\begin{subequations}
\begin{align}
\mathfrak{v} \in \mathfrak{U} &\coloneqq L^{2}(\mathfrak{X}^{0},\mathbb{R}^{n}), &
\mathfrak{p} \in \mathfrak{P} &\coloneqq L^{2}(\mathfrak{X}^{n}, \mathbb{R}), \\ 
\tilde{\mathfrak{s}}\in \mathfrak{G} &\subset L^{2}(\mathfrak{X}^{1},\mathbb{R}^{n}), &
\mathfrak{q} \in \mathfrak{Q} &\coloneqq L^{2}(\mathfrak{X}^{n-1}, \mathbb{R}).
\end{align}
\end{subequations}

Recall that \(\mathfrak{G}\), as defined in Section \ref{sec:the_space}, contains symmetry properties for stress and strain. Together, these spaces form the composite space \( U\):
\begin{equation}
U \coloneqq \mathfrak{U\times P\times G\times Q}.
\end{equation}

The space \( U\) is naturally endowed with a \( L^{2}\)-type inner product and norm, given by
\begin{equation*}
\begin{split}
\left\langle u_{1},u_{2}\right\rangle_{U} &=\left\langle\begin{bmatrix}
\mathfrak{v}_{1} & 
\mathfrak{p}_{1} & 
\tilde{\mathfrak{s}}_{1} & 
\mathfrak{q}_{1} & 
\end{bmatrix}^T,\begin{bmatrix}
\mathfrak{v}_{1} & 
\mathfrak{p}_{2} & 
\tilde{\mathfrak{s}}_{2} & 
\mathfrak{q}_{2} & 
\end{bmatrix}^T
\right\rangle_{U} \\
&  \coloneqq \left\langle\mathfrak{v}_{1},\mathfrak{v}_{2}\right\rangle_{\mathfrak{X}^{0}}+\left\langle\mathfrak{p}_{1},\mathfrak{p}_{2}\right\rangle_{\mathfrak{X}^{n}}+\left\langle\tilde{\mathfrak{s}}_{1},\tilde{\mathfrak{s}}_{2}\right\rangle_{\mathfrak{X}^{n-1}}+\left\langle\mathfrak{q}_{1},\mathfrak{q}_{2}\right\rangle_{\mathfrak{X}^{n-1}}, \\ 
\left\Vert u\right\Vert_{U} &  \coloneqq \sqrt{\left\langle u,u\right\rangle }. 
\end{split}
\end{equation*}

With the function spaces defined, we continue by considering all operators in the system as binary relations, including the linear operators. E.g., we write 
\begin{equation*}
\rho_{r}\subseteq L^{2}(\mathfrak{X}^{0},\mathbb{R}^{n})\times L^{2}(\mathfrak{X}^{0},\mathbb{R}^{n})
\end{equation*}
also in the case that \( \rho_{r}\) is simply multiplication by a constant (see Example \ref{eg:A2}). To further emphasize this, all mixed-dimensional differential operators (see Section \ref{sec:differential_operators}) are also interpreted as binary relations: 
\begin{equation*}
\begin{split}
\mathring{\mathfrak{D}}_{s} \subset L^{2}(\mathfrak{X}^{0},\mathbb{R}^{n})\times L^{2}(\mathfrak{X}^{1},\mathbb{R}^{n}),  &~~~~~~(\mathbb{D}_{s} \cdot ) \subset L^{2}(\mathfrak{X}^{1},\mathbb{R}^{n})\times L^{2}(\mathfrak{X}^{0},\mathbb{R}^{n}) \\ 
(\mathring{\mathfrak{D}} \cdot ) \subset L^{2}(\mathfrak{X}^{n-1}, \mathbb{R})\times L^{2}(\mathfrak{X}^{n}, \mathbb{R}),& ~~~~~~\mathbb{D \subset }L^{2}(\mathfrak{X}^{n}, \mathbb{R})\times L^{2}(\mathfrak{X}^{n-1}, \mathbb{R}) \\ 
\end{split}
\end{equation*}

Recall that the domains of these differential operators are proper, dense subsets of the \( L^{2}\) spaces, e.g. \( \mathrm{dom}(\mathring{\mathfrak{D}}_{s})\subset L^{2}(\mathfrak{X}^{0},\mathbb{R}^{n})\). In turn, by searching the solution in these domains, we ensure that the solution has sufficient regularity for the corresponding differentials to be well-defined. This same argument enforces the boundary conditions on the variables. We note that our choice of enforcing boundary conditions on the \(\mathring{\mathfrak{D}}\)-type differential operators implies zero (clamped) conditions on the displacement, and similarly zero normal component (no-flow) conditions on the fluid flux. The development below would be equally valid with boundary conditions imposed via \(\mathring{\mathbb{D}}\)-type operators, which would correspond to zero normal stress (floating) conditions for mechanics, and zero fluid pressure (open) conditions for flow.  

Finally, we incorporate the time dependency. Following  \cite{picard2015well}, we introduce the exponentially weighted Bochner space \( L_{\nu }^{2}(\mathbb{R},U)\) as follows.
\begin{definition}
Given \( \nu >0\), let \( L_{\nu }^{2}(\mathbb{R},U) \coloneqq \left\{ f: \mathbb{R} \rightarrow U \mid \int_{\mathbb{R}}^{}\left\Vert e^{-\nu t}f(t)\right\Vert_{U}^{2}\mathrm{d}t<\infty\right\}\). 
\end{definition}
The weight with positive \( \nu\) ensures that causality is preserved. The time derivative is then introduced as an operator acting on this weighted space.
\begin{definition}
Given \( \nu >0\), let \( \partial_{0,\nu }:\mathrm{dom}(\partial_{0,\nu })\subseteq L_{\nu }^{2}(\mathbb{R},H)\rightarrow L_{\nu }^{2}(\mathbb{R},H)\) be given by
\begin{equation*}
\partial_{0,\nu } \coloneqq e^{\nu t}(\partial_{t}+\nu)e^{-\nu t}.
\end{equation*}
\end{definition}
The motivation behind this definition can be found in Definition \ref{def:A14}. With the function space and interpretation of operators in place, we arrive at the weak formulation \eqref{eq: simplified MD problem} of the simplified hydromechanical problem \eqref{eqs: simplified md poromechanics}. 
\begin{table}[!htbp]
    \centering
    \begin{tabular}{|p{11.5cm}|}
    \hline
    \textbf{Weak formulation of the simplified mixed-dimensional poromechanics problem}
    \vspace{3mm}

    Given \( f \coloneqq \left[\mathfrak{r}_{\mathfrak{s}},\mathfrak{r}_{\mathfrak{m}}, 0,\mathfrak{r}_{\mathfrak{g}}\right]^{T}\in L_{\nu }^{2}\left(\mathbb{R},U\right)\), find \( u \coloneqq \left[\mathfrak{v, p, }\tilde{\mathfrak{s}}\mathfrak{, q}\right]^{T}\in L_{\nu }^{2}\left(\mathbb{R},U\right)\) such that
    \begin{equation} \label{eq: simplified MD problem}
    \left(u,f\right) =\left(\begin{bmatrix}
    \mathfrak{v} \\ 
    \mathfrak{p} \\ 
    \tilde{\mathfrak{s}} \\ 
    \mathfrak{q} \\ 
    \end{bmatrix},\begin{bmatrix}
    \mathfrak{r}_{\mathfrak{s}} \\ 
    \mathfrak{r}_{\mathfrak{m}} \\ 
    0 \\ 
    \mathfrak{r}_{\mathfrak{g}} \\ 
    \end{bmatrix}\right) \in \begin{bmatrix}
    \rho_{r}\partial_{0,\nu } &\mathbb{D}_{s} \cdot \alpha\check{\mathfrak{T}} & -\mathbb{D}_{s} \cdot  & 0 \\ 
    \check{\mathfrak{T}}'\alpha\mathring{\mathfrak{D}}_{s} & \partial_{0,\nu }\left(\hat{\mathfrak{T}}'\alpha\hat{\mathfrak{A}}\alpha\hat{\mathfrak{T}}+\beta\right) & \partial_{0,\nu }\hat{\mathfrak{T}}'\alpha\hat{\mathfrak{A}} &\mathring{\mathfrak{D}} \cdot  \\ 
    -\mathring{\mathfrak{D}}_{s} & \partial_{0,\nu }\hat{\mathfrak{A}}\alpha\hat{\mathfrak{T}} &\check{\mathfrak{A}}+\partial_{0,\nu }\hat{\mathfrak{A}} & 0 \\ 
    0 &\mathbb{D} & 0 & \kappa^{-1} \\ 
    \end{bmatrix}\end{equation} \\
    \hline
    \end{tabular}
\end{table}
\begin{remark}
We refer to \eqref{eq: simplified MD problem} as the \textit{weak} formulation since the problem is posed in a Hilbert space setting (the domains of the differential operators are Hilbert spaces  \cite{pedersen1989unbounded, arnold2018finite}). As a direct consequence, we recall that the solution, if it exists, is defined up to the equivalence classes of \( L_{\nu }^{2}(\mathbb{R},U)\). 
\end{remark}
\begin{remark}
 The function space \( L_{\nu }^{2}(\mathbb{R},U)\) does not ensure more regularity than square integrability in space and (weighted) time. However, the presence of the differential operators ensures that the solution, if it exists, has sufficient regularity for these to be well-defined. For example, the term \( \partial_{0,\nu }\mathfrak{v}\) ensures that \(\mathfrak{v}\in \mathrm{dom}(\partial_{0,\nu })\) and thus \( \partial_{0,\nu }\mathfrak{v}\in L_{\nu }^{2}(\mathbb{R},\mathfrak{U})\). Similarly, the solution has \(\mathfrak{v}\in \mathrm{dom}(\mathring{\mathfrak{D}}_{s})\) and thus \(\mathring{\mathfrak{D}}_{s}\mathfrak{v}\in L^{2}(\mathfrak{X}^{1},\mathbb{R}^{n})\). 
\end{remark}
\begin{remark}
 This formulation does not allow for a constant effect of gravity throughout the past since \(\mathfrak{r}_{\mathfrak{g}}\) is assumed to be in \( L_{\nu }^{2}(\mathbb{R},\mathfrak{Q})\). However, this effect can be properly incorporated by instead considering an initial-value problem on the real half-line \(\mathbb{R}_{>0}\), see  \cite{trostorff2012alternative} and Remark 3.3 in \cite{picard2015well}.
\end{remark}

\subsection{Well-posedness of the weak formulation}
\label{sec:wellposedness_weak}

In order to analyse problem \eqref{eq: simplified MD problem} in the appropriate setting, we recognize that the binary relation has a favourable underlying structure. In particular, we recognize that problem \eqref{eq: simplified MD problem} is an \textit{evolutionary equation}  \cite{picard2015well} of the form
\begin{equation*}
( u, f)\in \partial_{0,\nu }M_{0}+M_{1}+A_{\nu }.
\end{equation*}

Here, the first two components \( M_{0}\) and \( M_{1}\) are linear operators, given by
\begin{align}
M_{0} &\coloneqq  \Sigma  '\begin{bmatrix}
\rho_{r} &  &  &  \\ 
 & \beta  &  &  \\ 
 &  &\hat{\mathfrak{A}} &  \\ 
 &  &  & 0 \\ 
\end{bmatrix} \Sigma  , &
\Sigma  &\coloneqq \begin{bmatrix}
1 &  &  &  \\ 
 & 1 &  &  \\ 
 & \alpha\hat{\mathfrak{T}} & 1  &  \\ 
 &  &  & 0 \\ 
\end{bmatrix}, & 
M_{1} &\coloneqq \begin{bmatrix}
0 &  &  &  \\ 
 & 0 &  &  \\ 
 &  &\check{c} &  \\ 
 &  &  & c_{\kappa } \\ 
\end{bmatrix}.
\end{align}

Furthermore, \( A_{\nu }\subseteq L_{\nu }^{2}(\mathbb{R},U)\times L_{\nu }^{2}(\mathbb{R},U)\) is a temporal extension (see Def. \ref{def:A11}) given by
\begin{equation*}
A_{\nu } \coloneqq \left\{ u,v\in L_{\nu }^{2}(\mathbb{R},U)\mid (u(t),v(t))\in A_{0}+A_{1},  \text{ for a.e. } t\in \mathbb{R}\right\}.
\end{equation*}
Where the spatial relations \( A_{0}\) and \( A_{1}\) are given by
\begin{align}
A_{0} &\coloneqq \begin{bmatrix}
0 &  &  &  \\ 
 & 0 &  &  \\ 
 &  &\check{\mathfrak{A}}-\check{c} &  \\ 
 &  &  & \kappa^{-1}-c_{k}
\end{bmatrix}, &
A_{1} &\coloneqq \begin{bmatrix}
0 &\mathbb{D}_{s} \cdot \alpha\check{\mathfrak{T}} & -\mathbb{D}_{s} \cdot  &  \\ 
\check{\mathfrak{T}}'\alpha\mathring{\mathfrak{D}}_{s} & 0 &  &\mathring{\mathfrak{D}} \cdot  \\ 
-\mathring{\mathfrak{D}}_{s} &  & 0 &  \\ 
 &\mathbb{D} &  & 0 
\end{bmatrix}.
\end{align}

The decomposition of the evolutionary equation in terms of the binary relations \( M_{0}\), \( M_{1}\) and \( A_{0}\), \(A_{1}\) highlights the structure of the problem: \( M_{0}\) and \( M_{1}\) contain the weights of the time-derivative and diagonal terms, respectively, while \( A_{0}\) and \( A_{1}\) contain the non-linearities and differential operators, respectively. 

This identification of the problem allows us to use the solution theory of evolutionary equations, in particular we recall the following key theorem.
\begin{theorem}[Well-posedness of autonomous evolutionary inclusions]\label{theorem:5.1}
Let \( \nu >0\) and \( r>\frac{1}{2\nu }\). Let \( A_{\nu }\subseteq L_{\nu }^{2}(\mathbb{R},U)\times L_{\nu }^{2}(\mathbb{R},U)\) be a binary relation and \( M_{0},M_{1}\subseteq U\times U\) linear, bounded mappings. Assume the following hypotheses:
\begin{enumerate}[label=H\arabic*., ref=H\arabic*] \small
    \item \label{hypothesis:1}
    $A_{\nu }$ is maximal monotone, time translation-invariant (autonomous), and satisfies 
    \begin{align*}
    \int_{-\infty }^{0}\text{Re}(\left\langle u_{1}(t)-u_{2}(t),v_{1}(t)-v_{2}(t)\right\rangle)e^{-2\nu t}\mathrm{d}t &\geq 0, &
    \forall(u_{1},v_{1}),(u_{2},v_{2}) &\in A_{\nu }.
    \end{align*}
    \item \label{hypothesis:2}
    \( \exists c>0\) such that \( z^{-1}M_{0}+M_{1}-c\) is monotone for all \( z\in\mathcal{B}_{\mathbb{C}}(r,r)\). Here, \(\mathcal{B}_{\mathbb{C}}(r,r)\) denotes the open complex ball with radius \( r\), centered at \( r\).
\end{enumerate}
Then for each \( f\in L_{\nu }^{2}(\mathbb{R},U)\), there exists a unique \( u\in L_{\nu }^{2}(\mathbb{R},U)\) such that
\begin{equation*}
(u,f)\in\overline{\partial_{0,\nu }M_{0}+M_{1}+A_{\nu }}.\\ 
\end{equation*}
Moreover, the solution operator \((\overline{\partial_{0,\nu }M_{0}+M_{1}+A_{\nu }})^{-1}\) is causal and Lipschitz-continuous with a Lipschitz constant bounded by \(\frac{1}{c}\). 

\end{theorem}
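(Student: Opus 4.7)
The plan is to interpret the problem in the framework of Picard's well-posedness theory for evolutionary equations and reduce it to a perturbation theorem for maximal monotone operators on the Hilbert space $L^2_\nu(\mathbb{R}, U)$. The central device is the unitary equivalence between $\partial_{0,\nu}$ on the weighted space and multiplication by $i\xi + \nu$ on $L^2(\mathbb{R}, U)$, realized via the Fourier-Laplace transform $\mathcal{L}_\nu \coloneqq \mathcal{F} e^{-\nu t}$. Under this identification, $\partial_{0,\nu}$ is a boundedly invertible normal operator with numerical range contained in the half-plane $\{z : \mathrm{Re}\, z \geq \nu\}$, and its inverse $\partial_{0,\nu}^{-1}$ corresponds to multiplication by $(i\xi + \nu)^{-1}$, which maps $\mathbb{R}$ into the boundary of the ball $\mathcal{B}_{\mathbb{C}}(r,r)$ precisely when $r = 1/(2\nu)$. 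This is why the hypothesis $r > 1/(2\nu)$ matches the spectral geometry of $\partial_{0,\nu}$.

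The first key step is to translate hypothesis \ref{hypothesis:2} into a strict monotonicity estimate for $\partial_{0,\nu} M_0 + M_1 - c$ on $L^2_\nu(\mathbb{R}, U)$. Writing $L \coloneqq \partial_{0,\nu} M_0 + M_1$ and using linearity, one computes
\begin{equation*}
\mathrm{Re}\,\langle L u, u\rangle_{L^2_\nu}
= \mathrm{Re}\,\langle (\partial_{0,\nu} M_0 + M_1) u, u\rangle_{L^2_\nu}
= \mathrm{Re}\,\langle \partial_{0,\nu}(M_0 + \partial_{0,\nu}^{-1} M_1)\, u, u\rangle_{L^2_\nu},
\end{equation*}
and after conjugating by $\mathcal{L}_\nu$ the statement reduces pointwise in the Fourier variable to monotonicity of $z^{-1} M_0 + M_1 - c$ at the point $z = (i\xi + \nu)^{-1} \in \mathcal{B}_{\mathbb{C}}(r,r)$, which is exactly \ref{hypothesis:2}. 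Integrating in $\xi$ then yields the uniform estimate $\mathrm{Re}\,\langle L u_1 - L u_2, u_1 - u_2\rangle_{L^2_\nu} \geq c \|u_1 - u_2\|_{L^2_\nu}^2$.

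Next, under hypothesis \ref{hypothesis:1}, the relation $A_\nu$ is maximal monotone on $L^2_\nu(\mathbb{R}, U)$ with the causality-compatible orientation provided by the half-line integral condition. The sum $L + A_\nu$ is then monotone, and $L$ supplies the uniform coercivity constant $c$. Maximal monotonicity of the sum follows from a standard perturbation argument (Minty–Browder in Hilbert space, as employed in the appendix): since $L$ is a bounded linear, strictly monotone perturbation of the maximal monotone operator $A_\nu$, the range condition $\mathrm{ran}(L + A_\nu + \mathrm{Id}) = L^2_\nu(\mathbb{R}, U)$ holds, and equivalently $\mathrm{ran}(L + A_\nu)$ is dense; passage to the closure $\overline{L + A_\nu}$ gives a maximal monotone relation with uniform monotonicity constant $c$. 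Consequently the inverse $(\overline{L + A_\nu})^{-1}$ is a single-valued, globally defined, Lipschitz map with constant at most $1/c$.

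Finally, causality follows from the time-translation invariance in \ref{hypothesis:1} together with the fact that $\partial_{0,\nu}$ is causal on $L^2_\nu(\mathbb{R}, U)$: if $f$ vanishes on $(-\infty, T)$ then so does the unique solution $u$, by monotonicity testing against the truncation $u \mathbbm{1}_{(-\infty, T)}$ and exploiting that both $\partial_{0,\nu}$ and $A_\nu$ respect the past. The main obstacle in executing this plan is the first step: transferring the pointwise-in-$z$ condition \ref{hypothesis:2} into an operator-level coercivity estimate requires care because $M_0$ may be singular (indeed $M_0$ has a zero block in the flux slot), so $z^{-1} M_0$ is only densely defined and one must justify the spectral calculus for the non-invertible, non-self-adjoint operator $\partial_{0,\nu} M_0$ on its natural domain—this is precisely the technical content invoked from the appendix.
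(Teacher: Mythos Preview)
The paper does not actually prove this theorem: its entire proof reads ``See Theorem 3.2 of \cite{picard2015well}.'' So you have done considerably more than the paper attempts, by sketching the argument that the cited reference carries out. Your outline---Fourier--Laplace transform to realise $\partial_{0,\nu}$ as multiplication by $i\xi+\nu$, the geometric identification of the range of $(i\xi+\nu)^{-1}$ with the circle $\partial\mathcal{B}_{\mathbb{C}}(1/(2\nu),1/(2\nu))$ so that $r>1/(2\nu)$ places it inside $\mathcal{B}_{\mathbb{C}}(r,r)$, pointwise application of \ref{hypothesis:2} followed by integration in $\xi$, a sum theorem for maximal monotone relations, and a causality argument from time-translation invariance---is the correct skeleton of Picard's proof.

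One genuine slip: you call $L=\partial_{0,\nu}M_0+M_1$ a ``bounded linear, strictly monotone perturbation'' of $A_\nu$. It is not bounded; $\partial_{0,\nu}$ is unbounded, and this is precisely why one must pass to the closure $\overline{L+A_\nu}$ rather than apply a bounded-perturbation lemma directly. The correct route (in the cited reference) is to first show that $\overline{L}$ itself is $c$-maximal monotone via the spectral argument you describe, and then invoke a sum theorem of the type in Lemma~\ref{lemma:A1} (which requires boundedness of only one summand, here provided in applications by the structure of $A_\nu$, or handled by a resolvent regularisation). Your final paragraph correctly flags the domain issues arising from the singular block in $M_0$ as the main technical work.
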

\begin{proof}
See Theorem 3.2 of \cite{picard2015well}.
\end{proof}

This theorem leads to the statement of our main well-posedness result. 
\begin{theorem}[The main result]\label{theorem:5.2}
If assumptions \ref{assumption:1}-\ref{assumption:5} are fulfilled, then the mixed-dimensional poromechanics problem \eqref{eq: simplified MD problem} is well-posed. In particular, for any right-hand side \( f\in L_{\nu }^{2}(\mathbb{R},U)\), a solution \( u\in L_{\nu }^{2}(\mathbb{R},U)\) exists uniquely such that \((u,f)\in\overline{\partial_{0,\nu }M_{0}+M_{1}+A_{\nu }}\). Moreover, a \( c>0\) exists such that the solution operator is bounded:
\begin{equation*}
\left\vert u\right\vert_{L_{\nu }^{2}(\mathbb{R},U)}\leq\frac{1}{c}\left\vert f\right\vert_{L_{\nu }^{2}(\mathbb{R},U)}.
\end{equation*}
\begin{proof}
Lemmas \ref{lemma:5.3} and \ref{lemma:5.4}, presented below, suffice to invoke Theorem \ref{theorem:5.1}. The bound is a direct consequence of the Lipschitz-continuity of the solution operator.
\end{proof}
\end{theorem}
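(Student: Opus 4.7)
The plan is to invoke the abstract well-posedness theorem \ref{theorem:5.1} directly on the evolutionary inclusion $(u, f) \in \partial_{0,\nu} M_0 + M_1 + A_\nu$ identified just above. The Lipschitz bound on the solution operator $(\overline{\partial_{0,\nu} M_0 + M_1 + A_\nu})^{-1}$ furnished by Theorem \ref{theorem:5.1} immediately yields the claimed norm estimate with constant $1/c$, so the entire proof reduces to verifying hypotheses \ref{hypothesis:1} and \ref{hypothesis:2}. I expect to package these verifications as two auxiliary lemmas, corresponding respectively to the structural properties of $A_\nu$ and the coercivity properties of $z^{-1} M_0 + M_1$.

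For \ref{hypothesis:1}, I would decompose the spatial part pointwise as $A_0 + A_1$, where $A_0$ collects the shifted nonlinearities $\check{\mathfrak{A}} - \check{c}$ and $\kappa^{-1} - c_\kappa$ (each bounded, maximally monotone, and containing $(0,0)$ by Assumptions \ref{assumption:2} and \ref{assumption:5}), and $A_1$ collects the linear differential block. The essential calculation is the formal skew-adjointness of $A_1$: Lemma \ref{lemma:5.1} gives the pairing of $\mathring{\mathfrak{D}}_s$ with $-\mathbb{D}_s \cdot$, Definitions \ref{def:2.22}--\ref{def:2.23} pair $\mathring{\mathfrak{D}} \cdot$ with $-\mathbb{D}$, and Definition \ref{def:3.20} supplies the identity--trace duality between $\check{\mathfrak{T}}$ and $\check{\mathfrak{T}}'$. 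A block computation then gives $\mathrm{Re}\langle A_1 u, u\rangle_U = 0$ on $\mathrm{dom}(A_1)$, so $A_0 + A_1$ is monotone. Spatial maximality follows from a Minty range argument, and autonomy together with the causality integral inequality are then immediate for the pointwise-in-$t$ temporal extension $A_\nu$, since $A_0 + A_1$ is $t$-independent and pointwise monotone.

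For \ref{hypothesis:2}, the block structure
\[
M_0 = \Sigma'\, \mathrm{diag}(\rho_r, \beta, \hat{\mathfrak{A}}, 0)\, \Sigma, \qquad M_1 = \mathrm{diag}(0, 0, \check{c}, c_\kappa),
\]
combined with the standard fact that $\mathrm{Re}(z^{-1}) \geq 1/(2r)$ for $z \in \mathcal{B}_\mathbb{C}(r, r)$, yields for $u = (\mathfrak{v}, \mathfrak{p}, \tilde{\mathfrak{s}}, \mathfrak{q})$
\[
\mathrm{Re}\langle (z^{-1} M_0 + M_1) u, u\rangle_U \;\geq\; \tfrac{1}{2r}\bigl(c_\rho\|\mathfrak{v}\|^2 + c_\beta\|\mathfrak{p}\|^2 + \hat{c}\,\|\alpha\hat{\mathfrak{T}}\mathfrak{p} + \tilde{\mathfrak{s}}\|^2\bigr) + \check{c}\|\tilde{\mathfrak{s}}\|^2 + c_\kappa\|\mathfrak{q}\|^2,
\]
using Assumptions \ref{assumption:1}, \ref{assumption:3}, \ref{assumption:4}. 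All four components are coercively controlled: $\rho_r$ and $\beta$ handle $\mathfrak{v}, \mathfrak{p}$ via $M_0$, the term $\check{c}\|\tilde{\mathfrak{s}}\|^2$ in $M_1$ handles $\tilde{\mathfrak{s}}$ independently of the potentially degenerate cross-coupling in $\Sigma$, and $c_\kappa$ handles $\mathfrak{q}$. Selecting any $c > 0$ strictly below $\min\{c_\rho/(2r), c_\beta/(2r), \check{c}, c_\kappa\}$ completes the verification.

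The hard part, in my view, lies inside the first lemma. Pointwise monotonicity of $A_0 + A_1$ is straightforward from the skew-adjointness of $A_1$ and Assumptions \ref{assumption:2}, \ref{assumption:5}, but maximal monotonicity of the sum is not automatic: $A_0$ has full domain while $A_1$ is an unbounded differential operator on a proper dense domain determined by mixed-dimensional regularity and boundary conditions, so no off-the-shelf perturbation theorem applies. The argument has to exploit the linearity and skew-adjointness of $A_1$ together with the boundedness of $A_0$, reducing the Minty range condition for $I + \lambda(A_0 + A_1)$ to solvability of a mixed-dimensional saddle-point system of Biot type; here the adjointness results from Section \ref{sec:differential_operators} together with Lemma \ref{lemma:5.1} do the heavy lifting. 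Transferring the resulting spatial maximality to $A_\nu$ on $L_\nu^2(\mathbb{R}, U)$ is then a routine application of the temporal extension machinery behind the definition of $A_\nu$.
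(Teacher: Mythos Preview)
Your overall strategy matches the paper's: verify \ref{hypothesis:1} and \ref{hypothesis:2}, then invoke Theorem \ref{theorem:5.1}. Two points need correction.

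First, the maximality of $A_0 + A_1$ that you flag as the hard part is in fact routine. Since $A_0$ is bounded and maximal monotone (Assumptions \ref{assumption:2}, \ref{assumption:5}), $A_1$ is linear skew-selfadjoint and hence maximal monotone, and $0 \in \mathrm{dom}(A_0) \cap \mathrm{dom}(A_1)$, the standard perturbation result recorded as Lemma \ref{lemma:A1} gives maximality of the sum directly; your claim that ``no off-the-shelf perturbation theorem applies'' overlooks precisely this lemma. No Biot-type saddle-point solvability argument is needed. The lift to the temporal extension $A_\nu$ then follows from the cited result of Trostorff, using $(0,0) \in A_0 + A_1$.

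Second, your \ref{hypothesis:2} estimate is slightly off. The entry $\check{c}$ in $M_1$ must be read as $\check{c}\check{\gamma}$, acting only on fracture components; otherwise $\check{\mathfrak{A}} - \check{c}$ in $A_0$ would be anti-monotone on the bulk, where $\check{\mathfrak{A}}$ vanishes. Hence $M_1$ delivers only $\check{c}\,\|\check{\gamma}\delta\tilde{\mathfrak{s}}\|^2$, not $\check{c}\,\|\delta\tilde{\mathfrak{s}}\|^2$, and your proposed constant does not yet bound the full stress. The bulk part $\hat{\gamma}\tilde{\mathfrak{s}}$ must instead come from $M_0$: the $\hat{\mathfrak{A}}$ block yields $\hat{c}\,\|\hat{\gamma}(\delta\tilde{\mathfrak{s}} + \alpha\mathfrak{T}\delta\mathfrak{p})\|^2$, and a triangle inequality absorbing the cross term into $c_\beta\|\delta\mathfrak{p}\|^2$ recovers control of $\|\hat{\gamma}\delta\tilde{\mathfrak{s}}\|^2$ (this is the content of the paper's Lemma \ref{lemma:5.4.1}). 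The two pieces together then control $\|\delta\tilde{\mathfrak{s}}\|^2 = \|\hat{\gamma}\delta\tilde{\mathfrak{s}}\|^2 + \|\check{\gamma}\delta\tilde{\mathfrak{s}}\|^2$.
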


The proof of Theorem \ref{theorem:5.2} requires validating maximal monotonicity of several operators, for which we often need to take a difference between two elements \( u_{1},u_{2}\in U\). As a short-hand notation, we denote
\begin{equation*}
\delta u \coloneqq u_{1}-u_{2}
\end{equation*}
and let \(\left[\delta\mathfrak{v,}\delta\mathfrak{p,}\delta\hat{\mathfrak{s}},\delta\mathfrak{q}\right]^{T} \coloneqq \delta u\). Moreover, we omit the subscripts on inner products and norms for notational brevity.
\begin{lemma}[H1]\label{lemma:5.3}
If assumptions \ref{assumption:2},\ref{assumption:5} are satisfied, then [\ref{hypothesis:1}] is fulfilled.
\begin{proof}
First, \( A_{0}\) is maximal monotone by \ref{assumption:2} and \ref{assumption:5}. Second, \( A_{1}\) is linear and skew-selfadjoint and therefore also maximal monotone. Furthermore, \( 0\) is in the domain of both operators and \( A_{0}\) is bounded due to \ref{assumption:2},\ref{assumption:5}. We then invoke Lemma \ref{lemma:A1} from the appendix to conclude that the sum \( A_{0}+A_{1}\) is maximal monotone.

Next, we note that \( A_{\nu }\) is the temporal extension of \( A_{0}+A_{1}\) and we have that \((0,0)\in A_{0}+A_{1}\). Proposition 2.5 of \cite{trostorff2012alternative} then ensures that \( A_{\nu }\) is maximal monotone.

Time translation-invariance follows directly from the definition of \( A_{\nu }\). The positivity of the time integral is clear by the monotonicity of \( A_{0}+A_{1}\).
\end{proof}
\end{lemma}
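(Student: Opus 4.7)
The plan is to prove H1 by decomposing $A_\nu$ as the temporal extension of a pointwise-in-time spatial relation $A_0+A_1$, verifying maximal monotonicity for each piece, then lifting to the temporal setting and reading off the remaining structural properties. First I would handle $A_0$: its non-trivial diagonal blocks are $\check{\mathfrak{A}}-\check{c}$ and $\kappa^{-1}-c_\kappa$, both of which are maximal monotone because Assumptions \ref{assumption:2} and \ref{assumption:5} guarantee that $\check{\mathfrak{A}}$ and $\kappa^{-1}$ are $\check{c}$- and $c_\kappa$-maximal monotone respectively, and subtracting a nonnegative scalar preserves maximal monotonicity. A block-diagonal relation with maximal monotone diagonal entries is itself maximal monotone, so $A_0$ is maximal monotone, and by the boundedness parts of Assumptions \ref{assumption:2} and \ref{assumption:5} it is also bounded with $(0,0)\in A_0$.

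Next I would argue that $A_1$ is skew-selfadjoint. The three adjoint pairings to verify are $(\mathbb{D}_s\cdot, -\mathring{\mathfrak{D}}_s)$, $(\mathring{\mathfrak{D}}\cdot, \mathbb{D})$, and $(\mathbb{D}_s\cdot\,\alpha\check{\mathfrak{T}}, \check{\mathfrak{T}}'\alpha\mathring{\mathfrak{D}}_s)$. The first comes directly from Lemma \ref{lemma:5.1}, the second is Definition \ref{def:2.22}, and the third follows by composing the $\mathfrak{T}$-$\mathfrak{T}'$ duality of Definition \ref{def:3.20} with Lemma \ref{lemma:5.1}, noting that $\check{\gamma}$ enters symmetrically in $\check{\mathfrak{T}}$ and $\check{\mathfrak{T}}'$. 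These identities show that $A_1$ agrees with its negative adjoint on the intersection of the relevant differential-operator domains; since that intersection is dense in $U$ (each individual domain is dense), $A_1$ is skew-selfadjoint, and hence maximal monotone.

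To combine the two pieces, I would invoke the perturbation result Lemma \ref{lemma:A1}: since $A_0$ is bounded with $(0,0)\in\operatorname{dom}(A_0)$ and $A_1$ is maximal monotone, the sum $A_0+A_1$ is again maximal monotone on $\operatorname{dom}(A_1)$. To pass to the temporal extension $A_\nu$, I would then apply the temporal extension theorem (Proposition 2.5 of \cite{trostorff2012alternative}), whose only nontrivial hypothesis is $(0,0)\in A_0+A_1$; this is immediate from the block structure and from the fact that $(0,0)\in\check{\mathfrak{A}}$, $(0,0)\in\kappa^{-1}$ by Assumptions \ref{assumption:2} and \ref{assumption:5}.

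The remaining two requirements of H1 are then easy: time translation invariance of $A_\nu$ is immediate from the $t$-independence of $A_0$ and $A_1$, and the nonnegativity of the weighted integral over $(-\infty,0]$ follows because monotonicity of $A_0+A_1$ yields pointwise nonnegativity of the integrand while $e^{-2\nu t}>0$. The main subtlety I expect is the domain bookkeeping underlying the skew-adjointness of $A_1$: one has to check that the off-diagonal block pairings are genuine adjoints as unbounded operators between the appropriate $L^2(\mathfrak{X}^k,\mathbb{R}^p)$ spaces, not merely formal adjoints. This is exactly where Lemma \ref{lemma:5.1} and the circle-accent convention encoding boundary conditions on $\mathring{\mathfrak{D}}_s$ and $\mathring{\mathfrak{D}}\cdot$ do the work, and where the choice to impose clamped/no-flow boundary conditions (rather than floating/open ones) determines which half of each adjoint pair carries the boundary trace.
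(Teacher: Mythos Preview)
Your proposal is correct and follows essentially the same route as the paper: establish maximal monotonicity of $A_0$ from Assumptions \ref{assumption:2} and \ref{assumption:5}, of $A_1$ from skew-selfadjointness, combine them via Lemma \ref{lemma:A1}, lift to $A_\nu$ via Proposition 2.5 of \cite{trostorff2012alternative}, and read off autonomy and the integral bound. You supply more detail than the paper on the block structure of $A_0$ and on the specific adjoint pairings underlying the skew-selfadjointness of $A_1$, but the argument is otherwise identical.
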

\begin{lemma}[H2]\label{lemma:5.4}
If assumptions \ref{assumption:1}-\ref{assumption:5} are satisfied, then [\ref{hypothesis:2}] is fulfilled.
\begin{proof}
Let \((u_{1},v_{1}),(u_{2},v_{2})\in z^{-1}M_{0}+M_{1}\). The assumptions and Lemmas \ref{lemma:5.4.1} and \ref{lemma:5.4.2}, presented below, imply that
\begin{equation*}
\begin{split}
\left\langle \delta u,\delta M_{0}u\right\rangle & \geq c_{0}(\left\Vert \delta\mathfrak{v}\right\Vert^{2}+\left\Vert \delta\mathfrak{p}\right\Vert^{2}+\left\Vert\hat{\gamma }\delta\tilde{\mathfrak{s}}\right\Vert^{2}) \\ 
\left\langle \delta u,\delta M_{1}u\right\rangle & \geq c_{1}(\left\Vert \delta\mathfrak{q}\right\Vert^{2}+\left\Vert\check{\gamma }\delta\tilde{\mathfrak{s}}\right\Vert^{2}), \\ 
\end{split}
\end{equation*}
for some \( c_{0},c_{1}>0\). Using these bounds, we derive
\begin{equation*}
\begin{split}
\text{Re}(\left\langle \delta u,\delta v\right\rangle)&  = \text{Re}(z^{-1}\left\langle \delta u, \delta M_{0}u\right\rangle +\left\langle \delta u,\delta M_{1}u\right\rangle) \\ 
& \geq \text{Re}(z^{-1})c_{0}(\left\Vert \delta\mathfrak{v}\right\Vert^{2}+\left\Vert \delta\mathfrak{p}\right\Vert^{2}+\left\Vert\hat{\gamma }\delta\tilde{\mathfrak{s}}\right\Vert^{2})+c_{1}(\left\Vert \delta\mathfrak{q}\right\Vert^{2}+\left\Vert\check{\gamma }\delta\tilde{\mathfrak{s}}\right\Vert^{2}).
\end{split}
\end{equation*}

Since \( z\in\mathcal{B}_{\mathbb{C}}(r,r)\), its real part satisfies \( \text{Re}(z^{-1})\geq\frac{1}{2r}\) and thus we obtain:
\begin{equation*}
\text{Re}(\left\langle \delta u,\delta v\right\rangle)\geq\frac{c_{0}}{2r}(\left\Vert \delta\mathfrak{v}\right\Vert^{2}+\left\Vert \delta\mathfrak{p}\right\Vert^{2}+\left\Vert\hat{\gamma }\delta\tilde{\mathfrak{s}}\right\Vert^{2})+c_{1}(\left\Vert \delta\mathfrak{q}\right\Vert^{2}+\left\Vert\check{\gamma }\delta\tilde{\mathfrak{s}}\right\Vert^{2}).
\end{equation*}
We then set \( c = \min\left\{\frac{c_{0}}{2r},c_{1}\right\}\) to conclude that
\begin{equation*}
\text{Re}(\left\langle \delta u,\delta v\right\rangle)\geq c\left\Vert \delta u\right\Vert^{2}.
\end{equation*}
\end{proof}
\end{lemma}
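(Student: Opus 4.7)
The plan is to reduce [H2] to two separate spatial coercivity estimates—one for $M_0$, one for $M_1$—and then glue them using the elementary geometric fact that $\operatorname{Re}(z^{-1}) > 1/(2r)$ for every $z \in \mathcal{B}_{\mathbb{C}}(r, r)$. Concretely, for $(u_k, v_k) \in z^{-1} M_0 + M_1$, $k = 1, 2$, setting $\delta u \coloneqq u_1 - u_2$ and $\delta v \coloneqq v_1 - v_2$, linearity gives
$$
\operatorname{Re} \langle \delta u, \delta v \rangle
 = \operatorname{Re}(z^{-1}) \, \langle \delta u, M_0 \delta u \rangle + \langle \delta u, M_1 \delta u \rangle,
$$
so it suffices to establish a coercive lower bound on each quadratic form.

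The $M_1$ estimate is the easy one: $M_1$ is block-diagonal with entries $\check{c}$ (which, since $\check{\mathfrak{A}}$ is supported on the fractures, is active only against $\check{\gamma} \, \delta \tilde{\mathfrak{s}}$) and $c_\kappa$, so $\langle \delta u, M_1 \delta u \rangle \geq \check{c} \, \| \check{\gamma} \, \delta \tilde{\mathfrak{s}} \|^2 + c_\kappa \, \| \delta \mathfrak{q} \|^2$. The $M_0$ estimate is where the real work lies. Using the factorization $M_0 = \Sigma'\, \mathrm{diag}(\rho_r, \beta, \hat{\mathfrak{A}}, 0)\, \Sigma$ from Section~\ref{sec:model_equations} and expanding in the four blocks yields
$$
\langle \delta u, M_0 \delta u \rangle
 = \langle \delta \mathfrak{v}, \rho_r \, \delta \mathfrak{v} \rangle
 + \langle \delta \mathfrak{p}, \beta \, \delta \mathfrak{p} \rangle
 + \langle \alpha \hat{\mathfrak{T}} \, \delta \mathfrak{p} + \delta \tilde{\mathfrak{s}},\; \hat{\mathfrak{A}}\bigl(\alpha \hat{\mathfrak{T}} \, \delta \mathfrak{p} + \delta \tilde{\mathfrak{s}}\bigr) \rangle.
$$
Assumptions \ref{assumption:1} and \ref{assumption:4} dispose of the first two terms directly, and Assumption \ref{assumption:3} gives $\hat{c} \, \| \alpha \hat{\mathfrak{T}} \, \delta \mathfrak{p} + \hat{\gamma} \, \delta \tilde{\mathfrak{s}} \|^2$ from the third (only the $\hat{\gamma}$-projection of $\delta \tilde{\mathfrak{s}}$ survives because $\hat{\mathfrak{A}}$ lives on the bulk).

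The decisive step—and the main obstacle—is to decouple $\| \delta \mathfrak{p} \|$ from $\| \hat{\gamma} \, \delta \tilde{\mathfrak{s}} \|$ inside this combined bulk-Biot quadratic form, since the coercivity of $\hat{\mathfrak{A}}$ controls only their weighted sum. The plan is a standard Young split: for any $\delta > 1$,
$$
\| \alpha \hat{\mathfrak{T}} \, \delta \mathfrak{p} + \hat{\gamma} \, \delta \tilde{\mathfrak{s}} \|^2
 \geq -(\delta - 1) \, \| \alpha \hat{\mathfrak{T}} \, \delta \mathfrak{p} \|^2 + \bigl(1 - \tfrac{1}{\delta}\bigr) \, \| \hat{\gamma} \, \delta \tilde{\mathfrak{s}} \|^2 ,
$$
and then absorb the negative term into the $c_\beta \| \delta \mathfrak{p} \|^2$ contribution using boundedness of $\alpha \hat{\mathfrak{T}}$, which is always possible by choosing $\delta \in (1,\, 1 + c_\beta / (\hat{c} \, \| \alpha \hat{\mathfrak{T}} \|_{\mathrm{op}}^2))$. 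This delivers $\langle \delta u, M_0 \delta u \rangle \geq c_0 \bigl( \| \delta \mathfrak{v} \|^2 + \| \delta \mathfrak{p} \|^2 + \| \hat{\gamma} \, \delta \tilde{\mathfrak{s}} \|^2 \bigr)$ for an explicit $c_0 > 0$ depending on $c_\rho, c_\beta, \hat{c}$ and $\| \alpha \hat{\mathfrak{T}} \|_{\mathrm{op}}$.

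Combining the two bounds with $\operatorname{Re}(z^{-1}) > 1/(2r)$ and the orthogonal decomposition $\| \delta \tilde{\mathfrak{s}} \|^2 = \| \hat{\gamma} \, \delta \tilde{\mathfrak{s}} \|^2 + \| \check{\gamma} \, \delta \tilde{\mathfrak{s}} \|^2$ (from $\hat{\gamma} + \check{\gamma} = 1$, $\hat{\gamma} \check{\gamma} = 0$) yields $\operatorname{Re} \langle \delta u, \delta v \rangle \geq c \, \| \delta u \|_U^2$ with $c = \min\{ c_0 / (2r),\; \min(\check{c}, c_\kappa) \}$, which is precisely [H2]. Note that the constant $c$ degrades as $\| \alpha \hat{\mathfrak{T}} \|_{\mathrm{op}}^2 / c_\beta$ grows: this is the natural poroelastic inf-sup trade-off between Biot coupling and storativity, and it is the only non-trivial feature of the argument.
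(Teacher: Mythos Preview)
Your proposal is correct and follows essentially the same route as the paper: split $\operatorname{Re}\langle\delta u,\delta v\rangle$ into the $M_0$ and $M_1$ contributions, establish separate coercivity bounds (the paper packages these as Lemmas~\ref{lemma:5.4.1} and~\ref{lemma:5.4.2}), and then combine via $\operatorname{Re}(z^{-1})\geq 1/(2r)$ and the orthogonal splitting $\hat\gamma+\check\gamma=1$. Your explicit Young-inequality decoupling of $\|\delta\mathfrak{p}\|$ and $\|\hat\gamma\,\delta\tilde{\mathfrak{s}}\|$ is precisely the content of the paper's $\gtrsim$-step in Lemma~\ref{lemma:5.4.1}, just written out with a tunable parameter rather than hidden constants.
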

\begin{lemma}\label{lemma:5.4.1}
Given assumptions \ref{assumption:1},\ref{assumption:3}, and \ref{assumption:4}, then the relationship \( M_{0}\) satisfies
\begin{equation*}
\left\langle \delta u,\delta M_{0}u\right\rangle \geq c_{0}(\left\Vert \delta\mathfrak{v}\right\Vert^{2}+\left\Vert \delta\mathfrak{p}\right\Vert^{2}+\left\Vert\hat{\gamma }\delta\tilde{\mathfrak{s}}\right\Vert^{2}),
\end{equation*}
for some \(c_0 \gt 0 \).
\begin{proof}
Assumptions \ref{assumption:1},\ref{assumption:3}, and \ref{assumption:4} give us that
\begin{equation*}
\left\langle \delta u,\delta M_{0}u\right\rangle \geq c_{\rho }\left\Vert \delta\mathfrak{v}\right\Vert^{2}+c_{\beta }\left\Vert \delta\mathfrak{p}\right\Vert^{2}+\hat{c}\left\Vert\hat{\gamma }(\delta\tilde{\mathfrak{s}}+\alpha\mathfrak{T}\delta\mathfrak{p})\right\Vert^{2}.
\end{equation*}

Next, we use the continuity of \( \alpha\) to obtain
\begin{equation*}
\begin{split}
c_{\beta }\left\Vert \delta\mathfrak{p}\right\Vert^{2}+\hat{c}\left\Vert\hat{\gamma }\delta(\tilde{\mathfrak{s}}+\alpha\mathfrak{T}\delta\mathfrak{p})\right\Vert^{2}& \gtrsim\frac{1}{2}\left\Vert \delta\mathfrak{p}\right\Vert^{2}+\frac{1}{2}\left\Vert -\delta \alpha\mathfrak{p}\right\Vert^{2}+\left\Vert\hat{\gamma }\delta(\tilde{\mathfrak{s}}+\alpha\mathfrak{T}\delta\mathfrak{p})\right\Vert^{2} \\ 
& \gtrsim\left\Vert \delta\mathfrak{p}\right\Vert^{2}+\left\Vert\hat{\gamma }\delta\tilde{\mathfrak{s}}\right\Vert^{2}, \\ 
\end{split}
\end{equation*}

in which \( a\gtrsim b\) implies that a \( c>0\) exists such that \( a\geq cb\). The combination of these bounds now proves the result.
\end{proof}
\end{lemma}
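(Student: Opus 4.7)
The plan is to exploit the factored structure $M_0 = \Sigma' \, \mathrm{diag}(\rho_r, \beta, \hat{\mathfrak{A}}, 0) \, \Sigma$. Because every factor is linear, the quadratic form unfolds as
\begin{equation*}
\langle \delta u, \delta M_0 u\rangle = \langle \delta\mathfrak{v}, \rho_r \delta\mathfrak{v}\rangle + \langle \delta\mathfrak{p}, \beta \delta\mathfrak{p}\rangle + \langle \alpha\hat{\mathfrak{T}}\delta\mathfrak{p} + \delta\tilde{\mathfrak{s}},\; \hat{\mathfrak{A}}(\alpha\hat{\mathfrak{T}}\delta\mathfrak{p} + \delta\tilde{\mathfrak{s}})\rangle,
\end{equation*}
after which the three coercivity hypotheses (Assumptions \ref{assumption:1}, \ref{assumption:3}, and \ref{assumption:4}) produce the lower bound $c_\rho\|\delta\mathfrak{v}\|^2 + c_\beta\|\delta\mathfrak{p}\|^2 + \hat c\, \|\hat\gamma(\alpha\mathfrak{T}\delta\mathfrak{p} + \delta\tilde{\mathfrak{s}})\|^2$. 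I use here that $\hat{\mathfrak{A}} = \hat\gamma\,\mathfrak{A}$, so its coercivity in Assumption \ref{assumption:3} controls precisely the $\hat\gamma$-weighted combination rather than the full $L^2$-norm.

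The hard part will be that Assumption \ref{assumption:3} controls the coupled quantity $\hat\gamma(\alpha\mathfrak{T}\delta\mathfrak{p} + \delta\tilde{\mathfrak{s}})$ rather than $\hat\gamma\,\delta\tilde{\mathfrak{s}}$ on its own, so some of the pressure coercivity must be spent to peel off the cross term. To do this I would split $c_\beta\|\delta\mathfrak{p}\|^2 = \tfrac{c_\beta}{2}\|\delta\mathfrak{p}\|^2 + \tfrac{c_\beta}{2}\|\delta\mathfrak{p}\|^2$ and exploit boundedness of $\alpha$ and $\mathfrak{T}$ on the second half to obtain $\tfrac{c_\beta}{2}\|\delta\mathfrak{p}\|^2 \gtrsim \|\hat\gamma\,\alpha\mathfrak{T}\delta\mathfrak{p}\|^2$. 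A triangle inequality then gives
\begin{equation*}
\|\hat\gamma\,\delta\tilde{\mathfrak{s}}\|^2 \le 2\|\hat\gamma(\alpha\mathfrak{T}\delta\mathfrak{p} + \delta\tilde{\mathfrak{s}})\|^2 + 2\|\hat\gamma\,\alpha\mathfrak{T}\delta\mathfrak{p}\|^2,
\end{equation*}
so a positive linear combination of quantities already bounded from below dominates $\|\hat\gamma\,\delta\tilde{\mathfrak{s}}\|^2$. Retaining the untouched half $\tfrac{c_\beta}{2}\|\delta\mathfrak{p}\|^2$ preserves the $\|\delta\mathfrak{p}\|^2$ contribution, the $c_\rho\|\delta\mathfrak{v}\|^2$ piece is untouched throughout, and I take $c_0$ as the minimum of the resulting positive weights to conclude the proof.
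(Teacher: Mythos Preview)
Your proof is correct and follows essentially the same approach as the paper's: both expand the quadratic form via the factored structure of $M_0$, apply the three coercivity assumptions to obtain control of $\|\delta\mathfrak{v}\|^2$, $\|\delta\mathfrak{p}\|^2$, and $\|\hat\gamma(\delta\tilde{\mathfrak{s}}+\alpha\mathfrak{T}\delta\mathfrak{p})\|^2$, and then spend part of the pressure coercivity together with the boundedness of $\alpha\mathfrak{T}$ and a triangle inequality to isolate $\|\hat\gamma\,\delta\tilde{\mathfrak{s}}\|^2$. Your version is in fact more explicit about the splitting and the triangle-inequality step than the paper's terse $\gtrsim$ chain.
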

\begin{lemma}\label{lemma:5.4.2}
Given assumptions \ref{assumption:2},\ref{assumption:5}, then \( M_{1}\) satisfies
\begin{equation*}
\left\langle \delta u,\delta M_{1}u\right\rangle \geq c_{1}(\left\vert \delta\mathfrak{q}\right\Vert^{2}+\left\Vert\check{\gamma }\delta\tilde{\mathfrak{s}}\right\Vert^{2}),
\end{equation*}
for some \( c_{1}>0\).
\begin{proof}
Substituting the definitions and using assumptions \ref{assumption:3}, \ref{assumption:5}, we have
\begin{equation*}
\left\langle \delta u,\delta M_{1}u\right\rangle  = c_{\kappa }\left\Vert \delta\mathfrak{q}\right\Vert^{2}+\check{c}\left\Vert\check{\gamma }\delta\mathfrak{s}\right\Vert^{2}.
\end{equation*}
The result therefore follows with \( c_{1} = \min\left\{ c_{\kappa },\check{c}\right\}\).
\end{proof}
\end{lemma}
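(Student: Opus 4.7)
The plan is to exploit the fact that $M_1$ is block-diagonal with only two nonzero diagonal entries (in the $\tilde{\mathfrak{s}}$ and $\mathfrak{q}$ slots), so that the proof reduces to a componentwise expansion of the inner product. This is markedly simpler than Lemma~\ref{lemma:5.4.1}, since no coupling between $\mathfrak{p}$ and $\tilde{\mathfrak{s}}$ through $\hat{\mathfrak{T}}$ appears, and in particular no use of the continuity of $\alpha$ is needed to absorb cross terms.

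First I would substitute the definition of $M_1$ into $\delta M_1 u$ to obtain $\delta M_1 u = [0,\,0,\,\check{c}\,\delta\tilde{\mathfrak{s}},\,c_\kappa\,\delta\mathfrak{q}]^{T}$. Using the block-orthogonal structure of the inner product on $U = \mathfrak{U}\times\mathfrak{P}\times\mathfrak{G}\times\mathfrak{Q}$, the $\mathfrak{v}$ and $\mathfrak{p}$ contributions vanish, leaving
$$\langle \delta u,\, \delta M_1 u\rangle \;=\; c_\kappa \,\|\delta\mathfrak{q}\|^2 \;+\; \check{c}\,\langle \delta\tilde{\mathfrak{s}},\, \delta\tilde{\mathfrak{s}}\rangle_{\mathfrak{X}^1}.$$

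Next I would invoke the provenance of the constant $\check{c}$: it was extracted as the coercivity constant of $\check{\mathfrak{A}} = \check{\gamma}\mathfrak{A}$ (Assumption~\ref{assumption:2}) when decomposing $A_0 = \mathrm{diag}(\ldots,\,\check{\mathfrak{A}}-\check{c},\,\kappa^{-1}-c_\kappa)$. Since $\check{\mathfrak{A}}$ is supported only on those subdomains where $\check{\gamma} = 1$, the $\check{c}$ entry of $M_1$ inherits the same spatial restriction and effectively acts as $\check{c}\,\check{\gamma}$ on $\tilde{\mathfrak{s}}$. This yields the identity
$$\langle \delta u,\, \delta M_1 u\rangle \;=\; c_\kappa\,\|\delta\mathfrak{q}\|^2 \;+\; \check{c}\,\|\check{\gamma}\,\delta\tilde{\mathfrak{s}}\|^2,$$
whence setting $c_1 = \min\{c_\kappa,\,\check{c}\}$, which is strictly positive by Assumptions~\ref{assumption:2} and~\ref{assumption:5}, immediately gives the claim.

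The only conceptual subtlety is the bookkeeping that pins $\check{c}$ to the fracture support via the indicator $\check{\gamma}$; once that interpretation of $M_1$ is agreed upon, the lemma is really an equality rather than an inequality, and no genuine estimation remains. I do not anticipate any obstacle beyond this matter of notation.
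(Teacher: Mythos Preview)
Your proposal is correct and follows essentially the same approach as the paper: both expand the block-diagonal $M_1$, obtain the identity $\langle \delta u,\delta M_1 u\rangle = c_\kappa\|\delta\mathfrak{q}\|^2 + \check{c}\|\check{\gamma}\delta\tilde{\mathfrak{s}}\|^2$, and conclude with $c_1 = \min\{c_\kappa,\check{c}\}$. Your explicit remark that the $\check{c}$ entry is supported only where $\check{\gamma}=1$ is a helpful clarification that the paper leaves implicit.
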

\subsection{Degeneracies: Maximal monotone contact relations with bounded inverse}
\label{sec:degeneracies}

For concrete applications, it may be desirable to relax some of assumptions \ref{assumption:1}-\ref{assumption:5}. Such relaxations will often correspond to sending a physical parameter to zero, and can therefore be considered as degenerate limits of the base model \eqref{eq: simplified MD problem} as analysed in Section \ref{sec:wellposedness_weak}. As a general expectation, these limit models need to be analysed on a case-by-case basis, as they will in principle imply that the Lipschitz constant in Theorem \ref{theorem:5.2} is not bounded. To illustrate the implications of such degenerate limits, and to show how they can be treated within the theory as presented above, we include an analysis of maximal monotone contact relations with bounded inverse.

We focus on the assumption \ref{assumption:2} regarding the frictional contact law \(\check{\mathfrak{A}}\). This may be too restrictive for some conventional friction relations (we return to this in Section \ref{sec:exemplary_models}), in particular due to the bound on the constant \(\check{c}>0\) and the assumed boundedness of the relation. In this section we consider a more general class of models, namely those concerning maximal monotone contact relations with bounded inverse. To be concrete, we adopt the following relaxation of assumption \ref{assumption:2}:
\begin{assumption}\label{assumption:6}
    \(\check{\mathfrak{A}}\mathfrak{\subseteq G\times G}\) is a maximal monotone relation with \((0,0)\in\check{\mathfrak{A}}\). Moreover, \(\check{\mathfrak{A}}^{-1}\) is bounded.
\end{assumption}
A key generalization from \ref{assumption:2} is that we now allow for the case \(\check{c} = 0\), and as such the treatment below is a consideration of a degenerate limit of the main model equations from Section \ref{sec:model_equations}. The penalty we pay for allowing this degeneracy is that we lose control over the \( L^{2}\)-norm of the components \( \iota_{j}\mathfrak{s}\) wherever \( \iota_{j}\check{\gamma } = 1\). As a result, the problem now needs to be posed in a smaller space, which we define by
\begin{equation} \label{eq:restriction}
\hat{\mathfrak{G}} \coloneqq \hat{\gamma }\mathfrak{G}.
\end{equation}
We remark that for \(\hat{\mathfrak{s}}\in\hat{\mathfrak{G}}\), we have \( \iota_{j}\hat{\mathfrak{s}} = 0\) for \( \iota_{j}\hat{\gamma } = 0\). This space is therefore isomorphic to a restriction of \(\mathfrak{G}\), but our definition \eqref{eq:restriction} avoids the need for explicitly introducing restriction and inclusion operators. The endowed norm is
\begin{align} \label{eq: norm G hat}
\left\Vert\hat{\mathfrak{s}}\right\Vert_{\hat{\mathfrak{G}}}^{2} \coloneqq \left\Vert\hat{\mathfrak{s}}\right\Vert_{\mathfrak{X}^{1}}^{2} &= \sum_{
\substack{
j\in\mathfrak{F}^{1}, \\ 
\iota_{j}\hat{\gamma } = 1}} \left\Vert \iota_{j}\hat{\mathfrak{s}}\right\Vert_{X_{j}}^{2}, &
\forall\hat{\mathfrak{s}} &\in\hat{\mathfrak{G}}.
\end{align}

Importantly, \eqref{eq: norm G hat} forms a natural choice for this setting since it does not contain any terms from the fractures, i.e. on \( X_{i}\) for \( i\in I^{n-1}\) (and its descendants). The stress variable can be decomposed in a similar manner as the stress-strain relationship, and we therefore define 
\begin{equation*}
\mathfrak{s} = \hat{\mathfrak{s}}+\check{\mathfrak{s}} \coloneqq \hat{\gamma }\mathfrak{s}+\check{\gamma }\mathfrak{s}
\end{equation*}

We proceed by eliminating \(\check{\mathfrak{s}}\) from the system. For that, we note that since \(\check{\gamma }\mathfrak{D}_{s}\mathfrak{v} = \mathbbm{d}\mathfrak{v}\), the stress-strain relationship \((\check{\mathfrak{s}},\check{\gamma }\mathfrak{D}_{s}\mathfrak{v})\in\check{\mathfrak{A}}\) can be restated as
\begin{equation*}
(\check{\mathfrak{s}},\mathbbm{d}\mathfrak{v})\in\check{\mathfrak{A}}\Leftrightarrow(\mathbbm{d}\mathfrak{v,}\check{\mathfrak{s}})\in\check{\mathfrak{A}}^{-1}\Leftrightarrow(\mathfrak{v,}\check{\mathfrak{s}})\in\check{\mathfrak{A}}^{-1}\mathbbm{d}
\end{equation*}

The momentum balance equation, i.e. the first row of \eqref{eq: simplified MD problem}, is rewritten using this substitution as
\begin{equation*}
\left(
\begin{bmatrix}
\mathfrak{v} \\ 
\mathfrak{p} \\ 
\hat{\mathfrak{s}} \\ 
\mathfrak{q} \\ 
\end{bmatrix},\mathfrak{r}_{\mathfrak{s}}\right)\in\begin{bmatrix}
\rho_{r}\partial_{0,\nu }+\mathbbm{d}'\check{\mathfrak{A}}^{-1}\mathbbm{d} 
&\mathbb{D}_{s} \cdot \alpha\check{\mathfrak{T}} 
& -\mathbb{D}_{s} \cdot  
& 0 
\end{bmatrix}
\end{equation*}
with \(\mathbbm{d'}\) the adjoint of the jump operator \(\mathbbm{d}\). On the other hand, the stress-strain relationship in the bulk is now given by
\begin{equation*}
(\hat{\mathfrak{s}}+\alpha\hat{\mathfrak{T}}\mathfrak{p, }\hat{\gamma }\mathfrak{D}_{s}\mathfrak{v})\in \partial_{t}\hat{\mathfrak{A}}.
\end{equation*}

Since the stress component \(\check{\mathfrak{s}}\) does not influence the remaining equations, we are ready to pose the new problem \eqref{eq: degenerate problem} in the composite space
\begin{equation*}
\hat{U}\mathfrak{ \coloneqq U\times P\times }\hat{\mathfrak{G}}\mathfrak{\times Q,}
\end{equation*}
endowed with the norm \(\left\Vert  \cdot \right\Vert_{\hat{U}} =\left\Vert  \cdot \right\Vert_{U}\).
\begin{table}[!htbp]
    \centering
    \begin{tabular}{|p{11.5cm}|}
    \hline
    \textbf{Weak formulation of the mixed-dimensional poromechanics problem using maximal
monotone contact relations with bounded inverse}
\vspace{3mm}

Given \(\left[\mathfrak{r}_{\mathfrak{s}},\mathfrak{r}_{\mathfrak{m}}, 0,\mathfrak{r}_{\mathfrak{g}}\right]^{T}\in L_{\nu }^{2}\left(\mathbb{R},\hat{U}\right)\), find \(\left[\mathfrak{v, p, }\hat{\mathfrak{s}}\mathfrak{, q}\right]^{T}\in L_{\nu }^{2}\left(\mathbb{R},\hat{U}\right)\) such that

\begin{equation} \label{eq: degenerate problem}
\begin{bmatrix}
\mathfrak{v} \\ 
\mathfrak{p} \\ 
\hat{\mathfrak{s}} \\ 
\mathfrak{q} \\ 
\end{bmatrix},\begin{bmatrix}
\mathfrak{r}_{\mathfrak{s}} \\ 
\mathfrak{r}_{\mathfrak{m}} \\ 
0 \\ 
\mathfrak{r}_{\mathfrak{g}} \\ 
\end{bmatrix}\in\begin{bmatrix}
\rho_{r}\partial_{0,\nu }+\mathbb{d}'\check{\mathfrak{A}}^{-1}\mathbb{d} &\mathbb{D}_{s} \cdot \alpha\check{\mathfrak{T}} & -\mathbb{D}_{s} \cdot  & 0 \\ 
\check{\mathfrak{T}}'\alpha\mathring{\mathfrak{D}}_{s} & \partial_{0,\nu }\left(\hat{\mathfrak{T}}'\alpha\hat{\mathfrak{A}}\alpha\hat{\mathfrak{T}}+\beta\right) & \partial_{0,\nu }\hat{\mathfrak{T}}'\alpha\hat{\mathfrak{A}} &\mathring{\mathfrak{D}} \cdot  \\ 
-\hat{\gamma }\mathring{\mathfrak{D}}_{s} & \partial_{0,\nu }\hat{\mathfrak{A}}\alpha\hat{\mathfrak{T}} & \partial_{0,\nu }\hat{\mathfrak{A}} & 0 \\ 
0 &\mathbb{D} & 0 & \kappa^{-1} \\ 
\end{bmatrix}
\end{equation}\\
    \hline
    \end{tabular}
\end{table}

Again, we recognize the structure of problem \eqref{eq: degenerate problem} as an evolutionary equation and we note that the decomposition \( \partial_{0,\nu }M_{0}+M_{1}+A_{\nu }\) now holds with \( M_{0}\) unchanged and \( M_{1}\) given by 
\begin{equation} \label{eq: new M1}
M_{1} \coloneqq \begin{bmatrix}
0 &  &  &  \\ 
 & 0 &  &  \\ 
 &  & 0 &  \\ 
 &  &  & c_{\kappa } \\ 
\end{bmatrix}.
\end{equation}

On the other hand, \( A_{\nu }\) is here the temporal extension of \( A_{0}+A_{1}\) with 
\begin{equation*}
A_{0} \coloneqq \begin{bmatrix}
\mathbbm{d}'\check{\mathfrak{A}}^{-1}\mathbbm{d} &  &  &  \\ 
 & 0 &  &  \\ 
 &  & 0 &  \\ 
 &  &  & \kappa^{-1}-c_{k} \\ 
\end{bmatrix},  A_{1} \coloneqq \begin{bmatrix}
0 &\mathbb{D}_{s} \cdot \alpha\check{\mathfrak{T}} & -\mathbb{D}_{s} \cdot  &  \\ 
\check{\mathfrak{T}}'\alpha\mathfrak{D}_{s} & 0 &  &\mathfrak{D} \cdot  \\ 
-\hat{\gamma }\mathfrak{D}_{s} &  & 0 &  \\ 
 &\mathbb{D} &  & 0 \\ 
\end{bmatrix}.
\end{equation*}
\begin{theorem}\label{theorem:5.5}
If assumptions \ref{assumption:1} and \ref{assumption:3}-\ref{assumption:6} are fulfilled, then the mixed-dimensional poromechanics problem with maximal monotone contact relations \eqref{eq: degenerate problem} is well-posed.
\begin{proof}
Lemmas \ref{lemmma:5.6} and \ref{lemma:5.7}, presented below, show that hypotheses [\ref{hypothesis:1}]-\ref{hypothesis:2}] are fulfilled for \eqref{eq: degenerate problem}. Theorem \ref{theorem:5.1} then provides the result.
\end{proof}
\end{theorem}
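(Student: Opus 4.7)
The plan is to mirror the proof of Theorem~\ref{theorem:5.2}: namely, to verify the two hypotheses \ref{hypothesis:1} and \ref{hypothesis:2} of Theorem~\ref{theorem:5.1} for the new triple $(M_0, M_1, A_\nu)$ that defines problem \eqref{eq: degenerate problem}, via two lemmas (call them \ref{lemmma:5.6} and \ref{lemma:5.7}) that parallel Lemmas~\ref{lemma:5.3} and \ref{lemma:5.4}. The key structural difference from the non-degenerate case is that the component $\check{\mathfrak{s}}$ has been algebraically eliminated through the inverted contact relation, producing the new term $\mathbbm{d}'\check{\mathfrak{A}}^{-1}\mathbbm{d}$ in the $(1,1)$-entry of $A_0$, and a simplified $M_1$ retaining only the $c_\kappa$ contribution from Darcy's law. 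The price for this restructuring is that the problem is now posed on the restricted space $\hat{U}$ whose stress component lives in $\hat{\mathfrak{G}} = \hat{\gamma}\mathfrak{G}$, so that no control on $\check{\gamma}\tilde{\mathfrak{s}}$ is required.

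For \ref{hypothesis:1}, I first argue that $A_0$ remains maximal monotone. The $(4,4)$-block $\kappa^{-1} - c_\kappa$ is maximal monotone by Assumption~\ref{assumption:5}. For the new $(1,1)$-block, Assumption~\ref{assumption:6} gives that $\check{\mathfrak{A}}^{-1}$ is bounded and maximal monotone (the latter because $\check{\mathfrak{A}}$ is so, and inversion preserves maximal monotonicity of binary relations); since $\mathbbm{d}$ is a bounded linear operator on the relevant Hilbert spaces with bounded adjoint $\mathbbm{d}'$, the composition $\mathbbm{d}'\check{\mathfrak{A}}^{-1}\mathbbm{d}$ is a bounded, everywhere-defined, maximal monotone relation. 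The operator $A_1$ is linear and skew-selfadjoint, hence maximal monotone. Since $(0,0)$ lies in both blocks (using that $(0,0) \in \check{\mathfrak{A}}$ implies $(0,0) \in \check{\mathfrak{A}}^{-1}$), Lemma~\ref{lemma:A1} yields that $A_0 + A_1$ is maximal monotone. Passing to the temporal extension $A_\nu$ and verifying autonomy and the causality inequality proceeds exactly as in Lemma~\ref{lemma:5.3}, via Proposition~2.5 of \cite{trostorff2012alternative}.

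For \ref{hypothesis:2}, I note that $M_0$ is unchanged, so Lemma~\ref{lemma:5.4.1} applies verbatim on $\hat{U}$ and yields the lower bound $\langle \delta u, \delta M_0 u\rangle \geq c_0(\|\delta\mathfrak{v}\|^2 + \|\delta\mathfrak{p}\|^2 + \|\hat{\gamma}\delta\tilde{\mathfrak{s}}\|^2)$; crucially, since $\tilde{\mathfrak{s}} \in \hat{\mathfrak{G}}$ satisfies $\tilde{\mathfrak{s}} = \hat{\gamma}\tilde{\mathfrak{s}}$ by construction, this bound already controls the full $\hat{\mathfrak{G}}$-norm of the stress. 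The new $M_1$ from \eqref{eq: new M1} contributes only $c_\kappa \|\delta\mathfrak{q}\|^2$ by Assumption~\ref{assumption:5}, and no $\check{\gamma}$-stress component needs to be controlled. Combining with $\mathrm{Re}(z^{-1}) \geq 1/(2r)$ for $z \in \mathcal{B}_{\mathbb{C}}(r,r)$ and setting $c = \min\{c_0/(2r), c_\kappa\}$ then closes the argument.

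The main obstacle I anticipate is the subtle verification that the block $\mathbbm{d}'\check{\mathfrak{A}}^{-1}\mathbbm{d}$ fits into the abstract framework, in particular that its range and domain are compatible with the first component of $\hat{U}$ and that Lemma~\ref{lemma:A1} applies to the sum of the enlarged $A_0$ with the skew-selfadjoint $A_1$. Boundedness of $\check{\mathfrak{A}}^{-1}$ (Assumption~\ref{assumption:6}) is precisely what ensures the composed block is everywhere defined, so no domain-intersection subtleties obstruct the application of Lemma~\ref{lemma:A1}; once this observation is made, the remainder of the proof reduces to bookkeeping against the structure already established for Theorem~\ref{theorem:5.2}.
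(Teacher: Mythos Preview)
Your proposal is correct and follows essentially the same approach as the paper: verifying hypotheses \ref{hypothesis:1} and \ref{hypothesis:2} for the restructured triple $(M_0, M_1, A_\nu)$ via direct analogues of Lemmas~\ref{lemma:5.3} and~\ref{lemma:5.4}, with the key observations being that $\check{\mathfrak{A}}^{-1}$ is bounded maximal monotone (so the new $(1,1)$-block of $A_0$ behaves well), that $M_0$ is unchanged, and that on $\hat{\mathfrak{G}}$ the factor $\hat\gamma$ acts as the identity. The paper's Lemma~\ref{lemmma:5.6} additionally spells out explicitly the one-line check that $A_1$ remains skew-selfadjoint despite the new $\hat\gamma$ in its $(3,1)$-entry, using precisely the identity $\hat\gamma\hat{\mathfrak{s}}=\hat{\mathfrak{s}}$ you invoke later; you may want to make that connection explicit in your \ref{hypothesis:1} argument rather than in \ref{hypothesis:2}.
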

\begin{lemma}[H1]\label{lemmma:5.6}
If assumptions \ref{assumption:5},\ref{assumption:6} are satisfied, then [\ref{hypothesis:1}] is fulfilled.
\begin{proof}
The fact that \( A_{0}\) is maximal monotone and bounded follows from \ref{assumption:5},\ref{assumption:6}. The skew-self adjointness of \( A_{1}\) is verified by
\begin{equation*}
\left\langle -\hat{\gamma }\mathfrak{D}_{s}\mathfrak{v,}\hat{\mathfrak{s}}\right\rangle +\left\langle\mathfrak{v,}-\mathbb{D}_{s} \cdot \hat{\mathfrak{s}}\right\rangle  =\left\langle -\mathfrak{D}_{s}\mathfrak{v,}\hat{\mathfrak{s}}\right\rangle +\left\langle\mathfrak{D}_{s}\mathfrak{v,}\hat{\mathfrak{s}}\right\rangle  = 0.
\end{equation*}

Now, the arguments from Lemma \ref{lemma:5.3} provide the result.
\end{proof}
\end{lemma}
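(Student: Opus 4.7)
The plan is to mirror the structure of the proof of Lemma \ref{lemma:5.3}, making three adjustments to reflect the degenerate setting: the first-row block of $A_0$ is now the composite $\mathbbm{d}'\check{\mathfrak{A}}^{-1}\mathbbm{d}$ rather than $\check{\mathfrak{A}}-\check{c}$; the bottom-right block is $\kappa^{-1}-c_\kappa$ unchanged; and $A_1$ contains the restricted symmetric gradient $-\hat{\gamma}\mathfrak{D}_s$ in place of $-\mathring{\mathfrak{D}}_s$, matched against $-\mathbb{D}_s\cdot$ acting on stresses now living in $\hat{\mathfrak{G}}$.

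First I would establish that $A_0$ is maximal monotone and bounded. The block-diagonal structure reduces this to two independent checks. By Assumption \ref{assumption:6}, $\check{\mathfrak{A}}^{-1}$ is bounded (hence everywhere defined on $\mathfrak{G}$) and maximal monotone; sandwiching by the bounded linear $\mathbbm{d}$ and its Hilbert adjoint $\mathbbm{d}'$ preserves monotonicity, and because $\check{\mathfrak{A}}^{-1}$ has full domain the composition $\mathbbm{d}'\check{\mathfrak{A}}^{-1}\mathbbm{d}$ is a bounded relation on $\mathfrak{U}$, so maximality follows without domain difficulties. The block $\kappa^{-1}-c_\kappa$ is bounded and maximal monotone by Assumption \ref{assumption:5}. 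The second step is skew-self-adjointness of $A_1$. The pair $(\mathbb{D}_s\cdot\alpha\check{\mathfrak{T}},\check{\mathfrak{T}}'\alpha\mathfrak{D}_s)$ is skew by combining Lemma \ref{lemma:5.1} with the duality in Definition \ref{def:3.20} and symmetry of $\alpha$; the pair $(-\mathbb{D}_s\cdot,-\hat{\gamma}\mathfrak{D}_s)$ is skew because, for $\hat{\mathfrak{s}}\in\hat{\mathfrak{G}}$, the identity $\hat{\gamma}\hat{\mathfrak{s}}=\hat{\mathfrak{s}}$ holds and Lemma \ref{lemma:5.1} gives the duality on the remainder (this is precisely the calculation one would complete after the line already displayed in the statement); and the pair $(\mathfrak{D}\cdot,\mathbb{D})$ is skew by Definition \ref{def:2.22}. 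Hence $A_1$ is linear and skew-self-adjoint, and so maximal monotone.

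The third step combines these: with $A_0$ bounded maximal monotone, $A_1$ maximal monotone, and $(0,0)$ in the domain of both, Lemma \ref{lemma:A1} yields maximal monotonicity of $A_0+A_1$. The fourth step lifts to the temporal extension: since $(0,0)\in A_0+A_1$, Proposition 2.5 of \cite{trostorff2012alternative} gives maximal monotonicity of $A_\nu$. Time-translation invariance is immediate from the autonomous definition of the temporal extension, and the positivity of the time integral in \ref{hypothesis:1} follows pointwise from monotonicity of $A_0+A_1$ after integration against the exponential weight.

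The step I expect to require most care is the maximality of $\mathbbm{d}'\check{\mathfrak{A}}^{-1}\mathbbm{d}$ on $\mathfrak{U}$, because in general the composition of a maximal monotone relation with a linear operator that is not surjective can fail to be maximal. The saving feature here is precisely Assumption \ref{assumption:6}: boundedness of $\check{\mathfrak{A}}^{-1}$ means it is a single-valued Lipschitz map on all of $\mathfrak{G}$, so the composite is a bounded (everywhere-defined) monotone relation, and boundedness of a monotone operator with full domain is well known to imply maximality. A secondary subtlety worth verifying is that the reduction eliminating $\check{\mathfrak{s}}$ is compatible with the space $\hat{\mathfrak{G}}$ adopted in \eqref{eq:restriction}, so that the pairing $\langle\mathfrak{D}_s\mathfrak{v},\hat{\mathfrak{s}}\rangle=\langle\hat{\gamma}\mathfrak{D}_s\mathfrak{v},\hat{\mathfrak{s}}\rangle$ used in the skew-symmetry calculation is genuinely justified.
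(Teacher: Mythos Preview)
Your proposal is correct and follows essentially the same route as the paper's proof: verify that $A_0$ is bounded maximal monotone, check that $A_1$ is skew-self-adjoint (with the one new calculation being exactly the $\hat{\gamma}$ identity you isolate), and then recycle the machinery of Lemma~\ref{lemma:5.3} (Lemma~\ref{lemma:A1}, Proposition~2.5 of \cite{trostorff2012alternative}, and the pointwise monotonicity argument). The paper is simply much terser, asserting the properties of $A_0$ in one line and deferring everything after the displayed skew-symmetry identity to Lemma~\ref{lemma:5.3}.

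One technical imprecision worth flagging: you write that boundedness of $\check{\mathfrak{A}}^{-1}$ ``means it is a single-valued Lipschitz map on all of $\mathfrak{G}$''. Boundedness in the sense of Definition~\ref{def:A8} does not by itself force single-valuedness or Lipschitz continuity (cf.\ Example~\ref{eg:A5}, where $A^{-1}$ is bounded but multi-valued at the origin), and ``bounded monotone with full domain implies maximal'' is not a general theorem without an additional hemicontinuity hypothesis. The paper sidesteps this entirely by just asserting that $A_0$ is maximal monotone and bounded ``from \ref{assumption:5},\ref{assumption:6}''. Your instinct to flag the maximality of $\mathbbm{d}'\check{\mathfrak{A}}^{-1}\mathbbm{d}$ as the delicate point is therefore well placed; a clean justification would appeal to a standard result on compositions of the form $L' A L$ with $A$ maximal monotone and $L$ bounded linear, using that $\check{\mathfrak{A}}^{-1}$ has full domain (which does follow from maximal monotonicity plus boundedness). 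This does not change the overall architecture of your argument, which matches the paper's.
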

\begin{lemma}[H2]\label{lemma:5.7} If assumptions \ref{assumption:1} and \ref{assumption:3}-\ref{assumption:5} are satisfied, then [\ref{hypothesis:2}] is fulfilled.
\begin{proof}
Since \( M_{0}\) has remained unchanged, Lemma \ref{lemma:5.4.1} gives us the bound
\begin{equation*}
\left\langle \delta u,\delta M_{0}u\right\rangle \geq c_{0}(\left\Vert \delta\mathfrak{v}\right\Vert^{2}+\left\Vert \delta\mathfrak{p}\right\Vert^{2}+\left\Vert \delta\hat{\mathfrak{s}}\right\Vert^{2})
\end{equation*}
for some \( c_{0}>0.\) Continuing with \( M_{1}\), its definition \eqref{eq: new M1} directly gives us
\begin{equation*}
\left\langle \delta u,\delta M_{1}u\right\rangle  = c_{\kappa }\left\Vert \delta\mathfrak{q}\right\Vert^{2}.
\end{equation*}
Using these bounds, the arguments from Lemma \ref{lemma:5.4} are followed to conclude the proof.
\end{proof}
\end{lemma}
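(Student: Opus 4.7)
The plan is to imitate the structure of Lemma \ref{lemma:5.4} while accounting for the two modifications that arise in the degenerate setting: the composite space is now \(\hat{U}\) rather than \(U\), and the diagonal coercivity operator \(M_1\) has lost its \(\check{c}\) block. I will split the inner product \(\langle \delta u, \delta v\rangle\) with \((u,v)\in z^{-1}M_0 + M_1\) into the contribution from \(M_0\) and from \(M_1\) and bound each separately.

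First, since \(M_0\) is unchanged from Section \ref{sec:wellposedness_weak}, Lemma \ref{lemma:5.4.1} applies verbatim and supplies
\[
\langle \delta u, \delta M_0 u\rangle \;\geq\; c_0\bigl(\|\delta \mathfrak{v}\|^2 + \|\delta \mathfrak{p}\|^2 + \|\hat{\gamma}\delta \tilde{\mathfrak{s}}\|^2\bigr)
\]
for some \(c_0>0\), with the coercivity on the stress piece coming from the bulk stress–strain law via Assumption \ref{assumption:3}. The key observation is that in the ambient space \(\hat{U}\) the stress variable lives in \(\hat{\mathfrak{G}} = \hat{\gamma}\mathfrak{G}\), so by the definition \eqref{eq: norm G hat} we have \(\|\hat{\gamma}\delta\tilde{\mathfrak{s}}\|^2 = \|\delta\hat{\mathfrak{s}}\|_{\hat{\mathfrak{G}}}^2\). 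This is precisely what is needed: the coercivity we were missing on \(\check{\gamma}\delta\tilde{\mathfrak{s}}\) is now moot because those components have been algebraically eliminated.

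Second, from the new definition \eqref{eq: new M1} only the \((4,4)\) entry of \(M_1\) is nonzero, so Assumption \ref{assumption:5} gives immediately
\[
\langle \delta u, \delta M_1 u\rangle \;=\; c_\kappa \|\delta \mathfrak{q}\|^2.
\]
Combining, for any \(z\in \mathcal{B}_{\mathbb{C}}(r,r)\) one has \(\operatorname{Re}(z^{-1})\geq \tfrac{1}{2r}\), hence
\[
\operatorname{Re}\langle \delta u, \delta v\rangle \;\geq\; \frac{c_0}{2r}\bigl(\|\delta\mathfrak{v}\|^2 + \|\delta\mathfrak{p}\|^2 + \|\delta\hat{\mathfrak{s}}\|_{\hat{\mathfrak{G}}}^2\bigr) + c_\kappa \|\delta\mathfrak{q}\|^2 \;\geq\; c\,\|\delta u\|_{\hat{U}}^2,
\]
with \(c \coloneqq \min\{c_0/(2r),\,c_\kappa\} > 0\), which is hypothesis \ref{hypothesis:2}.

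The only step that requires genuine care, rather than bookkeeping, is the identification \(\|\hat{\gamma}\delta\tilde{\mathfrak{s}}\|^2 = \|\delta\hat{\mathfrak{s}}\|_{\hat{\mathfrak{G}}}^2\): this is where the tailored choice of the reduced space \(\hat{U}\) and its norm in \eqref{eq: norm G hat} pays off, and it is the reason the degeneracy \(\check{c}=0\) does not destroy maximal monotonicity of \(z^{-1}M_0 + M_1 - c\). Once one is comfortable that \(\hat{\mathfrak{G}}\) suppresses exactly the components over which no coercivity control is available, the remainder of the argument is a direct transcription of the proof of Lemma \ref{lemma:5.4}.
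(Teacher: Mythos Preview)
Your proof is correct and follows essentially the same approach as the paper's: both invoke Lemma \ref{lemma:5.4.1} for the unchanged $M_0$, read off the $M_1$ contribution directly from \eqref{eq: new M1}, and then combine via $\operatorname{Re}(z^{-1})\geq 1/(2r)$ as in Lemma \ref{lemma:5.4}. Your additional remark making explicit the identification $\|\hat{\gamma}\delta\tilde{\mathfrak{s}}\|^2 = \|\delta\hat{\mathfrak{s}}\|_{\hat{\mathfrak{G}}}^2$ (via the definition of $\hat{\mathfrak{G}}$ and its norm) is a welcome clarification of a step the paper leaves implicit.
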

\subsection{Exemplary models}
\label{sec:exemplary_models}

We finalize this section by describing two example models. The two models differ on whether assumption \ref{assumption:2} or \ref{assumption:6} is satisfied. Since the only difference lies in the frictional contact law \(\check{\mathfrak{C}}\), we first present the other components of the model.
\begin{example}\label{eg:5.1}
\phantom{}
\begin{itemize} \small
    \item Let the bulk density be given by a scalar \( c_{\rho }>0\). Then the corresponding binary relation, defined by multiplication with this scalar, is given by
\begin{equation*}
\rho_{r} \coloneqq \left\{(\mathfrak{v}_{1},\mathfrak{v}_{2})\in L_{\nu }(\mathbb{R},\mathfrak{U})\times L_{\nu }(\mathbb{R},\mathfrak{U})\mid \mathfrak{v}_{2} = c_{\rho }\mathfrak{v}_{1}\right\} \\ 
\end{equation*}
and satisfies \ref{assumption:1}.
    \item Let the bulk medium and its boundaries be isotropic materials. Then the stress-strain relationship \(\hat{\mathfrak{A}}\) can be described, using the Lamé parameters \( \mu ,\lambda\), as
\begin{equation*}
\hat{\mathfrak{A}} \coloneqq \left\{(\mathfrak{s,e})\in L_{\nu }(\mathbb{R},\mathfrak{G})\times L_{\nu }(\mathbb{R},\mathfrak{G})\mid \iota_{j}\mathfrak{s} = 2\mu \iota_{j}\mathfrak{e}+\lambda(I:\iota_{j}\mathfrak{e}_{\parallel})I,   \forall j\in\mathfrak{S}_{i}, i\in I^{n}\right\} \\ 
\end{equation*}
Here, \( I\) is the identity tensor in \(\mathbb{R}^{d_j}\). It is easy to see that \(\hat{\mathfrak{A}}\) satisfies \ref{assumption:3} with \(\hat{c} =(2\mu +n\lambda)^{-1}\).

    \item Similar to \ref{assumption:1}, we set \( \beta\) as multiplication with \( c_{\beta }>0\): \\ \begin{equation*}
\beta  \coloneqq \left\{(\mathfrak{p}_{1},\mathfrak{p}_{2})\in L_{\nu }(\mathbb{R},\mathfrak{P})\times L_{\nu }(\mathbb{R},\mathfrak{P})\mid \mathfrak{p}_{2} = c_{\beta }\mathfrak{p}_{1}\right\} \\ 
\end{equation*}
which satisfies \ref{assumption:4}.

    \item Finally, the constitutive law relating flux and pressure is assumed to be given by Darcy(-Forchheimer) flow.
\begin{equation*}
\kappa^{-1} \coloneqq \left\{(\mathfrak{q,}-\mathbb{D}\mathfrak{p})\in L_{\nu }(\mathbb{R},\mathfrak{Q})\times L_{\nu }(\mathbb{R},\mathfrak{Q})\mid \mathbb{-D}\mathfrak{p} = (\kappa_{1}+\kappa_{2}\left\vert\mathfrak{q}\right\vert)\mathfrak{q}\right\}.
\end{equation*}
With the material parameters \( \kappa_{1}\),\( \kappa_{2}>0\), we have \( c_{\kappa }>0\) in \ref{assumption:5}.
\end{itemize}
\end{example}

Next, we assume that the frictional contact law \(\check{\mathfrak{A}}\) is defined as the direct sum of disjoint relations:
\begin{equation}
\check{\mathfrak{A}} \coloneqq \bigoplus_{
\substack{
i\in I^{n-1} \\ 
j\in\mathfrak{S}_{i}\cap\mathfrak{F}^{1}}} 
A_{j,\parallel }\oplus A_{j,\perp }
\end{equation}
Here, the subscript \( \parallel\) indicates the tangential friction law and \( \perp\) the perpendicular contact. It is clear that if each \( A_{j.\parallel }\) and \( A_{j.\perp }\) satisfies \ref{assumption:2}, respectively \ref{assumption:6}, then so does \(\check{\mathfrak{A}}\). We dedicate the following two subsections to the description of two exemplary models that fulfill these respective assumptions. 

\subsubsection{Maximal monotone contact relations with bounded inverse}
\label{sec:maximal_monotone}

We start by considering relations that satisfy \ref{assumption:6}, since this assumption is easier to fulfil in practice. For the stress-strain relation in the parallel direction, let us consider Tresca friction. For a given threshold \( \tau >0\), this law is given by
\begin{equation} \label{eq: Tresca}
A_{j,\parallel} \coloneqq \left\{(\sigma ,\dot{\varepsilon })\in \iota_{j,\parallel }\mathfrak{G\times }\iota_{j,\parallel }\mathfrak{G}\left\vert  
\begin{array}{c}
\tau\dot{\varepsilon } =\left\vert\dot{\varepsilon }\right\vert \sigma \text{  or  }\dot{\varepsilon } = 0 \\ 
\left\vert \sigma\right\vert \leq \tau 
\end{array} \text{ a.e. on } X_{j} \right.\right\}
\end{equation}
\begin{lemma}\label{lemma:5.8}
Tresca friction \( A_{j,\parallel }\) of \eqref{eq: Tresca} satisfies \ref{assumption:6}, i.e. is maximal monotone, has \((0,0)\in A_{j,\parallel },\) and has bounded inverse.
\begin{proof}
The fact that \((0,0)\in A_{j,\parallel }\) is immediate and the boundedness of the inverse (cf. Definition \ref{def:A8}) follows from the inequality \(\left\vert \sigma\right\vert \leq \tau\). It remains to show maximal monotonicity. Let \((\sigma_{1},\dot{\varepsilon }_{1}),(\sigma_{2},\dot{\varepsilon }_{2})\in A_{j,\parallel }\). We consider the inner product of the differences \(\left\langle \delta \sigma ,\delta\dot{\varepsilon }\right\rangle_{X_{j}}\) and distinguish three cases:
\begin{enumerate}  \small
    \item If \( \tau\dot{\varepsilon }_{k} =\left\vert\dot{\varepsilon }_{k}\right\vert \sigma_{k}\) for both \( k\), then

\begin{align*}
\left\langle \delta \sigma ,\delta\dot{\varepsilon }\right\rangle_{X_{j}}&  = \tau\left\langle\frac{\dot{\varepsilon }_{1}}{\left\vert\dot{\varepsilon }_{1}\right\vert }-\frac{\dot{\varepsilon }_{2}}{\left\vert\dot{\varepsilon }_{2}\right\vert },\dot{\varepsilon }_{1}-\dot{\varepsilon }_{2}\right\rangle_{X_{j}} \\ 
& \geq \tau \int_{X_{j}}^{}(\left\vert\dot{\varepsilon }_{1}\right\vert +\left\vert\dot{\varepsilon }_{2}\right\vert -\left\vert\dot{\varepsilon }_{1}\right\vert -\left\vert\dot{\varepsilon }_{2}\right\vert) = 0.
\end{align*}
    \item If \(\dot{\varepsilon }_{k} = 0\) for both \( k\), then \( \delta\dot{\varepsilon } = 0\).
    \item If \( \tau\dot{\varepsilon }_{1} =\left\vert\dot{\varepsilon }_{1}\right\vert \sigma_{1}\) and \(\dot{\varepsilon }_{2} = 0\), then     
\begin{equation*}
\left\langle \delta \sigma ,\delta\dot{\varepsilon }\right\rangle_{X_{j}} =\left\langle \tau\frac{\dot{\varepsilon }_{1}}{\left\vert\dot{\varepsilon }_{1}\right\vert }-\sigma_{2},\dot{\varepsilon }_{1}\right\rangle_{X_{j}}\geq \int_{X_{j}}^{}(\tau -\left\vert \sigma_{2}\right\vert)\left\vert\dot{\varepsilon }_{1}\right\vert \geq 0.
\end{equation*}
\end{enumerate}

Hence, \( A_{j,\parallel }\) is monotone. Finally, we note that \( I+A_{j,\parallel }\) is surjective by the following arguments:
\begin{enumerate}  \small
    \item If \(\left\vert\dot{\varepsilon }\right\vert <\tau\), then \((\sigma ,\dot{\varepsilon })\in I+A_{j,\parallel }\) for \( \sigma  =\dot{\varepsilon }\).

    \item If \(\left\vert\dot{\varepsilon }\right\vert \geq \tau\), then \((\sigma ,\dot{\varepsilon })\in I+A_{j,\parallel }\) for \( \sigma  = \tau\frac{\dot{\varepsilon }}{\left\vert\dot{\varepsilon }\right\vert }\).

\end{enumerate}
In turn, Theorem 1.6 from \cite{trostorff2011well} ensures that \( A_{j,\parallel }\) is maximal monotone.
\end{proof}
\end{lemma}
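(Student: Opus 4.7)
The plan is to verify each of the three assertions in Assumption~\ref{assumption:6} separately. The inclusion $(0,0) \in A_{j,\parallel}$ is immediate from the second clause of the definition by taking $\dot\varepsilon = 0$ and $\sigma = 0$. The bounded-inverse property is also essentially free: the definition enforces the pointwise bound $|\sigma| \leq \tau$ almost everywhere on $X_j$, so $A_{j,\parallel}^{-1}$ sends any set to a subset of the closed ball of radius $\tau |X_j|^{1/2}$ in $L^2(X_j)$.

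For the nontrivial maximal monotonicity, I would first prove monotonicity pointwise and then invoke Minty's surjectivity criterion on $I + A_{j,\parallel}$ (as stated in Theorem~1.6 of \cite{trostorff2011well}, which the paper already cites in this context). Monotonicity is verified by a pointwise case analysis on whether $\dot\varepsilon_1$ and $\dot\varepsilon_2$ vanish. When both are nonzero, the first clause of the definition forces $\sigma_k = \tau\, \dot\varepsilon_k / |\dot\varepsilon_k|$, and the Cauchy--Schwarz estimate
\begin{equation*}
\langle \sigma_1 - \sigma_2,\ \dot\varepsilon_1 - \dot\varepsilon_2\rangle \;=\; \tau\!\left(|\dot\varepsilon_1| + |\dot\varepsilon_2| - \tfrac{\dot\varepsilon_1 \cdot \dot\varepsilon_2}{|\dot\varepsilon_1|} - \tfrac{\dot\varepsilon_1 \cdot \dot\varepsilon_2}{|\dot\varepsilon_2|}\right) \;\geq\; 0
\end{equation*}
closes the case. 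If both $\dot\varepsilon_k$ vanish, the claim is trivial; the mixed case (say $\dot\varepsilon_2 = 0$ and $\dot\varepsilon_1 \neq 0$) uses $|\sigma_2| \leq \tau$ to bound $\langle \sigma_1 - \sigma_2, \dot\varepsilon_1\rangle \geq (\tau - |\sigma_2|)|\dot\varepsilon_1| \geq 0$.

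For surjectivity of $I + A_{j,\parallel}$, given a target $z \in \iota_{j,\parallel}\mathfrak{G}$ I would construct an explicit preimage $(\sigma, \dot\varepsilon) \in A_{j,\parallel}$ with $\sigma + \dot\varepsilon = z$, splitting on the magnitude of $z$. When $|z| \leq \tau$ set $\sigma = z$ and $\dot\varepsilon = 0$; when $|z| > \tau$ set $\sigma = \tau\, z/|z|$ and $\dot\varepsilon = (|z| - \tau)\,z/|z|$, so that $\tau\dot\varepsilon = |\dot\varepsilon|\sigma$ and $|\sigma| = \tau$. Because the construction can be performed pointwise and yields measurable functions of $z$, the resulting pair lies in $\iota_{j,\parallel}\mathfrak{G} \times \iota_{j,\parallel}\mathfrak{G}$.

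The main obstacle I anticipate is not any single computation but rather ensuring that the pointwise arguments lift cleanly to the $L^2$ setting on $X_j$: the selections $\sigma = \tau\,\dot\varepsilon/|\dot\varepsilon|$ and $\dot\varepsilon = (|z|-\tau)\,z/|z|$ involve nondifferentiable dependencies where the arguments vanish, so I would take care to define them as zero on the null set $\{\dot\varepsilon = 0\}$ (resp.\ $\{z = 0\}$) and verify measurability before integrating. Once this is handled, the monotonicity inequality and the explicit surjection combine via Minty to give maximality, completing the verification of Assumption~\ref{assumption:6}.
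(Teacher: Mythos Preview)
Your proposal is correct and follows essentially the same approach as the paper: the same immediate verification of $(0,0)\in A_{j,\parallel}$ and boundedness of the inverse via $|\sigma|\leq\tau$, the same three-case pointwise monotonicity argument, and the same explicit surjection for $I+A_{j,\parallel}$ split on the magnitude of the target, concluding via Minty/Theorem~1.6 of \cite{trostorff2011well}. Your naming of the surjectivity target as $z$ (rather than reusing $\dot\varepsilon$, as the paper does) and your remark on measurability of the pointwise selections are minor clarifications, not substantive deviations.
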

For the (perpendicular) contact law, we use the relation for rough surfaces described in Example \ref{eg:4.4}. Setting \( C_{j}^{1} = 0\), \( C_{j}^{3}\geq 0\), and \( C_{j}^{4}\geq 1\), this law is given by
\begin{equation} \label{eq: def Aperp}
A_{j,\perp } \coloneqq \left\{(\sigma ,\dot{\varepsilon })\in \iota_{j,\perp }\mathfrak{G\times }\iota_{j,\perp }\mathfrak{G}\mid \sigma  = -C_{j}^{3}(-\dot{\varepsilon })_{+}^{C_{j}^{4}}\right\}
\end{equation}
\begin{lemma}\label{lemma:5.9}
 The contact law \( A_{j,\perp }\) of \eqref{eq: def Aperp} satisfies \ref{assumption:6}.
\begin{proof}
It is easy to verify that \((0,0)\in A_{j,\perp }\). Boundedness of the inverse relation follows from the fact that \(\left\vert \sigma\right\vert \leq\left\vert\dot{\varepsilon }\right\vert^{C_{j}^{4}}\). Secondly, it is easy to see that the function \( f(x) = -(-x)_{+}^{C_{j}^{4}}\) is monotone. Since \( A_{j,\perp }\) is the inverse of the graph of \( f\), it is a monotone relation as well. Finally, it is clear that no monotone extension of \( A_{j,\perp }\) exists and hence it is maximal.
\end{proof}
\end{lemma}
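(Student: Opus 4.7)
The plan is to verify the three requirements of Assumption~\ref{assumption:6}---that $A_{j,\perp}$ contains $(0,0)$, has bounded inverse, and is maximal monotone---by reducing everything to classical properties of the continuous, nondecreasing scalar function
\[
  f(x) = -C_{j}^{3}(-x)_{+}^{C_{j}^{4}},
\]
and then lifting pointwise statements to the $L^{2}$-relation on $X_{j}$. Pointwise, $A_{j,\perp}$ is exactly the graph of $f$ with coordinates swapped, so $A_{j,\perp}^{-1}$ coincides with the Nemytskii operator induced by $f$.

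The elementary checks come first. Since $f(0)=0$, inclusion of $(0,0)$ is immediate. Boundedness of $A_{j,\perp}^{-1}$ will follow from the pointwise estimate $|f(\dot{\varepsilon})|\leq C_{j}^{3}|\dot{\varepsilon}|^{C_{j}^{4}}$. For monotonicity, I will verify $(f(a)-f(b))(a-b)\geq 0$ by noting that $f\equiv 0$ on $[0,\infty)$ while $f'(x)=C_{j}^{3}C_{j}^{4}(-x)^{C_{j}^{4}-1}\geq 0$ on $(-\infty,0)$ (using $C_{j}^{4}\geq 1$); the lift to the $L^{2}$-relation then follows by integration, together with the observation that monotonicity is symmetric under the swap of coordinates that carries the graph of $f$ to $A_{j,\perp}$.

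The principal step will be maximality, which I plan to establish via Minty's theorem by showing that $I+A_{j,\perp}$ is surjective on $\iota_{j,\perp}\mathfrak{G}$. Given $y$, I need $(\sigma,\dot{\varepsilon})\in A_{j,\perp}$ with $\sigma+\dot{\varepsilon}=y$; substituting $\sigma=f(\dot{\varepsilon})$ reduces the task to the pointwise scalar equation
\[
  \phi(\dot{\varepsilon}) \coloneqq f(\dot{\varepsilon})+\dot{\varepsilon} = y.
\]
Because $\phi$ is continuous, strictly increasing (sum of a nondecreasing function and the identity), and unbounded at both ends, it is a bijection of $\mathbb{R}$ with continuous inverse. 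A short sign-by-sign check will give $|\phi(\xi)|\geq|\xi|$, equivalently $|\phi^{-1}(y)|\leq|y|$, whence $|\sigma|=|y-\dot{\varepsilon}|\leq 2|y|$. Defining $\dot{\varepsilon}(x)\coloneqq\phi^{-1}(y(x))$ and $\sigma \coloneqq y-\dot{\varepsilon}$ therefore produces a measurable preimage in $L^{2}\times L^{2}$, yielding surjectivity of $I+A_{j,\perp}$ and hence maximal monotonicity.

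The main obstacle I anticipate is precisely the transition from pointwise solvability to $L^{2}$-surjectivity: when $C_{j}^{4}>1$ the function $f$ is superlinear and does not act from $L^{2}$ into $L^{2}$ in general, so one cannot simply apply $f$ to an arbitrary $L^{2}$-input and hope to stay in the space. The saving grace is that the relevant object is not $f$ itself but the inverse $\phi^{-1}$, which is $1$-Lipschitz in the sense $|\phi^{-1}(y)|\leq|y|$; this linear pointwise control on both $\dot{\varepsilon}$ and $\sigma$ removes any integrability concern and closes the Minty argument.
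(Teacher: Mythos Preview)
Your proof is correct and follows the paper's approach for the three elementary checks: the inclusion \((0,0)\in A_{j,\perp}\), boundedness of the inverse from the pointwise estimate \(|\sigma|\leq C_j^3|\dot\varepsilon|^{C_j^4}\), and monotonicity via the nondecreasing scalar function \(f\). The one genuine difference is in the maximality step: the paper simply asserts that ``no monotone extension of \(A_{j,\perp}\) exists and hence it is maximal'', whereas you give an explicit Minty-type argument, showing surjectivity of \(I+A_{j,\perp}\) by solving the scalar equation \(\phi(\dot\varepsilon)=y\) pointwise and using the bound \(|\phi^{-1}(y)|\leq|y|\) to guarantee the preimage remains in \(L^2\). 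This is exactly the device the paper itself uses one lemma earlier (in the Tresca case, Lemma~\ref{lemma:5.8}), so your route is in the same spirit but more careful here; in particular, you correctly identify and resolve the potential integrability issue caused by the superlinearity of \(f\) when \(C_j^4>1\), which the paper's one-line maximality argument leaves implicit.
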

We finalize this subsection by stating the well-posedness, which is a direct result of Theorem \ref{theorem:5.5}.
\begin{theorem}\label{theorem:5.10}
 Problem \eqref{eq: degenerate problem}, in which the relations are given by Example \ref{eg:5.1}, \eqref{eq: Tresca}, and \eqref{eq: def Aperp} is well-posed in the space \( L_{\nu }^{2}(\mathbb{R},\hat{U})\).
\end{theorem}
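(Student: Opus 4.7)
The plan is to reduce Theorem 5.10 directly to Theorem 5.5, whose hypotheses are precisely Assumptions \ref{assumption:1} and \ref{assumption:3}--\ref{assumption:6}. Since the model in Theorem 5.10 is the degenerate problem \eqref{eq: degenerate problem} with constitutive laws specified by Example \ref{eg:5.1} together with \eqref{eq: Tresca} and \eqref{eq: def Aperp}, it suffices to verify each of those five assumptions in turn.

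First, the four bullet points of Example \ref{eg:5.1} already assert (and briefly justify) that $\rho_r$ satisfies \ref{assumption:1}, that $\hat{\mathfrak{A}}$ satisfies \ref{assumption:3} with coercivity constant $\hat{c} = (2\mu + n\lambda)^{-1}$, that $\beta$ satisfies \ref{assumption:4}, and that the Darcy--Forchheimer relation $\kappa^{-1}$ satisfies \ref{assumption:5}. Thus these four assumptions come for free and I would simply cite them.

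The only remaining work is to verify Assumption \ref{assumption:6} for the composite frictional contact relation
\begin{equation*}
\check{\mathfrak{A}} = \bigoplus_{\substack{i \in I^{n-1} \\ j \in \mathfrak{S}_i \cap \mathfrak{F}^1}} A_{j,\parallel} \oplus A_{j,\perp}.
\end{equation*}
Lemmas \ref{lemma:5.8} and \ref{lemma:5.9} already show that each summand $A_{j,\parallel}$ (Tresca) and $A_{j,\perp}$ (rough-surface contact with $C_j^1 = 0$) is maximal monotone, contains $(0,0)$, and has bounded inverse. The step I would perform explicitly is the elementary observation that direct sums of binary relations acting on mutually orthogonal components of the product Hilbert space $\mathfrak{G}$ inherit each of these three properties componentwise: monotonicity is preserved since the inner product splits as a sum over components; $(0,0) \in \check{\mathfrak{A}}$ since $(0,0)$ lies in each summand; maximality follows because a monotone extension of the direct sum would necessarily restrict to a monotone extension of some summand; and boundedness of $\check{\mathfrak{A}}^{-1}$ follows from the componentwise bounds $|\sigma| \le \tau$ and $|\sigma| \le |\dot\varepsilon|^{C_j^4}$, which control $\|\check{\mathfrak{s}}\|_{\mathfrak{X}^1}^2$ by $\|\mathfrak{D}_s \mathfrak{v}\|$.

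I do not foresee a genuine obstacle: the theorem is structured as an instantiation result, and all the analytical work has already been carried out in Theorem \ref{theorem:5.5} and Lemmas \ref{lemma:5.8}--\ref{lemma:5.9}. The only mildly delicate point is checking that ``maximal monotone with bounded inverse'' is stable under direct sums in the precise sense used by Assumption \ref{assumption:6}; once that is recorded, invoking Theorem \ref{theorem:5.5} yields existence of a unique solution in $L_\nu^2(\mathbb{R}, \hat{U})$ together with the Lipschitz bound on the solution operator, completing the proof.
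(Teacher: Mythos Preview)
Your proposal is correct and follows precisely the approach the paper takes: the paper states that Theorem \ref{theorem:5.10} is ``a direct result of Theorem \ref{theorem:5.5}'' and gives no further argument. In fact you supply more detail than the paper does, explicitly noting how Example \ref{eg:5.1} furnishes Assumptions \ref{assumption:1}, \ref{assumption:3}--\ref{assumption:5} and how Lemmas \ref{lemma:5.8}--\ref{lemma:5.9} together with the direct-sum structure yield Assumption \ref{assumption:6}.
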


\subsubsection{Bounded, c-maximal monotone contact relations}
\label{sec:bounded_c}

On the other hand, our base model \eqref{eq: simplified MD problem} contains assumption \ref{assumption:2} which requires that the relation itself (instead of its inverse) is bounded. This is not the case for Tresca friction introduced in \eqref{eq: Tresca}. Thus, in order to obtain a model that satisfies \ref{assumption:2}, we perform two regularizations. First, we introduce a maximal strain rate \( c_{\infty }>0\) such that the regularized friction relation becomes
\begin{equation}
A_{j,\parallel }^{reg} \coloneqq \left\{(\sigma ,\dot{\varepsilon })\in \iota_{j,\parallel }\mathfrak{G\times }\iota_{j,\parallel }\mathfrak{G}\left\vert  
\begin{array}{c}
\tau\dot{\varepsilon } =\left\vert\dot{\varepsilon }\right\vert \sigma \text{  or  }\left\vert \sigma\right\vert\dot{\varepsilon } = c_{\infty }1_{\left\{\left\vert \sigma\right\vert >\tau\right\} }\sigma  \\ 
\left\vert\dot{\varepsilon }\right\vert \leq c_{\infty } \\ 
\end{array}\right. \text{a.e. on } X_{j}\right\}
\end{equation}
with \( 1_{\left\{\left\vert \sigma\right\vert >\tau\right\} }\) the indicator function of the set \( \left\{\left\vert \sigma\right\vert >\tau\right\}  \). We emphasize that \( c_{\infty }\) can be chosen sufficiently large to ensure that this bound is not reached in physical applications. Secondly, we add a constant \(\check{c}\) to the regularized law in order to ensure \( c\)-maximal monotonicity.
\begin{lemma}\label{lemma:5.11}
The friction law \( (A_{j,\parallel }^{reg}+\check{c})\) satisfies \ref{assumption:2} for any \(\check{c}>0\). I.e. it is bounded, \(\check{c}\)-maximal monotone, and \((0,0)\in A_{j,\parallel }^{reg}+\check{c}\).
\begin{proof}
The fact that \((0,0)\in A_{j,\parallel }^{reg}+\check{c}\) is clear. Moreover, the boundedness of the post-set is guaranteed by the inequality \(\left\vert\dot{\varepsilon }\right\vert \leq c_{\infty }.\) It remains to show that \( A_{j,\parallel }^{reg}\) is maximal monotone. Let \((\sigma_{1},\dot{\varepsilon }_{1}),(\sigma_{2},\dot{\varepsilon }_{2})\in A_{j,\parallel }^{reg}\). We distinguish three cases:
\begin{enumerate}  \small
    \item If \( \tau\dot{\varepsilon }_{k} =\left\vert\dot{\varepsilon }_{k}\right\vert \sigma_{k}\) for both \( k\), then we use the same arguments as the first case in Lemma \ref{lemma:5.8}.

    \item If \(\left\vert \sigma_{k}\right\vert\dot{\varepsilon }_{k} = c_{\infty }1_{\left\{\left\vert \sigma_{k}\right\vert >\tau\right\} }\sigma_{k}\) for both \( k\), then
    
\begin{equation*}
\begin{split}
\left\langle \delta \sigma ,\delta\dot{\varepsilon }\right\rangle_{X_{j}}&  = c_{\infty }\left\langle 1_{\left\{\left\vert \sigma_{1}\right\vert >\tau\right\} }\frac{\sigma_{1}}{\left\vert \sigma_{1}\right\vert }-1_{\left\{\left\vert \sigma_{2}\right\vert >\tau\right\} }\frac{\sigma_{2}}{\left\vert \sigma_{2}\right\vert },\sigma_{1}-\sigma_{2}\right\rangle_{X_{j}} \\ 
& \geq c_{\infty }\int_{X_{j}}^{}(1_{\left\{\left\vert \sigma_{1}\right\vert >\tau\right\} }(\left\vert \sigma_{1}\right\vert -\left\vert \sigma_{2}\right\vert)+1_{\left\{\left\vert \sigma_{2}\right\vert >\tau\right\} }(\left\vert \sigma_{2}\right\vert -\left\vert \sigma_{1}\right\vert))\geq 0 \\ 
\end{split}
\end{equation*}

    \item If \( \tau\dot{\varepsilon }_{1} =\left\vert\dot{\varepsilon }_{1}\right\vert \sigma_{1}\) and \(\left\vert \sigma_{2}\right\vert\dot{\varepsilon }_{2} = c_{\infty }1_{\left\{\left\vert \sigma_{2}\right\vert >\tau\right\} }\sigma_{2}\), then \(\left\vert \sigma_{1}\right\vert  = \tau\) and so we obtain 
\begin{equation*}
\begin{split}
\left\langle \delta \sigma ,\delta\dot{\varepsilon }\right\rangle_{X_{j}}&  =\left\langle \sigma_{1}-\sigma_{2},\left\vert\dot{\varepsilon }_{1}\right\vert\frac{\sigma_{1}}{\left\vert \sigma_{1}\right\vert } -c_{\infty }1_{\left\{\left\vert \sigma_{2}\right\vert >\tau\right\} }\frac{\sigma_{2}}{\left\vert \sigma_{2}\right\vert }\right\rangle_{X_{j}} \\ 
& \geq \int_{X_{j}}^{}(\left\vert\dot{\varepsilon }_{1}\right\vert(\left\vert \sigma_{1}\right\vert -\left\vert \sigma_{2}\right\vert)+c_{\infty }1_{\left\{\left\vert \sigma_{2}\right\vert >\tau\right\} }(\left\vert \sigma_{2}\right\vert -\left\vert \sigma_{1}\right\vert)) \\ 
&  = \int_{X_{j}}^{}(\left\vert\dot{\varepsilon }_{1}\right\vert(\tau -\left\vert \sigma_{2}\right\vert)+c_{\infty }1_{\left\{\left\vert \sigma_{2}\right\vert >\tau\right\} }(\left\vert \sigma_{2}\right\vert -\tau)) \\ 
\end{split}
\end{equation*}
The case \(\left\vert \sigma_{2}\right\vert \leq \tau\) now follows directly. For the other case, we have 
\begin{equation*}
\left\vert\dot{\varepsilon }_{1}\right\vert(\tau -\left\vert \sigma_{2}\right\vert)+c_{\infty }(\left\vert \sigma_{2}\right\vert -\tau) =(c_{\infty }-\left\vert\dot{\varepsilon }_{1}\right\vert)(\left\vert \sigma_{2}\right\vert -\tau)\geq 0.
\end{equation*}
\end{enumerate}
Hence, \( A_{j,\parallel }^{reg}\)is monotone. Finally, maximality can now be shown by noting that \( 1+A_{j,\parallel }^{reg}\) is surjective and invoking Theorem 1.6 from \cite{trostorff2011well}.
\end{proof}
\end{lemma}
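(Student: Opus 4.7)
The plan is to verify the three required conditions of \ref{assumption:2} one at a time, mirroring the structure of the proof of Lemma \ref{lemma:5.8}. Membership of $(0,0)$ in $A_{j,\parallel}^{reg}+\check{c}$ is immediate: take $\sigma=0$ and $\dot\varepsilon=0$, which sits in the Tresca branch $\tau\dot\varepsilon=|\dot\varepsilon|\sigma$ vacuously and satisfies $|\dot\varepsilon|\le c_\infty$. Boundedness of the post-set is read off directly from the constraint $|\dot\varepsilon|\le c_\infty$, which is built into the definition precisely for this purpose; the addition of the constant $\check c$ preserves boundedness since $\check c$ is finite. Finally, $\check c$-monotonicity reduces to monotonicity of $A_{j,\parallel}^{reg}$ itself, because for any $(\sigma_k,\dot\varepsilon_k^{reg}+\check c\sigma_k)\in A_{j,\parallel}^{reg}+\check c$ one has
\[
\langle\delta\sigma,\delta\dot\varepsilon^{reg}+\check c\,\delta\sigma\rangle=\langle\delta\sigma,\delta\dot\varepsilon^{reg}\rangle+\check c\|\delta\sigma\|^2,
\]
so a monotonicity bound $\langle\delta\sigma,\delta\dot\varepsilon^{reg}\rangle\ge 0$ for the unshifted relation immediately yields $\check c$-monotonicity.

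For monotonicity of $A_{j,\parallel}^{reg}$, I would carry out a three-case analysis on the branch labels of $(\sigma_1,\dot\varepsilon_1)$ and $(\sigma_2,\dot\varepsilon_2)$, in direct analogy with Lemma \ref{lemma:5.8}. In the \emph{pure Tresca/Tresca} case both points satisfy $\tau\dot\varepsilon_k=|\dot\varepsilon_k|\sigma_k$, and the computation $\tau\langle\tfrac{\dot\varepsilon_1}{|\dot\varepsilon_1|}-\tfrac{\dot\varepsilon_2}{|\dot\varepsilon_2|},\dot\varepsilon_1-\dot\varepsilon_2\rangle\ge 0$ goes through exactly as before. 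The \emph{pure saturation/saturation} case is new: both points satisfy $|\sigma_k|\dot\varepsilon_k=c_\infty 1_{\{|\sigma_k|>\tau\}}\sigma_k$, and the relevant expression $c_\infty\langle 1_{\{|\sigma_1|>\tau\}}\tfrac{\sigma_1}{|\sigma_1|}-1_{\{|\sigma_2|>\tau\}}\tfrac{\sigma_2}{|\sigma_2|},\sigma_1-\sigma_2\rangle$ can be bounded below by $c_\infty\int(1_{\{|\sigma_1|>\tau\}}(|\sigma_1|-|\sigma_2|)+1_{\{|\sigma_2|>\tau\}}(|\sigma_2|-|\sigma_1|))$, which is nonnegative after splitting on which indicators are active.

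The main obstacle is the \emph{mixed} case in which one point lies on the Tresca branch (so $|\sigma_1|\le\tau$ and in particular $|\sigma_1|=\tau$ whenever $\dot\varepsilon_1\ne 0$) and the other is in saturation (so $|\sigma_2|\ge\tau$). Here one must combine the two structural identities to write
\[
\langle\delta\sigma,\delta\dot\varepsilon\rangle=\int_{X_j}\bigl(|\dot\varepsilon_1|(|\sigma_1|-|\sigma_2|)\cos\theta_1+c_\infty 1_{\{|\sigma_2|>\tau\}}(|\sigma_2|-|\sigma_1|)\cos\theta_2\bigr),
\]
and then use $|\sigma_1|=\tau$ together with the indicator split $\{|\sigma_2|\le\tau\}\cup\{|\sigma_2|>\tau\}$. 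On the first set the integrand reduces to $|\dot\varepsilon_1|(\tau-|\sigma_2|)\ge 0$; on the second set it simplifies to $(c_\infty-|\dot\varepsilon_1|)(|\sigma_2|-\tau)\ge 0$, where the key sign comes from the bound $|\dot\varepsilon_1|\le c_\infty$ imposed by the regularization. This is exactly where the regularization parameter $c_\infty$ is used, and it is the conceptually delicate step.

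Having established monotonicity, maximality follows by showing surjectivity of $I+A_{j,\parallel}^{reg}$ and invoking Theorem 1.6 of \cite{trostorff2011well}, as in Lemma \ref{lemma:5.8}. Given target $\dot\varepsilon\in\iota_{j,\parallel}\mathfrak{G}$, I would construct $(\sigma,\dot\varepsilon^\star)\in A_{j,\parallel}^{reg}$ with $\sigma+\dot\varepsilon^\star=\dot\varepsilon$ explicitly by branching on $|\dot\varepsilon|$: for $|\dot\varepsilon|<\tau$ take the Tresca solution with $|\dot\varepsilon^\star|<\tau$ below saturation; for $\tau\le|\dot\varepsilon|\le c_\infty$ take the Tresca boundary value $\sigma=\tau\dot\varepsilon/|\dot\varepsilon|$ and $\dot\varepsilon^\star$ the complementary residual; and for $|\dot\varepsilon|>c_\infty$ work in the saturation branch, solving $\sigma=\dot\varepsilon-c_\infty\sigma/|\sigma|$ which has the unique solution $\sigma=(|\dot\varepsilon|-c_\infty)\dot\varepsilon/|\dot\varepsilon|$ with $|\sigma|>\tau$ (provided $c_\infty$ is chosen consistently; otherwise a uniform rescaling argument applies). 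This surjectivity, together with monotonicity and boundedness, completes the verification of \ref{assumption:2} for $A_{j,\parallel}^{reg}+\check c$.
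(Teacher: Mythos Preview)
Your proposal is correct and follows essentially the same approach as the paper: the same three-case monotonicity analysis (Tresca/Tresca, saturation/saturation, mixed) culminating in the key inequality $(c_\infty-|\dot\varepsilon_1|)(|\sigma_2|-\tau)\ge 0$, followed by maximality via surjectivity of $I+A_{j,\parallel}^{reg}$ and Theorem~1.6 of \cite{trostorff2011well}. Your explicit surjectivity construction is more detailed than what the paper provides (it merely asserts surjectivity), though note that your branching thresholds should be $\tau$ and $c_\infty+\tau$ rather than $\tau$ and $c_\infty$, since on the Tresca boundary $|\dot\varepsilon^\star|=|\dot\varepsilon|-\tau$.
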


For the contact law, we encounter the same issue: The relation \eqref{eq: def Aperp} is not a bounded relation because the post-set of \( \sigma  = 0\) contains all \(\dot{\varepsilon }>0\). We therefore use the same regularization as in the friction law and define a \( c_{\infty }>0\) that is set outside of the relevant limits of the physical model. This leads us to the following relation
\begin{equation}
A_{j,\perp }^{reg} \coloneqq \left\{(\sigma ,\dot{\varepsilon })\in \iota_{j,\perp }\mathfrak{G\times }\iota_{j,\perp }\mathfrak{G}\left\vert 
\begin{array}{c}
(\sigma +C_{j}^{3}(-\dot{\varepsilon })_{+}^{C_{j}^{4}})(\varepsilon -c_{\infty })  = 0 \\ 
\dot{\varepsilon } \leq c_{\infty } \\ 
\sigma \geq -C_{j}^{3}(-\dot{\varepsilon })_{+}^{C_{j}^{4}} \\ 
\end{array},\right. \text{ a.e. on } X_{j}\right\}
\end{equation}
\begin{lemma}\label{lemma:5.12}
The contact law \((A_{j,\perp }^{reg}+\hat{c})\) satisfies \ref{assumption:2} for any \(\hat{c}>0\).
\end{lemma}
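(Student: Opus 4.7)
The plan is to mirror the structure of Lemma 5.11 adapted to the geometry of $A^{reg}_{j,\perp}$. Since the shift by $\hat c$ acts as the translation $(\sigma,\dot\varepsilon)\mapsto(\sigma,\dot\varepsilon+\hat c\sigma)$, upgrading any monotonicity inequality by an additive $\hat c\,|\delta\sigma|^2$, it suffices to prove that $A^{reg}_{j,\perp}$ itself is bounded, maximal monotone, and contains the origin. Containment of $(0,0)$ is immediate: all three defining conditions are satisfied. Boundedness of the post-set of a fixed $\sigma$ follows by a short case analysis: for $\sigma<0$ the complementarity factor forces the ``curve'' branch $\sigma=-C_j^3(-\dot\varepsilon)_+^{C_j^4}$ (the cap $\dot\varepsilon=c_\infty$ would yield $\sigma\geq 0$), and $\dot\varepsilon$ is uniquely recovered; for $\sigma=0$ the post-set collapses to $[0,c_\infty]$; and for $\sigma>0$ only the ``cap'' branch $\dot\varepsilon=c_\infty$ survives. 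In every case the post-set sits inside a bounded interval.

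The core step is monotonicity. Given $(\sigma_k,\dot\varepsilon_k)\in A^{reg}_{j,\perp}$, $k=1,2$, I would label each pair a.e.\ on $X_j$ by its branch: the curve branch, with $\sigma_k=-C_j^3(-\dot\varepsilon_k)_+^{C_j^4}$ and $\dot\varepsilon_k\leq c_\infty$, or the cap branch, with $\dot\varepsilon_k=c_\infty$ and $\sigma_k\geq 0$. Three pointwise sub-cases then arise: both on the curve, which reduces to the scalar monotonicity of $\dot\varepsilon\mapsto -C_j^3(-\dot\varepsilon)_+^{C_j^4}$ already used in Lemma 5.9; both on the cap, where $\delta\dot\varepsilon=0$ identically; and the mixed case, which is the main technical obstacle.

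The mixed case I would dispose of by observing that if, at a given point, the first pair lies on the curve and the second on the cap, then $\dot\varepsilon_1\leq c_\infty=\dot\varepsilon_2$ while $\sigma_1\leq 0\leq\sigma_2$, so $(\sigma_1-\sigma_2)(\dot\varepsilon_1-\dot\varepsilon_2)\geq 0$ almost everywhere on $X_j$, and integration preserves the sign. The symmetric sub-case is identical. Care must be taken that the branch partition is measurable, but this follows from the explicit algebraic form of the two defining sets.

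Maximality then follows, as in Lemmas 5.8 and 5.11, from the surjectivity of $I+A^{reg}_{j,\perp}$ together with Theorem 1.6 of \cite{trostorff2011well}. For a given right-hand side $y$ I would construct a solution branch-wise: the curve equation $\dot\varepsilon-C_j^3(-\dot\varepsilon)_+^{C_j^4}=y$ defines a continuous, strictly increasing bijection of $\mathbb{R}$ onto itself, so whenever the unique solution satisfies $\dot\varepsilon\leq c_\infty$ (equivalently, $y\leq c_\infty$), we are done; otherwise we take $\dot\varepsilon=c_\infty$ and $\sigma=y-c_\infty\geq 0$ on the cap. The $\hat c$-shift finally yields the claimed $\hat c$-coercivity, completing the verification of [5.2] for $(A^{reg}_{j,\perp}+\hat c)$.
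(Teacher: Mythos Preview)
Your proposal is correct and follows essentially the same three-case monotonicity analysis as the paper (both on the cap, both on the curve, mixed case with $\sigma_1\leq 0\leq\sigma_2$ and $\dot\varepsilon_1\leq c_\infty=\dot\varepsilon_2$). The only genuine difference is in the maximality step: the paper argues directly that no monotone extension of $A_{j,\perp}^{reg}$ exists (as in Lemma~\ref{lemma:5.9}), whereas you verify surjectivity of $I+A_{j,\perp}^{reg}$ and invoke Theorem~1.6 of \cite{trostorff2011well} (as in Lemmas~\ref{lemma:5.8} and~\ref{lemma:5.11}). Your route is more constructive and makes the branch-wise solvability explicit; the paper's is terser but relies on the reader seeing that the graph already fills the ``maximal'' shape. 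Both are valid, and your additional remarks on boundedness and measurability of the branch partition are more careful than the paper's ``easy to verify''.
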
 
\begin{proof}
Boundedness and the fact that \((0,0)\in A_{j,\perp }^{reg}+\check{c}\) are easy to verify. We show monotonicity of \( A_{j,\perp }^{reg}\) by considering three cases:
\begin{enumerate} \small
    \item If \(\dot{\varepsilon }_{k} = c_{\infty }\) for both \( k = 1,2\) then we have \( \delta\dot{\varepsilon } = 0\).

    \item If \(\sigma_{k} = -C_{j}^{3}(-\dot{\varepsilon }_{k})_{+}^{C_{j}^{4}}\) for both \( k = 1,2\), then we use the fact that \( f(x) = -(-x)_{+}^{c}\) is a monotone function and thus  
\begin{equation*}
\left\langle \delta \sigma ,\delta\dot{\varepsilon }\right\rangle_{X_{j}} = C_{j}^{3}\left\langle -(-\dot{\varepsilon }_{1})_{+}^{C_{j}^{4}}+(-\dot{\varepsilon }_{2})_{+}^{C_{j}^{4}},\dot{\varepsilon }_{1}-\dot{\varepsilon }_{2}\right\rangle_{X_{j}}\geq 0.
\end{equation*}

    \item If \( \sigma_{1} = -C_{j}^{3}(-\dot{\varepsilon }_{1})_{+}^{C_{j}^{4}}\) and \(\dot{\varepsilon }_{2} = c_{\infty }\) then \( \sigma_{1}\leq 0\) and \( \sigma_{2}\geq 0\) and so
    
\begin{equation*}
\left\langle \delta \sigma ,\delta\dot{\varepsilon }\right\rangle_{X_{j}} =\left\langle \sigma_{1}-\sigma_{2},\dot{\varepsilon }_{1}-c_{\infty }\right\rangle_{X_{j}}\geq 0.
\end{equation*}

\end{enumerate}
This shows that \( A_{j,\perp }^{reg}\) is monotone. No monotone extension of this law exists and hence it is maximal. In turn, \((A_{j,\perp }^{reg}+\hat{c})\) is $\hat{c}$-maximal monotone. 
\end{proof}

\begin{remark}\label{remark:5.9}
Applying the bound \( c_{\infty }\) is equivalent to applying a cut-off operator on the strain rate similar to  \cite{brun2020monolithic, sun2005discontinuous}. We note that, in practice, one can always check a posteriori whether the solution has attained the bound at any moment in time. If not, then the solution is independent of this bound, as desired. If the solution attains the bound, however, then the value of \( c_{\infty }\) can be increased. If no bounded \( c_{\infty }\) exists, then the physical model assumptions will at some point be violated, since this would indicate an arbitrarily large bulk velocity.
\end{remark}

The well-posedness of the resulting problem is now a direct consequence of Theorem \ref{theorem:5.2}, which we present formally in the following theorem.

\begin{theorem}\label{theorem:5.11}
Problem \eqref{eq: simplified MD problem}, in which the relations are given by Example \ref{eg:5.1} and those analyzed in Lemmas \ref{lemma:5.11} and \ref{lemma:5.12}, is well-posed in the space \( L_{\nu }^{2}(\mathbb{R},U)\).
\end{theorem}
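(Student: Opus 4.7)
The plan is to reduce Theorem \ref{theorem:5.11} directly to Theorem \ref{theorem:5.2} by verifying, component by component, that the choices of constitutive operators in Example \ref{eg:5.1} together with the regularized friction and contact laws of Lemmas \ref{lemma:5.11} and \ref{lemma:5.12} satisfy Assumptions \ref{assumption:1}--\ref{assumption:5}. First I would dispatch the four ``scalar multiplication'' style items from Example \ref{eg:5.1}: \(\rho_r = c_\rho\,\mathrm{Id}\) and \(\beta = c_\beta\,\mathrm{Id}\) with \(c_\rho,c_\beta>0\) immediately give Assumptions \ref{assumption:1} and \ref{assumption:4}; isotropic Hooke's law with Lamé parameters \(\mu,\lambda>0\) is a coercive linear operator on \(\mathfrak{G}\) (restricted via \(\hat{\gamma}\) to the bulk and its surfaces) with coercivity constant \(\hat c = (2\mu+n\lambda)^{-1}\), giving Assumption \ref{assumption:3}; and the Darcy--Forchheimer inverse relation \(\kappa^{-1}(\mathfrak{q}) = (\kappa_1+\kappa_2|\mathfrak{q}|)\mathfrak{q}\) with \(\kappa_1,\kappa_2>0\) is a bounded, \(\kappa_1\)-maximal monotone relation containing \((0,0)\), giving Assumption \ref{assumption:5}.

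The substantive work is Assumption \ref{assumption:2} on the frictional contact relation \(\check{\mathfrak{A}}\). Following the decomposition introduced just before Section \ref{sec:maximal_monotone}, I would write
\begin{equation*}
\check{\mathfrak{A}} = \bigoplus_{\substack{i\in I^{n-1}\\ j\in\mathfrak{S}_i\cap\mathfrak{F}^1}} \bigl(A_{j,\parallel}^{reg}+\check c\bigr)\oplus\bigl(A_{j,\perp}^{reg}+\check c\bigr),
\end{equation*}
and then argue that the required properties pass through the direct sum. Boundedness of the post-set is immediate from the pointwise bound \(|\dot\varepsilon|\le c_\infty\) that holds in both components, giving boundedness in \(L^2(X_j)\) by Lebesgue's dominated convergence. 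Containment of \((0,0)\) holds componentwise. Maximal monotonicity with the same constant \(\check c>0\) follows because the inner product \(\langle\delta\sigma,\delta\dot\varepsilon\rangle_{X_j}\) splits across the orthogonal parallel/perpendicular decomposition of \(\iota_j\mathfrak{G}\) and across the disjoint index sets, so the estimates established in Lemmas \ref{lemma:5.11} and \ref{lemma:5.12} sum to the corresponding global bound \(\langle\delta\check{\mathfrak{s}},\delta\dot{\mathfrak{e}}\rangle \ge \check c\,\|\delta\check{\mathfrak{s}}\|^2\). Maximality of the sum is preserved because a direct sum of maximal monotone relations on an orthogonal direct sum of Hilbert spaces is maximal monotone (equivalently, surjectivity of \(I + \check{\mathfrak{A}}\) reduces to surjectivity of \(I\) plus each summand, which is known from Lemmas \ref{lemma:5.11} and \ref{lemma:5.12} via Theorem 1.6 of \cite{trostorff2011well}).

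With Assumptions \ref{assumption:1}--\ref{assumption:5} established, Theorem \ref{theorem:5.2} applies verbatim and yields existence, uniqueness, and the Lipschitz bound of the solution map on \(L^2_\nu(\mathbb{R},U)\), which is exactly the claim of Theorem \ref{theorem:5.11}. The main obstacle I anticipate is the careful bookkeeping for the direct-sum argument: one must ensure that the parallel/perpendicular decomposition of \(\iota_j\mathfrak{e}\) in the fractures is genuinely orthogonal with respect to the \(L^2(X_j)\) inner product used in Assumption \ref{assumption:2}, and that the summation over \(i\in I^{n-1}\) and \(j\in\mathfrak{S}_i\cap\mathfrak{F}^1\) reaches exactly the components for which \(\iota_j\check\gamma=1\) as postulated in the assumption; once these matters are clarified, the remainder is mechanical.
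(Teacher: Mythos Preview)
Your proposal is correct and follows exactly the route the paper takes: the paper states just before Theorem \ref{theorem:5.11} that the result ``is now a direct consequence of Theorem \ref{theorem:5.2}'', and the sentence preceding Section \ref{sec:maximal_monotone} already remarks that if each summand \(A_{j,\parallel}\), \(A_{j,\perp}\) satisfies \ref{assumption:2} then so does \(\check{\mathfrak{A}}\). Your write-up simply spells out the direct-sum bookkeeping (orthogonality of the parallel/perpendicular split, preservation of boundedness and maximal monotonicity under orthogonal direct sums) that the paper leaves implicit.
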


\section{Summary and final remarks}
\label{sec:summary}

In this manuscript, we have provided a general mixed-dimensional finite strain model for poromechanics in the presence of fracture (Section \ref{sec:governing_eq_mixed_dimensional_finite}), its simplification in the case of linearized strain (Section \ref{sec:governing_eq_linearized}), and a well-posedness theory allowing in the setting of linearized stain, still allowing for a generality in terms of the constitutive laws (Section \ref{sec:wellposedness_weak}). These developments are to the best of our knowledge all new. Our finite strain theory is rotationally invariant, and our mixed-dimensional model have several well-established models as its simplifications (Section \ref{sec:connection_to_classical}).  

As presented, the model allows for a range of physical phenomena, some of which may be desirable to neglect in various concrete applications. One such example arises in friction, where Tresca friction does not conform to a parameterization of the full model, but instead is in a different class of maximally monotone relations where there is no positive lower bound. This barely violates assumption \ref{assumption:2}, and is thus a degenerate limit of our model framework (using our general theory, as stated in Theorem \ref{theorem:5.2}, leads to a continuity constant that is unbounded). On the other hand, Tresca friction (and other models satisfying assumption \ref{assumption:6}) can nevertheless be allowed by a small perturbation of the spaces considered, as illustrated in Section \ref{sec:degeneracies}.

An example of a different kind of degeneracy, which is not analyzed herein, is related to the presence of mechanical strain and stress terms at solid surfaces. This allows for modeling of surface effects, as may appear due to mineral processes in the subsurface, or through surface coating in industrial applications. On the other hand, such surface effects may be desirable to neglect for ``clean" fractures. We consider this also as a degenerate limit of our model, but in this case it is assumption \ref{assumption:3} that is violated (and possibly also \ref{assumption:1}). 

Another important point which is not covered by our well-posedness analysis is the dependency of fracture permeability on aperture. In terms of the model structure, this enters in the sense of the permeability depending on the mixed-dimensional strain, as stated in \eqref{eq:Darcy}. Such dependencies have recently been analyzed in the fixed-dimensional case \cite{bociu2016analysis}, and we are optimistic that their approach can be extended to the mixed-dimensional setting. 

We have previously shown how numerical methods can be derived for the simpler problem of flow in porous media (see references in Section \ref{sec:fluid_flow_rigid}). Development of numerical methods for the current problem is ongoing, and we look forward to reporting on this in future work.  

\section*{Acknowledgements}
The authors wish to thank Jakub Both, Omar Duran, Eirik Keilegavlen and Ivar Stefansson for many helpful discussions and comments on the manuscript. WMB was supported by the Dahlquist Research Fellowship, funded by Comsol AB. The work of JMN took place in the context of NFR project 250223 and the "Akademia" grant at the UoB titled "FracFlow" (funded by Equinor ASA).

\bibliography{sn-bibliography}


\begin{thebibliography}{40}
\ifx \bisbn   \undefined \def \bisbn  #1{ISBN #1}\fi
\ifx \binits  \undefined \def \binits#1{#1}\fi
\ifx \bauthor  \undefined \def \bauthor#1{#1}\fi
\ifx \batitle  \undefined \def \batitle#1{#1}\fi
\ifx \bjtitle  \undefined \def \bjtitle#1{#1}\fi
\ifx \bvolume  \undefined \def \bvolume#1{\textbf{#1}}\fi
\ifx \byear  \undefined \def \byear#1{#1}\fi
\ifx \bissue  \undefined \def \bissue#1{#1}\fi
\ifx \bfpage  \undefined \def \bfpage#1{#1}\fi
\ifx \blpage  \undefined \def \blpage #1{#1}\fi
\ifx \burl  \undefined \def \burl#1{\textsf{#1}}\fi
\ifx \doiurl  \undefined \def \doiurl#1{\url{https://doi.org/#1}}\fi
\ifx \betal  \undefined \def \betal{\textit{et al.}}\fi
\ifx \binstitute  \undefined \def \binstitute#1{#1}\fi
\ifx \binstitutionaled  \undefined \def \binstitutionaled#1{#1}\fi
\ifx \bctitle  \undefined \def \bctitle#1{#1}\fi
\ifx \beditor  \undefined \def \beditor#1{#1}\fi
\ifx \bpublisher  \undefined \def \bpublisher#1{#1}\fi
\ifx \bbtitle  \undefined \def \bbtitle#1{#1}\fi
\ifx \bedition  \undefined \def \bedition#1{#1}\fi
\ifx \bseriesno  \undefined \def \bseriesno#1{#1}\fi
\ifx \blocation  \undefined \def \blocation#1{#1}\fi
\ifx \bsertitle  \undefined \def \bsertitle#1{#1}\fi
\ifx \bsnm \undefined \def \bsnm#1{#1}\fi
\ifx \bsuffix \undefined \def \bsuffix#1{#1}\fi
\ifx \bparticle \undefined \def \bparticle#1{#1}\fi
\ifx \barticle \undefined \def \barticle#1{#1}\fi
\bibcommenthead
\ifx \bconfdate \undefined \def \bconfdate #1{#1}\fi
\ifx \botherref \undefined \def \botherref #1{#1}\fi
\ifx \url \undefined \def \url#1{\textsf{#1}}\fi
\ifx \bchapter \undefined \def \bchapter#1{#1}\fi
\ifx \bbook \undefined \def \bbook#1{#1}\fi
\ifx \bcomment \undefined \def \bcomment#1{#1}\fi
\ifx \oauthor \undefined \def \oauthor#1{#1}\fi
\ifx \citeauthoryear \undefined \def \citeauthoryear#1{#1}\fi
\ifx \endbibitem  \undefined \def \endbibitem {}\fi
\ifx \bconflocation  \undefined \def \bconflocation#1{#1}\fi
\ifx \arxivurl  \undefined \def \arxivurl#1{\textsf{#1}}\fi
\csname PreBibitemsHook\endcsname

\bibitem{hughes1983mathematical}
\begin{bbook}
\bauthor{\bsnm{Marsden}, \binits{J.E.}},
\bauthor{\bsnm{Hughes}, \binits{T.J.}}:
\bbtitle{Mathematical Foundations of Elasticity}.
\bpublisher{Prentice-Hall},
\blocation{University of Minnesota}
(\byear{1994})
\end{bbook}
\endbibitem

\bibitem{kikuchi1988contact}
\begin{bbook}
\bauthor{\bsnm{Kikuchi}, \binits{N.}},
\bauthor{\bsnm{Oden}, \binits{J.T.}}:
\bbtitle{Contact Problems in Elasticity: a Study of Variational Inequalities
  and Finite Element Methods}.
\bpublisher{SIAM},
\blocation{Philadelphia}
(\byear{1988})
\end{bbook}
\endbibitem

\bibitem{coussy2005poromechanics}
\begin{barticle}
\bauthor{\bsnm{Coussy}, \binits{O.}}:
\batitle{Poromechanics of freezing materials}.
\bjtitle{Journal of the Mechanics and Physics of Solids}
\bvolume{53}(\bissue{8}),
\bfpage{1689}--\blpage{1718}
(\byear{2005})
\end{barticle}
\endbibitem

\bibitem{boon2021functional}
\begin{barticle}
\bauthor{\bsnm{Boon}, \binits{W.M.}},
\bauthor{\bsnm{Nordbotten}, \binits{J.M.}},
\bauthor{\bsnm{Vatne}, \binits{J.E.}}:
\batitle{Functional analysis and exterior calculus on mixed-dimensional
  geometries}.
\bjtitle{Annali di Matematica Pura ed Applicata (1923-)}
\bvolume{200}(\bissue{2}),
\bfpage{757}--\blpage{789}
(\byear{2021})
\end{barticle}
\endbibitem

\bibitem{martin2005modeling}
\begin{barticle}
\bauthor{\bsnm{Martin}, \binits{V.}},
\bauthor{\bsnm{Jaffr{\'e}}, \binits{J.}},
\bauthor{\bsnm{Roberts}, \binits{J.E.}}:
\batitle{Modeling fractures and barriers as interfaces for flow in porous
  media}.
\bjtitle{SIAM Journal on Scientific Computing}
\bvolume{26}(\bissue{5}),
\bfpage{1667}--\blpage{1691}
(\byear{2005})
\end{barticle}
\endbibitem

\bibitem{boon2018robust}
\begin{barticle}
\bauthor{\bsnm{Boon}, \binits{W.M.}},
\bauthor{\bsnm{Nordbotten}, \binits{J.M.}},
\bauthor{\bsnm{Yotov}, \binits{I.}}:
\batitle{Robust discretization of flow in fractured porous media}.
\bjtitle{SIAM Journal on Numerical Analysis}
\bvolume{56}(\bissue{4}),
\bfpage{2203}--\blpage{2233}
(\byear{2018})
\end{barticle}
\endbibitem

\bibitem{bukavc2017dimensional}
\begin{barticle}
\bauthor{\bsnm{Buka{\v{c}}}, \binits{M.}},
\bauthor{\bsnm{Yotov}, \binits{I.}},
\bauthor{\bsnm{Zunino}, \binits{P.}}:
\batitle{Dimensional model reduction for flow through fractures in poroelastic
  media}.
\bjtitle{ESAIM: Mathematical Modelling and Numerical Analysis}
\bvolume{51}(\bissue{4}),
\bfpage{1429}--\blpage{1471}
(\byear{2017})
\end{barticle}
\endbibitem

\bibitem{angot2009asymptotic}
\begin{barticle}
\bauthor{\bsnm{Angot}, \binits{P.}},
\bauthor{\bsnm{Boyer}, \binits{F.}},
\bauthor{\bsnm{Hubert}, \binits{F.}}:
\batitle{Asymptotic and numerical modelling of flows in fractured porous
  media}.
\bjtitle{ESAIM: Mathematical Modelling and Numerical Analysis}
\bvolume{43}(\bissue{2}),
\bfpage{239}--\blpage{275}
(\byear{2009})
\end{barticle}
\endbibitem

\bibitem{picard2011partial}
\begin{bbook}
\bauthor{\bsnm{Picard}, \binits{R.}},
\bauthor{\bsnm{McGhee}, \binits{D.}}:
\bbtitle{Partial Differential Equations: A Unified Hilbert Space Approach}
vol. \bseriesno{55}.
\bpublisher{Walter de Gruyter},
\blocation{Berlin}
(\byear{2011})
\end{bbook}
\endbibitem

\bibitem{picard2015well}
\begin{botherref}
\oauthor{\bsnm{Picard}, \binits{R.}},
\oauthor{\bsnm{Trostorff}, \binits{S.}},
\oauthor{\bsnm{Waurick}, \binits{M.}}:
Well-posedness via monotonicity--an overview.
Operator semigroups meet complex analysis, harmonic analysis and mathematical
  physics,
397--452
(2015)
\end{botherref}
\endbibitem

\bibitem{mcghee2010note}
\begin{barticle}
\bauthor{\bsnm{McGhee}, \binits{D.F.}},
\bauthor{\bsnm{Picard}, \binits{R.}}:
\batitle{A note on anisotropic, inhomogeneous, poro-elastic media}.
\bjtitle{Mathematical methods in the applied sciences}
\bvolume{33}(\bissue{3}),
\bfpage{313}--\blpage{322}
(\byear{2010})
\end{barticle}
\endbibitem

\bibitem{trostorff2012alternative}
\begin{barticle}
\bauthor{\bsnm{Trostorff}, \binits{S.}}:
\batitle{An alternative approach to well-posedness of a class of differential
  inclusions in hilbert spaces}.
\bjtitle{Nonlinear Analysis: Theory, Methods \& Applications}
\bvolume{75}(\bissue{15}),
\bfpage{5851}--\blpage{5865}
(\byear{2012})
\end{barticle}
\endbibitem

\bibitem{girault2019mixed}
\begin{bchapter}
\bauthor{\bsnm{Girault}, \binits{V.}},
\bauthor{\bsnm{Wheeler}, \binits{M.F.}},
\bauthor{\bsnm{Kumar}, \binits{K.}},
\bauthor{\bsnm{Singh}, \binits{G.}}:
\bctitle{Mixed formulation of a linearized lubrication fracture model in a
  poro-elastic medium}.
In: \bbtitle{Contributions to Partial Differential Equations and Applications},
pp. \bfpage{171}--\blpage{219}.
\bpublisher{Springer},
\blocation{Cham}
(\byear{2019})
\end{bchapter}
\endbibitem

\bibitem{ambartsumyan2019nonlinear}
\begin{barticle}
\bauthor{\bsnm{Ambartsumyan}, \binits{I.}},
\bauthor{\bsnm{Ervin}, \binits{V.J.}},
\bauthor{\bsnm{Nguyen}, \binits{T.}},
\bauthor{\bsnm{Yotov}, \binits{I.}}:
\batitle{A nonlinear stokes--biot model for the interaction of a non-newtonian
  fluid with poroelastic media}.
\bjtitle{ESAIM: Mathematical Modelling and Numerical Analysis}
\bvolume{53}(\bissue{6}),
\bfpage{1915}--\blpage{1955}
(\byear{2019})
\end{barticle}
\endbibitem

\bibitem{BONALDI202140}
\begin{barticle}
\bauthor{\bsnm{Bonaldi}, \binits{F.}},
\bauthor{\bsnm{Brenner}, \binits{K.}},
\bauthor{\bsnm{Droniou}, \binits{J.}},
\bauthor{\bsnm{Masson}, \binits{R.}}:
\batitle{Gradient discretization of two-phase flows coupled with mechanical
  deformation in fractured porous media}.
\bjtitle{Computers \& Mathematics with Applications}
\bvolume{98},
\bfpage{40}--\blpage{68}
(\byear{2021})
\end{barticle}
\endbibitem

\bibitem{girault2015lubrication}
\begin{barticle}
\bauthor{\bsnm{Girault}, \binits{V.}},
\bauthor{\bsnm{Wheeler}, \binits{M.F.}},
\bauthor{\bsnm{Ganis}, \binits{B.}},
\bauthor{\bsnm{Mear}, \binits{M.E.}}:
\batitle{A lubrication fracture model in a poro-elastic medium}.
\bjtitle{Mathematical Models and Methods in Applied Sciences}
\bvolume{25}(\bissue{04}),
\bfpage{587}--\blpage{645}
(\byear{2015})
\end{barticle}
\endbibitem

\bibitem{cusini2021simulation}
\begin{barticle}
\bauthor{\bsnm{Cusini}, \binits{M.}},
\bauthor{\bsnm{White}, \binits{J.A.}},
\bauthor{\bsnm{Castelletto}, \binits{N.}},
\bauthor{\bsnm{Settgast}, \binits{R.R.}}:
\batitle{Simulation of coupled multiphase flow and geomechanics in porous media
  with embedded discrete fractures}.
\bjtitle{International Journal for Numerical and Analytical Methods in
  Geomechanics}
\bvolume{45}(\bissue{5}),
\bfpage{563}--\blpage{584}
(\byear{2021})
\end{barticle}
\endbibitem

\bibitem{berre2019flow}
\begin{barticle}
\bauthor{\bsnm{Berre}, \binits{I.}},
\bauthor{\bsnm{Doster}, \binits{F.}},
\bauthor{\bsnm{Keilegavlen}, \binits{E.}}:
\batitle{Flow in fractured porous media: a review of conceptual models and
  discretization approaches}.
\bjtitle{Transport in Porous Media}
\bvolume{130}(\bissue{1}),
\bfpage{215}--\blpage{236}
(\byear{2019})
\end{barticle}
\endbibitem

\bibitem{jha2014coupled}
\begin{barticle}
\bauthor{\bsnm{Jha}, \binits{B.}},
\bauthor{\bsnm{Juanes}, \binits{R.}}:
\batitle{Coupled multiphase flow and poromechanics: A computational model of
  pore pressure effects on fault slip and earthquake triggering}.
\bjtitle{Water Resources Research}
\bvolume{50}(\bissue{5}),
\bfpage{3776}--\blpage{3808}
(\byear{2014})
\end{barticle}
\endbibitem

\bibitem{garipov2016discrete}
\begin{barticle}
\bauthor{\bsnm{Garipov}, \binits{T.}},
\bauthor{\bsnm{Karimi-Fard}, \binits{M.}},
\bauthor{\bsnm{Tchelepi}, \binits{H.}}:
\batitle{Discrete fracture model for coupled flow and geomechanics}.
\bjtitle{Computational Geosciences}
\bvolume{20}(\bissue{1}),
\bfpage{149}--\blpage{160}
(\byear{2016})
\end{barticle}
\endbibitem

\bibitem{norbeck2016embedded}
\begin{barticle}
\bauthor{\bsnm{Norbeck}, \binits{J.H.}},
\bauthor{\bsnm{McClure}, \binits{M.W.}},
\bauthor{\bsnm{Lo}, \binits{J.W.}},
\bauthor{\bsnm{Horne}, \binits{R.N.}}:
\batitle{An embedded fracture modeling framework for simulation of hydraulic
  fracturing and shear stimulation}.
\bjtitle{Computational Geosciences}
\bvolume{20}(\bissue{1}),
\bfpage{1}--\blpage{18}
(\byear{2016})
\end{barticle}
\endbibitem

\bibitem{berge2020finite}
\begin{barticle}
\bauthor{\bsnm{Berge}, \binits{R.L.}},
\bauthor{\bsnm{Berre}, \binits{I.}},
\bauthor{\bsnm{Keilegavlen}, \binits{E.}},
\bauthor{\bsnm{Nordbotten}, \binits{J.M.}},
\bauthor{\bsnm{Wohlmuth}, \binits{B.}}:
\batitle{Finite volume discretization for poroelastic media with fractures
  modeled by contact mechanics}.
\bjtitle{International Journal for Numerical Methods in Engineering}
\bvolume{121}(\bissue{4}),
\bfpage{644}--\blpage{663}
(\byear{2020})
\end{barticle}
\endbibitem

\bibitem{stefansson2021fully}
\begin{barticle}
\bauthor{\bsnm{Stefansson}, \binits{I.}},
\bauthor{\bsnm{Berre}, \binits{I.}},
\bauthor{\bsnm{Keilegavlen}, \binits{E.}}:
\batitle{A fully coupled numerical model of thermo-hydro-mechanical processes
  and fracture contact mechanics in porous media}.
\bjtitle{Computer Methods in Applied Mechanics and Engineering}
\bvolume{386},
\bfpage{114122}
(\byear{2021})
\end{barticle}
\endbibitem

\bibitem{truesdell2004non}
\begin{bchapter}
\bauthor{\bsnm{Truesdell}, \binits{C.}},
\bauthor{\bsnm{Noll}, \binits{W.}}:
\bctitle{The non-linear field theories of mechanics}.
In: \bbtitle{The Non-linear Field Theories of Mechanics},
pp. \bfpage{1}--\blpage{579}.
\bpublisher{Springer},
\blocation{Berlin, Heidelberg}
(\byear{2004})
\end{bchapter}
\endbibitem

\bibitem{bear1979groundwater}
\begin{botherref}
\oauthor{\bsnm{Bear}, \binits{J.}}, et al.:
Groundwater hydraulics.
McGraw-Hill, New York
(1979)
\end{botherref}
\endbibitem

\bibitem{nordbotten2011geological}
\begin{bbook}
\bauthor{\bsnm{Nordbotten}, \binits{J.M.}},
\bauthor{\bsnm{Celia}, \binits{M.A.}}:
\bbtitle{Geological Storage of CO2: Modeling Approaches for Large-scale
  Simulation}.
\bpublisher{John Wiley \& Sons},
\blocation{United States}
(\byear{2011})
\end{bbook}
\endbibitem

\bibitem{hornung1996homogenization}
\begin{bbook}
\bauthor{\bsnm{Hornung}, \binits{U.}}:
\bbtitle{Homogenization and Porous Media}.
\bpublisher{Springer},
\blocation{New York, NY}
(\byear{1996})
\end{bbook}
\endbibitem

\bibitem{spivak1965}
\begin{bbook}
\bauthor{\bsnm{Spivak}, \binits{M.}}:
\bbtitle{Calculus on Manifolds: a Modern Approach to Classical Theorems of
  Advanced Calculus}.
\bpublisher{CRC press},
\blocation{Boca Raton}
(\byear{2018})
\end{bbook}
\endbibitem

\bibitem{arnold2018finite}
\begin{bbook}
\bauthor{\bsnm{Arnold}, \binits{D.N.}}:
\bbtitle{Finite Element Exterior Calculus}.
\bpublisher{SIAM},
\blocation{Philadelphia}
(\byear{2018})
\end{bbook}
\endbibitem

\bibitem{pedersen1989unbounded}
\begin{bbook}
\bauthor{\bsnm{Pedersen}, \binits{G.K.}}:
\bbtitle{Analysis Now}.
\bpublisher{Springer},
\blocation{New York}
(\byear{1989})
\end{bbook}
\endbibitem

\bibitem{nordbotten2017modeling}
\begin{bchapter}
\bauthor{\bsnm{Nordbotten}, \binits{J.}},
\bauthor{\bsnm{Boon}, \binits{W.}}:
\bctitle{Modeling, structure and discretization of hierarchical
  mixed-dimensional partial differential equations}.
In: \bbtitle{International Conference on Domain Decomposition Methods},
pp. \bfpage{87}--\blpage{101}
(\byear{2017}).
\bcomment{Springer}
\end{bchapter}
\endbibitem

\bibitem{temam2005mathematical}
\begin{bbook}
\bauthor{\bsnm{Temam}, \binits{R.}},
\bauthor{\bsnm{Miranville}, \binits{A.}}:
\bbtitle{Mathematical Modeling in Continuum Mechanics}.
\bpublisher{Cambridge University Press},
\blocation{Cambridge}
(\byear{2005})
\end{bbook}
\endbibitem

\bibitem{oden1985models}
\begin{barticle}
\bauthor{\bsnm{Oden}, \binits{J.}},
\bauthor{\bsnm{Martins}, \binits{J.}}:
\batitle{Models and computational methods for dynamic friction phenomena}.
\bjtitle{Computer methods in applied mechanics and engineering}
\bvolume{52}(\bissue{1-3}),
\bfpage{527}--\blpage{634}
(\byear{1985})
\end{barticle}
\endbibitem

\bibitem{reveron2021iterative}
\begin{barticle}
\bauthor{\bsnm{Rever{\'o}n}, \binits{M.A.B.}},
\bauthor{\bsnm{Kumar}, \binits{K.}},
\bauthor{\bsnm{Nordbotten}, \binits{J.M.}},
\bauthor{\bsnm{Radu}, \binits{F.A.}}:
\batitle{Iterative solvers for biot model under small and large deformations}.
\bjtitle{Computational Geosciences}
\bvolume{25}(\bissue{2}),
\bfpage{687}--\blpage{699}
(\byear{2021})
\end{barticle}
\endbibitem

\bibitem{gurtin1975continuum}
\begin{barticle}
\bauthor{\bsnm{Gurtin}, \binits{M.E.}},
\bauthor{\bsnm{Murdoch}, \binits{A.I.}}:
\batitle{A continuum theory of elastic material surfaces}.
\bjtitle{Archive for rational mechanics and analysis}
\bvolume{57}(\bissue{4}),
\bfpage{291}--\blpage{323}
(\byear{1975})
\end{barticle}
\endbibitem

\bibitem{flemisch2018benchmarks}
\begin{barticle}
\bauthor{\bsnm{Flemisch}, \binits{B.}},
\bauthor{\bsnm{Berre}, \binits{I.}},
\bauthor{\bsnm{Boon}, \binits{W.}},
\bauthor{\bsnm{Fumagalli}, \binits{A.}},
\bauthor{\bsnm{Schwenck}, \binits{N.}},
\bauthor{\bsnm{Scotti}, \binits{A.}},
\bauthor{\bsnm{Stefansson}, \binits{I.}},
\bauthor{\bsnm{Tatomir}, \binits{A.}}:
\batitle{Benchmarks for single-phase flow in fractured porous media}.
\bjtitle{Advances in Water Resources}
\bvolume{111},
\bfpage{239}--\blpage{258}
(\byear{2018})
\end{barticle}
\endbibitem

\bibitem{trostorff2011well}
\begin{botherref}
\oauthor{\bsnm{Trostorff}, \binits{S.}}:
Well-posedness and causality for a class of evolutionary inclusions
(2013).
\doiurl{https://arxiv.org/abs/1307.2074}
\end{botherref}
\endbibitem

\bibitem{brun2020monolithic}
\begin{barticle}
\bauthor{\bsnm{Brun}, \binits{M.K.}},
\bauthor{\bsnm{Ahmed}, \binits{E.}},
\bauthor{\bsnm{Berre}, \binits{I.}},
\bauthor{\bsnm{Nordbotten}, \binits{J.M.}},
\bauthor{\bsnm{Radu}, \binits{F.A.}}:
\batitle{Monolithic and splitting solution schemes for fully coupled
  quasi-static thermo-poroelasticity with nonlinear convective transport}.
\bjtitle{Computers \& Mathematics with Applications}
\bvolume{80}(\bissue{8}),
\bfpage{1964}--\blpage{1984}
(\byear{2020})
\end{barticle}
\endbibitem

\bibitem{sun2005discontinuous}
\begin{barticle}
\bauthor{\bsnm{Sun}, \binits{S.}},
\bauthor{\bsnm{Wheeler}, \binits{M.F.}}:
\batitle{Discontinuous galerkin methods for coupled flow and reactive transport
  problems}.
\bjtitle{Applied Numerical Mathematics}
\bvolume{52}(\bissue{2-3}),
\bfpage{273}--\blpage{298}
(\byear{2005})
\end{barticle}
\endbibitem

\bibitem{bociu2016analysis}
\begin{barticle}
\bauthor{\bsnm{Bociu}, \binits{L.}},
\bauthor{\bsnm{Guidoboni}, \binits{G.}},
\bauthor{\bsnm{Sacco}, \binits{R.}},
\bauthor{\bsnm{Webster}, \binits{J.T.}}:
\batitle{Analysis of nonlinear poro-elastic and poro-visco-elastic models}.
\bjtitle{Archive for Rational Mechanics and Analysis}
\bvolume{222}(\bissue{3}),
\bfpage{1445}--\blpage{1519}
(\byear{2016})
\end{barticle}
\endbibitem

\end{thebibliography}
\begin{appendices}
\section{Evolutionary equations and monotonicity}
\label{sec:appendix}

This work employs the theoretical framework from \cite{picard2015well} in order to present the model and its analysis. This setting is more general than the conventional approach in which mappings in Sobolev spaces are used, and therewith provides us three key advantages. 
\begin{itemize}
    \item First, the domains of differential operators are derived from the operator, rather than vice versa. This relieves the need for characterizing the solution space and the theory is formulated in \( L^{2}\)-type spaces instead. 

    \item Second, it allows us to include constitutive laws in our model that are not, strictly speaking, mappings between Sobolev spaces. In fact, we can consider the larger class of maximal monotone relations. 

    \item Third, the theory of \textit{evolutionary equations} naturally incorporates the time derivatives in a continuous setting. This provides an existence result for the entire time domain, rather than requiring arguments based on discrete time stepping. 
\end{itemize}
In order to provide accessible reference, we recall the key concepts from this framework in this appendix. Let \( H\) be a Hilbert space endowed with inner product \(\left\langle  \cdot , \cdot \right\rangle\).
\begin{definition}\label{def:A1}
A \textit{binary relation} between \( H\) and \( H\) is a subset \( A\subseteq H\times H\).
\end{definition}

This set-theoretical perspective on binary relations allows us to speak of \textit{closed} relations (if the set \( A\) is closed) and of the \textit{closure} of a relation, which we denote by \(\overline{A}\). 

Binary relations have an algebraic structure in the sense that for \( A,B\subseteq H\times H\) and \( \lambda\in \mathbb{R}\), we have
\begin{equation}
A+\lambda B =\left\{(x,y)\in H\times H\left\vert 
\begin{array}{c}
\exists y_{A}\in H \text{ with } (x,y_{A})\in A, \\ 
\exists y_{B}\in H \text{ with } (x,y_{B})\in B, \\ 
\end{array} \right.  \text{ such that } y = y_{A}+\lambda y_{B}\right\} 
\end{equation}
\begin{example}\label{eg:A1}
 Given a mapping \( f:H\rightarrow H\), then its graph \( \mathrm{Gr}(f)\) given by all pairs \((x,f(x))\) with \( x\in H\) is a binary relation. The algebraic structure of binary relations naturally generalizes that of mappings in the sense that
\begin{equation*}
\mathrm{Gr}(f)+\lambda \mathrm{Gr}(g) = \mathrm{Gr}(f+\lambda g).
\end{equation*}

We do not distinguish between a mapping and its graph and instead reuse the notation \( f\), i.e. write \((x,y)\in f\subseteq H\times H\) when referring to the corresponding binary relation.
\end{example}
\begin{example}\label{eg:A2}
For a constant \( c\in \mathbb{R}\), we have \((x,y)\in c\) if \( y = cx\). The binary relation \( c\subseteq H\times H\) therefore corresponds to the multiplication by \( c\). Importantly, this multiplicative structure implies that \( (x,\partial_t y) \in \partial_t c\).
\end{example}
\begin{example}\label{eg:A3}

We adopt a matrix/vector notation for binary relations between tuples of variables. Thus, let \( H = H_{1}\times H_{2}\) and let $ u \coloneqq \begin{bmatrix}
u_{1} \\ u_{2}
\end{bmatrix} \in H $ and 
$v \coloneqq \begin{bmatrix}
 v_{1} \\ v_{2}
\end{bmatrix} \in H$. Moreover, let \( A_{ij}\subseteq H_{j}\times H_{i}\) for \( i,j\in\left\{ 1,2\right\}\). We denote
\begin{equation}
(u,v) =\left(\left[
\begin{array}{c}
u_{1} \\ 
u_{2} \\ 
\end{array}\right],\left[
\begin{array}{c}
v_{1} \\ 
v_{2} \\ 
\end{array}\right]\right)\in\begin{bmatrix}
A_{11} & A_{12} \\ 
A_{21} & A_{22} \\ 
\end{bmatrix}
\end{equation}
if and only if \( v_{i} = \sum_{j}^{}v_{ij}\) with \((u_{j},v_{ij})\in A_{ij}\) for each \( i\) and \( j\).

\end{example}
\begin{definition}\label{def:A2}
 The \textit{domain} and \textit{range} of a binary relation \( A\subseteq H\times H\) are denoted by \(\mathrm{dom}(A)\) and \( \text{ran}(A)\), respectively, and are given by
\begin{equation}
\begin{split}
\mathrm{dom}(A)&  \coloneqq \left\{ x\in H\mid \exists y\in H \text{ s.t. }(x,y)\in A\right\}  \\ 
\text{ran}(A)&  \coloneqq \left\{ y\in H\mid \exists x\in H \text{ s.t. }(x,y)\in A\right\}  \\ 
\end{split}
\end{equation}
\end{definition}
As is common in a functional analysis setting, \( \text{ran}(A)\) may be a proper subset of \( H\), i.e. the range of an operator is allowed to be smaller than its codomain. In this setting, we typically also have that \(\mathrm{dom}(A)\) is a proper subset of \( H\). Thus, it is important to remember that both the domain and range of \( A\) can be proper subsets of \( H\), despite \( A\) being defined as \( A\subseteq H\times H\).
\begin{definition}\label{def:A3}
A binary relation\textit{ }\( A\subseteq H\times H\) is \textit{monotone} if for all \((x_{1},y_{1}),(x_{2},y_{2})\in A\) it holds that
\begin{equation*}
\text{Re}(\left\langle x_{1}-x_{2},y_{1}-y_{2}\right\rangle)\geq 0.
\end{equation*}
\end{definition}
\begin{definition}\label{def:A4}
A binary relation\textit{ }\( A\subseteq H\times H\) is \textit{maximal monotone} if it is monotone and if for all \( B\subseteq H\times H\) with \( B\) monotone and \( A\subseteq B\), it follows that \( A = B\).
\end{definition}
\begin{definition}\label{def:A5}
 A binary relation\textit{ }\( A\subseteq H\times H\) is \( c\)-\textit{maximal monotone} for some \( c>0\) if \( A-c\) is maximal monotone. 
\end{definition}
The following example illustrates the generality that these definitions allow for.
\begin{example}\label{eg:A4}
Let \( H\mathbb{ \coloneqq R}\) and let the binary relation \( A\mathbb{\subset R\times R}\) be given by 
\begin{equation*}
A \coloneqq \left\{(x,y)\in \mathbb{R\times R}\left\vert 
\begin{array}{c}
y = 0 \text{ if } \left\vert x\right\vert <1   \\ 
xy\geq 0 \text{ if } \left\vert x\right\vert  = 1 \\ 
\end{array}\right.\right\} 
\end{equation*}

Graphically, \( A\) is given by the horizontal line segment \((x,0)\) for \( -1\leq x\leq 1\) and the vertical half-lines \((\text{sgn}(y), y)\). In turn, \(\mathrm{dom}(A) =\left[-1,1\right]\), which we emphasize is a proper subset of \(\mathbb{R}\), and \( \text{ran}(A) = \mathbb{R}\). The fact that \( A\) is monotone can be verified by a straightforward computation:
\begin{itemize} \small
    \item \(\left\vert x_{1}\right\vert ,\left\vert x_{2}\right\vert <1.\) Then \( y_{1} = y_{2} = 0\) and so \(\left\langle x_{1}-x_{2},y_{1}-y_{2}\right\rangle  = 0.\)

    \item \(\left\vert x_{1}\right\vert <1 =\left\vert x_{2}\right\vert .\)Then \( y_{1} = 0\) and \(\left\langle x_{1}-x_{2},y_{1}-y_{2}\right\rangle  =(x_{2}-x_{1})y_{2}\geq 0\) since the two terms have the same sign.

    \item \(\left\vert x_{1}\right\vert  =\left\vert x_{2}\right\vert  = 1\). Then \((x_{1}-x_{2})\) is either zero or has the same sign as \((y_{1}-y_{2})\), giving the result.

\end{itemize}
Moreover, \( A\) is maximal monotone since there exists no monotone extension \( B\mathbb{\subseteq R\times R}\) with \( A\subseteq B\) except for \( B = A\).
\end{example}
\begin{definition}\label{def:A6}
The \textit{adjoint} of a binary relation \( A\subseteq H_{1}\times H_{2}\) is given by
\begin{equation*}
A' \coloneqq \left\{(u,v)\in H_{2}\times H_{1}\mid \left\langle u,y\right\rangle_{H_{2}} =\left\langle v,x\right\rangle_{H_{1}},  \forall(x,y)\in A\right\} 
\end{equation*}
\end{definition}
\begin{definition}\label{def:A7}
The \textit{inverse} of a binary relation \( A\subseteq H\times H\) is given by
\begin{equation*}
A^{-1} \coloneqq \left\{(x,y)\in H\times H\mid (y,x)\in A\right\} 
\end{equation*}
\end{definition}
Importantly, this definition implies that \textit{any} binary relation has an inverse relation. Moreover, it is easy to see that the inverse of a maximal monotone relation is itself maximal monotone as well (by the symmetry of the inner product).
\begin{example}\label{eg:A5}
Continuing with example \ref{eg:A4}, we have 
\begin{equation*}
A^{-1} =\left\{(x,y)\in \mathbb{R\times R}\left\vert 
\begin{array}{c}
y = \text{sgn}(x) \text{ if } x\neq 0   \\ 
y\in\left[-1,1\right] \text{ if } x = 0 \\ 
\end{array}\right.\right\} .
\end{equation*}

As is apparent here, \( A^{-1}\) can be described as a \textit{set-valued} function. However, for the reasons mentioned at the beginning of the section, we herein prefer the use of binary relations.
\end{example}

Even though the inverse of a relation is always well-defined, we often require more properties of the inverse, in particular that it corresponds to a Lipschitz continuous mapping. This can be obtained by Minty’s theorem, which lies at the heart of the analysis of evolutionary equations.
\begin{theorem}[Minty’s theorem]\label{th:A1} Let \( A\subseteq H\times H\) be a \( c\)-maximal monotone relation for some \( c>0\). Then the inverse relation \( A^{-1}\) defines a Lipschitz continuous mapping with domain \(\mathrm{dom}(A^{-1}) = H\) and a Lipschitz constant bounded by \(\frac{1}{c}\).
\begin{proof}
See Theorem 1.1 of \cite{picard2015well}.
\end{proof}
\end{theorem}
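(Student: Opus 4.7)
The plan is to split the claim into three sub-statements and prove them in a natural order: (i) $A^{-1}$ is single-valued on its domain; (ii) wherever defined, $A^{-1}$ obeys the $1/c$-Lipschitz estimate; (iii) $\mathrm{dom}(A^{-1}) = H$, i.e.\ $\mathrm{ran}(A) = H$. Items (i) and (ii) fall out of $c$-maximal monotonicity by a single Cauchy--Schwarz computation; the real work is in (iii), which is the Minty surjectivity statement and where I expect the main obstacle to lie.

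First I would unpack the hypothesis. By Definition \ref{def:A5}, $B \coloneqq A - c$ is maximal monotone, so for any $(x_1,y_1),(x_2,y_2)\in A$,
\begin{equation*}
\mathrm{Re}\,\langle x_1-x_2,\, y_1-y_2\rangle \;\ge\; c\,\lVert x_1-x_2\rVert^2.
\end{equation*}
Now take any $(y_1,x_1),(y_2,x_2)\in A^{-1}$, so that $(x_k,y_k)\in A$. Cauchy--Schwarz applied to the displayed inequality gives
\begin{equation*}
c\,\lVert x_1-x_2\rVert^2 \;\le\; \lVert x_1-x_2\rVert\,\lVert y_1-y_2\rVert,
\end{equation*}
hence $\lVert x_1-x_2\rVert \le c^{-1}\lVert y_1-y_2\rVert$. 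Specializing to $y_1=y_2$ forces $x_1=x_2$, so $A^{-1}$ is single-valued; the same estimate, read for arbitrary $y_1,y_2\in\mathrm{dom}(A^{-1})$, gives the Lipschitz bound $1/c$. This handles (i) and (ii) in one stroke, and uses only $c$-monotonicity (not maximality).

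The substantive step is (iii). My strategy is a reduction to the classical Minty surjectivity theorem: if $M\subseteq H\times H$ is maximal monotone, then $\mathrm{ran}(I+M)=H$. Given an arbitrary $y\in H$, I want to produce $x\in H$ with $(x,y)\in A$. Writing $A = B + c$ with $B$ maximal monotone, the inclusion $y \in A(x)$ is equivalent to $y-cx\in B(x)$, i.e.\ $c^{-1}y - x \in c^{-1}B(x)$. Setting $C \coloneqq c^{-1}B$ (still maximal monotone, since scaling by a positive constant preserves both monotonicity and maximality), the task becomes: find $x$ with $c^{-1}y \in (I+C)(x)$. Classical Minty applied to $C$ yields exactly such an $x$, so $\mathrm{ran}(A)=H$ and $A^{-1}$ is defined on all of $H$. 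Combined with (i) and (ii), this completes the proof.

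The main obstacle is the surjectivity half of classical Minty for $C$, which is not elementary: a standard argument proceeds via the Yosida regularization $C_\lambda = \lambda^{-1}(I-(I+\lambda C)^{-1})$ together with a fixed-point or Banach--Steinhaus argument to invert $I+C$. I would not reprove this; instead I would cite it (as the paper itself does, by pointing to Theorem 1.1 of \cite{picard2015well}). Everything else in the argument — single-valuedness, the Lipschitz estimate, and the algebraic reduction $A \rightsquigarrow C$ — is direct and self-contained given the definitions in Appendix~\ref{sec:appendix}.
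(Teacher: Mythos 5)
Your proposal is correct, and it does more than the paper does: the paper's entire ``proof'' is the citation to Theorem 1.1 of \cite{picard2015well}, whereas you supply a genuine argument, deferring only the classical surjectivity core. Your steps check out. From Definition \ref{def:A5}, $(x_k,y_k)\in A$ gives $\mathrm{Re}\langle x_1-x_2,y_1-y_2\rangle\ge c\lVert x_1-x_2\rVert^2$, and Cauchy--Schwarz then yields single-valuedness of $A^{-1}$ and the Lipschitz bound $1/c$, using monotonicity-plus-shift only, exactly as you say. The reduction for surjectivity is also sound: with $B\coloneqq A-c$ maximal monotone (in the relation calculus of Example \ref{eg:A2}, $B+c=A$), the inclusion $y\in A(x)$ is equivalent to $c^{-1}y\in(I+C)(x)$ with $C\coloneqq c^{-1}B$, and positive scaling preserves maximal monotonicity, so classical Minty surjectivity for $C$ gives $\mathrm{ran}(A)=H$, i.e.\ $\mathrm{dom}(A^{-1})=H$. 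The trade-off between the two routes: the paper's citation imports the statement wholesale in the exact form and setting used later (including the complex/``$\mathrm{Re}$'' convention of the evolutionary-equations framework), while your decomposition makes transparent which parts are elementary (uniqueness and the $1/c$ estimate need no maximality at all) and isolates the only deep ingredient, the range condition $\mathrm{ran}(I+C)=H$, which you rightly cite rather than reprove; if you wanted full self-containment you would still need the Yosida-regularization or fixed-point argument behind that step, and in a complex Hilbert space you should note that the classical theorem applies with $\mathrm{Re}\langle\cdot,\cdot\rangle$, which is the form the cited reference provides.
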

\begin{example}\label{eg:A6}
Continuing with example \ref{eg:A5}, let \( B = A+\epsilon\) for some \( \epsilon >0.\) We then compute 
\begin{equation*}
B^{-1} =\left\{(x,y)\in \mathbb{R\times R}\left\vert 
\begin{array}{c}
y = \text{sgn}(x) \text{ if }\left\vert x\right\vert \geq \epsilon    \\ 
y =\frac{1}{\epsilon }x \text{ if }\left\vert x\right\vert <\epsilon  \\ 
\end{array}\right.\right\} .
\end{equation*}

Note that, by construction, \( B-c = A\) is maximal monotone with \( c = \epsilon >0\). In turn, Minty’s theorem implies that \( B^{-1}\) is Lipschitz continuous with possible Lipschitz constant \(\frac{1}{\epsilon }\) and this can easily be verified. On the other hand, we see that \( A^{-1}\) is not Lipschitz continuous at the origin. In turn, the converse of Minty’s theorem implies that there is no \( c>0\) such that \( A\) is \( c\)-maximal monotone.
\end{example}
Minty’s theorem forms a powerful tool and it is therefore important to determine whether a binary relation is maximal monotone. It becomes a natural question to ask whether the sum of two maximal monotone relations is maximal monotone as well. To answer this, we first define bounded relations.
\begin{definition}\label{def:A8}
A relation \( A\subseteq H\times H\) is called \textit{bounded} if for all bounded sets \( M\subseteq H\), the post-set given by \(\left\{ y\in H\mid \exists x\in M \text{ s.t. } (x,y)\in A\right\}\) is bounded as well.
\end{definition}

We remark that if \( A\) corresponds to a mapping, the post-set is typically referred to as the \textit{image} of \( M\) under \( A\). Using the boundedness property, we have the following result concerning the sum of maximal monotone relations.
\begin{lemma}\label{lemma:A1}
Let \( A,B\subset H\times H\) be maximal monotone, let \( A\) be bounded and let \(\mathrm{dom}(A)\cap\mathrm{dom}B)\neq \varnothing\). Then the sum \( A+B\) is maximal monotone.
\begin{proof}
See Proposition 1.22 of \cite{trostorff2011well}.
\end{proof}
\end{lemma}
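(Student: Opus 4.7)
The plan is to reduce maximality of $A+B$ to a range condition via Minty's theorem (Theorem A.1 with $c = 1$): since $A+B$ is manifestly monotone (add the monotonicity inequalities for the decompositions $y_i = a_i + b_i$ with $a_i \in Ax_i$ and $b_i \in Bx_i$), it will be maximal monotone as soon as one shows $\mathrm{ran}(I + A + B) = H$. Thus, given $f \in H$, the task is to construct $u \in \mathrm{dom}(A) \cap \mathrm{dom}(B)$ with $f \in u + Au + Bu$.

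To build such a $u$, I would regularize via the Yosida approximation $B_\lambda \coloneqq \tfrac{1}{\lambda}(I - J_\lambda^B)$, where $J_\lambda^B = (I+\lambda B)^{-1}$ is the resolvent of $B$ (well-defined and nonexpansive on all of $H$ by Minty applied to $B$). Each $B_\lambda$ is single-valued, $\tfrac{1}{\lambda}$-Lipschitz, maximal monotone, and satisfies $\mathrm{dom}(B_\lambda) = H$. The sum $A + B_\lambda$ is then maximal monotone by the standard perturbation result for Lipschitz monotone additions defined on all of $H$ (which can itself be proved by a contraction argument: solving $g \in u + Au + B_\lambda u$ is a fixed-point equation $u = (I+A)^{-1}(g - B_\lambda u)$ tractable after a resolvent rescaling). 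Applying Minty to $A + B_\lambda$, there exist $u_\lambda \in \mathrm{dom}(A)$ and $v_\lambda \in A u_\lambda$ with
\begin{equation*}
u_\lambda + v_\lambda + B_\lambda u_\lambda = f.
\end{equation*}

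The next step is to establish uniform bounds as $\lambda \downarrow 0$. Fix $x_0 \in \mathrm{dom}(A) \cap \mathrm{dom}(B)$ (nonempty by assumption) together with $y_0 \in Ax_0$ and $z_0 \in Bx_0$, and test the equation against $u_\lambda - x_0$. Monotonicity of $A$ gives $\langle u_\lambda - x_0, v_\lambda \rangle \geq \langle u_\lambda - x_0, y_0 \rangle$; monotonicity of $B$ applied to the admissible pair $(J_\lambda^B u_\lambda, B_\lambda u_\lambda) \in B$ gives
\begin{equation*}
\langle u_\lambda - x_0, B_\lambda u_\lambda \rangle \geq \lambda \|B_\lambda u_\lambda\|^2 + \langle J_\lambda^B u_\lambda - x_0, z_0 \rangle,
\end{equation*}
which in turn yields a bound $\|u_\lambda\| \leq C$ independent of $\lambda$. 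The hypothesis that $A$ is a bounded relation (Definition A.8) is now critical: it upgrades the bound on $u_\lambda$ to a bound on the whole post-set, in particular on $v_\lambda$, and then the equation forces $B_\lambda u_\lambda = f - u_\lambda - v_\lambda$ to be bounded as well.

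Finally, I would pass to the limit along a subsequence $u_\lambda \rightharpoonup u$, $v_\lambda \rightharpoonup v$, $B_\lambda u_\lambda \rightharpoonup w$. Since $\|u_\lambda - J_\lambda^B u_\lambda\| = \lambda \|B_\lambda u_\lambda\| \to 0$, we also have $J_\lambda^B u_\lambda \rightharpoonup u$. The main obstacle is showing $(u,v) \in A$ and $(u,w) \in B$ in the absence of strong convergence; the key trick, which I anticipate being the hardest step, is to combine the identity
\begin{equation*}
\langle u_\lambda, v_\lambda \rangle + \langle J_\lambda^B u_\lambda, B_\lambda u_\lambda \rangle = \langle u_\lambda, f \rangle - \|u_\lambda\|^2 - \langle u_\lambda - J_\lambda^B u_\lambda, B_\lambda u_\lambda \rangle
\end{equation*}
with $\langle u_\lambda - J_\lambda^B u_\lambda, B_\lambda u_\lambda \rangle = \lambda \|B_\lambda u_\lambda\|^2 \geq 0$, lower semicontinuity of the norm, and weak convergence of $\langle u_\lambda, f \rangle \to \langle u, f \rangle$ to produce the joint $\limsup$ bound
\begin{equation*}
\limsup_{\lambda \to 0}\bigl(\langle u_\lambda, v_\lambda\rangle + \langle J_\lambda^B u_\lambda, B_\lambda u_\lambda\rangle\bigr) \leq \langle u, v + w \rangle,
\end{equation*}
using $u + v + w = f$. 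Maximal monotonicity of $A$ and $B$ (equivalently, demiclosedness of their graphs in the product weak topology under such $\limsup$ conditions) then delivers $v \in Au$ and $w \in Bu$, so $f = u + v + w \in u + Au + Bu$, as required.
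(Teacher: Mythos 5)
Your proposal is sound, but note that the paper does not actually prove this lemma: its ``proof'' is a citation to Proposition 1.22 of the Tr\"ostorff reference, so what you have written is a self-contained replacement for an outsourced argument. Your route is the classical Brezis--Crandall--Pazy perturbation scheme: Yosida-regularize $B$, solve $u_\lambda + v_\lambda + B_\lambda u_\lambda = f$ with $v_\lambda \in A u_\lambda$, get uniform bounds by testing against a point of $\mathrm{dom}(A)\cap\mathrm{dom}(B)$, use boundedness of $A$ (Definition \ref{def:A8}) to control $v_\lambda$ and hence $B_\lambda u_\lambda$, and identify weak limits through the joint $\limsup$ estimate; this makes explicit exactly where each hypothesis of the lemma enters, which the citation hides, at the cost of length. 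Three points need care if you flesh it out. First, the maximality criterion you attribute to Theorem \ref{th:A1} (monotone plus $\mathrm{ran}(I+A+B)=H$ implies maximal monotone) is the converse of that theorem as stated; it is the Minty surjectivity characterization that the paper itself invokes elsewhere as Theorem 1.6 of the same Tr\"ostorff reference, so cite that instead. Second, maximal monotonicity of $A+B_\lambda$ does not come from a one-shot contraction, since $u\mapsto (I+A)^{-1}(g-B_\lambda u)$ is only $\lambda^{-1}$-Lipschitz; your parenthetical ``resolvent rescaling'' should be the standard continuation argument, using that $(I+A+\theta_0 B_\lambda)^{-1}$ is nonexpansive to advance $\theta$ in steps smaller than $\lambda$. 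Third, the last step needs more than generic demiclosedness: from $\limsup_{\lambda\to 0}\bigl(\langle u_\lambda-u,\,v_\lambda-v\rangle+\langle J_\lambda^B u_\lambda-u,\,B_\lambda u_\lambda-w\rangle\bigr)\le 0$ you must split the two terms, which works because the standard lemma gives, under a $\limsup\le 0$ condition for a single maximal monotone graph, both membership \emph{and} convergence of the pairing to zero; assuming one term has a positive $\limsup$ along a subsequence then forces the other to have a strictly negative one, contradicting that convergence. With these details supplied, your argument is complete and correct, and arguably more informative than the paper's citation.
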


On the other hand, we will frequently encounter unbounded operators as well. An important class of these is given by densely defined, unbounded linear operators.
\begin{definition}\label{def:A9}
An operator \( A\subseteq H\times H\) is \textit{densely defined} if \(\mathrm{dom}(A)\) is dense in \( H\).
\end{definition}

Differential operators are a primary example of unbounded operators in our work, defined as mappings between \( L^{2}\)-type spaces. We emphasize this using the
\begin{example}\label{eg:A7}
Let \( \Omega \subset\mathbb{R}^{n}\) be a Lipschitz domain with \( n\in\left\{ 2,3\right\}\). Let \(\tilde{\nabla }_{0}\) be the gradient operator defined on \( \mathring{C}^{\infty }(\Omega)\), i.e. on the space of infinitely differentiable functions with vanishing trace on \( \partial \Omega\): 
\begin{equation*}
\tilde{\nabla }_{0}:\mathring{C}^{\infty }(\Omega)\subset L^{2}(\Omega ,\mathbb{R})\rightarrow L^{2}(\Omega ,\mathbb{R}^{n})
\end{equation*}

Now, let the divergence \((\nabla  \cdot )\) and its dual be given by
\begin{equation*}
(\nabla  \cdot ) \coloneqq -(\tilde{\nabla }_{0})'  \text{ and }  \nabla_{0} \coloneqq -(\nabla  \cdot )'.
\end{equation*}

Let \( H\) be the pair of spaces
\begin{equation*}
H \coloneqq L^{2}(\Omega, \mathbb{R})\times L^{2}(\Omega ,\mathbb{R}^{n}).
\end{equation*}

Finally, let the relation \( A\subseteq H\times H\) be given by
\begin{equation*}
A \coloneqq \begin{bmatrix}
0 & \nabla  \cdot  \\ 
\nabla_{0} & 0 \\ 
\end{bmatrix}.
\end{equation*}

We emphasize that the domain of the operator \( A\) follows from its definition. In fact, \(\mathrm{dom}(A)\) is given by all functions \((f, g)\in H\) with \( f\in\mathrm{dom}(\nabla_{0})\subseteq L^{2}(\Omega, \mathbb{R})\) and \( g\in\mathrm{dom}(\nabla  \cdot )\subseteq L^{2}(\Omega ,\mathbb{R}^{n})\). For these functions, it follows by definition that \( \nabla_{0}f\in L^{2}(\mathbb{R}^{n})\) and \( \nabla  \cdot g\in L^{2}(\Omega, \mathbb{R})\). In turn, we have the following characterization in more conventional notation:
\begin{equation*}
\mathrm{dom}(A) = \mathrm{dom}(\nabla_{0})\times \mathrm{dom}(\nabla  \cdot ) = \mathring{H}^{1}(\Omega)\times H(\nabla  \cdot ,\Omega).
\end{equation*}

Finally, we note that \( \mathrm{dom}(A)\) is dense in \( H\) and thus \( A\) is a densely defined, unbounded linear operator.
\end{example}
\begin{remark}\label{remark:A1}
Analogously, we may start with \((\tilde{\nabla }_{0} \cdot )\) as the divergence acting on vector-valued, infinitely differentiable functions with vanishing normal trace. By taking the appropriate adjoints, this leads us to the operators \( \nabla\) and \( (\nabla_{0} \cdot )\) with
\begin{align}
\mathrm{dom}(\nabla) &= H^{1}(\Omega),  &
\mathrm{dom}(\nabla_{0} \cdot ) &= \mathring{H}(\nabla\cdot,\Omega).
\end{align}
Here, \( \mathring{H}(\nabla  \cdot ,\Omega)\) denotes the subspace of \( H(\nabla  \cdot ,\Omega)\) consisting of functions with zero normal trace on \( \partial \Omega\). 
\end{remark}
\begin{figure}
    \centering
    \includegraphics[width=\textwidth]{./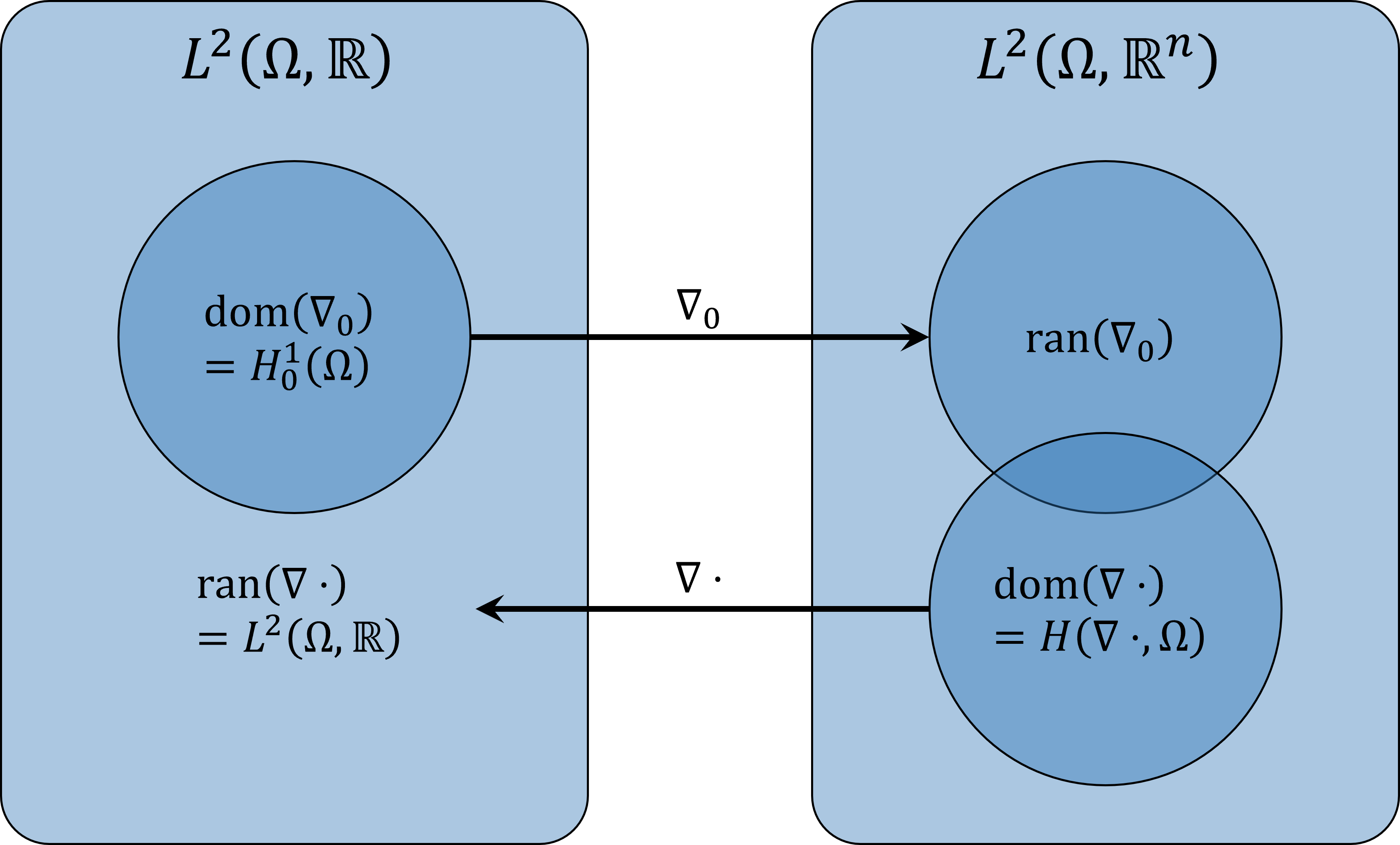}
    \caption{The gradient and divergence mappings defined as densely defined, unbounded linear operators. Note that both the domain and range of \( \nabla_{0}\) is a proper subset of \( L^{2}\). The fact that the range of \( \nabla  \cdot \) equals \( L^{2}\) is classical.}
    \label{fig:the-gradient-and-divergence-mappings}
\end{figure}
\begin{example}\label{eg:A8}
Continuing with example \ref{eg:A7}, we note that \( A\) is linear and skew-selfadjoint, and therefore maximal monotone.  In particular, for \([f_{1}, g_{1}]^T, [f_{2}, g_{2}]^T \in \mathrm{dom}(A)\), we have
\begin{equation}
\begin{split}
\left\langle\left[
\begin{array}{c}
f_{1}-f_{2} \\ 
g_{1}-g_{2} \\ 
\end{array}\right],A\left[
\begin{array}{c}
f_{1} \\ 
g_{1} \\ 
\end{array}\right]-A\left[
\begin{array}{c}
f_{1} \\ 
g_{1} \\ 
\end{array}\right]\right\rangle &  =\left\langle f_{1}-f_{2},\nabla  \cdot (g_{1}-g_{2})\right\rangle +\left\langle \nabla_{0}(f_{1}-f_{2}),g_{1}-g_{2}\right\rangle  \\ 
&  =\left\langle f_{1}-f_{2},\nabla  \cdot (g_{1}-g_{2})\right\rangle -\left\langle f_{1}-f_{2},\nabla  \cdot (g_{1}-g_{2})\right\rangle  \\ 
&  = 0 \\ 
\end{split}
\end{equation}
Here, we have used angled brackets for the inner products of both the product space \( H\) and its components.
\end{example} 

In order to define an evolutionary equation, we next consider the spatiotemporal setting. Following \cite{picard2015well}, we introduce an exponentially weighted Bochner-type function space. 
\begin{definition}\label{def:A10}
For $\nu > 0$, let
\begin{equation*}
L_{\nu }^{2}(\mathbb{R},H) \coloneqq \left\{ f\mathbb{:R\rightarrow }H\mid \int_{\mathbb{R}}^{}\left\vert e^{-\nu t}f(t)\right\vert_{H}^{2}\mathrm{d}t<\infty\right\} 
\end{equation*}
\end{definition}
\begin{definition}\label{def:A11}
Given a binary relation \( A\subseteq H\times H\), its \textit{temporal extension} \( A_{\nu }\subseteq L_{\nu }^{2}(\mathbb{R},H)\times L_{\nu }^{2}(\mathbb{R},H)\) is given by 
\begin{equation*}
A_{\nu } \coloneqq \left\{ u,v\in L_{\nu }^{2}(\mathbb{R},H)\mid (u(t),v(t))\in A,  \text{for} a.e. t\in \mathbb{R}\right\} 
\end{equation*}
\end{definition}
\begin{definition}\label{def:A12}
 An operator \( A:\mathrm{dom}(A)\subseteq L_{\nu }^{2}(\mathbb{R},H)\rightarrow L_{\nu }^{2}(\mathbb{R},H)\) is \textit{time translation-invariant} if for each \((u,v)\in A\) and \( h\in\mathbb{R}\), we have \((u( \cdot +h),v( \cdot +h))\in A.\)
\end{definition}
\begin{definition}\label{def:A13}
A closed mapping\textit{ }\( A:\mathrm{dom}(A)\subseteq L_{\nu }^{2}(\mathbb{R},H)\rightarrow L_{\nu }^{2}(\mathbb{R},H)\) is called \textit{causal} if for all \( h\in\mathbb{R}\) and \( f,g\in \mathrm{dom}(A)\) with \( f\left. \right\vert_{t\leq h} = g\left. \right\vert_{t\leq h}\), it follows that \((Af)\left. \right\vert_{t\leq h} =(Ag)\left. \right\vert_{t\leq h}\).
\end{definition}
Finally, we present the time derivative on the weighted space \( L_{\nu }^{2}(\mathbb{R},H)\). Its definition is motivated by the following short calculation for differentiable \( f\mathbb{:R\rightarrow R}\).
\begin{equation*}
e^{\nu t}\partial_{t}(e^{-\nu t}f) = e^{\nu t}(-\nu e^{-\nu t}f+e^{-\nu t}\partial_{t}f) =(-v+\partial_{t})f
\end{equation*}

In turn, we have \( e^{\nu t}(\partial_{t}+\nu)e^{-\nu t}f = \partial_{t}f\), which suggests the following definition.
\begin{definition}\label{def:A14}
Given \( \nu >0\), let \( \partial_{0,\nu }:\mathrm{dom}(\partial_{0,\nu })\subseteq L_{\nu }^{2}(\mathbb{R},H)\rightarrow L_{\nu }^{2}(\mathbb{R},H)\) be given by
\begin{equation*}
\partial_{0,\nu } \coloneqq e^{\nu t}(\partial_{t}+\nu)e^{-\nu t}.
\end{equation*}
\end{definition}
\end{appendices}
\end{document}